\documentclass{article}
\usepackage[utf8]{inputenc}
\usepackage{authblk}

\usepackage{latexsym,amsthm,amscd}
\usepackage{amsmath}
\usepackage{amssymb}
\usepackage{amsfonts}
\usepackage[normalem]{ulem}
\usepackage{tikz}
\usepackage{stmaryrd}
\usepackage{hyperref}
\usepackage[most]{tcolorbox}
\usetikzlibrary{patterns}
\usepackage{enumerate}
\usepackage{units}
\usepackage{CJKutf8}
\usepackage{ulem}
\usepackage{geometry}

\usepackage{mathrsfs}

\usepackage{float}
\usepackage{caption}
\usepackage{subcaption}

\usepackage{soul,xcolor}
\usepackage[capitalize]{cleveref}

\newtheorem{thmx}{Theorem}

\newcommand{\A}{\mathcal{A}}
\newcommand{\Z}{\mathbb{Z}}
\newcommand{\N}{\mathbb{N}}
\newcommand{\ie}{\textit{i.e.}}
\newcommand{\eg}{\textit{e.g.}}

\title{The topological structure of isolated points in the space of $\mathbb{Z}^d$-shifts}
\author[1]{\normalsize SILV\`ERE GANGLOFF\footnote{Corresponding author}\footnote {\url{sfgangloff@gmail.com}}} 

\affil[1]{\small
	Independent researcher
	}

\author[2,3]{\normalsize ALONSO N\'U\~NEZ\footnote{The second author is supported by the National Agency for Research and Development
(ANID) / Scholarship Program / DOCTORADO BECAS CHILE/2019 - 72200562; by the ANR “Difference” project (ANR-20-CE40-0002); and by the CIMI LabEx “Computability of asymptotic properties of dynamical systems” project (ANR-20-CE48-0002).}\footnote{\url{alonso.herrera_nunez@math.univ-toulouse.fr}}}
\affil[2]{
\small
Institut de mathématiques de Toulouse, 1R3, Université Paul Sabatier, 118 Route de Narbonne, 31400 Toulouse}
\affil[3]{Instituto de Ingenier\'ia Matem\'atica y Computacional, Pontificia Universidad Cat\'olica de Chile, Chile
}
\setlength{\textwidth}{17cm}
\setlength{\textheight}{25cm}
\setlength{\topmargin}{-1cm}
\setlength{\oddsidemargin}{0cm}
\setlength{\evensidemargin}{0cm}

\geometry{hmargin=4.5cm, vmargin= 3.5cm}

\date{\today}

\newtheorem{theorem}{Theorem}[section]
\newtheorem*{theorem*}{Theorem}
\newtheorem{lemma}[theorem]{Lemma}
\newtheorem{corollary}[theorem]{Corollary}
\newtheorem{question}[theorem]{Question}
\newtheorem{definition}[theorem]{Definition}
\newtheorem{example}[theorem]{Example}
\newtheorem{conjecture}[theorem]{Conjecture}
\newtheorem*{problem*}{Problem}
\newtheorem{notation}[theorem]{Notation}
\newtheorem{proposition}[theorem]{Proposition}
\newtheorem{remark}[theorem]{Remark}

%%% Robinson %%%

%-------------- Alphabet ----------------%

\newcommand{\robionegauche}[2]{
\draw (#1,#2) rectangle (#1+2,#2+2);
\draw [-latex] (#1+2,#2+1) -- (#1+0,#2+1);
\draw [-latex] (#1+1,#2+0) -- (#1+1,#2+1); 
\draw [-latex] (#1+1,#2+2) -- (#1+1,#2+1);}

\newcommand{\robionehaut}[2]{
\draw (#1,#2) rectangle (#1+2,#2+2);
\draw [-latex] (#1+1,#2+2) -- (#1+1,#2+0);
\draw [-latex] (#1+0,#2+1) -- (#1+1,#2+1); 
\draw [-latex] (#1+2,#2+1) -- (#1+1,#2+1);}

\newcommand{\robitwobas}[2]{
\draw (#1,#2) rectangle (#1+2,#2+2);
\draw [-latex] (#1+1,#2+2) -- (#1+1,#2+0) ;
\draw [-latex] (#1+0,#2+1) -- (#1+1,#2+1) ; 
\draw [-latex] (#1+0,#2+0.5) -- (#1+1,#2+0.5) ; 
\draw [-latex] (#1+2,#2+1) -- (#1+1,#2+1) ;
\draw [-latex] (#1+2,#2+0.5) -- (#1+1,#2+0.5) ;}
\newcommand{\robitwohaut}[2]{
\draw (#1,#2) rectangle (#1+2,#2+2);
\draw [-latex] (#1+1,#2+0) -- (#1+1,#2+2) ;
\draw [-latex] (#1+0,#2+1) -- (#1+1,#2+1) ; 
\draw [-latex] (#1+0,#2+1.5) -- (#1+1,#2+1.5) ; 
\draw [-latex] (#1+2,#2+1) -- (#1+1,#2+1) ;
\draw [-latex] (#1+2,#2+1.5) -- (#1+1,#2+1.5) ;}
\newcommand{\robitwodroite}[2]{
\draw (#1,#2) rectangle (#1+2,#2+2);
\draw [-latex] (#1+0,#2+1) -- (#1+2,#2+1) ;
\draw [-latex] (#1+1,#2+0) -- (#1+1,#2+1) ; 
\draw [-latex] (#1+1.5,#2+0) -- (#1+1.5,#2+1) ; 
\draw [-latex] (#1+1,#2+2) -- (#1+1,#2+1) ;
\draw [-latex] (#1+1.5,#2+2) -- (#1+1.5,#2+1) ;}
\newcommand{\robitwogauche}[2]{
\draw (#1,#2) rectangle (#1+2,#2+2);
\draw [-latex] (#1+2,#2+1) -- (#1+0,#2+1) ;
\draw [-latex] (#1+1,#2+0) -- (#1+1,#2+1) ; 
\draw [-latex] (#1+0.5,#2+0) -- (#1+0.5,#2+1) ; 
\draw [-latex] (#1+1,#2+2) -- (#1+1,#2+1) ;
\draw [-latex] (#1+0.5,#2+2) -- (#1+0.5,#2+1) ;}

\newcommand{\robithreehaut}[2]{
\draw (#1,#2) rectangle (#1+2,#2+2) ;
\draw [-latex] (#1+1,#2+0) -- (#1+1,#2+2) ;
\draw [-latex] (#1+0.5,#2+0) -- (#1+0.5,#2+2) ; 
\draw [-latex] (#1+0,#2+1) -- (#1+0.5,#2+1) ; 
\draw [-latex] (#1+2,#2+1) -- (#1+1,#2+1) ;}

\newcommand{\robithreedroite}[2]{
\draw (#1,#2) rectangle (#1+2,#2+2) ;
\draw [-latex] (#1+0,#2+1) -- (#1+2,#2+1) ;
\draw [-latex] (#1+0,#2+0.5) -- (#1+2,#2+0.5) ; 
\draw [-latex] (#1+1,#2+0) -- (#1+1,#2+0.5) ; 
\draw [-latex] (#1+1,#2+2) -- (#1+1,#2+1) ;}

\newcommand{\robisixbas}[2]{
\draw (#1,#2) rectangle (#1+2,#2+2) ;
\draw [-latex] (#1+1,#2+2) -- (#1+1,#2+0) ;
\draw [-latex] (#1+0.5,#2+2) -- (#1+0.5,#2+0) ; 
\draw [-latex] (#1+0,#2+1) -- (#1+0.5,#2+1) ; 
\draw [-latex] (#1+2,#2+1) -- (#1+1,#2+1) ;
\draw [-latex] (#1+0,#2+0.5) -- (#1+0.5,#2+0.5) ; 
\draw [-latex] (#1+2,#2+0.5) -- (#1+1,#2+0.5) ;}
\newcommand{\robisixgauche}[2]{
\draw (#1,#2) rectangle (#1+2,#2+2) ;
\draw [-latex] (#1+2,#2+1) -- (#1+0,#2+1) ;
\draw [-latex] (#1+2,#2+0.5) -- (#1+0,#2+0.5) ; 
\draw [-latex] (#1+1,#2+0) -- (#1+1,#2+0.5) ; 
\draw [-latex] (#1+1,#2+2) -- (#1+1,#2+1) ;
\draw [-latex] (#1+0.5,#2+0) -- (#1+0.5,#2+0.5) ; 
\draw [-latex] (#1+0.5,#2+2) -- (#1+0.5,#2+1) ;}
\newcommand{\robisixhaut}[2]{
\draw (#1,#2) rectangle (#1+2,#2+2) ;
\draw [-latex] (#1+1,#2+0) -- (#1+1,#2+2) ;
\draw [-latex] (#1+0.5,#2+0) -- (#1+0.5,#2+2) ; 
\draw [-latex] (#1+0,#2+1) -- (#1+0.5,#2+1) ; 
\draw [-latex] (#1+2,#2+1) -- (#1+1,#2+1) ;
\draw [-latex] (#1+0,#2+1.5) -- (#1+0.5,#2+1.5) ; 
\draw [-latex] (#1+2,#2+1.5) -- (#1+1,#2+1.5) ;}
\newcommand{\robisixdroite}[2]{
\draw (#1,#2) rectangle (#1+2,#2+2) ;
\draw [-latex] (#1+0,#2+1) -- (#1+2,#2+1) ;
\draw [-latex] (#1+0,#2+0.5) -- (#1+2,#2+0.5) ; 
\draw [-latex] (#1+1,#2+0) -- (#1+1,#2+0.5) ; 
\draw [-latex] (#1+1,#2+2) -- (#1+1,#2+1) ;
\draw [-latex] (#1+1.5,#2+0) -- (#1+1.5,#2+0.5) ; 
\draw [-latex] (#1+1.5,#2+2) -- (#1+1.5,#2+1) ;}

\newcommand{\robisevenbas}[2]{
\draw (#1,#2) rectangle (#1+2,#2+2) ;
\draw [-latex] (#1+1,#2+2) -- (#1+1,#2+0) ;
\draw [-latex] (#1+1.5,#2+2) -- (#1+1.5,#2+0) ; 
\draw [-latex] (#1+0,#2+1) -- (#1+1,#2+1) ; 
\draw [-latex] (#1+2,#2+1) -- (#1+1.5,#2+1) ;
\draw [-latex] (#1+0,#2+0.5) -- (#1+1,#2+0.5) ; 
\draw [-latex] (#1+2,#2+0.5) -- (#1+1.5,#2+0.5) ;}
\newcommand{\robisevengauche}[2]{
\draw (#1,#2) rectangle (#1+2,#2+2) ;
\draw [-latex] (#1+2,#2+1) -- (#1+0,#2+1) ;
\draw [-latex] (#1+2,#2+1.5) -- (#1+0,#2+1.5) ; 
\draw [-latex] (#1+1,#2+0) -- (#1+1,#2+1) ; 
\draw [-latex] (#1+1,#2+2) -- (#1+1,#2+1.5) ;
\draw [-latex] (#1+0.5,#2+0) -- (#1+0.5,#2+1) ; 
\draw [-latex] (#1+0.5,#2+2) -- (#1+0.5,#2+1.5) ;}
\newcommand{\robisevenhaut}[2]{
\draw (#1,#2) rectangle (#1+2,#2+2) ;
\draw [-latex] (#1+1,#2+0) -- (#1+1,#2+2) ;
\draw [-latex] (#1+1.5,#2+0) -- (#1+1.5,#2+2) ; 
\draw [-latex] (#1+0,#2+1) -- (#1+1,#2+1) ; 
\draw [-latex] (#1+2,#2+1) -- (#1+1.5,#2+1) ;
\draw [-latex] (#1+0,#2+1.5) -- (#1+1,#2+1.5) ; 
\draw [-latex] (#1+2,#2+1.5) -- (#1+1.5,#2+1.5) ;}
\newcommand{\robisevendroite}[2]{
\draw (#1,#2) rectangle (#1+2,#2+2) ;
\draw [-latex] (#1+0,#2+1) -- (#1+2,#2+1) ;
\draw [-latex] (#1+0,#2+1.5) -- (#1+2,#2+1.5) ; 
\draw [-latex] (#1+1,#2+0) -- (#1+1,#2+1) ; 
\draw [-latex] (#1+1,#2+2) -- (#1+1,#2+1.5) ;
\draw [-latex] (#1+1.5,#2+0) -- (#1+1.5,#2+1) ; 
\draw [-latex] (#1+1.5,#2+2) -- (#1+1.5,#2+1.5) ;}

% ---- Sans compteur ----- %

\newcommand{\robibluebastgauche}[2]{
\fill[blue!40] (#1+0.5,#2+0.5) rectangle (#1+1,#2+2) ;
\fill[blue!40] (#1+0.5,#2+0.5) rectangle (#1+2,#2+1) ;
\draw (#1,#2) rectangle (#1+2,#2+2) ;
\draw [-latex] (#1+0.5,#2+0.5) -- (#1+0.5,#2+2) ; 
\draw [-latex] (#1+0.5,#2+0.5) -- (#1+2,#2+0.5) ;
\draw [-latex] (#1+1,#2+1) -- (#1+1,#2+2) ; 
\draw [-latex] (#1+1,#2+1) -- (#1+2,#2+1) ; 
\draw [-latex] (#1+0.5,#2+1) -- (#1+0,#2+1) ; 
\draw [-latex] (#1+1,#2+0.5) -- (#1+1,#2+0) ;
\node[scale=0.85] at (#1+0.825,#2+0.825) {\textbf{0}};
}
\newcommand{\robibluebastdroite}[2]{
\fill[blue!40] (#1+1.5,#2+0.5) rectangle (#1+1,#2+2) ;
\fill[blue!40] (#1+1.5,#2+0.5) rectangle (#1+0,#2+1) ;
\draw (#1,#2) rectangle (#1+2,#2+2) ;
\draw [-latex] (#1+1.5,#2+0.5) -- (#1+1.5,#2+2) ; 
\draw [-latex] (#1+1.5,#2+0.5) -- (#1+0,#2+0.5) ;
\draw [-latex] (#1+1,#2+1) -- (#1+1,#2+2) ; 
\draw [-latex] (#1+1,#2+1) -- (#1+0,#2+1) ; 
\draw [-latex] (#1+1.5,#2+1) -- (#1+2,#2+1) ; 
\draw [-latex] (#1+1,#2+0.5) -- (#1+1,#2+0) ;
\node[scale=0.85] at (#1+1.175,#2+0.825) {\textbf{0}};
}
\newcommand{\robibluehauttgauche}[2]{
\fill[blue!40] (#1+2,#2+1) rectangle (#1+0.5,#2+1.5) ;
\fill[blue!40] (#1+1,#2+0) rectangle (#1+0.5,#2+1.5) ;
\draw (#1,#2) rectangle (#1+2,#2+2) ;
\draw [-latex] (#1+0.5,#2+1.5) -- (#1+0.5,#2+0) ; 
\draw [-latex] (#1+0.5,#2+1.5) -- (#1+2,#2+1.5) ;
\draw [-latex] (#1+1,#2+1) -- (#1+1,#2+0) ; 
\draw [-latex] (#1+1,#2+1) -- (#1+2,#2+1) ; 
\draw [-latex] (#1+0.5,#2+1) -- (#1+0,#2+1) ; 
\draw [-latex] (#1+1,#2+1.5) -- (#1+1,#2+2) ;
\node[scale=0.85] at (#1+0.825,#2+1.175) {\textbf{0}};
}
\newcommand{\robibluehauttdroite}[2]{
\fill[blue!40] (#1+0,#2+1) rectangle (#1+1.5,#2+1.5) ;
\fill[blue!40] (#1+1,#2+0) rectangle (#1+1.5,#2+1.5) ;
\draw (#1,#2) rectangle (#1+2,#2+2) ;
\draw [-latex] (#1+1.5,#2+1.5) -- (#1+1.5,#2+0) ; 
\draw [-latex] (#1+1.5,#2+1.5) -- (#1+0,#2+1.5) ;
\draw [-latex] (#1+1,#2+1) -- (#1+1,#2+0) ; 
\draw [-latex] (#1+1,#2+1) -- (#1+0,#2+1) ; 
\draw [-latex] (#1+1.5,#2+1) -- (#1+2,#2+1) ; 
\draw [-latex] (#1+1,#2+1.5) -- (#1+1,#2+2) ;
\node[scale=0.85] at (#1+1.175,#2+1.175) {\textbf{0}};
}

% ---------------- % 

\newcommand{\robiredbasgauche}[2]{
\fill[red!40] (#1+0.5,#2+0.5) rectangle (#1+1,#2+2) ;
\fill[red!40] (#1+0.5,#2+0.5) rectangle (#1+2,#2+1) ;
\draw (#1,#2) rectangle (#1+2,#2+2) ;
\draw [-latex] (#1+0.5,#2+0.5) -- (#1+0.5,#2+2) ; 
\draw [-latex] (#1+0.5,#2+0.5) -- (#1+2,#2+0.5) ;
\draw [-latex] (#1+1,#2+1) -- (#1+1,#2+2) ; 
\draw [-latex] (#1+1,#2+1) -- (#1+2,#2+1) ; 
\draw [-latex] (#1+0.5,#2+1) -- (#1+0,#2+1) ; 
\draw [-latex] (#1+1,#2+0.5) -- (#1+1,#2+0) ;
\node[scale=0.85] at (#1+0.825,#2+0.825) {\textbf{1}};
}
\newcommand{\robiredbasdroite}[2]{
\fill[red!40] (#1+1.5,#2+0.5) rectangle (#1+1,#2+2) ;
\fill[red!40] (#1+1.5,#2+0.5) rectangle (#1+0,#2+1) ;
\draw (#1,#2) rectangle (#1+2,#2+2) ;
\draw [-latex] (#1+1.5,#2+0.5) -- (#1+1.5,#2+2) ; 
\draw [-latex] (#1+1.5,#2+0.5) -- (#1+0,#2+0.5) ;
\draw [-latex] (#1+1,#2+1) -- (#1+1,#2+2) ; 
\draw [-latex] (#1+1,#2+1) -- (#1+0,#2+1) ; 
\draw [-latex] (#1+1.5,#2+1) -- (#1+2,#2+1) ; 
\draw [-latex] (#1+1,#2+0.5) -- (#1+1,#2+0) ;
\node[scale=0.85] at (#1+1.175,#2+0.825) {\textbf{1}};
}
\newcommand{\robiredhautgauche}[2]{
\fill[red!40] (#1+2,#2+1) rectangle (#1+0.5,#2+1.5) ;
\fill[red!40] (#1+1,#2+0) rectangle (#1+0.5,#2+1.5) ;
\draw (#1,#2) rectangle (#1+2,#2+2) ;
\draw [-latex] (#1+0.5,#2+1.5) -- (#1+0.5,#2+0) ; 
\draw [-latex] (#1+0.5,#2+1.5) -- (#1+2,#2+1.5) ;
\draw [-latex] (#1+1,#2+1) -- (#1+1,#2+0) ; 
\draw [-latex] (#1+1,#2+1) -- (#1+2,#2+1) ; 
\draw [-latex] (#1+0.5,#2+1) -- (#1+0,#2+1) ; 
\draw [-latex] (#1+1,#2+1.5) -- (#1+1,#2+2) ;
\node[scale=0.85] at (#1+0.825,#2+1.175) {\textbf{1}};
}
\newcommand{\robiredhautdroite}[2]{
\fill[red!40] (#1+0,#2+1) rectangle (#1+1.5,#2+1.5) ;
\fill[red!40] (#1+1,#2+0) rectangle (#1+1.5,#2+1.5) ;
\draw (#1,#2) rectangle (#1+2,#2+2) ;
\draw [-latex] (#1+1.5,#2+1.5) -- (#1+1.5,#2+0) ; 
\draw [-latex] (#1+1.5,#2+1.5) -- (#1+0,#2+1.5) ;
\draw [-latex] (#1+1,#2+1) -- (#1+1,#2+0) ; 
\draw [-latex] (#1+1,#2+1) -- (#1+0,#2+1) ; 
\draw [-latex] (#1+1.5,#2+1) -- (#1+2,#2+1) ; 
\draw [-latex] (#1+1,#2+1.5) -- (#1+1,#2+2) ;
\node[scale=0.85] at (#1+1.175,#2+1.175) {\textbf{1}};
}

\tikzset{every loop/.style={min distance=2cm}}

\begin{document}

\maketitle
\setstcolor{red}

\begin{abstract}
    R. Pavlov and S. Schmieding~\cite{Pavlov} provided recently some results about generic $\mathbb{Z}$-shifts, which rely mainly on an original theorem stating that isolated points form a residual set in the space of $\mathbb{Z}$-shifts such that all other residual sets must contain it. As a direction for further research, they pointed towards genericity in the space of $\mathbb{G}$-shifts, where $\mathbb{G}$ is a finitely generated group. In the present text, we approach this for the case of $\mathbb{Z}^d$-shifts, where $d \ge 2$.
    As it is usual, multidimensional dynamical systems are much more difficult to understand. In light of the result of R. Pavlov and S. Schmieding, it is natural to begin with a better understanding of isolated points. We prove here a characterization of such points in the space of $\mathbb{Z}^d$-shifts, in terms of the natural notion of maximal subsystems which we also introduce in this article. From this characterization we recover the result of R. Pavlov and S. Schmieding's for $\mathbb{Z}^1$-shifts. We also prove a series of results which exploit this notion. In particular some transitivity-like properties can be related to the number of maximal subsystems. Furthermore, we show that the Cantor-Bendixon rank of the space of $\mathbb{Z}^d$-shifts is infinite for $d>1$, while for $d=1$ it is known to be equal to one.
    % \footnote{\textcolor{red}{J'ai effac\'e la partie ``residuel'' car c'est d\'ej\`a montr\'e par Doucha et on ne donne que des exemples.}}}
    % \textcolor{gray}{Furthermore we recover that, on the contrary of dimension one, the set of isolated shifts is not residual and provide an example of shift which is not in its closure.}
    
\end{abstract}

%%%%%%%%%%%%%%%%%%%%%%%%%%%%%%%%%%%%%%%%%
%%%%%%%%%%%%%%%%%%%%%%%%%%%%%%%%%%%%%%%%%
% \paragraph*{Proposed intro}
\vspace{1cm}
Given a (large) collection of objects, it is somewhat natural to wonder which members within this family are typical. %Phrased like that, of course, the term `typical' does not carry any specific meaning and it might be interpreted through a plethora of viewpoints. %A way of making this question precise is formalizing `typical' through the notion of generic property: a property is called generic if the set of elements that satisfies it is residual (it contains a $G_\delta$ dense set). 
One way to formalize the idea of a typical object, in a topological sense, is the notion of dense $G_\delta$ set: a countable intersection of open dense sets. Properties satisfied by every element in a dense $G_\delta$ set are called \textit{generic properties}. Recall that a set is \textit{residual} if it contains a dense $G_\delta$ set.
%\comm{Silvere}{I think we can make the above a bit more to the point by saying "One way to formalise the idea of a typical object in a collection is the notion of $G_{\delta}$ dense set. Properties of typical objects for this definition are called generic."}
%\comm{Alonso}{You mean from ``Phrased like that..."? I'm ok with that, or from some other point, feel free to change it at your will :)}
In a dynamical systems context, where the collection of objects is a family of (dynamical) systems, the study of generic properties has taken many forms over the years, and it has produced a wide range of results.  The first results in this direction appeared as early as the 1940's with the works of Oxtoby and Ulam \cite{oxtoby}, asserting that a generic volume preserving homeomorphism of a cube is ergodic. Later Halmos \cite{halmos,halmos2,halmos3}, in 1944, showed that a generic, measure preserving transformation is weakly mixing; and four years later Rohlin \cite{rohlin} proved that such a transformation is not strongly mixing -- by showing that strongly mixing transformations are not generic. These two latter results combined mean (indirectly) that there exist weakly mixing systems that are not strongly mixing, constituting the first evidence of the existence of such systems. Multiple other results have been found, \eg, \cite{bezugly,glasner,akinhurley}

This kind of questions has been studied in the space of transformations over spaces conjugated to Cantor sets as well. For instance,
% \textcolor{gray}{When the ambient space is conjugated to a Cantor set, generic properties in the space of the associated transformations has not been the exception to this trend. }
Kechris and Rosendal \cite{Kechris} found in 2007 that the homeomorphism group of the Cantor space has but one generic conjugacy class. This result was quite surprising, since the variety of dynamics on the Cantor space is large and, in comparison, this generic class is rather `small'. The implication is that generic properties of Cantor systems are fairly degenerated. In 2008 a description of this system was provided by Akin, Glasner and Weiss \cite{Akin}. About the same time, 2008, Hochman \cite{Hochman} established a series of results for transitive and totally transitive homeomorphisms of the Cantor space. In particular he showed that in this space, generic, totally transitive transformations have zero topological entropy.

In this document we are concerned with the space $\mathcal{S}^d$ of all $\Z^d$-shifts, that is, the union of all shift spaces $\A^{\Z^d}$, with the union running through all finite alphabets $\A \subset \mathbb{Z}$.
% \footnote{\textcolor{red}{Ici le reviewer demande s'on veut vraiment prendre l'union sur tous les alphabets possibles, Pavlov prend que des sous-ensembles de $\mathbb{Z}$. \`A mon avis c'est la m\^eme chose, Pavlov utilise $\mathcal{A}\subsetneq\mathbb{Z}$ car on peut toujours re-nommer les symboles.} \textcolor{blue}{I think it might be dangerous indeed to take all the finite sets (although I am not really sure it matters). 
% We can do like Pavlov and say that for the examples we can recode symbols with integers without changing the properties}}. 
This space is typically endowed with the Hausdorff metric, and we keep this tradition in here. For $d=1$, plenty of genericity results have been obtained. In 1971, Sigmund showed that the set of subshifts with zero topological entropy is residual in the space $\mathcal{A}^\Z$, \cite{Sigmund}. Sigmund's result was extended by Frisch and Tamuz  to shifts spaces $\A^G$, \cite{Tamuz}, where $G$ is an amenable group. Moreover, in this setting, they showed that for any $c$, shifts with entropy $c$ are generic in the space of shifts with entropy at least $c$.

In a recent development, Pavlov and Schmieding \cite{Pavlov} showed in 2023 that in $S^1$ generic shifts are precisely the ones whose descriptive graphs satisfy a simple combinatorial property: no cycle in the graph has both incoming and outgoing edges. This description implies that a generic shift in $S^1$ is a countable shift of finite type that is the union of finitely many orbits which are bi-asymptotic to periodic orbits.  The consequences of this characterisation are plenty, notably, all known results about the structure of generic shifts can be recovered from this characterization. In particular, they easily recover Sigmund's result: generically, a shift has zero entropy.

As often occurs, the situation changes dramatically when considering multidimensional dynamics. In the case of shift spaces, see for instance \cite{callard,berger,drshen,meyerovitch,Robinson}. Thus, it is not surprising that a characterization of generic shifts in $\mathcal{S}^d$, $d\geq2$, becomes a heavy task. In \cite{Pavlov} the authors left open the question of generic shifts in the space of shifts on groups different from $\Z$. Observe that understanding isolated points (if there are any) is a reasonable first approach to generic properties, as every residual set must contain them. Our present work addresses this problem and provides a characterization of such points by the means of the notions of \textit{maximal subsystems} and \textit{outcasts}: i) a (non-empty) subsystem is maximal if it is a proper subsystem and it is not contained in any other proper subsystem; ii) a (non-empty) subsystem is an outcast if it is a proper subsystem and it is not contained in a maximal subsystem.

\begin{thmx}\label{thm:A}
A $\Z^d$-shift is isolated in $\mathcal{S}^d$ if and only if it is of finite type, it has finitely many maximal subsystems, and contains no outcast.
\end{thmx}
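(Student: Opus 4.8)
The plan is to translate isolation into a statement about languages and then match that statement to the order structure of subsystems. Throughout, write $L_n(Y)$ for the set of patterns of $Y$ on the box of side $n$, and recall that convergence in the Hausdorff metric on $\mathcal{S}^d$ is exactly agreement of languages on growing boxes, so $d(Y,X)\to 0$ iff $L_n(Y)=L_n(X)$ for larger and larger $n$. The first observation, valid for any $X$, is that $X$ admits proper subsystems converging to it iff for every $n$ there is a proper subsystem $Y$ with $L_n(Y)=L_n(X)$ (using that each $L_n(X)$ is finite); such subsystems are $\neq X$, so their existence forces non-isolation. I would next dispose of finite type: if $X$ is not an SFT, then its canonical SFT over-approximations $X^{(n)}\supsetneq X$ (configurations all of whose $n$-patterns lie in $L_n(X)$) satisfy $L_n(X^{(n)})=L_n(X)$ and converge to $X$, so $X$ is not isolated. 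Conversely, if $X$ is an SFT with window $m$, any subshift $Y$ close enough to $X$ has $L_m(Y)=L_m(X)$, whence every $m$-pattern of $Y$ is allowed in $X$ and so $Y\subseteq X$. Thus for an SFT, every nearby distinct shift is actually a \emph{proper subsystem} with almost full language, and isolation becomes the clean condition: there exists $N$ such that no proper subsystem attains the full $N$-block language $L_N(X)$.

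For sufficiency, assume $X$ is an SFT with finitely many maximal subsystems $M_1,\dots,M_k$ and no outcast. The no-outcast hypothesis says every proper subsystem is contained in some $M_i$. Since each $M_i\subsetneq X$, its language differs from $L_{N_i}(X)$ at some window $N_i$; set $N=\max_i N_i$. Any proper subsystem lies in some $M_i$ and hence misses a pattern that $M_i$ already misses at window $N$, so no proper subsystem has full $N$-block language. Combined with the SFT reduction, $X$ is isolated. (The degenerate case of no proper subsystems at all, i.e.\ $X$ minimal, is vacuously covered.)

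For necessity I would prove the two negations separately. The crucial and least obvious ingredient is: \textbf{any two distinct maximal subsystems cover $X$.} Indeed $M_i\cup M_j$ is a subsystem containing the maximal $M_i$, hence equals $M_i$ or $X$; equality with $M_i$ would give $M_j\subseteq M_i$ and thus $M_j=M_i$, a contradiction, so $M_i\cup M_j=X$. Consequently $L_n(M_i)\cup L_n(M_j)=L_n(X)$, so no $n$-pattern can be missing from two distinct maximal subsystems; the missing sets $L_n(X)\setminus L_n(M_i)$ are pairwise disjoint subsets of the finite set $L_n(X)$. Therefore at most $|L_n(X)|$ of the maximal subsystems miss any $n$-pattern, and if there are infinitely many maximal subsystems then, for every $n$, infinitely many of them have full $n$-block language, each being a proper subsystem witnessing non-isolation. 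For the outcast case, an outcast $O$ is contained in no maximal subsystem, so the poset of proper subsystems containing $O$ has no maximal element (a maximal element of this poset would be a global maximal subsystem above $O$); by Zorn's lemma some chain of proper subsystems above $O$ has no proper upper bound, so the closure of its union is all of $X$. Finiteness of $L_n(X)$ then yields, for each $n$, a single member of the chain—a proper subsystem—with full $n$-block language, so again $X$ is not isolated.

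Assembling these gives Theorem~\ref{thm:A}: for an SFT, isolation is equivalent to having finitely many maximal subsystems and no outcast, while non-SFTs are never isolated; note the negation ``infinitely many maximal \emph{or} an outcast'' is exactly covered by the two necessity arguments (the first needing nothing beyond the infinitude, the second handling outcasts regardless of the number of maximal subsystems). I expect the main obstacles to be technical rather than conceptual once the pairwise-covering lemma is in hand: one must verify carefully that for an SFT every sufficiently close shift is genuinely a subsystem (this is where finite type is used essentially), and one must handle the outcast via a Zorn/compactness argument in which the union of the chain may fail to be closed, so the passage to its closure and the finiteness of $L_n(X)$ must be used precisely. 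The pigeonhole step controlling infinitely many maximal subsystems is the heart of the argument and is where I would concentrate the exposition.
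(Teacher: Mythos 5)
Your proof is correct, and its overall architecture matches the paper's: reduce to shifts of finite type, use the finite-window neighborhood lemma to turn nearby shifts into proper subsystems, and handle sufficiency exactly as the paper does (each of the finitely many maximal subsystems already misses a pattern at some window, and no-outcast pushes every proper subsystem below one of them). Where you genuinely diverge is in the two necessity arguments. For infinitely many maximal subsystems, the paper constructs an auxiliary non-decreasing sequence $K_n=\bigcup_{k\le n}\overline{X\setminus Z_k}$, shows it converges to $X$ without stabilizing, and derives a contradiction with maximality; your pigeonhole argument -- that by the covering lemma $Z\cup Z'=X$ the sets $\mathscr{L}_{\mathbb{U}}(X)\setminus\mathscr{L}_{\mathbb{U}}(Z)$ over distinct maximal $Z$ are pairwise disjoint inside a finite set, so all but finitely many maximal subsystems have full language on any fixed window -- is more elementary and proves something slightly sharper, namely that the maximal subsystems themselves accumulate at $X$. (Both proofs hinge on the same key fact, the paper's Lemma on two maximal subsystems covering $X$, which you re-derive correctly.) For the outcast case, the paper proves a quantitative density lemma (any point of $X\setminus Z$ can be $\epsilon$-approximated by a larger outcast, because otherwise the closure of the union of proper subsystems above $Z$ would be a maximal subsystem) and then builds an explicit countable increasing sequence of outcasts converging to $X$ via finite $\epsilon$-nets; you instead apply Zorn's lemma to the poset of proper subsystems containing the outcast and extract, from a chain with no upper bound, a single member with full language on each window. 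The two arguments rest on the same observation about the supremum of proper subsystems above an outcast; the paper's version is constructive and avoids choice beyond countable selection, while yours is shorter but must (and does, as you note) handle the passage from the closure of the union of the chain to a single chain member via finiteness of each $\mathscr{L}_{\mathbb{U}}(X)$.
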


This characterization suggests the existence of isolated points with infinitely many subsystems, which does not occur in $S^1$. In \cref{section.example.inf.subsystems} we exhibit such a system using the well-known Robinson shift \cite{Robinson}. As the construction is quite involved and the example is not essential for the remainder of the paper, we do not provide all the technical details. We address this example to specialists and we leave the interested reader to find details, for instance, in \cite{Robinson,Gangloff}. The characterization in \cref{thm:A}  allows to recover Pavlov and Schmieding's, \cref{cor pavlov}. We further explore structural implications of the conditions on maximal subsystems through what we call \textit{maximality type}, \ie, a shift has maximality type $n\in\N_0\cup\{\infty\}$ if it contains exactly $n$ maximal subsystems. We use this notion to follow Hochman, Pavlov and Schmieding, and others, to study isolated points in the space of transitive shifts -- in $S^1$ those points do not even exist. In particular we found that transitive shifts have maximality type either 0 or 1. Furthermore, we characterize isolated points in this space:

\begin{thmx}\label{thm:B}
    In the space $\mathcal{T}^d$ of transitive $\mathbb{Z}^d$-shifts, a shift of finite type is isolated if it has maximality type 1, or if it is minimal.
    Furthermore, a non finite type shift $X$ with maximality type 1 is isolated if and only if it is contained in a shift of finite type $Z$ such that $Z \backslash X$ is not dense in $Z$.
\end{thmx}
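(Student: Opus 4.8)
The plan is to handle the two assertions separately, both times leaning on one structural feature of transitive shifts which I will record first. \emph{Claim: if $X$ is transitive and admits a maximal subsystem $M$, then every proper subsystem $Y\subsetneq X$ satisfies $Y\subseteq M$ (so $X$ has at most one maximal subsystem and no outcast).} Indeed, $M\cup Y$ is closed and shift-invariant; it cannot be all of $X$, for a transitive point of $X$ would then lie in $M$ or in $Y$, and by invariance and closedness its dense orbit would force that set to be $X$, contradicting properness. Thus $M\cup Y$ is a proper subsystem containing $M$, so $M\cup Y=M$ by maximality, i.e. $Y\subseteq M$. I will use throughout the reformulation of the metric: $X$ is isolated in $\mathcal{T}^d$ iff there is $N$ such that the only transitive $Y$ with $L_N(Y)=L_N(X)$ is $X$ itself.

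For the first sentence both cases reduce to \cref{thm:A}. If $X$ is a shift of finite type of maximality type $1$, it has exactly one maximal subsystem and, by the Claim, no outcast; \cref{thm:A} then gives that $X$ is isolated in $\mathcal{S}^d$, hence a fortiori in the subspace $\mathcal{T}^d$. If $X$ is a minimal shift of finite type it has no proper subsystem at all, so it has no maximal subsystem and no outcast, and \cref{thm:A} again yields isolation in $\mathcal{S}^d\supseteq\mathcal{T}^d$.

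For the equivalence I will work with the finite-type approximations $Z_N$: the SFT whose admissible $N$-blocks are exactly those of $X$, so $X\subseteq Z_N$, $L_N(Z_N)=L_N(X)$, and $Z_N\searrow X$. For $(\Leftarrow)$, let $X\subseteq Z$ with $Z$ an SFT and $Z\setminus X$ not dense in $Z$. Then some cylinder meets $Z$ only inside $X$, so there is a block $w\in L(X)$ such that every point of $Z$ in which $w$ occurs lies in $X$. Choose $N$ larger than the window of $Z$ and the size of $w$, and large enough that $L_N(M)\subsetneq L_N(X)$ (possible since $M\subsetneq X$, where $M$ is the maximal subsystem). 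Given transitive $Y$ with $L_N(Y)=L_N(X)$, the inclusion $L_N(Y)\subseteq L_N(Z)$ forces $Y\subseteq Z$, while $w\in L_N(X)=L_N(Y)$ means $w$ occurs in $Y$; a transitive point of $Y$ then realises $w$, lies in $Z$, hence in $X$, and its dense orbit gives $Y\subseteq X$. If $Y\neq X$ it is proper, so $Y\subseteq M$ and $L_N(Y)\subseteq L_N(M)\subsetneq L_N(X)$, a contradiction; thus $Y=X$ and $X$ is isolated.

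For $(\Rightarrow)$ I argue the contrapositive: assume $Z\setminus X$ is dense in $Z$ for every SFT $Z\supseteq X$, and produce for each $N$ a transitive $Y\neq X$ with $L_N(Y)=L_N(X)$. Apply the hypothesis to $Z=Z_N$. Fix a transitive point $x^{*}$ of $X$; since there are finitely many $N$-blocks and each occurs in $x^{*}$, all of them already occur in $x^{*}|_{[-k_0,k_0]^d}$ for some $k_0$. Density of $Z_N\setminus X$ in $Z_N$ gives, for each $k$, a point $z_k\in Z_N\setminus X$ agreeing with $x^{*}$ on $[-k,k]^d$. For $k\geq k_0$ the point $z_k$ carries every $N$-block of $X$, so $Y:=\overline{\mathcal{O}(z_k)}$ is transitive with $L_N(Y)\supseteq L_N(X)$, while $Y\subseteq Z_N$ gives $L_N(Y)\subseteq L_N(Z_N)=L_N(X)$; hence $L_N(Y)=L_N(X)$, and $z_k\in Y\setminus X$ gives $Y\neq X$, contradicting isolation at level $N$. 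The delicate point is exactly here: one might expect to need to \emph{glue} a block of $x^{*}$ to an outside block inside $Z_N$, which is the sort of multidimensional gluing that has no reason to succeed; the argument sidesteps this by noting that a single outside point close enough to $x^{*}$ already contains all finitely many $N$-blocks of $X$, so its orbit closure is automatically the transitive competitor. The care needed is in ordering the quantifiers (first $N$, then $k_0$, then $k$) and in checking that $L_N(Y)$ equals $L_N(X)$ rather than merely being contained in it.
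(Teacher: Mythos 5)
Your proof is correct, and its overall skeleton matches the paper's: your opening Claim is exactly the paper's \cref{lemma.type.1.star} (every strict subsystem of a transitive maximality-type-$1$ shift lies in the maximal one), the first sentence then reduces to \cref{thm:A} in both treatments, and the equivalence rests on the same dichotomy about whether $Z\setminus X$ is dense in every SFT $Z\supseteq X$. The executions of the equivalence differ enough to be worth noting. For $(\Leftarrow)$ the paper (\cref{proposition.transitive.1}) argues by contradiction with a sequence of orbit closures $\overline{\mathcal{O}(x_n)}\to X$, splits into the cases $\overline{\mathcal{O}(x_n)}\subseteq\overline{Z\setminus X}\cup M(X)$ versus $\overline{\mathcal{O}(x_n)}\supsetneq X$, and passes to the limit; you instead extract a single cylinder $[w]$ with $[w]\cap Z\subseteq X$ and run a purely finite-window argument ($Y\subseteq Z$ from the window bound, then $Y\subseteq X$ from the occurrence of $w$ in a transitive point, then $Y=X$ from $\mathscr{L}_N(M)\subsetneq\mathscr{L}_N(X)$), which is shorter and needs no limiting process. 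For $(\Rightarrow)$ the paper performs a double extraction using compactness of $\mathcal{S}(Z_1)$ to build the competing shifts $\Lambda_n$; your observation that a point of $Z_N\setminus X$ agreeing with a transitive point of $X$ on a box containing all $N$-blocks already yields a transitive competitor $\overline{\mathcal{O}(z_k)}\neq X$ with the same $N$-language eliminates the compactness argument entirely. Both proofs are valid; yours is the more elementary, and your closing remark correctly identifies why no multidimensional gluing is needed.
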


We also provide, in \cref{non SFT isolated in T}, an example of a transitive shift which is not of finite type and is isolated. This implies in particular that minimality is not a generic property, contrary to the one-dimensional case. We find as well that strong mixing-type properties, such as block gluing, imply that the shift has maximality type 0. \bigskip 

These results exhibit a tight relation between maximal subsystems and the topological structure of shift spaces, and genericity might be understood in these terms. Note that although we do not consider other groups than $\mathbb{Z}^d$, we expect our result to be generalisable to other groups without particular difficulty. \bigskip 

We identify two approaches to the study of genericity: 
\begin{enumerate}
\item[\textbf{1:}] Computing the closure of the set of isolated points, which is challenging. However, we provide here examples of shifts that are not in this closure, and examples of non-isolated shifts that are in it; 
\item[\textbf{2:}] Computing the Cantor-Bendixon rank of the space of $\mathbb{Z}^d$-shifts in order to shed some additional light into the topological structure of this space. Here we show that the rank is infinite, but a precise computation remains an open question.
\end{enumerate}
% \textcolor{gray}{From this point, the strategy for understanding genericity is twofold: \textbf{1}. one way would be to compute the closure of the set of isolated shifts. This question is quite hard, and we only provide here examples of shifts which are not in this closure and non isolated shifts which are in the closure; \textbf{2}. Another way is to understand more generally the topological structure of the space of $\mathbb{Z}^d$-shifts, by computing its Cantor-Bendixson rank.} \bigskip

In this second direction, using properties of the well-known $\times 2\times3$ system on the circle, we build examples of shifts proving that:

\begin{thmx}\label{thm:C}
For all $d > 1$, the Cantor-Bendixson rank of the space of $\mathbb{Z}^d$-shifts is infinite.
\end{thmx}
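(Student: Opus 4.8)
The plan is to show that the Cantor--Bendixson derived sequence of $\mathcal{S}^d$ strictly decreases at every finite stage. Writing $\mathcal{S}^{d,(k)}$ for the $k$-th Cantor--Bendixson derivative (the points surviving $k$ successive removals of isolated points), it suffices to exhibit, for each $n$, a shift of rank exactly $n$, i.e.\ a shift that is isolated in $\mathcal{S}^{d,(n)}$: such a shift witnesses $\mathcal{S}^{d,(n)}\neq\mathcal{S}^{d,(n+1)}$, and since $n$ is arbitrary the rank of $\mathcal{S}^d$ is infinite. All witnesses will be taken to be shifts of finite type, and the realisations used below are available in every dimension $d\geq 2$, so I fix such a $d$ and keep $d=2$ in mind.

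The key reduction is that around an SFT the ambient space is nothing but its subsystem lattice. If $X$ is SFT with window $w$ and $d_H(X,Y)$ is small enough that $X$ and $Y$ have the same patterns on the $w$-box, then every point of $Y$ avoids the forbidden patterns of $X$, whence $Y\subseteq X$. Thus a sufficiently small neighbourhood of an SFT $X$ in $\mathcal{S}^d$ is exactly the set of subsystems of $X$ that are $d_H$-close to $X$, and by \cref{thm:A} such a subsystem is isolated in $\mathcal{S}^d$ if and only if it is itself SFT with finitely many maximal subsystems and no outcast. Because all our witnesses are SFT, their Cantor--Bendixson rank may be computed \emph{internally}, by iterating the removal of isolated points inside this poset of large subsystems; we never have to analyse neighbourhoods of non-SFT shifts in the full space. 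The problem thereby becomes: build SFTs whose poset of subsystems accumulating at the top has arbitrarily large finite Cantor--Bendixson depth.

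To produce deep but rigidly controlled subsystem posets I would use the $\times 2\times 3$ system on $\mathbb{T}=\mathbb{R}/\mathbb{Z}$ together with Furstenberg's theorem: the only closed subsets of $\mathbb{T}$ invariant under both $x\mapsto 2x$ and $x\mapsto 3x$ are the finite sets of rationals and $\mathbb{T}$ itself. Realising this system as a subsystem of a $\mathbb{Z}^2$-SFT by a Robinson/simulation-type construction (of the kind already invoked in \cref{section.example.inf.subsystems}), closed invariant subsets of $\mathbb{T}$ correspond to subsystems of the realisation $C$. Rigidity is the whole point: along a chain $F_1\subset F_2\subset\cdots$ of finite invariant sets with $\overline{\bigcup_k F_k}=\mathbb{T}$, each $F_k$ realises a finite, hence isolated, subsystem, while Furstenberg's theorem guarantees there is no intermediate infinite invariant set, so $C$ itself is the unique accumulation of the $F_k$. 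Inside any SFT containing it, $C$ is therefore a subsystem whose only strictly smaller subsystems are finite, contributing a single $(\omega+1)$-factor --- a convergent sequence with limit $C$ --- to the subsystem poset. This is the base of the tower, and the reason for using $\times 2\times 3$ rather than an ad hoc sequence is precisely that Furstenberg's theorem certifies there is no unexpected subsystem to spoil the count.

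To reach rank $n$ I would then build an SFT $X_n$ whose large-subsystem poset is, near the top, a copy of $(\omega+1)^n$: concretely, a hierarchical (lexicographic) realisation of $n$ independent copies of the $\times 2\times 3$ system, in which collapsing the $k$-th ``circle'' to a finite invariant set is an independent degree of freedom. Each of the $n$ rigid convergent chains supplied above contributes one $(\omega+1)$-factor, so the top shift $X_n$ has Cantor--Bendixson rank $n$; letting $n\to\infty$ gives the theorem, and the passage to $d>2$ is by the same realisations in higher dimension (or, for the SFT witnesses, by making the extra coordinates inert). The main obstacle, and the step demanding the most care, is the \emph{exhaustive control of the subsystem poset}: one must show that no unforeseen subsystem accumulates at $X_n$, and in particular that the $n$ copies do not interact through subdirect (graph-of-morphism) subsystems, which would collapse factors and lower the rank. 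This is exactly where Furstenberg rigidity is indispensable, since it both pins down the invariant subsets of each circle factor and, through the absence of factor maps between the rigid copies, forbids the dangerous subdirect products; verifying that the $\mathbb{Z}^2$-realisation introduces no spurious invariant sets and that the $n$ levels remain independent under it is the technical heart of the argument.
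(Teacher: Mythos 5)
Your overall skeleton coincides with the paper's: reduce to the subsystem lattice of an SFT via the window argument, use the $\times 2\times 3$ system together with Furstenberg's orbit-closure theorem to produce an SFT all of whose strict subsystems are finite (hence a point isolated in the first derived set, cf.\ \cref{theorem.times.shift}), and then combine $n$ such atoms to reach depth $n$. Two remarks on the base case: the paper's $X_0$ is \emph{directly} an SFT given by the carry rules $x_{\textbf{u}+\textbf{e}_1}=2x_{\textbf{u}}+\epsilon_1$, $x_{\textbf{u}+\textbf{e}_2}=3x_{\textbf{u}}+\epsilon_2$, so no Robinson/simulation embedding is needed; such an embedding would in fact be counterproductive, since the Robinson shift carries its own infinite family of subsystems and would destroy the property that all strict subsystems are finite.

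The genuine gap is in your combination step. You propose a ``hierarchical realisation of $n$ independent copies'' whose large-subsystem poset is $(\omega+1)^n$, and you correctly identify the exclusion of subdirect subsystems as the technical heart --- but the resolution you offer (``absence of factor maps between the rigid copies forbids the dangerous subdirect products'') fails. The identity is a factor map from a copy to itself, so the diagonal of $C\times C$ is already an infinite proper subdirect subsystem; more generally, finite unions of graphs of rational translations give infinitely many infinite proper closed invariant subsets of $\mathbb{T}^2$ under the diagonal $\times 2,\times 3$ action, and Furstenberg's theorem (which classifies orbit closures in $\mathbb{T}^1$ only) says nothing about them. Controlling the full lattice of topological joinings of $\times 2\times 3$ with itself is a much harder rigidity problem than the one-dimensional statement you invoke, so as written the claim that the poset is $(\omega+1)^n$ is unsupported and likely false for a genuine product. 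The paper sidesteps this entirely by taking \emph{disjoint} unions ($\mathcal{G}_n(X)=X\times Z_n$ with $Z_n$ the $n$ constant configurations): every subsystem of a disjoint union $X\sqcup Z$ is of the form $M\cup N$ with $M,N$ subsystems (or empty), so the lattice factors exactly and no joinings arise; the rank increment per copy is then established by the explicit double induction of \cref{lemma.recur.s} and \cref{lemma.hierarchy}. If you replace your product-type construction by disjoint unions, your argument closes and becomes the paper's proof.
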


In direction \textbf{1}, the strategy would be to better understand the structure of subsystems of isolated shifts, and more generally how the maximality type affects this structure. We may find some rigidity there which must be satisfied also by shifts in the closure of the set of isolated points. \bigskip

The text is organized as follows. Section~\ref{section.background} contains some background definitions and elements of constructions for some examples presented after. In Section~\ref{section.isolated.shifts} we prove the characterization of isolated points in the space of $\mathbb{Z}^d$-shifts. Section~\ref{section.transitive} is devoted to transitive shifts. In Section~\ref{section.dec} we prove that every shift can be written as a union of a sequence of shifts of maximality type $1$ and a shift of maximality type $0$. We also prove some properties of this decomposition which will be useful in the following section. In Section~\ref{section.topological.structure} we consider the question of the closure of isolated points and prove that the Cantor-Bendixson rank is infinite. Finally, Section~\ref{open.questions} presents questions that we leave open.\newline

\paragraph*{Acknowledgements} We thank Mathieu Sablik for pointing us in the direction of Pavlov and Schmieding's article, and the fruitful conversations about it. We also thank Nicanor Carrasco for the idea that the Robinson shift is not isolated, on which this article is based. 

\newpage

\section{Background\label{section.background}}

\subsection{{Shifts}} 

\subsubsection{Elementary definitions}

For any finite set $\mathcal{A}\subset\mathbb{Z}$ and integer $d \ge 1$, the set $\mathcal{A}^{\mathbb{Z}^d}$ is called the $d$-dimensional \textit{full shift} on alphabet $\mathcal{A}$. Elements in this set are maps $x:\mathbb{Z}^d\to\mathcal{A}$ that we call $d$-dimensional \textit{configurations}. For such a configuration $x$, the value of $x$ for $\textbf{v}\in\mathbb{Z}^d$ is denoted by $x_\textbf{v}$. For two configurations $x,y$
we set 
\[\delta(x,y) := 2^{ - \inf \{\lVert\textbf{n}\rVert_{\infty}\mid\textbf{n} \in \mathbb{Z}^d : x_{\textbf{n}} \neq y_{\textbf{n}}\}},\]
where $\lVert\textbf{v}\rVert_\infty=\max_{k\leq d}|v_k|$ for every $\textbf{v}$ in $\mathbb{Z}^d$. For a configuration $x$ and a shift $Y$ we denote by $\delta(x,Y)$ the number $\min_{y\in Y}\delta(x,y)$.

We denote by  $\sigma_d$ the $\mathbb{Z}^d$-action on $\mathcal{A}^{\mathbb{Z}^d}$such that for all $\textbf{u},\textbf{v} \in \mathbb{Z}$ and $x$ a d-dimensional configuration, 
$\sigma_d^{\textbf{u}} (x)_{\textbf{v}} = 
x_{\textbf{u}+\textbf{v}}$.

% In the remainder of this text 
% we fix some integer $d \ge 2$
% and 

In the following, we simplify the notation $\sigma_d$ into $\sigma$, as it will be clear from the configuration it applies to, which action it designates.

A $d$-dimensional \textit{shift} is a compact (for the topology induced by the metric $\delta$) 
subset $X$ of $\mathcal{A}^{\mathbb{Z}^d}$ such that for all $\textbf{u} \in \mathbb{Z}^d$,  $\sigma^{\textbf{u}}(X) \subset X$. A \textit{pattern} on a finite set $\mathcal{A}$ is a map $p: \mathbb{U} \rightarrow \mathcal{A}$, of a  domain $\mathbb{U}$ of $\mathbb{Z}^d$. The domain $\mathbb{U}$ is called the \textit{support} of $p$. For every $n\in\N$, we set $\mathbb{B}_n^d:=\llbracket-n,n\rrbracket^d$ and  $\mathbb{U}_n^d:=\llbracket0,n-1\rrbracket^d$. A pattern is called \textit{finite} when its support is finite. In the remainder of the text, we assume that patterns are finite, unless stated otherwise.
We say that a pattern $p$ on support $\mathbb{U}$ \textit{appears} in a configuration $x$ if there is some $\textbf{u} \in \mathbb{Z}^d$ 
such that for all $\textbf{v} \in \mathbb{U}$, $x_{\textbf{v} + \textbf{u}} = p(\textbf{v})$.
% \footnote{\textcolor{red}{\`A v\'erifier la d\'efinition de $p$, car $p(\textbf{v})$ n'est pas n\'ecessairement bien d\'efini.}} 
We denote this by $p\sqsubset x$. We say that $p$ is \textit{globally admissible} for a shift $X$ when it appears in at least one configuration of $X$. For a shift $X$, and $\mathbb{U}$ a subset of $\mathbb{Z}^d$, 
we denote by $\mathscr{L}_{\mathbb{U}}(X)$ the set of patterns on $\mathbb{U}$ which 
are globally admissible 
for $X$. The \textit{language} of $X$ is the set, denoted by $\mathscr{L}(X)$, defined as the union of the sets $\mathscr{L}_{\mathbb{U}}(X)$, where $\mathbb{U}$ runs over all finite subsets of $\mathbb{Z}^d$. We endow the set $\mathscr{L}(X)$, as well as any $\mathcal{L}_{\mathbb{U}}(X)$ where $\mathbb{U}$ is infinite, with the prodiscrete topology.

For any finite set of patterns $\mathcal{F}$ on the same alphabet $\mathcal{A}$, let us denote by $X_{\mathcal{F}}$ the shift on alphabet $\mathcal{A}$ obtained by forbidding the patterns of $\mathcal{F}$: 
\[X_{\mathcal{F}} = \{x \in X : \forall p \in \mathcal{F}, p \not \sqsubseteq x\}.\]
Patterns in $\mathcal{F}$ are called \textit{forbidden patterns} (for $X_\mathcal{F}$), while a pattern that is not in $\mathcal{F}$ is called an \textit{allowed pattern} (for $X_\mathcal{F}$).

We say that a shift $X$ is of \textit{finite type} when there exists a finite set $\mathcal{F}$ such that $X = X_{\mathcal{F}}$. For every configuration $x$, 
we denote by $\mathcal{O}(x)$ the set $\{\sigma^{\textbf{u}}(x) : \textbf{u} \in \mathbb{Z}^d\}$ and call it the \textit{orbit} of $x$.

For two shifts $X,X'$ on alphabets $\mathcal{A},\mathcal{A}'$, we denote by $X \times X'$ the shift on 
alphabet $\mathcal{A} \times \mathcal{A}'$ such that:
\[X \times X' = \left\{z : \exists x \in X, x' \in X', \forall \textbf{u} \in \mathbb{Z}^d, z_{\textbf{u}} = (x_{\textbf{u}},x'_{\textbf{u}})\right\}.\]

A subsystem of $X$ is simply a non-empty closed subset of this shift which is invariant by the shift action. We say that $X$ is \textit{minimal} when its only subsystem is $X$ itself.
We say that it is \textit{transitive} when there exists some $x \in X$ such that $\overline{\mathcal{O}(x)} = X$.

A map $\varphi:X\to Y$, where 
$X,Y$ are two $d$-dimensional shifts, is called a \textit{conjugacy} when it is a homeomorphism such that for all $\boldsymbol{u} \in \mathbb{Z}^d$, $\varphi \circ \sigma_d^{\boldsymbol{u}} = \sigma_d^{\boldsymbol{u}} \circ \varphi$.

% \comm{S}{Do we really need to define conjugacy ? Besides, a conjugacy commutes with the shift.}
% \comm{A}{It is asked by the reviewer (46) :/}

\subsubsection{The space $\mathcal{S}^d$ of $d$-dimensional shifts}

From now on, we denote by $\mathcal{S}^d$ the set of all $d$-dimensional shifts.
For any two shifts, $X,Z$, we set 
\[\delta_H(X,Z) := \max \left(\sup_{x \in X} \inf_{z \in Z} \delta(x,z), \sup_{z \in Z} \inf_{x \in X} \delta(z,x)\right)\]
The function $\delta_H$ is a metric and it is called the \textit{Hausdorff metric}. In the following, we will use the straightforward fact that 
a sequence of shifts $(X_n)_{n \ge 0}$ converges towards $X$ for the metric $\delta_H$ if and only if for all finite subsets $\mathbb{U}$ of $\mathbb{Z}^d$, there exists $m$ such that for all $n \ge m$, $\mathscr{L}_{\mathbb{U}}(X_n) = \mathscr{L}_{\mathbb{U}}(X)$.

\subsection{Constructions\label{section.constructions}}

In this section, we present some elements often used in constructions of higher dimensional shifts, and used as well in the present article. 

\paragraph{Constructions by layers}

A general paradigm for construction of higher dimensional shifts is the one 
of \textit{layers}. By \textit{adding a layer} to a shift $X$ we mean considering a shift $Y$ included in $X \times Z$, where $Z$ is a shift called the added layer. 
When $X$ and $Z$ are shifts of finite type and there is a finite set of patterns $\mathcal{F}$ such that $Y$ is the set of configurations in $X \times Z$ in which no pattern of $\mathcal{F}$ appear, then $Y$ is also of finite type. 
We say that a layer is \textit{accessory} when for all $x \in X$ there is a unique configuration $z\in Z$ such that $(x,z)$ is in $Y$.

\paragraph{Robinson shift}

Another element often used in constructions is a shift due to R. Robinson~\cite{Robinson}. As this construction is quite involved and long to unpack, and  since we use it in constructions which serve as examples, and not in the proofs of core results, we will refer to a past work of the first author~\cite{Gangloff} and to the original article \cite{Robinson} for details about this construction and keep its exposition relatively informal. \bigskip 

We denote this shift by $X_R$. It can be described in terms of layers. Its alphabet is the following set of symbols and their rotations by angles $\frac{\pi}{2}$, $\pi$ or $\frac{3\pi}{2}$:

\[\begin{tikzpicture}[scale=0.8]
\draw (0,0) rectangle (1.2,1.2) ;
\draw (0.6,1.2) -- (0.6,1.1);
\draw [-latex] (0.6,0.7) -- (0.6,0) ;
\draw (0,0.6) -- (0.2,0.6);
\draw [-latex] (0.4,0.6) -- (0.6,0.6) ; 
\draw [-latex] (1.2,0.6) -- (0.6,0.6) ;
\node[scale=1,color=gray!90] at (0.6125,0.9) {\textbf{i}};
\node[scale=1,color=gray!90] at (0.3,0.6) {\textbf{j}};
\end{tikzpicture} \ \
\begin{tikzpicture}[scale=0.8]
\draw (0,0) rectangle (1.2,1.2) ;
\draw (0.6,1.2) -- (0.6,1.1);
\draw [-latex] (0.6,0.7) -- (0.6,0) ;
\draw [-latex] (0,0.6) -- (0.6,0.6) ; 
\draw [-latex] (0,0.3) -- (0.6,0.3) ; 
\draw [-latex] (1.2,0.6) -- (0.6,0.6) ;
\draw [-latex] (1.2,0.3) -- (0.6,0.3) ;
\node[scale=1,color=gray!90] at (0.6125,0.9) {\textbf{i}};
\node[scale=1,color=gray!90] at (0.3,0.45) {\textbf{j}};
\end{tikzpicture} \ \ \begin{tikzpicture}[scale=0.8]
\draw (0,0) rectangle (1.2,1.2) ;
\draw [-latex] (0.6,1.2) -- (0.6,0) ;
\draw [-latex] (0.3,1.2) -- (0.3,0) ; 
\draw [-latex] (0,0.6) -- (0.3,0.6) ; 
\draw [-latex] (0.8,0.6) -- (0.6,0.6) ;
\draw (1.2,0.6) -- (1,0.6);
\node[scale=1,color=gray!90] at (0.45,0.9) {\textbf{i}};
\node[scale=1,color=gray!90] at (0.9,0.6) {\textbf{j}};
\end{tikzpicture} \ \  \begin{tikzpicture}[scale=0.8]
\draw (0,0) rectangle (1.2,1.2) ;
\draw [-latex] (0.6,1.2) -- (0.6,0) ;
\draw [-latex] (0.9,1.2) -- (0.9,0) ; 
\draw (0,0.6) -- (0.2,0.6);
\draw [-latex] (0.4,0.6) -- (0.6,0.6) ; 
\draw [-latex] (1.2,0.6) -- (0.9,0.6) ;
\node[scale=1,color=gray!90] at (0.75,0.9) {\textbf{i}};
\node[scale=1,color=gray!90] at (0.3,0.6) {\textbf{j}};
\end{tikzpicture} \ \ 
\begin{tikzpicture}[scale=0.8]
\draw (0,0) rectangle (1.2,1.2) ;
\draw [-latex] (0.6,1.2) -- (0.6,0) ;
\draw [-latex] (0.3,1.2) -- (0.3,0) ; 
\draw [-latex] (0,0.6) -- (0.3,0.6) ; 
\draw [-latex] (1.2,0.6) -- (0.6,0.6) ;
\draw [-latex] (0,0.3) -- (0.3,0.3) ; 
\draw [-latex] (1.2,0.3) -- (0.6,0.3) ;
\node[scale=1,color=gray!90] at (0.9,0.45) {\textbf{i}};
\node[scale=1,color=gray!90] at (0.45,0.9) {\textbf{j}};
\end{tikzpicture} \ \
\begin{tikzpicture}[scale=0.8]
\draw (0,0) rectangle (1.2,1.2) ;
\draw [-latex] (0.6,1.2) -- (0.6,0) ;
\draw [-latex] (0.9,1.2) -- (0.9,0) ; 
\draw [-latex] (0,0.6) -- (0.6,0.6) ; 
\draw [-latex] (1.2,0.6) -- (0.9,0.6) ;
\draw [-latex] (0,0.3) -- (0.6,0.3) ; 
\draw [-latex] (1.2,0.3) -- (0.9,0.3) ;
\node[scale=1,color=gray!90] at (0.3,0.45) {\textbf{i}};
\node[scale=1,color=gray!90] at (0.75,0.9) {\textbf{j}};
\end{tikzpicture} \ \ \begin{tikzpicture}[scale=0.8]
\fill[blue!40] (0.3,0.3) rectangle (0.6,1.2) ;
\fill[blue!40] (0.3,0.3) rectangle (1.2,0.6) ;
\draw (0,0) rectangle (1.2,1.2) ;
\draw [-latex] (0.3,0.3) -- (0.3,1.2) ; 
\draw [-latex] (0.3,0.3) -- (1.2,0.3) ;
\draw [-latex] (0.6,0.6) -- (0.6,1.2) ; 
\draw [-latex] (0.6,0.6) -- (1.2,0.6) ; 
\draw [-latex] (0.3,0.6) -- (0,0.6) ; 
\draw [-latex] (0.6,0.3) -- (0.6,0) ;
\node[scale=1,color=gray!90] at (0.9,0.9) {\textbf{0}};
\node[scale=1] at (0.5,0.5) {\textbf{0}};
\end{tikzpicture} \ \ \begin{tikzpicture}[scale=0.8]
\fill[red!40] (0.3,0.3) rectangle (0.6,1.2) ;
\fill[red!40] (0.3,0.3) rectangle (1.2,0.6) ;
\draw (0,0) rectangle (1.2,1.2) ;
\draw [-latex] (0.3,0.3) -- (0.3,1.2) ; 
\draw [-latex] (0.3,0.3) -- (1.2,0.3) ;
\draw [-latex] (0.6,0.6) -- (0.6,1.2) ; 
\draw [-latex] (0.6,0.6) -- (1.2,0.6) ; 
\draw [-latex] (0.3,0.6) -- (0,0.6) ; 
\draw [-latex] (0.6,0.3) -- (0.6,0) ;
\node[scale=1,color=gray!90] at (0.9,0.9) {\textbf{0}};
\node[scale=1] at (0.5,0.5) {\textbf{1}};
\end{tikzpicture} \ \ \begin{tikzpicture}[scale=0.8]
\fill[red!40] (0.3,0.3) rectangle (0.6,1.2) ;
\fill[red!40] (0.3,0.3) rectangle (1.2,0.6) ;
\draw (0,0) rectangle (1.2,1.2) ;
\draw [-latex] (0.3,0.3) -- (0.3,1.2) ; 
\draw [-latex] (0.3,0.3) -- (1.2,0.3) ;
\draw [-latex] (0.6,0.6) -- (0.6,1.2) ; 
\draw [-latex] (0.6,0.6) -- (1.2,0.6) ; 
\draw [-latex] (0.3,0.6) -- (0,0.6) ; 
\draw [-latex] (0.6,0.3) -- (0.6,0) ;
\node[scale=1,color=gray!90] at (0.9,0.9) {\textbf{1}};
\node[scale=1] at (0.5,0.5) {\textbf{1}};
\end{tikzpicture},\]
where $i$ and $j$ can be $0$ or $1$. The corner symbols with \textbf{0} at the center are referred to as \emph{blue corners} and the ones with \textbf{1} at the center are referred to as \emph{red corners}.
The shift $X_R$ is defined by a set of forbidden patterns which can be derived from the following rules: 
\begin{enumerate} \item Outgoing and incoming arrows must correspond for two adjacent symbols. For instance, the following patterns are respectively forbidden (left) and allowed (right): 
\[\begin{tikzpicture}[scale=0.6]
\draw (0,1.2) rectangle (1.2,2.4) ;
\draw [-latex] (0.6,2.4) -- (0.6,1.2) ;
\draw [-latex] (0,1.8) -- (0.6,1.8) ; 
\draw [-latex] (0,1.5) -- (0.6,1.5) ; 
\draw [-latex] (1.2,1.8) -- (0.6,1.8) ;
\draw [-latex] (1.2,1.5) -- (0.6,1.5) ;
\draw (0,0) rectangle (1.2,1.2) ;
\draw [-latex] (0.6,1.2) -- (0.6,0) ;
\draw [-latex] (0.3,1.2) -- (0.3,0) ; 
\draw [-latex] (0,0.6) -- (0.3,0.6) ; 
\draw [-latex] (1.2,0.6) -- (0.6,0.6) ;
\end{tikzpicture}, \quad \begin{tikzpicture}[scale=0.6]
\draw (0,0) rectangle (1.2,1.2) ;
\draw [-latex] (0.6,1.2) -- (0.6,0) ;
\draw [-latex] (0,0.6) -- (0.6,0.6) ; 
\draw [-latex] (1.2,0.6) -- (0.6,0.6) ;
\draw (0,1.2) rectangle (1.2,2.4) ;
\draw [-latex] (0.6,2.4) -- (0.6,1.2) ;
\draw [-latex] (0,1.8) -- (0.6,1.8) ; 
\draw [-latex] (0,1.5) -- (0.6,1.5) ; 
\draw [-latex] (1.2,1.8) -- (0.6,1.8) ;
\draw [-latex] (1.2,1.5) -- (0.6,1.5) ;
\end{tikzpicture}.\]
\item In every $2 \times 2$ square pattern (meaning a pattern on a translate of $\{0,1\}^2$) there is a blue corner and the presence of a blue corner in a position $\textbf{u} \in \mathbb{Z}^2$ 
forces the presence of a blue corner on positions $\textbf{u}+(0,2), \textbf{u}-(0,2), 
\textbf{u}+(2,0)$ and $\textbf{u}-(2,0)$.
\item The value $i$ is transmitted horizontally, meaning that two consecutive positions $(n,m)$ and $(n+1,m)$ have the same $i$, while the value $j$ is transmitted vertically.
\item On a six arrows symbol or a five arrows symbol, for instance the respective patterns:
\[\begin{tikzpicture}[scale=0.6]
\draw (0,0) rectangle (1.2,1.2) ;
\draw [-latex] (0.6,1.2) -- (0.6,0) ;
\draw [-latex] (0.3,1.2) -- (0.3,0) ; 
\draw [-latex] (0,0.6) -- (0.3,0.6) ; 
\draw [-latex] (1.2,0.6) -- (0.6,0.6) ;
\draw [-latex] (0,0.3) -- (0.3,0.3) ; 
\draw [-latex] (1.2,0.3) -- (0.6,0.3) ;
\end{tikzpicture},\quad \begin{tikzpicture}[scale=0.6]
\draw (0,0) rectangle (1.2,1.2) ;
\draw [-latex] (0.6,1.2) -- (0.6,0) ;
\draw [-latex] (0,0.6) -- (0.6,0.6) ; 
\draw [-latex] (0,0.3) -- (0.6,0.3) ; 
\draw [-latex] (1.2,0.6) -- (0.6,0.6) ;
\draw [-latex] (1.2,0.3) -- (0.6,0.3) ;
\end{tikzpicture},\]
we have $i \neq j$.
\end{enumerate}

As a consequence, $X_R$ is a shift of finite type.

%\begin{notation}\label{notation.ball}
%For all $n,d$, let us denote by $\mathbb{B}_n^d$
%the set $\llbracket -n , n \rrbracket ^d$, and set $\mathbb{U}^{d}_{n}:=\llbracket 0,n-1 \rrbracket^d$.
%\end{notation}

\begin{figure}[h!]
\[\begin{tikzpicture}[scale=0.4]

\begin{scope}
\robibluebastgauche{0}{0};
\robibluebastgauche{8}{0};
\robibluebastgauche{0}{8};
\robibluebastgauche{8}{8};
\robiredbasgauche{2}{2};
\robiredbasgauche{6}{6};
\robibluehauttgauche{0}{4};
\robibluehauttgauche{8}{4};
\robibluehauttgauche{0}{12};
\robibluehauttgauche{8}{12};
\robiredhautgauche{2}{10};
\robibluebastdroite{4}{0};
\robibluebastdroite{12}{0};
\robibluebastdroite{4}{8};
\robibluebastdroite{12}{8};
\robiredbasdroite{10}{2};
\robibluehauttdroite{4}{4};
\robibluehauttdroite{12}{4};
\robibluehauttdroite{4}{12};
\robibluehauttdroite{12}{12};
\robiredhautdroite{10}{10};
\robitwobas{2}{0}
\robitwobas{10}{0}
\robitwobas{6}{2}
\robitwogauche{0}{2}
\robitwogauche{2}{6}
\robitwogauche{0}{10}
\robitwodroite{12}{2}
\robitwodroite{12}{10}
\robitwohaut{2}{12}
\robitwohaut{10}{12}
\robionehaut{6}{0}
\robionehaut{6}{4}
\robionegauche{0}{6}
\robionegauche{4}{6}
\robisixbas{2}{8}
\robisixdroite{4}{2}
\robisixhaut{2}{4}
\robisevendroite{4}{10}
\robithreehaut{6}{8}
\robisixhaut{6}{10}
\robithreehaut{6}{12}
\robisixgauche{8}{2}
\robisevengauche{8}{10}
\robisevenbas{10}{8}
\robisevenhaut{10}{4}
\robisixdroite{10}{6}
\robithreedroite{8}{6}
\robithreedroite{12}{6}
\draw (0,0) rectangle (14,14);
\foreach \x in {1,...,6} \draw (0,2*\x) -- (14,2*\x);
\foreach \x in {1,...,6} \draw (2*\x,0) -- (2*\x,14);
\node[scale=0.9] at (0.825,0.825) {\textbf{0}};
\node[scale=0.9] at (8.825,0.825) {\textbf{0}};
\node[scale=0.9] at (8.825,8.825) {\textbf{0}};
\node[scale=0.9] at (0.825,8.825) {\textbf{0}};
\node[scale=0.9] at (5.175,5.175) {\textbf{0}};
\node[scale=0.9] at (13.175,13.175) {\textbf{0}};
\node[scale=0.9] at (13.175,5.175) {\textbf{0}};
\node[scale=0.9] at (5.175,13.175) {\textbf{0}};

\node[scale=0.9] at (13.175,0.825) {\textbf{0}};
\node[scale=0.9] at (13.175,8.825) {\textbf{0}};
\node[scale=0.9] at (5.175,8.825) {\textbf{0}};
\node[scale=0.9] at (5.175,0.825) {\textbf{0}};

\node[scale=0.9] at (0.825,13.175) {\textbf{0}};
\node[scale=0.9] at (8.825,13.175) {\textbf{0}};
\node[scale=0.9] at (8.825,5.175) {\textbf{0}};
\node[scale=0.9] at (0.825,5.175) {\textbf{0}};

\node[scale=0.9] at (2.825,2.825) {\textbf{1}};
\node[scale=0.9] at (2.825,11.175) {\textbf{1}};
\node[scale=0.9] at (11.175,2.825) {\textbf{1}};
\node[scale=0.9] at (6.825,6.825) {\textbf{1}};
\node[scale=0.9] at (11.175,11.175) {\textbf{1}};
\end{scope}
\end{tikzpicture}\]
\caption{The south west supertile of order two (some 0s and 1s are omitted for clarity).}\label{figure.order2supertile}
\end{figure}
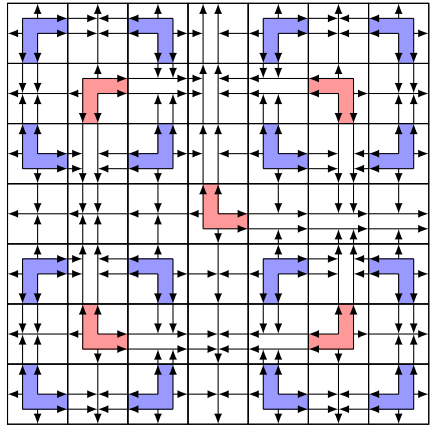 \bigskip

\textit{Finite supertiles ---} In any configuration of $X_R$, the blue corners cause the presence of some particular square patterns that are called \textit{supertiles}, which themselves cause larger supertiles that are similar to the first ones, etc. Let us define these supertiles formally, by induction. For all $n\in\mathbb{N}$, we denote by $St_{sw} (n)$, $St_{sw} (n)$, $St_{sw}(n)$ and $St_{sw}(n)$ 
the \textit{south west, south east, north west, north east supertiles of order $n$}, respectively. For $n=0$ these supertiles are defined as follows:
\[
St_{sw} (0)=\vbox to 14pt{\hbox{
\begin{tikzpicture}[scale=0.35]
 \robibluebastgauche{0}{0};
\end{tikzpicture}}},\quad
St_{se} (0)=\vbox to 14pt{\hbox{
\begin{tikzpicture}[scale=0.35]
 \robibluebastdroite{0}{0};
\end{tikzpicture}}},\quad
St_{nw} (0)=\vbox to 14pt{\hbox{
\begin{tikzpicture}[scale=0.35]
 \robibluehauttgauche{0}{0};
\end{tikzpicture}}},\quad
St_{ne} (0)=\vbox to 14pt{\hbox{
\begin{tikzpicture}[scale=0.35]
 \robibluehauttdroite{0}{0};
\end{tikzpicture}}}.
\] 

For all $n\in\mathbb{N}$, the support of the supertile $St_{sw}(n+1)$ 
(resp. $St_{se} (n+1)$, $St_{nw} (n+1)$, $St_{ne} (n+1)$) is set as
$\mathbb{U}^{(2)}_{2^{n+2}-1}$. These supertiles are constructed as follows: 

\begin{enumerate} 
\item For $\textbf{u}=(2^{n+1}-1,2^{n+1}-1)$, the 
symbols $St_{sw} (n+1) _{\textbf{u}}$, $St_{se} (n+1) _{\textbf{u}}$, $St_{nw} (n+1)_{\textbf{u}}$,
$St_{ne} (n+1) _{\textbf{u}}$ are
\[
\begin{tikzpicture}[scale=0.35]
 \robiredbasgauche{0}{0};
\end{tikzpicture},\quad
\begin{tikzpicture}[scale=0.35]
 \robiredbasdroite{0}{0};
\end{tikzpicture},\quad
\begin{tikzpicture}[scale=0.35]
 \robiredhautgauche{0}{0};
\end{tikzpicture},\quad
\begin{tikzpicture}[scale=0.35]
 \robiredhautdroite{0}{0};
\end{tikzpicture},
\]
respectively.
\item The restriction
to $\mathbb{U}^{(2)}_{2^{n+1}-1}$ (resp. 
$(2^{n+1},0)+\mathbb{U}^{(2)}_{2^{n+1}-1}$, 
 $(0,2^{n+1})+\mathbb{U}^{(2)}_{2^{n+1}-1}$,
$(2^{n+1},2^{n+1})+\mathbb{U}^{(2)}_{2^{n+1}-1}$)
is 
$St_{sw}(n)$ (resp. $St_{se} (n)$, $St_{nw} (n)$,
$St_{ne} (n)$).
\item Each of the other positions 
(forming a cross whose center in at the center of the square) is filled with one symbol among the following, up to rotation (they are uniquely determined by their neighboring symbols): \[\begin{tikzpicture}[scale=0.6]
\draw (0,0) rectangle (1.2,1.2) ;
\draw [-latex] (0.6,1.2) -- (0.6,0) ;
\draw [-latex] (0,0.6) -- (0.6,0.6) ; 
\draw [-latex] (1.2,0.6) -- (0.6,0.6) ;
\end{tikzpicture}, \quad \begin{tikzpicture}[scale=0.6]
\draw (0,0) rectangle (1.2,1.2) ;
\draw [-latex] (0.6,1.2) -- (0.6,0) ;
\draw [-latex] (0,0.6) -- (0.6,0.6) ; 
\draw [-latex] (0,0.3) -- (0.6,0.3) ; 
\draw [-latex] (1.2,0.6) -- (0.6,0.6) ;
\draw [-latex] (1.2,0.3) -- (0.6,0.3) ;
\end{tikzpicture}.\] 
\end{enumerate} 

An example of a supertile is displayed on Figure~\ref{figure.order2supertile}. \bigskip 

\textit{Infinite supertiles ---}
Let $x$ be a configuration in $X_R$ and consider 
the equivalence relation $\sim_x$ on $\mathbb{Z}^2$ defined by:
$\textbf{i} \sim_x \textbf{j}$ $\Leftrightarrow$ there is a finite
supertile in $x$ on a support which contains $\textbf{i}$ and $\textbf{j}$.
An \textit{infinite order} supertile is an infinite pattern whose support is an equivalence 
class of this relation. Each configuration satisfies one of the following:
\begin{itemize}
\item[(i)] It has a unique infinite order supertile on support $\mathbb{Z}^2$.
\item[(ii)] It has two infinite order supertiles separated by an infinite row $\mathbb{Z} \times \{m\}$ or a column  $\{m\} \times \mathbb{Z}$ with only three-arrows symbols or only four arrows symbols on it.  
% \textcolor{cyan}{Different to what happens in configurations satisfying (i) or (iii), order $n$ finite supertiles in one infinite supertile might be nonaligned with the same-order-supertiles present in the other infinite supertile.}
In such a configuration, there might be some finite $n$ such that order $n$ supertiles in one of the infinite supertiles might not be aligned with the order $n$ supertiles in the other (whereas %this is the case 
in a configuration which satisfies (i) or (iii), order $n$ supertiles are aligned with respect to all infinite supertiles).
% \textcolor{cyan}{If one of the infinite supertiles is shifted with respect to the other we obtain another (different) configuration satisfying the same property. }
% \textcolor{gray}{\footnote{\textcolor{red}{est-ce qu'on a vraiment besoin de \c{c}a ?}}}
\item[(iii)] It has four infinite order supertiles, separated by an infinite cross, that is $\mathbb{C}_{m,n} := \mathbb{Z} \times \{m\} \bigcup \{n\} \times \mathbb{Z}$ for some $m,n \in \mathbb{Z}$, such that $(m,n)$ is superimposed with a red corner, 
a six-arrows symbol or a five-arrows symbol. In any case the symbols on $\mathbb{C}_{m,n} \backslash \{(m,n)\}$ are uniquely determined by their surrounding symbols.
\end{itemize} 

% \comm{S}{26,27}

\textit{Minimal version ---}
Note that $X_R$ is not a minimal shift. The main reason is that provided a configuration of type (ii), any configuration obtained from it by shifting one of its infinite supertile relatively to the other is also in  $X_R$. There are patterns across the infinite line separating the infinite supertile which do not appear in every configuration of $X_R$. However, it is possible to add a layer to $X_R$ 
so that in configurations which satisfy the condition (ii), the supertiles of the same (finite) order across the infinite row or column are aligned and the obtained shift is a minimal shift of finite type. Roughly speaking, the idea is to use the accessory layer to send ``signals'' between supertiles forcing them to be aligned. This is a %known fact 
standard technique and a detailed construction can be found in \cite{Gangloff}, for instance. We denote this shift by $X_R^{+}$. \bigskip 

% \comm{A}{Peut-\^ere qu'on peut faire mieux ici pour addresser plus facilement apr\`es le commentaire (43) (exemple inf ssyst)} 

% \comm{S}{Raj justification pour minimal sft rob et ref papier min}

\textit{Additional terminology ---}
Provided a configuration of $X_R$, for each supertile of order $n$, the position in $\mathbb{Z}^2$ of the central symbol is called a \textit{site} of order $n$. We call \textit{cell} of order $n$
the subset of $\mathbb{Z}^2$
enclosed by sites of order $2n+1$ at the center of which there is another site. We call \textit{cytoplasm} of a cell of order $n$ the set of positions in this cell which do not belong to any cell of smaller order. The \textit{border} of a cell is the set of its elements which do not have four neighbors in it. A \textit{free column} (resp. row) of a cell $\mathbb{C}$ is a subset of the form $(\mathbb{Z} \times \{m\})\cap \mathbb{C}$ (resp. $(\{m\}\times\mathbb{Z})\cap \mathbb{C}$) which does not intersect a cell of smaller order. \bigskip 

\textit{Subsystems of $X_R$ ---}
We denote by $X_{R}^{\infty}$ the set of configurations which satisfy $(i)$ or $(iii)$. 
It is a minimal shift. Furthermore, for all $n$ we denote by $X_R^n$ the subsystem of $X_R$ obtained by enforcing, in configurations satisfying $(ii)$, the order $k \le n$
supertiles across the separating row or column, 
to be aligned. It is straightforward that $X_R^n$ is of finite type for all $n$, that the sequence $(X_R^n)_n$ is strictly decreasing, and that
\[\bigcap_n X_R^n = X_R^{\infty}.\]

For the remainder of the text we denote by $\textbf{e}_1, \ldots, \textbf{e}_d$ the canonical basis of $\mathbb{Z}^d$.

\paragraph{Encoding finite alphabets into $\mathbb{Z}$.} In the text, we provide examples of shifts on some alphabets $\mathcal{A} \not \subset \mathbb{Z}$. For each one of these, we implicitly re-encode the shift so that the alphabet is included in $\mathbb{Z}$. By re-encoding we mean the following: consider $X$ a shift on alphabet $\mathcal{A}$, and $\phi : \mathcal{A} \rightarrow \mathcal{A}'$ a bijection. The re-encoded shift is the image of $X$ by the map $x \mapsto (\phi(x_{\boldsymbol{u}}))_{\boldsymbol{u} \in \mathbb{Z}^d}$.

\section{Characterization of isolated points in $\mathcal{S}^d$\label{section.isolated.shifts}}

In order to investigate generic shifts in higher dimension, the approach we have taken is to 
first understand isolated points in $\mathcal{S}^d$. We characterize these points in the present section, and this characterization will help us prove that the closure of the set of isolated shifts is not $\mathcal{S}^d$. The characterization is as in \cref{thm:A}: a shift is isolated if and only if it is of finite type, it has finitely many maximal subsystems and each of its subsystems is included in a maximal subsystem. We prove it in Section~\ref{section.characterization.isolated}, and as an application we recover the characterization in one dimension formulated by R. Pavlov and S. Schmieding. 
Using a construction based on the Robinson shift, we construct a shift with finitely many maximal subsystems, but infinitely many subsystems (Section~\ref{section.example.inf.subsystems}). We remark that such a  system does not exist in the space of $\mathbb{Z}$-shifts.

To the best of our knowledge, the notion of maximal subsystem
involved naturally in the characterization of isolated shifts has not been studied elsewhere. We devote Section~\ref{section.study.maximal} to the study of this notion in and of itself. Section~\ref{section.auxiliary} contains auxiliary results on topology which are useful in the present section.

\subsection{{Auxiliary results}\label{section.auxiliary}}

Results presented throughout this subsection are elementary, but proofs are included for completeness.
%For a configuration $x$ and a shift $X$, we denote by $\delta(x,X)$ the number $\inf_{z \in X} \delta(x,z)$. For $\epsilon > 0$, we denote by $B_{\epsilon}(x,X)$ the set of configurations $z \in X$ such that $\delta(x,z) < \epsilon$.

\begin{lemma}\label{lemma.intersection}
Let $(X_n)_{n\ge0}$ be a sequence of shifts in $\mathcal{S}^d$ such that for all $n \ge 0$, $X_{n+1} \subset X_n$ and set $X_{\infty}:= \bigcap_n X_n$. Then $X_n$ converges to $X_{\infty}$ for the Hausdorff topology.
\end{lemma}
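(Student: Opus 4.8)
The plan is to work entirely through the language characterization of Hausdorff convergence recorded just before the statement: it suffices to prove that for every finite $\mathbb{U} \subset \mathbb{Z}^d$ there exists $m$ such that $\mathscr{L}_{\mathbb{U}}(X_n) = \mathscr{L}_{\mathbb{U}}(X_\infty)$ for all $n \ge m$. Fix such a $\mathbb{U}$. Since $X_{n+1} \subset X_n$, every pattern globally admissible for $X_{n+1}$ is globally admissible for $X_n$, so $\mathscr{L}_{\mathbb{U}}(X_{n+1}) \subseteq \mathscr{L}_{\mathbb{U}}(X_n)$. All of these sets are subsets of the \emph{finite} set $\mathcal{A}^{\mathbb{U}}$ of patterns on $\mathbb{U}$, where $\mathcal{A}$ is the (common) alphabet of $X_0$. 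A decreasing sequence of subsets of a finite set is eventually constant, so there is $m = m(\mathbb{U})$ with $\mathscr{L}_{\mathbb{U}}(X_n) = \bigcap_k \mathscr{L}_{\mathbb{U}}(X_k) =: L$ for all $n \ge m$. It then remains to identify $L$ with $\mathscr{L}_{\mathbb{U}}(X_\infty)$.

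One inclusion is immediate: since $X_\infty \subset X_n$ for every $n$, we have $\mathscr{L}_{\mathbb{U}}(X_\infty) \subseteq \mathscr{L}_{\mathbb{U}}(X_n)$ for every $n$, hence $\mathscr{L}_{\mathbb{U}}(X_\infty) \subseteq L$. The reverse inclusion is the heart of the argument and the only non-formal step. I would take $p \in L$, so that $p$ is globally admissible for every $X_n$, and for each $n$ choose a configuration $x^{(n)} \in X_n$ in which $p$ appears; after translating (using shift-invariance of $X_n$) I may assume $x^{(n)}_{\textbf{v}} = p(\textbf{v})$ for all $\textbf{v} \in \mathbb{U}$. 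All these configurations lie in the compact space $X_0$, so some subsequence $x^{(n_k)}$ converges, say to $x \in X_0$.

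I would then verify two things about the limit $x$. First, $x \in X_\infty$: for any fixed $N$, all but finitely many terms $x^{(n_k)}$ belong to $X_{n_k} \subseteq X_N$ (once $n_k \ge N$), and since $X_N$ is closed the limit $x$ lies in $X_N$; as $N$ is arbitrary, $x \in \bigcap_N X_N = X_\infty$. Second, $p$ appears in $x$: convergence for the metric $\delta$ is coordinatewise, and $\mathbb{U}$ is finite, so for large $k$ the configuration $x^{(n_k)}$ agrees with $x$ on $\mathbb{U}$, giving $x_{\textbf{v}} = p(\textbf{v})$ for all $\textbf{v} \in \mathbb{U}$. Hence $p \in \mathscr{L}_{\mathbb{U}}(X_\infty)$, proving $L \subseteq \mathscr{L}_{\mathbb{U}}(X_\infty)$ and thus equality. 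The main (and essentially only) obstacle is this reverse inclusion, i.e.\ promoting ``admissible in every $X_n$'' to ``admissible in the intersection $X_\infty$''; it is exactly here that compactness of $X_0$ and closedness of each $X_N$ enter, through the subsequence extraction. Everything else is bookkeeping with the finite, decreasing family of language sets.
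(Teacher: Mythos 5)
Your proof is correct, but it follows a genuinely different route from the paper's. The paper argues directly with the Hausdorff metric, by contradiction: since $X_\infty \subset X_n$, failure of convergence would produce $\epsilon>0$ and points $x_n \in X_n$ with $\delta(x_n,X_\infty)\ge\epsilon$; compactness yields a limit point, which lies in every $X_N$ (by nestedness and closedness) and hence in $X_\infty$, while continuity of $\delta(\cdot,X_\infty)$ forces it to stay at distance $\ge\epsilon$ from $X_\infty$ — a contradiction. That argument never uses the symbolic structure and works verbatim for any non-increasing sequence of compact subsets of a compact metric space. You instead go through the language characterization of Hausdorff convergence: the decreasing languages $\mathscr{L}_{\mathbb{U}}(X_n)$ stabilize because $\mathcal{A}^{\mathbb{U}}$ is finite, and you identify the stable value with $\mathscr{L}_{\mathbb{U}}(X_\infty)$ by translating witnesses of each persistent pattern to the origin and extracting a convergent subsequence. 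The crucial step is the same in both proofs — compactness of the ambient space plus closedness of each $X_N$ under nestedness — but your version additionally establishes the independently useful fact that a pattern globally admissible for every member of a nested family of shifts is globally admissible for their intersection, at the cost of relying on the finiteness of the alphabet; the paper's version is shorter and more general.
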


\begin{proof}
Let us assume, on the contrary, that there exists $\epsilon>0$ and infinitely many integers $n$ such that 
there exists $x_n \in X_n$ with $\delta(x_n,X_{\infty}) \ge \epsilon$. %Because $(X_n)_{n \ge 0}$ is non-increasing, all but finitely many integers satisfy this. 
By compactness, we can assume that $x_n$ converges to a point $x \in X$. Furthermore, since $(X_n)_{n \ge 0}$ is non-increasing, $x \in X_n$ for all $n$, and thus $x \in X_{\infty}$. On the other hand, because the distance $\delta$ is a continuous function, $\delta(x,X_{\infty}) \ge \epsilon$, which is a contradiction.
\end{proof}

The following is straightforward:

\begin{lemma}\label{lemma.inclusion}
    Consider three shifts $X' \subset X \subset Z$. We have $\delta_H(X',Z) \ge \delta_H(X,Z)$.
\end{lemma}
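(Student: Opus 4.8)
The plan is to unwind the definition of the Hausdorff metric and observe that, because both $X$ and $X'$ are contained in $Z$, one of the two terms in the maximum defining $\delta_H(\cdot,Z)$ drops out entirely. Recall that
\[
\delta_H(X,Z) = \max\left(\sup_{x \in X} \inf_{z \in Z} \delta(x,z),\ \sup_{z \in Z} \inf_{x \in X} \delta(z,x)\right).
\]
Since $X \subset Z$, every $x \in X$ already lies in $Z$, so $\inf_{z \in Z}\delta(x,z) = \delta(x,x) = 0$ (the defining set $\{\mathbf{n} : x_{\mathbf n} \neq x_{\mathbf n}\}$ is empty, giving $2^{-\infty}=0$). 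Hence the first term vanishes and $\delta_H(X,Z) = \sup_{z \in Z} \inf_{x \in X} \delta(z,x)$. The identical argument applied to $X' \subset Z$ gives $\delta_H(X',Z) = \sup_{z \in Z} \inf_{x \in X'} \delta(z,x)$.

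With both Hausdorff distances reduced to a single one-sided term, the remaining step is a monotonicity observation. First I would fix $z \in Z$ and compare $\inf_{x \in X'}\delta(z,x)$ with $\inf_{x \in X}\delta(z,x)$: because $X' \subset X$, the infimum over the smaller index set $X'$ is taken over fewer competitors, so it can only be larger, i.e.
\[
\inf_{x \in X'}\delta(z,x) \ge \inf_{x \in X}\delta(z,x) \qquad \text{for every } z \in Z.
\]
This pointwise (in $z$) domination is preserved under taking the supremum over $z \in Z$, which yields $\sup_{z \in Z}\inf_{x \in X'}\delta(z,x) \ge \sup_{z \in Z}\inf_{x \in X}\delta(z,x)$, that is $\delta_H(X',Z) \ge \delta_H(X,Z)$, as desired.

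There is essentially no hard part here, which is consistent with the statement being labelled straightforward; the only points warranting a moment's care are the conventions for $\delta(x,x)=0$ (and hence the vanishing of the first Hausdorff term under inclusion in $Z$) and the correct direction of the inequality for infima over nested sets. All suprema and infima are finite since $\delta$ takes values in $[0,1]$, so no issues of well-definedness arise.
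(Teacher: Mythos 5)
Your proof is correct. The paper states this lemma without proof (it is labelled ``straightforward''), and your argument---both one-sided terms toward $Z$ vanish under inclusion, leaving only $\sup_{z\in Z}\inf_{x\in\cdot}\delta(z,x)$, which is monotone under shrinking the inner index set---is exactly the intended verification.
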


For $\epsilon > 0$, we denote by $B_{\epsilon}(x,X)$ the set of configurations $z \in X$ such that $\delta(x,z) < \epsilon$. We say that a sequence of shifts $(X_n)_{n\geq0}$ is non-decreasing (resp. non-increasing) if for all $n$ it holds that $X_n\subseteq X_{n+1}$ (resp. $X_{n+1}\subseteq X_n$).

\begin{lemma}\label{lemma.union}
    Consider a shift $X_{\infty}$ and a non-decreasing sequence of shifts $(X_n)_{n \ge 0}$ such that $X_{\infty} = \bigcup_n X_n$. 
    Then $X_n$ converges to $X_{\infty}$.
\end{lemma}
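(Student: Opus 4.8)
The plan is to reduce the claim to the combinatorial characterization of Hausdorff convergence recorded just after the definition of $\delta_H$: the sequence $(X_n)_{n\ge 0}$ converges to $X_\infty$ for $\delta_H$ if and only if for every finite subset $\mathbb{U}\subset\mathbb{Z}^d$ there exists $m$ such that $\mathscr{L}_{\mathbb{U}}(X_n)=\mathscr{L}_{\mathbb{U}}(X_\infty)$ for all $n\ge m$. So I would fix an arbitrary finite window $\mathbb{U}$ and produce such an $m$.

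First I would record the monotonicity coming from the hypotheses. Since $X_n\subseteq X_{n+1}\subseteq\bigcup_k X_k=X_\infty$, passing to languages on $\mathbb{U}$ yields a non-decreasing chain $\mathscr{L}_{\mathbb{U}}(X_0)\subseteq\mathscr{L}_{\mathbb{U}}(X_1)\subseteq\cdots\subseteq\mathscr{L}_{\mathbb{U}}(X_\infty)$. Because $\mathbb{U}$ is finite and every $X_n$ uses only symbols appearing in $X_\infty$ (as $X_n\subseteq X_\infty$), the set $\mathscr{L}_{\mathbb{U}}(X_\infty)$ is finite.

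The core step is the identity $\bigcup_n\mathscr{L}_{\mathbb{U}}(X_n)=\mathscr{L}_{\mathbb{U}}(X_\infty)$. The inclusion $\subseteq$ is immediate from $X_n\subseteq X_\infty$. For $\supseteq$, take a pattern $p\in\mathscr{L}_{\mathbb{U}}(X_\infty)$; by global admissibility it appears in some configuration $x\in X_\infty=\bigcup_n X_n$, so $x\in X_n$ for some $n$, whence $p\in\mathscr{L}_{\mathbb{U}}(X_n)$. This is exactly where the hypothesis $X_\infty=\bigcup_n X_n$ (rather than merely $X_n\subseteq X_\infty$) is used, and it is the only substantive point in the argument.

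Finally, a non-decreasing chain of subsets of the finite set $\mathscr{L}_{\mathbb{U}}(X_\infty)$ whose union equals $\mathscr{L}_{\mathbb{U}}(X_\infty)$ must already coincide with $\mathscr{L}_{\mathbb{U}}(X_\infty)$ from some index $m$ onward; for $n\ge m$ the chain is squeezed between $\mathscr{L}_{\mathbb{U}}(X_m)$ and $\mathscr{L}_{\mathbb{U}}(X_\infty)$, forcing equality. This is precisely the condition the characterization demands, so $X_n\to X_\infty$. There is no genuine obstacle here; the only delicate point is that stabilization occurs at a \emph{finite} stage, which is a consequence of the finiteness of the window language. This is why routing the proof through $\mathscr{L}_{\mathbb{U}}$ is cleaner than arguing directly with $\delta_H$: directly, the term $\sup_{x\in X_n}\inf_{z\in X_\infty}\delta(x,z)$ vanishes trivially from $X_n\subseteq X_\infty$, while the remaining term $\sup_{z\in X_\infty}\inf_{x\in X_n}\delta(z,x)$ is exactly what the finite-window stabilization controls.
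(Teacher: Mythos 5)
Your proof is correct, but it takes a genuinely different route from the paper's. The paper argues by contradiction with the metric directly: if convergence failed, Lemma~\ref{lemma.inclusion} would force $\delta_H(X_n,X_\infty)\ge\epsilon$ for \emph{all} $n$, yielding points $x_n\in X_\infty$ whose $\epsilon$-balls in $X_\infty$ miss $X_n$; compactness gives a limit $x$ whose $\epsilon/2$-ball misses every $X_n$, contradicting $X_\infty=\bigcup_n X_n$. You instead route everything through the finite-window characterization of $\delta_H$-convergence stated after its definition, proving $\bigcup_n\mathscr{L}_{\mathbb{U}}(X_n)=\mathscr{L}_{\mathbb{U}}(X_\infty)$ and then using finiteness of the window language to force stabilization at a finite stage. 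Your argument is direct rather than by contradiction, avoids the compactness extraction, and isolates cleanly the one place the hypothesis ``equals the union'' (rather than the closure of the union) is needed; the paper's argument has the advantage of being structurally parallel to its proof of Lemma~\ref{lemma.intersection} for decreasing sequences and of not relying on the language characterization. One cosmetic remark: the finiteness of $\mathscr{L}_{\mathbb{U}}(X_\infty)$ follows simply from $X_\infty$ being a shift over a finite alphabet; the aside about the $X_n$ using only symbols of $X_\infty$ is not needed for that point (though it is what guarantees all the shifts live over a common alphabet, which the characterization implicitly assumes).
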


\begin{proof}
    If it were not the case, there would be some $\epsilon>0$ and a subsequence $(n_k)_{k\in\N}$ such that for all $k$ we have $\delta_H(X_{n_k},X_\infty) \ge \epsilon$, and by \cref{lemma.inclusion}, we have that for all $n$, $\delta_H(X_n,X_{\infty}) \ge \epsilon$. For all $n$, since $X_n \subset X_{\infty}$, there exists an element $x_n$ of $X_{\infty}$ such that $B_{\epsilon}(x_n,X_{\infty})$ is contained in $X_{\infty} \backslash X_n$. By compactness, we can assume that $x_n \rightarrow x \in X_{\infty}$. Therefore $B_{\epsilon/2}(x,X_{\infty})$
    does not intersect any $X_n$, which contradicts the fact that $X_{\infty}$ is the union of the sets $X_n$.
\end{proof}

\begin{lemma}\label{lemma.stability}
Let $X$ be a shift and let $U \subset X$ be such that $\sigma^\textbf{u}(U) \subset U$ for all $\textbf{u} \in \mathbb{Z}^d$. The set $\overline{U}$ is a shift.
\end{lemma}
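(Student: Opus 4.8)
The plan is to verify directly the two defining properties of a shift for the set $\overline{U}$: compactness and invariance under the translation action. Compactness is immediate. Since $U \subset X$ and $X$ is a shift, it is in particular compact; hence $\overline{U}$, being a closed subset of the compact space $X$, is itself compact. (Here the closure may be taken either inside $X$ or inside the full shift $\mathcal{A}^{\mathbb{Z}^d}$: as $X$ is closed, the two closures coincide.)

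The substantive point is invariance, $\sigma^{\textbf{u}}(\overline{U}) \subseteq \overline{U}$ for every $\textbf{u} \in \mathbb{Z}^d$. The key fact I would invoke is that each translation $\sigma^{\textbf{u}}$ is a homeomorphism of $\mathcal{A}^{\mathbb{Z}^d}$, in particular continuous. For any continuous map $f$ and any set $U$ one has the standard containment $f(\overline{U}) \subseteq \overline{f(U)}$. Applying this with $f = \sigma^{\textbf{u}}$ gives $\sigma^{\textbf{u}}(\overline{U}) \subseteq \overline{\sigma^{\textbf{u}}(U)}$, and the hypothesis $\sigma^{\textbf{u}}(U) \subset U$ together with monotonicity of closure yields $\overline{\sigma^{\textbf{u}}(U)} \subseteq \overline{U}$. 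Chaining the two inclusions gives $\sigma^{\textbf{u}}(\overline{U}) \subseteq \overline{U}$, which is exactly the required invariance.

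There is essentially no obstacle here; the statement is a routine consequence of continuity of the shift action, and the only thing to keep track of is that one is closing a set that is already known to sit inside the compact shift $X$. If one prefers to avoid citing the closure containment $f(\overline{U}) \subseteq \overline{f(U)}$ as a black box, the invariance can be spelled out with sequences: given $y \in \sigma^{\textbf{u}}(\overline{U})$, write $y = \sigma^{\textbf{u}}(x)$ with $x \in \overline{U}$, choose $x_n \in U$ with $x_n \to x$, and note that $\sigma^{\textbf{u}}(x_n) \in U$ with $\sigma^{\textbf{u}}(x_n) \to \sigma^{\textbf{u}}(x) = y$ by continuity, so that $y \in \overline{U}$.
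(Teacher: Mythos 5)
Your proposal is correct and, in its final sequence-based formulation, is essentially identical to the paper's own proof (take $x_n \in U$ with $x_n \to x$, apply $\sigma^{\textbf{u}}$, use continuity). The only difference is cosmetic: you also note compactness of $\overline{U}$ explicitly and phrase the invariance via the containment $\sigma^{\textbf{u}}(\overline{U}) \subseteq \overline{\sigma^{\textbf{u}}(U)}$, which the paper leaves implicit.
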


\begin{proof}
    Let $x \in \overline{U}$. There exists a sequence of elements $x_n$ of $U$ which converges to $x$. Because $U$ is invariant under the action of the shift, for all $\textbf{v}\in\mathbb{Z}^d$, $\sigma^{\textbf{v}}(x_n) \in U$. By continuity of $\sigma^{\textbf{v}}$, for all $\textbf{v}\in\mathbb{Z}^d$, $\sigma^{\textbf{v}}(x_n)$ converges to $\sigma^{\textbf{v}}(x)$, which is thus in $\overline{U}$.
\end{proof}

\subsection{Characterization of isolated points in $\mathcal{S}^d$}\label{section.characterization.isolated}

This section is decomposed into three subsections: first we prove that isolated shifts are all of finite type (Section~\ref{section.isolated.sft}); second we exhibit a relation with maximal subsystems (Section~\ref{section.maximal.subsystems}); then we formulate the characterization of isolated shifts in $\mathcal{S}^d$ in Section~\ref{subsection.characterization}.

\subsubsection{{Isolated shifts are of finite type}\label{section.isolated.sft}}

First, we show that any isolated shift is a shift of finite type, and that for any shift of finite type $X$, there is a neighborhood of $X$ which contains only 
subsystems of $X$.

\begin{lemma}\label{lemma.isolated.are.sft}
For all $d \ge 1$, isolated shifts in $\mathcal{S}^d$ are of finite type. 
\end{lemma}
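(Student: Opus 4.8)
The plan is to prove the contrapositive: if a shift $X$ is not of finite type, then it is not isolated, which I will establish by exhibiting a sequence of distinct shifts converging to $X$ in the Hausdorff metric. Recall the convergence criterion stated in the excerpt: $(X_n)$ converges to $X$ if and only if for every finite $\mathbb{U} \subset \mathbb{Z}^d$, eventually $\mathscr{L}_{\mathbb{U}}(X_n) = \mathscr{L}_{\mathbb{U}}(X)$. So it suffices to build shifts $X_n \neq X$ whose languages agree with that of $X$ on larger and larger windows.

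First I would fix an exhaustion of $\mathbb{Z}^d$ by the finite boxes $\mathbb{B}_n^d = \llbracket -n,n\rrbracket^d$, and for each $n$ let $\mathcal{F}_n$ be the set of patterns on support $\mathbb{B}_n^d$ that are \emph{not} globally admissible for $X$, \ie\ the complement of $\mathscr{L}_{\mathbb{B}_n^d}(X)$ inside $\mathcal{A}^{\mathbb{B}_n^d}$. Define $X_n := X_{\mathcal{F}_n}$, the shift on the alphabet $\mathcal{A}$ of $X$ obtained by forbidding exactly these patterns. Each $X_n$ is a shift of finite type by construction, and the sequence $(X_n)_n$ is non-increasing with $\bigcap_n X_n = X$: any configuration avoiding all patterns forbidden on every box is precisely a configuration all of whose finite patterns are admissible for $X$, hence belongs to $X$ by compactness (this is the standard fact that $X$ equals the set of configurations whose patterns lie in $\mathscr{L}(X)$). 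By \cref{lemma.intersection}, $X_n \to X$ in the Hausdorff topology.

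The remaining point is that infinitely many of these $X_n$ differ from $X$. Here is where the hypothesis that $X$ is \emph{not} of finite type is used: if $X_n = X$ for some $n$, then $X$ would be defined by the finite forbidden set $\mathcal{F}_n$, contradicting that $X$ is not of finite type. Hence $X_n \supsetneq X$ for every $n$, so each $X_n$ is a shift distinct from $X$ yet arbitrarily close to it, proving $X$ is not isolated. I would phrase this cleanly by noting $X_n = X$ for even a single $n$ already forces $X$ to be of finite type.

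The main obstacle, though a mild one, is being careful about the alphabet: a priori one might worry whether the approximating shifts should live on the same alphabet as $X$ or whether enlarging the alphabet is needed, but since forbidding patterns on the fixed alphabet $\mathcal{A}$ already produces strictly larger SFTs containing $X$, no alphabet change is necessary. One should also verify the convergence directly if one prefers not to invoke \cref{lemma.intersection}: for any finite $\mathbb{U}$, choosing $n$ large enough that $\mathbb{U} \subset \mathbb{B}_n^d$ guarantees $\mathscr{L}_{\mathbb{U}}(X_n) = \mathscr{L}_{\mathbb{U}}(X)$, since $X_n$ was constructed to forbid precisely the $X$-inadmissible patterns on boxes containing $\mathbb{U}$. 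This gives the convergence explicitly through the language criterion, and completes the argument.
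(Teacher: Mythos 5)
Your proof is correct and follows essentially the same route as the paper: approximate $X$ from above by the non-increasing sequence of shifts of finite type obtained by forbidding finitely many inadmissible patterns on the boxes $\mathbb{B}_n^d$, invoke \cref{lemma.intersection} for convergence, and observe that no $X_n$ can equal $X$ since $X$ is not of finite type. The only cosmetic difference is that you take $\mathcal{F}_n$ to be the full complement of $\mathscr{L}_{\mathbb{B}_n^d}(X)$ rather than truncating a fixed presentation $\mathcal{F}$ of $X$, which changes nothing essential.
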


\begin{proof}
 Let $X$ be an isolated shift and assume it is not of finite type. Let $\mathcal{F}$ be a set of forbidden patterns such that $X = X_{\mathcal{F}}$. For all $n \ge 1$, we consider the shift $X_n := X_{\mathcal{F}_n}$, where
$\mathcal{F}_n$ is the set of patterns in $\mathcal{F}$ which appear in a pattern on 
$\mathbb{B}_n^d=\llbracket-n,n\rrbracket^d$. By definition, the sequence $(X_n)_{n \ge 0}$ is non-increasing and takes infinitely many values, otherwise the sequence would be ultimately constant and $X$ would be equal to some $X_n$ and thus be of finite type. We only have left to prove that $X_n$ converges to $X$, and this is a direct consequence of Lemma~\ref{lemma.intersection}.
\end{proof}

\noindent Reciprocally, a shift of finite type is not necessarily isolated: 

\begin{example}
    For any alphabet $\mathcal{A}$ such that $|\mathcal{A}| \ge 2$, the full shift on $\mathcal{A}$ is not isolated in $\mathcal{S}^d$. Indeed, for all $n \ge 0$, set  $X_n := X_{\{p_n\}}$, where $p_n : \mathbb{B}_n^d \rightarrow \mathcal{A}$
    is constant with value $a$. The sequence $(X_n)_{n\ge0}$ converges towards the full shift on alphabet $\mathcal{A}$.
\end{example}

However we have information about 
the systems contained 
in a close enough neighborhood of a shift of finite type.

\begin{lemma}\label{lemma.subsystem}
    A shift $X \in \mathcal{S}^d$ is of finite type if and only if there exists $\epsilon>0$
    such that for all $Z \in \mathcal{S}^d$, if $\delta_H(Z,X) < \epsilon$ then $Z \subset X$.
\end{lemma}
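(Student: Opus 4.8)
The strategy is to convert everything into the combinatorial language of the shifts, using the same dictionary that underlies the convergence criterion recorded just after the definition of $\delta_H$. Concretely, I would first record the quantitative version of that criterion: for any two shifts $Z,X$ and any $n$, one has $\delta_H(Z,X) < 2^{-n}$ if and only if $\mathscr{L}_{\mathbb{B}_n^d}(Z) = \mathscr{L}_{\mathbb{B}_n^d}(X)$. This follows directly from the definition of $\delta$ (two configurations agree on $\mathbb{B}_n^d$ exactly when $\delta < 2^{-n}$) together with compactness, which turns the $\inf$ in the definition of $\delta_H$ into a minimum; the only point requiring care is that a pattern of $Z$ on $\mathbb{B}_n^d$ lying in $\mathscr{L}_{\mathbb{B}_n^d}(X)$ a priori appears in $X$ only at some translate, so one re-centers using the shift-invariance of $X$ to produce an actual $x\in X$ agreeing with the given $z$ on $\mathbb{B}_n^d$. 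With this dictionary in hand both implications become short.

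For the forward implication, suppose $X = X_{\mathcal F}$ with $\mathcal F$ finite. Since whether a pattern appears in a configuration is translation-invariant, I may replace each $p\in\mathcal F$ by a translate, and thus assume that every support of a pattern of $\mathcal F$ is contained in $\mathbb{B}_N^d$ for a single $N$. I then set $\epsilon = 2^{-N}$. If $\delta_H(Z,X) < \epsilon$, the dictionary gives $\mathscr{L}_{\mathbb{B}_N^d}(Z)\subseteq\mathscr{L}_{\mathbb{B}_N^d}(X)$. Now take any $z\in Z$ and suppose some forbidden $p\in\mathcal F$ appeared in $z$; translating, the pattern $\sigma^{\textbf u}(z)|_{\mathbb{B}_N^d}$ lies in $\mathscr{L}_{\mathbb{B}_N^d}(Z)\subseteq\mathscr{L}_{\mathbb{B}_N^d}(X)$, hence appears in some $x\in X$, and since the support of $p$ sits inside $\mathbb{B}_N^d$ this forces $p\sqsubset x$, contradicting $x\in X_{\mathcal F}$. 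Therefore no forbidden pattern occurs in $z$, so $z\in X$; as $z$ was arbitrary, $Z\subseteq X$.

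For the converse, assume such an $\epsilon$ exists and pick $N$ with $2^{-N}\le\epsilon$. I would introduce the canonical finite-type over-approximation $X^{(N)}$ obtained by forbidding every pattern on $\mathbb{B}_N^d$ that does not belong to $\mathscr{L}_{\mathbb{B}_N^d}(X)$; this is a finite forbidden set (finite alphabet, finite support), so $X^{(N)}$ is of finite type. By construction $X\subseteq X^{(N)}$ and every $\mathbb{B}_N^d$-pattern of $X^{(N)}$ lies in $\mathscr{L}_{\mathbb{B}_N^d}(X)$, whence $\mathscr{L}_{\mathbb{B}_N^d}(X^{(N)}) = \mathscr{L}_{\mathbb{B}_N^d}(X)$; the dictionary then yields $\delta_H(X^{(N)},X) < 2^{-N}\le\epsilon$. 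Applying the hypothesis to $Z = X^{(N)}$ gives $X^{(N)}\subseteq X$, so $X = X^{(N)}$ is of finite type. I expect the only genuinely delicate step to be the quantitative dictionary itself — in particular the re-centering argument via shift-invariance and the verification $\mathscr{L}_{\mathbb{B}_N^d}(X^{(N)}) = \mathscr{L}_{\mathbb{B}_N^d}(X)$, which guarantees that passing to the $N$-block over-approximation does not disturb the language on $\mathbb{B}_N^d$; once this is in place, both directions are immediate.
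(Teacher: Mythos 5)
Your proof is correct and follows essentially the same route as the paper: the forward direction bounds the supports of the finitely many forbidden patterns inside a single window $\mathbb{B}_N^d$ and takes $\epsilon=2^{-N}$, while the converse uses the canonical finite-type over-approximation obtained by forbidding the non-admissible patterns on a bounded support. The only (cosmetic) difference is that you argue the converse directly by applying the hypothesis to the single approximation $X^{(N)}$, whereas the paper argues contrapositively with the whole decreasing sequence of such approximations; your write-up also makes explicit the quantitative dictionary between $\delta_H$ and equality of languages that the paper leaves implicit.
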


\begin{proof}
    ($\Rightarrow$) There is a finite set of patterns $\mathcal{F}$ such that $X = X_{\mathcal{F}}$. There exists $n$ such that the supports of these patterns are all contained in some translate of $\mathbb{B}_n^d$. Therefore, the ball around $X$ of diameter $2^{-n}$ for the metric $\delta_H$ contains only shifts whose languages have empty intersection with $\mathcal{F}$, meaning that they are subsystems of $X$. ($\Leftarrow$) If $X$ is not of finite type, then for all $\epsilon$ there is some $Z \in \mathcal{S}^d$ such that $\delta_H(Z,X) < \epsilon$ and $Z$ is not contained in $X$. In order to see this it is sufficient 
    to consider the sequence $(X_n)$ such that for all $n$, $X_n$ is defined by forbidding the patterns which are on some support $\mathbb{U}$ amongst the first $n$ subsets of an enumeration of all finite subsets of $\mathbb{Z}^d$ and that are not in $\mathscr{L}_{\mathbb{U}}(X)$.
\end{proof}

%\begin{remark}
%In the previous lemma, %the converse is also %true. This yields %another characterization %of SFT, which might be %generalized to other %contexts.
%\end{remark}

%\begin{question}
%    Can the previous lemma be generalized to topological systems ? (generalizing the notion of SFT using elements of an open base instead of forbidden words)
%\end{question}

\begin{lemma}\label{lemma.subsystem.2}
    Consider a shift of finite type $X$. Every subsystem of $X$ has a neighborhood whose elements are all subsystems of $X$.
\end{lemma}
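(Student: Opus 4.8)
The plan is to exploit that $X$, being of finite type, is defined by a \emph{finite} family $\mathcal{F}$ of forbidden patterns whose supports can all be assumed contained in a single box $\mathbb{B}_m^d$ (translating each pattern does not change the forbidden configurations). The key observation is that although a subsystem $Y \subset X$ need not itself be of finite type---so Lemma~\ref{lemma.subsystem} cannot be applied to $Y$ directly---every shift sufficiently close to $Y$ shares with $Y$ all its patterns on the supports of $\mathcal{F}$, and these patterns, being admissible for $Y \subseteq X$, are a fortiori absent from $\mathcal{F}$. Hence such a shift avoids $\mathcal{F}$ entirely and must be contained in $X = X_{\mathcal{F}}$.

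I would argue by contradiction, in the sequence-and-compactness style used in the preceding lemmas. Suppose the conclusion fails for some subsystem $Y$ of $X$. Then there is a sequence of shifts $(Z_k)_{k \ge 0}$ converging to $Y$ for $\delta_H$ with $Z_k \not\subset X$ for every $k$. For each $k$, since $Z_k \not\subset X = X_{\mathcal{F}}$, some configuration of $Z_k$ contains a forbidden pattern $p_k \in \mathcal{F}$; that is, $p_k$ is globally admissible for $Z_k$, so $p_k \in \mathscr{L}_{\mathrm{supp}(p_k)}(Z_k)$.

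Now I would invoke the convergence characterization recalled just after the definition of $\delta_H$: for every finite $\mathbb{U} \subset \mathbb{Z}^d$ there is an index beyond which $\mathscr{L}_{\mathbb{U}}(Z_k) = \mathscr{L}_{\mathbb{U}}(Y)$. Applying this to the finitely many supports $\mathrm{supp}(p)$, $p \in \mathcal{F}$, yields a single $N$ such that $\mathscr{L}_{\mathrm{supp}(p)}(Z_k) = \mathscr{L}_{\mathrm{supp}(p)}(Y)$ for all $k \ge N$ and all $p \in \mathcal{F}$. Fixing any $k \ge N$ gives $p_k \in \mathscr{L}_{\mathrm{supp}(p_k)}(Z_k) = \mathscr{L}_{\mathrm{supp}(p_k)}(Y)$, so the forbidden pattern $p_k$ appears in some $y \in Y \subseteq X$. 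This contradicts $X = X_{\mathcal{F}}$, since no configuration of $X$ contains a pattern of $\mathcal{F}$. Therefore no such sequence exists, and $Y$ has a neighborhood all of whose elements are subsystems of $X$.

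The argument is essentially routine; the only point requiring care---and the one I would flag as the main obstacle---is that one must resist trying to deduce that $\delta_H(Z,X)$ is small from $\delta_H(Z,Y)$ being small. This is false when $X$ is much larger than $Y$ (the term $\sup_{x \in X}\inf_{z \in Z}\delta(x,z)$ in $\delta_H$ need not be small), so Lemma~\ref{lemma.subsystem} is not directly available here. What rescues the argument is that the containment $Z \subset X$ only requires the one-sided information that the local patterns of $Z$ on the bounded supports of $\mathcal{F}$ already occur in $Y$; the finiteness of $\mathcal{F}$ is precisely what makes the threshold $N$ (equivalently, the radius of the neighborhood) uniform.
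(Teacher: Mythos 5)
Your proof is correct and follows essentially the same route as the paper: the paper simply says the argument is "similar to the direction $(\Rightarrow)$ in the proof of Lemma~\ref{lemma.subsystem}," i.e., any shift close enough to the subsystem $Y$ shares its language on the (finitely many, bounded) supports of $\mathcal{F}$, hence avoids $\mathcal{F}$ and is contained in $X_{\mathcal{F}}=X$. Your sequence-and-contradiction phrasing and your explicit warning against trying to bound $\delta_H(Z,X)$ by $\delta_H(Z,Y)$ are just a more careful write-up of the same idea.
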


\begin{proof}
        The proof is similar to the direction $(\Rightarrow)$ in the proof of Lemma \ref{lemma.subsystem}.
\end{proof}

\subsubsection{{Maximal subsystems}\label{section.maximal.subsystems}}

After the observations made in the last section, it is natural to search for a characterization of isolated shifts in terms of their subsystems. As a matter of fact, we have the following:

\begin{proposition}\label{proposition.finite.subsystems}
If a shift of finite type has finitely many subsystems, then it is isolated.
\end{proposition}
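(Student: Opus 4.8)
The plan is to produce an explicit $\epsilon > 0$ such that the ball of radius $\epsilon$ around $X$ in the Hausdorff metric contains no shift other than $X$ itself. The two ingredients are the finite-type hypothesis (which forces nearby shifts to sit \emph{inside} $X$) and the finiteness hypothesis on subsystems (which keeps the proper subsystems bounded away from $X$).

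First I would invoke \cref{lemma.subsystem}: since $X$ is of finite type, there exists $\epsilon_0 > 0$ such that every shift $Z$ with $\delta_H(Z,X) < \epsilon_0$ satisfies $Z \subset X$. Any such $Z$, being a nonempty closed shift-invariant subset of $X$, is by definition a subsystem of $X$. Thus all shifts in the $\epsilon_0$-ball around $X$ are among the subsystems of $X$, of which there are only finitely many by hypothesis; write them as $X = Z_0, Z_1, \dots, Z_k$, where $Z_1, \dots, Z_k$ denote the \emph{proper} subsystems.

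Next I would observe that each proper subsystem is at strictly positive Hausdorff distance from $X$. Indeed, for $i \ge 1$ we have $Z_i \subsetneq X$ with $Z_i$ closed and $X$ compact, so there is a configuration $x \in X \setminus Z_i$ with $\inf_{z \in Z_i}\delta(x,z) > 0$; recalling the definition of $\delta_H$ and that $Z_i \subset X$ makes the first supremum vanish, this yields $\delta_H(Z_i, X) = \sup_{x \in X}\inf_{z \in Z_i}\delta(x,z) > 0$. Setting $\epsilon := \min\!\bigl(\epsilon_0,\, \min_{1 \le i \le k}\delta_H(Z_i,X)\bigr)$, which is a minimum of finitely many positive numbers and hence positive, I would conclude: if $\delta_H(Z,X) < \epsilon$ then $Z \subset X$ makes $Z$ one of the $Z_i$, but $\delta_H(Z,X) < \epsilon \le \delta_H(Z_i,X)$ for every $i \ge 1$ rules out the proper ones, forcing $Z = Z_0 = X$. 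Hence $X$ is isolated.

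There is no genuine obstacle here; the argument is essentially a counting argument combined with \cref{lemma.subsystem}. The only points requiring a moment of care are the two routine compactness/closedness verifications: that a shift $Z \subset X$ is automatically a subsystem, and that a proper closed subsystem of the compact shift $X$ lies at positive Hausdorff distance from $X$. Both follow immediately from compactness and the definition of $\delta_H$.
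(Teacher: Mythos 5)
Your proof is correct and follows essentially the same route as the paper: apply \cref{lemma.subsystem} to confine all nearby shifts to the finite collection of subsystems of $X$, then shrink the neighborhood to exclude the finitely many proper ones. You merely make explicit the positivity of $\min_{1\le i\le k}\delta_H(Z_i,X)$, which the paper leaves implicit.
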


\begin{proof}
    Let us consider a shift of finite type $X$ which has finitely many subsystems. As a consequence of Lemma~\ref{lemma.subsystem}, there is a neighborhood of $X$ which contains only subsystems of $X$. Since by hypothesis there are finitely many of them, there is a neighborhood of $X$ which only contains $X$, meaning that it is isolated.
\end{proof}

In the same vein, we have the following lemma which will be useful below:

\begin{lemma}\label{lemma:subsystem.finite.subsystems}
    Every subsystem of a shift of finite type $X$ having a finite number of subsystems is isolated. 
\end{lemma}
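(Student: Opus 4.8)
The plan is to leverage the local structure provided by Lemma~\ref{lemma.subsystem.2} together with the global finiteness hypothesis, reducing the statement to the elementary fact that a point admitting a neighborhood meeting only finitely many points of the ambient space is isolated. Let $Y$ be a subsystem of the finite type shift $X$. Since $X$ is of finite type, Lemma~\ref{lemma.subsystem.2} furnishes a neighborhood $N$ of $Y$ in $\mathcal{S}^d$ all of whose elements are subsystems of $X$. The crucial observation is that, because $X$ has only finitely many subsystems by hypothesis, the neighborhood $N$ contains only finitely many distinct shifts.

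Concretely, I would first fix $\epsilon > 0$ so that the ball $B := \{Z \in \mathcal{S}^d : \delta_H(Z,Y) < \epsilon\}$ is contained in $N$; every element of $B$ is then a subsystem of $X$, so $B$ is a subset of the finite collection of subsystems of $X$. I would enumerate the subsystems of $X$ lying in $B$ as $Y = Z_0, Z_1, \dots, Z_k$. If $k = 0$ then $B = \{Y\}$ and $Y$ is already isolated; otherwise I set $\epsilon' := \min_{1 \le i \le k} \delta_H(Y,Z_i) > 0$, a minimum of finitely many strictly positive numbers since each $Z_i$ with $i \ge 1$ is distinct from $Y$. The ball of radius $\min(\epsilon, \epsilon')$ around $Y$ then contains only $Y$ itself, so $Y$ is isolated in $\mathcal{S}^d$.

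There is essentially no analytic difficulty in this argument; its entire content is the passage from "a neighborhood consisting only of subsystems of $X$" to "a finite neighborhood," after which isolation is automatic. The only step requiring genuine input is Lemma~\ref{lemma.subsystem.2}, which guarantees that arbitrarily close to a subsystem of a finite type shift one sees nothing but further subsystems of that shift; the finiteness hypothesis then collapses this neighborhood to a finite set. I expect the main thing to be careful about is simply ensuring the enumeration is of \emph{distinct} shifts so that $\epsilon'$ is a genuine positive minimum, rather than any deeper obstacle. As a by-product, combining this with Lemma~\ref{lemma.isolated.are.sft} shows that every such subsystem is itself of finite type, though this consequence is not needed for the proof.
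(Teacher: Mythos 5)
Your argument is correct and is essentially the paper's own proof: apply Lemma~\ref{lemma.subsystem.2} to obtain a neighborhood of the subsystem consisting only of subsystems of $X$, note that finiteness of the set of subsystems makes this neighborhood finite, and conclude that a point with a finite neighborhood is isolated. You merely make explicit the routine shrinking of the radius below the minimum distance to the finitely many other subsystems, which the paper leaves implicit.
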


\begin{proof}
    As a consequence of Lemma \ref{lemma.subsystem.2}, every subsystem of $X$ has a neighborhood which consists only of subsystems of $X$, meaning that it is finite. Since a shift with finite neighborhood is isolated, this yields the statement.
\end{proof}

\begin{remark}
    Lemma \ref{lemma:subsystem.finite.subsystems} implies that if a shift of finite type has finitely many subsystems, all its subsystems are of finite type.
\end{remark}

One may observe that the proof of Proposition~\ref{proposition.finite.subsystems} can be generalized to 
shifts of finite type which has a finite collection $\mathcal{C}$ of strict subsystems with the property that every strict subsystem is contained in an element of $\mathcal{C}$. This suggest the following definition:

\begin{definition}\label{definition.maxsub.outcast}
    Let $X$ be a shift. We say that a 
    % \textcolor{cyan}{(non-empty)}
    % \footnote{\textcolor{red}{Le reviewer a demand\'e de clarifier si $Z=\emptyset$ peut \^etr\'e un sous syst\`eme maximal ou pas. J'ai mis \c{c}a car sinon il faut changer pas mal de choses apr\`es}} 
    subsystem $Z$ of $X$ is \textbf{maximal} when it is different from $X$ and if $Z' \supset Z$ is another subsystem different from $Z$, then $Z' = X$. We say that a subsystem of $X$ is an \textbf{outcast} if it is not contained in a maximal subsystem.
\end{definition}

Then we can formulate the following: 

\begin{proposition}\label{proposition.finite.maximal}
    If a shift of finite type has finitely many maximal subsystems and no outcast, then it is isolated.
\end{proposition}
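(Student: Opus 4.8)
The plan is to exhibit an explicit radius $r > 0$ such that the only shift within Hausdorff distance $r$ of $X$ is $X$ itself. Write $M_1, \dots, M_k$ for the finitely many maximal subsystems of $X$. The two ingredients are Lemma~\ref{lemma.subsystem}, which confines nearby shifts to be subsystems of $X$, and Lemma~\ref{lemma.inclusion}, which bounds a Hausdorff distance from below along a chain of inclusions.

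First I would invoke the ($\Rightarrow$) direction of Lemma~\ref{lemma.subsystem}: since $X$ is of finite type, there is $\epsilon > 0$ such that every $Z \in \mathcal{S}^d$ with $\delta_H(Z,X) < \epsilon$ satisfies $Z \subset X$. Next, observe that each maximal subsystem $M_i$ is by definition different from $X$, and both $M_i$ and $X$ are compact; hence any point of $X \setminus M_i$ lies at positive $\delta$-distance from the compact set $M_i$, so $\eta_i := \delta_H(M_i, X) > 0$. Because there are only finitely many maximal subsystems, $\eta := \min_{1 \le i \le k} \eta_i > 0$. I then set $r := \min(\epsilon, \eta)$.

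Now take any shift $Z$ with $\delta_H(Z, X) < r$. By the choice of $\epsilon$ we have $Z \subset X$. Suppose toward a contradiction that $Z \neq X$; then $Z$ is a proper subsystem, and since $X$ has no outcast, $Z$ is contained in some maximal subsystem $M_i$. Applying Lemma~\ref{lemma.inclusion} to the chain $Z \subset M_i \subset X$ yields $\delta_H(Z, X) \ge \delta_H(M_i, X) = \eta_i \ge \eta \ge r$, contradicting $\delta_H(Z,X) < r$. Hence $Z = X$, so the ball of radius $r$ around $X$ contains only $X$, and $X$ is isolated.

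The step that deserves the most care is the uniform lower bound $\eta > 0$, which is where both hypotheses do their work. The pointwise positivity $\eta_i > 0$ uses only that $\delta$ is a genuine metric together with compactness of $M_i$; the passage from the individual $\eta_i$ to a common positive $\eta$ is exactly what the hypothesis of \emph{finitely many} maximal subsystems provides, since an infinite family could have infimum of distances equal to $0$. The \emph{no outcast} hypothesis is what guarantees that every proper subsystem $Z$ is captured by one of the $M_i$, so that the chain $Z \subset M_i \subset X$ is available to apply Lemma~\ref{lemma.inclusion}.
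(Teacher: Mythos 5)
Your proof is correct and follows essentially the same route as the paper's: confine nearby shifts to subsystems of $X$ via Lemma~\ref{lemma.subsystem}, use the no-outcast hypothesis to place any proper subsystem inside some maximal $M_i$, and apply Lemma~\ref{lemma.inclusion} together with finiteness of $\{M_1,\dots,M_k\}$ to get a uniform positive lower bound on the distance. You have simply made explicit the radius and the positivity argument that the paper leaves implicit.
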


\begin{proof}
    Consider a shift $X$ as in the statement of the proposition. Any shift $Z$ close enough to $X$ for the Hausdorff distance is a subsystem of $X$ and thus, included in a maximal subsystem or equal to $X$. All subsystems different from $X$ are at distance larger than the minimum of $ \delta_H(Z',X)$, where $Z'$ is maximal (this comes from Lemma~\ref{lemma.inclusion}) which is positive. Hence $X$ is isolated.
\end{proof}

\begin{remark}
    The hypotheses in Proposition~\ref{proposition.finite.maximal} are satisfied by shifts of finite type which have finitely many subsystems. Proposition~\ref{proposition.finite.subsystems} is thus a consequence of Proposition~\ref{proposition.finite.maximal}.
\end{remark}

We also use the following notation.

\begin{notation}
Let $X$ be a shift. We denote by $\mathcal{M}(X)$ the set of maximal subsystems of $X$, and by $\mathcal{S}(X)$ the set of its strict subsystems.
\end{notation}

\begin{lemma}\label{lemma.two.maximal}
    Let $Z_1,Z_2$ be two different maximal subsystems of a shift $X$. We have $Z_1\cup Z_2=X$.
\end{lemma}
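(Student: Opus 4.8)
The plan is to realize $Z_1 \cup Z_2$ as a subsystem of $X$ and then play the maximality of the two pieces against each other. First I would check that the union of two subsystems is again a subsystem: $Z_1 \cup Z_2$ is nonempty, it is closed as a finite union of closed sets, and it is invariant under $\sigma^{\textbf{u}}$ for every $\textbf{u} \in \mathbb{Z}^d$ since each $Z_i$ is. This places $Z_1 \cup Z_2$ inside the collection of subsystems of $X$, where Definition~\ref{definition.maxsub.outcast} becomes applicable.

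Next I would apply maximality of $Z_1$ to the inclusion $Z_1 \subseteq Z_1 \cup Z_2$. Since $Z_1 \cup Z_2$ is a subsystem containing the maximal subsystem $Z_1$, the definition forces either $Z_1 \cup Z_2 = Z_1$ or $Z_1 \cup Z_2 = X$. In the second case we are done, so the entire argument reduces to excluding the first case.

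Finally, I would rule out $Z_1 \cup Z_2 = Z_1$, which is equivalent to $Z_2 \subseteq Z_1$. Here I invoke maximality of $Z_2$ symmetrically: since $Z_1$ is a subsystem containing $Z_2$, either $Z_1 = Z_2$ or $Z_1 = X$. The first alternative contradicts the hypothesis $Z_1 \neq Z_2$, and the second contradicts the fact that $Z_1$, being maximal, is by definition a proper subsystem of $X$. Hence $Z_2 \not\subseteq Z_1$, the first case is impossible, and we conclude $Z_1 \cup Z_2 = X$. I do not expect any genuine obstacle here; the only points requiring minor care are the verification that a finite union of subsystems is a subsystem and the correct symmetric use of the maximality definition for both $Z_1$ and $Z_2$, using that neither equals $X$.
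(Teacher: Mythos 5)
Your proof is correct and follows essentially the same route as the paper: form the subsystem $Z_1\cup Z_2$ and apply the maximality of $Z_1$ (and then of $Z_2$) to force the union to be all of $X$. You are in fact slightly more careful than the paper's own two-line argument, since you explicitly rule out the degenerate case $Z_2\subseteq Z_1$, which the paper passes over silently.
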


\begin{proof}
    Suppose it is not the case. Then since $Z_1$ and $Z_2$ are different, $Z_1$ is strictly included in $Z_1\cup Z_2$ which is also different from $X$ by hypothesis. This implies that $Z_1$ is not a maximal subsystem of $X$.
\end{proof}

%\textcolor{red}{It follows from \cref{definition.maxsub.outcast} that if $Z_1,Z_2$ are maximal subsystems of $X$, then $Z_1\cup Z_2=X$, otherwise, $Z_1\subset(Z_1\cup Z_2)\neq X$.}

\begin{proposition}\label{proposition.inf.max.sub}
    If a shift of finite type has infinitely many maximal subsystems, 
    then it is not isolated. 
\end{proposition}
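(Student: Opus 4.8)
The plan is to prove the contrapositive-style statement directly: assuming $X$ is a shift of finite type with infinitely many maximal subsystems, I will exhibit a sequence of shifts converging to $X$ but not eventually equal to $X$, which shows $X$ is not isolated. The key tension to exploit is that being of finite type gives us, via Lemma~\ref{lemma.subsystem}, an $\epsilon > 0$ such that every shift within $\epsilon$ of $X$ is a \emph{subsystem} of $X$. So any witnessing sequence necessarily consists of proper subsystems approaching $X$; the infinitude of maximal subsystems should be exactly what produces such an accumulation.

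First I would enumerate infinitely many distinct maximal subsystems $(Z_k)_{k \ge 0}$ of $X$. The natural candidate sequence is the sequence of finite unions $Y_n := \bigcup_{k=0}^{n} Z_k$. By Lemma~\ref{lemma.stability} (or simply because a finite union of shifts is a shift) each $Y_n$ is a shift, and the sequence is non-decreasing. Since the $Z_k$ are maximal and pairwise distinct, Lemma~\ref{lemma.two.maximal} tells us $Z_i \cup Z_j = X$ for $i \ne j$, so as soon as $n \ge 1$ we already have $Y_n = X$. This means the \emph{increasing} union collapses too fast and is the wrong sequence to use. The real content is therefore to build shifts that are close to $X$ yet strictly smaller, and the right object is an infinite union of pieces, each lying inside a distinct maximal subsystem, chosen so that the union is dense in $X$ but misses at least one point.

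The approach I would take instead is to use density. Because each $Z_k$ is a proper subsystem, $\delta_H(Z_k, X) > 0$; but I want to combine \emph{orbits} drawn from infinitely many different $Z_k$ into a single subsystem that is $\delta_H$-close to $X$ without equalling $X$. Concretely, I would try to show that there is a proper subsystem $Y \subsetneq X$ which is $\delta_H$-arbitrarily close to $X$, contradicting isolation — or, more cleanly, construct an explicit convergent sequence. One robust route: for each $n$, use the fact that $X$ has only finitely many patterns on $\mathbb{B}_n^d$, and the infinitely many distinct maximal subsystems $Z_k$ must, for large $k$, all realize the full language $\mathscr{L}_{\mathbb{B}_n^d}(X)$ (since there are infinitely many of them but only finitely many possible $n$-languages, and two maximal subsystems with the same language on all $\mathbb{B}_n^d$ would have to coincide as they are closed invariant sets — so I extract a subsequence whose $\mathbb{B}_n^d$-language exhausts that of $X$). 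Then $Z_k \to X$ in $\delta_H$ along that subsequence by the language-convergence criterion recalled after the definition of $\delta_H$, while each $Z_k \ne X$; hence $X$ is not isolated.

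The main obstacle I anticipate is justifying the language-exhaustion step rigorously: I must argue that infinitely many distinct maximal subsystems force their finite-window languages to converge to $\mathscr{L}(X)$. The clean way is a counting/pigeonhole argument combined with the observation that a subsystem is determined by its language, so distinct maximal subsystems cannot share the same language on every window; pushing this to show the languages actually \emph{increase} to $\mathscr{L}_{\mathbb{B}_n^d}(X)$ for each fixed $n$ needs care, because a priori the maximal subsystems could all avoid some fixed pattern of $X$. I would handle this by noting that if every maximal subsystem (hence, by the outcast-free part of maximality structure, every proper subsystem) avoided a fixed pattern $p \in \mathscr{L}(X)$, then the configurations containing $p$ would all lie in $X$ but in no proper subsystem, forcing $X$ to be, in effect, generated by finitely many such ``top'' pieces and thereby contradicting having infinitely many maximal subsystems. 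Making that dichotomy precise — either the languages exhaust $\mathscr{L}(X)$ and we get convergence, or they stabilize below it and we bound the number of maximal subsystems — is the crux, and everything else is routine compactness and the convergence criterion for $\delta_H$.
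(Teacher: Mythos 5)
Your route is genuinely different from the paper's, and the underlying idea is sound, but the step you yourself flag as the crux is exactly where your argument as written breaks down. You worry that ``a priori the maximal subsystems could all avoid some fixed pattern of $X$'', and you propose to rule this out via outcast-freeness plus a vague claim that $X$ would be ``generated by finitely many top pieces''. Neither works: no outcast hypothesis is available in this proposition, and the ``top pieces'' step is not a proof. The correct observation --- which you already have in hand, since you invoke it to explain why the unions $\bigcup_{k\le n} Z_k$ collapse --- is \cref{lemma.two.maximal}: if $Z_i \neq Z_j$ are maximal then $Z_i \cup Z_j = X$, so any configuration of $X$ containing a given pattern $p \in \mathscr{L}(X)$ lies in $Z_i$ or in $Z_j$, whence at most \emph{one} maximal subsystem can omit $p$ from its language. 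Since $\mathscr{L}_{\mathbb{B}_n^d}(X)$ is finite, for each $n$ only finitely many of the $Z_k$ fail to have full language on $\mathbb{B}_n^d$; a diagonal extraction then yields a subsequence with $\mathscr{L}_{\mathbb{B}_n^d}(Z_{k_j}) = \mathscr{L}_{\mathbb{B}_n^d}(X)$ for all $n \le j$, hence $Z_{k_j} \rightarrow X$ by the language criterion for $\delta_H$, while each $Z_{k_j}$ is a proper subsystem. With that substitution your proof closes (and, like the paper's, it never actually uses the finite-type hypothesis).

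For comparison, the paper argues in the dual direction: rather than showing the $Z_k$ themselves accumulate at $X$, it sets $K = \bigcap_k Z_k$ and $K_n = \bigcup_{k \le n} \overline{X \setminus Z_k}$, shows via \cref{lemma.stability} and \cref{lemma.union} that $K \cup K_n \rightarrow X$, and uses \cref{lemma.two.maximal} in the form $Z_{n+1} \supset \overline{X \setminus Z_k}$ for $k \le n$ to conclude that each $K \cup K_n$ is contained in the proper subsystem $Z_{n+1}$, so the sequence is not ultimately equal to $X$. Both proofs hinge on the same lemma; yours is arguably the more economical once the pigeonhole step is done correctly, since it dispenses with the auxiliary shifts $K$ and $K_n$ and the appeal to \cref{lemma.union}.
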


\begin{proof}
Let $X$ be a shift.
Let $(Z_n)_{n\ge0}$ be a sequence of maximal subsystems, set $K := \bigcap_n Z_n$, and for all $n$, set 
\[K_n := \bigcup_{k=0}^n \overline{\left(X \backslash Z_k \right)}\]
It is straightforward that $K$ is a shift. This is also the case for $K_n$ for all $n$. Indeed, for all $k$, $X \backslash Z_k$ is invariant by the shift because $Z_k$ is.
As a consequence of Lemma~\ref{lemma.stability}, $\overline{\left(X \backslash Z_k \right)}$ is a shift, thus $K_n$ is a shift.
Furthermore $(K_n)$ is a non-decreasing sequence whose union contains $X \backslash K$. Indeed, this union contains 
the union of the sets $X \backslash Z_k$, which is equal to 
$X \backslash K$.
Therefore 
the sets $K_n$, together with $K$, cover $X$. By this, for all $n$ there exists $m$ such that for all $l \ge m$, 
the Hausdorff distance between $K \bigcup K_l$ and $X$ is smaller than $2^{-n}$ (this comes from Lemma~\ref{lemma.union}). We only have left to prove that $(K_n)$ is not ultimately constant. 
By Lemma~\ref{lemma.two.maximal}, for all $n \neq m$, $Z_n \bigcup Z_m = X$.
This means that $Z_m \supset X \backslash Z_n$ and thus 
$Z_m \supset \overline{X \backslash Z_n}$. This implies that for all $n$, $Z_{n+1}$ contains $K_n$ and thus $K \bigcup K_n$. If $(K_n)$ was ultimately constant, this would contradict the fact that $Z_{n+1}$ is maximal and thus not equal to $X$.

% \textcolor{cyan}{... together with $K$ cover $X$. Observe that for any $n\in\N$ $K_n\subseteq K_{n+1}$, thus $K\cup K_n\subseteq K\cup K_{n+1}$. Denote by $n_\alpha$ the number such that $\delta_H(K\cup K_\alpha,X)=2^{-n_\alpha}$, and observe that $n_\alpha\leq n_{\alpha+1}$. Set $K_0=\emptyset$, $i_0=0$, and for $j>0$ set $i_j=n_{i_{j-1}}+|{\mathcal{A}^{\mathbb{B}^d_{n_{i_{j-1}}}}}|$. We have that $i_0<i_1<\ldots<n_j<\ldots$, and therefore $n_{i_0}<n_{i_1}<\ldots<n_{i_j}<\ldots$. It follows that for all $n\in\N$ there exists $j\in\N$ such that $n<n_{i_j}$ and thus $\delta_H(K\cup K_{i_j},X)=2^{-n_{i_j}}<2^{-n}$. We only have left to prove...}
\end{proof}

% \comm{S}{point (36)}

The following is straightforward.

\begin{lemma}\label{lemma.stability.outcast}
    Consider $Z$ an outcast of a shift $X$. Every $Z' \in \mathcal{S}(X)$ containing $Z$ is also an outcast.
\end{lemma}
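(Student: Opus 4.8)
The plan is to argue by contradiction, relying only on the transitivity of set inclusion together with the definitions of \emph{maximal subsystem} and \emph{outcast} from \cref{definition.maxsub.outcast}. First I would record the shape of the hypotheses: since $Z' \in \mathcal{S}(X)$, the subsystem $Z'$ is by definition a strict (proper) subsystem of $X$, so the \emph{only} way $Z'$ could fail to be an outcast is by being contained in some maximal subsystem of $X$. In other words, to prove $Z'$ is an outcast it suffices to rule out $Z' \subseteq M$ for any $M \in \mathcal{M}(X)$.

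So I would suppose, toward a contradiction, that $Z'$ is not an outcast. Then there exists a maximal subsystem $M \in \mathcal{M}(X)$ with $Z' \subseteq M$. Combining this with the hypothesis $Z \subseteq Z'$ and the transitivity of inclusion yields $Z \subseteq M$. But then $Z$ is contained in a maximal subsystem, which directly contradicts the assumption that $Z$ is an outcast. Hence $Z'$ cannot be contained in any maximal subsystem, and being a strict subsystem it is therefore itself an outcast, as required.

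The main (and essentially only) point requiring care is the bookkeeping in the definition of outcast: one must confirm that $Z'$ genuinely qualifies as a candidate outcast, i.e. that it is a proper subsystem, which is guaranteed precisely because $Z' \in \mathcal{S}(X)$. Beyond this there is no real obstacle — the statement is a one-line consequence of transitivity of inclusion, and crucially no topological or dynamical input (compactness, finite type, convergence in the Hausdorff metric, etc.) is needed, which is why the excerpt classifies it as straightforward.
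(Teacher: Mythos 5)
Your proof is correct and is exactly the transitivity-of-inclusion argument the paper has in mind when it declares the lemma "straightforward" and omits the proof. The care you take to note that $Z'\in\mathcal{S}(X)$ guarantees properness is the right bookkeeping, and no further input is needed.
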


\begin{lemma}\label{lemma.distance.outcast}
    Consider $Z$ an outcast of a shift $X$. For every $x \in X \backslash Z$ and every $\epsilon>0$ there exists an outcast $Z'$ of $X$ such that $\delta(x,Z') < \epsilon$ and $Z \subset Z'$.
\end{lemma}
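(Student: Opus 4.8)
The plan is to reduce the statement, via \cref{lemma.stability.outcast}, to producing \emph{any} strict subsystem of $X$ containing $Z$ that comes $\epsilon$-close to $x$, and then to build such a subsystem by a maximality argument. Indeed, \cref{lemma.stability.outcast} does most of the bookkeeping: any strict subsystem of $X$ that contains the outcast $Z$ is again an outcast. Hence it suffices to exhibit a strict subsystem $Z' \supset Z$ containing some configuration $w$ with $\delta(x,w)<\epsilon$; then $\delta(x,Z')\le\delta(x,w)<\epsilon$, and $Z'$ is automatically an outcast. The naive candidate $Z\cup\overline{\mathcal{O}(x)}$ is tempting, but it can fail to be proper --- for instance when $x$ is a transitive point, so that $\overline{\mathcal{O}(x)}=X$ --- and this is exactly the situation the argument must circumvent.

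To sidestep this, I would set
\[ W := \bigcup\{Y : Y \text{ is a subsystem of } X,\ Z\subset Y\subsetneq X\}. \]
This set is shift-invariant and non-empty (it contains $Z$), so by \cref{lemma.stability} its closure $\overline{W}$ is a subsystem of $X$. The heart of the proof is the claim that $\overline{W}=X$; granting it, $x\in\overline{W}$ yields a configuration $w\in W$ with $\delta(x,w)<\epsilon$, and by the definition of $W$ this $w$ lies in some strict subsystem $Z'$ with $Z\subset Z'\subsetneq X$, which is the desired outcast.

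The main obstacle is therefore establishing $\overline{W}=X$, and I would argue by contradiction. If $\overline{W}\ne X$, then $\overline{W}$ is itself a strict subsystem of $X$ containing $Z$, hence one of the sets appearing in the union defining $W$; this forces $\overline{W}\subset W$, and thus $W=\overline{W}$ is closed. Now $Z\subset W\subsetneq X$ with $Z$ an outcast, so \cref{lemma.stability.outcast} makes $W$ an outcast; since a maximal subsystem is trivially contained in a maximal subsystem (itself) and hence is never an outcast, $W$ cannot be maximal. By the definition of maximality there is then a subsystem $W'$ with $W\subsetneq W'\subsetneq X$; but $W'\supset Z$ together with $W'\subsetneq X$ puts $W'$ inside the union, giving $W'\subset W$, contradicting $W\subsetneq W'$. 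Hence $\overline{W}=X$, which completes the argument. The only delicate point is keeping straight that the union $W$ need not be closed a priori, which is precisely why the contradiction must run through \cref{lemma.stability} to pass to $\overline{W}$ and then through the self-improving maximality step.
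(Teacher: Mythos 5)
Your proof is correct and follows essentially the same route as the paper's: both consider the closure of the union of all strict subsystems of $X$ containing $Z$, show it must equal $X$ (since otherwise it would be a maximal subsystem containing the outcast $Z$, which is impossible), and then extract a nearby strict subsystem $Z'$ that is an outcast by \cref{lemma.stability.outcast}. The only cosmetic difference is that you establish $\overline{W}=X$ directly before locating $x$, whereas the paper runs the contradiction through the assumption $\delta(x,Z')\ge\epsilon$; the substance is identical.
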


\begin{proof}
Fix $x \in X \backslash Z$. Assume that there exists $\epsilon > 0$ such that for every $Z' \in \mathcal{S}(X)$ with $Z \subset Z'$, $\delta(x,Z') \ge \epsilon$. This implies that the distance between $x$ and the set
\[ \overline{\bigcup_{\underset{Z'' \supset Z}{Z'' \in \mathcal{S}(X)}} Z''}\]
is at least $\epsilon$.
As a consequence, this set is different from $X$ and this means that it is maximal, which is not possible. This implies that there exists $Z' \in \mathcal{S}(X)$ such that $Z \subset Z'$ and $\delta(x,Z') < \epsilon$. Then, by \cref{lemma.stability.outcast}, $Z'$ is also an outcast of $X$.
\end{proof}

\begin{proposition}
    If $Z$ is an outcast of a shift $X$, then there exists a strictly increasing sequence $(Z_n)_{n\in\mathbb{N}}$ of shifts such that $Z_0 = Z$ and $\overline{\cup_{n\ge0}Z_n}=X$.
\end{proposition}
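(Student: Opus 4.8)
The plan is to construct $(Z_n)$ by induction, using \cref{lemma.distance.outcast} at each step to enlarge the current outcast while simultaneously pushing it towards a prescribed target configuration. Two elementary observations drive the argument: first, for nested shifts $A \subset B$ we have $\delta(x,B) \le \delta(x,A)$ for every configuration $x$, so once some term of the sequence is within a given distance of a configuration, all later terms remain at least that close; second, $X$ is a compact metric space, hence separable, so density of $\bigcup_n Z_n$ in $X$ can be tested against a fixed countable dense subset at a countable family of precisions.

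First I would fix a countable dense subset $\{x_k\}_{k \ge 0}$ of $X$ together with a bijective enumeration $(k_j,m_j)_{j \ge 0}$ of $\mathbb{N}\times\mathbb{N}$, and set $Z_0 := Z$. Suppose $Z_j$ has been built and is an outcast; in particular it is a proper subsystem, so $X \setminus Z_j \neq \emptyset$ and, $Z_j$ being closed, $\delta(x,Z_j) > 0$ for any $x \in X \setminus Z_j$. To define $Z_{j+1}$ I distinguish two cases. If $x_{k_j} \notin Z_j$, I apply \cref{lemma.distance.outcast} to the outcast $Z_j$, the configuration $x_{k_j}$, and the precision $\epsilon := \min\!\big(2^{-m_j},\,\tfrac12\delta(x_{k_j},Z_j)\big)$, obtaining an outcast $Z_{j+1} \supset Z_j$ with $\delta(x_{k_j},Z_{j+1}) < \epsilon$. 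If instead $x_{k_j} \in Z_j$, I apply the same lemma to $Z_j$, an arbitrary $x \in X \setminus Z_j$, and a precision $\epsilon < \delta(x,Z_j)$. In both cases the choice $\epsilon < \delta(\,\cdot\,,Z_j)$ guarantees that the produced shift differs from $Z_j$, so $Z_{j+1} \supsetneq Z_j$, and \cref{lemma.distance.outcast} ensures $Z_{j+1}$ is again an outcast, so the induction continues.

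It then remains to check the two defining properties. Strict monotonicity and $Z_0 = Z$ hold by construction, and each $Z_n$ is an outcast, hence a genuine subsystem (shift) of $X$. For density, fix $y \in X$ and $\eta > 0$; choose $m$ with $2^{-m} < \eta/2$ and, by density of $\{x_k\}$, an index $k$ with $\delta(y,x_k) < \eta/2$. The pair $(k,m)$ occurs as $(k_j,m_j)$ for some $j$, and the step above forces $\delta(x_k,Z_{j+1}) < 2^{-m}$ (this is immediate when $x_k \in Z_j$, where the distance is $0$). By the first observation, $\delta(x_k,Z_n) \le \delta(x_k,Z_{j+1}) < 2^{-m}$ for all $n \ge j+1$, whence $\delta(y,Z_n) \le \delta(y,x_k) + \delta(x_k,Z_n) < \eta$. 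Thus $y \in \overline{\bigcup_n Z_n}$, and since the reverse inclusion $\overline{\bigcup_n Z_n} \subset X$ is clear, we conclude $\overline{\bigcup_n Z_n} = X$.

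The main obstacle is reconciling, within a single application of \cref{lemma.distance.outcast}, the two competing demands at each step: the sequence must grow strictly, and it must approach the designated target. This is precisely what the case distinction resolves—when the target already lies in $Z_j$ one enlarges through a different witness point of $X \setminus Z_j$—while taking $\epsilon$ strictly below the (positive) distance $\delta(\,\cdot\,,Z_j)$ forces the strict inclusion. I would also emphasize that this one-outcast-at-a-time enlargement is what keeps every stage a proper subsystem: I deliberately avoid forming finite unions of subsystems, which could prematurely equal $X$ or fail to be outcasts, since \cref{lemma.distance.outcast} hands back a single outcast at each step.
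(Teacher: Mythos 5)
Your proof is correct and follows essentially the same strategy as the paper: both arguments iterate \cref{lemma.distance.outcast} to build an increasing chain of outcasts containing $Z$ whose union is dense in $X$. The only differences are bookkeeping --- the paper applies the lemma finitely many times per step against a finite $2^{-n-1}$-net of $X$ (getting $\delta_H(Z_n,X)<2^{-n}$) and then extracts a strictly increasing subsequence, whereas you apply it once per step against an enumerated countable dense set and force strict growth directly by taking $\epsilon$ below the relevant distance $\delta(\cdot,Z_j)$ --- and both versions are valid.
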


\begin{proof}
    Set $Z_0=Z$ and suppose that $Z_0, \ldots , Z_{n-1}$ have been already constructed for $n \ge 1$ and $\delta_H(Z_{n-1},X) < 2^{-n-1}$.
     There exists a finite set $S_n \subset X$ such that 
    every point of $X$ is at distance strictly less than $2^{-n-1}$ 
    to a point in $S_n$.
    Using Lemma~\ref{lemma.distance.outcast} multiple times, we obtain an outcast $Z_n$ of $X$ containing $Z_{n-1}$ such that for all $x \in S_n$, $\delta(x,Z_n) < 2^{-n-1}$. As a consequence the Hausdorff distance 
    between $X$ and $Z_n$ is less than $2^{-n}$. Since $Z_n$ is an outcast, it is different from $X$. By construction, the sequence $(Z_n)_n$ is non-decreasing and $\overline{\cup_{n\ge0}Z_n}=X$. Since for every $n$, $Z_n$ is different from $X$, it is possible to extract from $(Z_n)$ another sequence $(Z'_n)_n$ such that $Z'_0 = Z$ which is strictly increasing and such that $\overline{\cup_{n\ge0}Z'_n}=X$.
    % \textcolor{cyan}{The sequence $(Z_n)_{n\in\N}$ that arises in this manner is such that for every $n\in\N$ it holds that $Z_n$ is an outcast of $X$, and $Z_n\subset Z_{n-1}$. If the shift $Y=\overline{\cup_{n\geq0}Z_n}$ is different from $X$, then $Y$ is a maximal subsystem of $X$ that contains $Z$. This contradiction completes the proof.}
\end{proof}

% \comm{S}{(37)}

A straightforward consequence of this is the following proposition:

\begin{proposition}\label{proposition.isolated.strict.contained}
    If a shift has an outcast, then it is not isolated.
\end{proposition}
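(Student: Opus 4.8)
The plan is to deduce the statement directly from the preceding proposition, which, for any outcast $Z$ of $X$, produces a strictly increasing sequence $(Z_n)_{n \ge 0}$ of shifts with $Z_0 = Z$ and $\overline{\bigcup_{n \ge 0} Z_n} = X$. The strategy is then to exhibit $(Z_n)$ as a sequence of shifts, all different from $X$, that converges to $X$ for the Hausdorff metric; this contradicts isolation.

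First I would check that each $Z_n$ is a proper subsystem of $X$. Since $\bigcup_n Z_n \subset \overline{\bigcup_n Z_n} = X$, we have $Z_n \subset X$ for all $n$. If some $Z_N$ equaled $X$, then strict monotonicity would force $Z_{N+1} \supsetneq Z_N = X$, which is impossible as $Z_{N+1} \subset X$. Hence $Z_n \neq X$ for every $n$.

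Next I would establish $Z_n \to X$. The sequence $(Z_n)$ is non-decreasing and $\overline{\bigcup_n Z_n} = X$, and I claim the argument of Lemma~\ref{lemma.union} applies in this slightly more general situation where the union is merely dense. The only place where the hypothesis $X_\infty = \bigcup_n X_n$ is used in that proof is to reach a contradiction from the existence of a ball $B_{\epsilon/2}(x,X)$, centered at a point $x \in X$, meeting none of the $Z_n$; but $x \in X = \overline{\bigcup_n Z_n}$ guarantees that every ball around $x$ contains points of $\bigcup_n Z_n$, giving precisely that contradiction. Equivalently, and perhaps cleaner to write out, one can argue at the level of languages: for any finite $\mathbb{U}$, density yields $\mathscr{L}_{\mathbb{U}}(X) = \bigcup_n \mathscr{L}_{\mathbb{U}}(Z_n)$, since a finite pattern appearing in $X$ already appears on a large enough window of a configuration approximated from $\bigcup_n Z_n$, hence in some $Z_n$; as $\mathscr{L}_{\mathbb{U}}(X)$ is finite and the $\mathscr{L}_{\mathbb{U}}(Z_n)$ are nested, they stabilize to $\mathscr{L}_{\mathbb{U}}(X)$, and the convergence criterion recorded in the background section gives $Z_n \to X$.

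Finally, since $Z_n \to X$ while $Z_n \neq X$ for all $n$, the shift $X$ is not isolated, which is the claim. The only genuinely delicate point—and the one to state carefully—is the passage from $\bigcup_n Z_n$ to its closure when invoking Lemma~\ref{lemma.union}; I do not expect any serious obstacle, as everything else is bookkeeping.
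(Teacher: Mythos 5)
Your proposal is correct and follows exactly the route the paper intends: it declares the proposition ``a straightforward consequence'' of the preceding one, namely that the strictly increasing sequence $(Z_n)$ with $Z_0=Z$ and $\overline{\bigcup_n Z_n}=X$ consists of proper subsystems converging to $X$. Your care in extending Lemma~\ref{lemma.union} from $X_\infty=\bigcup_n X_n$ to a merely dense union is exactly the right point to flag, and both of your arguments for it (the ball-based one and the language-stabilization one) go through.
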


\subsubsection{{Characterization of isolated shifts}\label{subsection.characterization}}
The results in the previous section suggest the following definition.
\begin{definition}\label{definition.star}
    We say that a shift satisfies the condition $(\star)$ when: it has finitely many maximal subsystems and no outcasts.
\end{definition}

\cref{thm:A} is a reformulation of the following.

\begin{theorem}\label{thm.main}
A shift is isolated if and only it satisfies the condition $(\star)$ and is of finite type.
\end{theorem}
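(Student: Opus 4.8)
The plan is to observe that \cref{thm.main} is a direct consequence of the results already established in this subsection, so the proof amounts to assembling them in the correct order; all the genuine work has already been carried out in \cref{lemma.isolated.are.sft}, \cref{proposition.finite.maximal}, \cref{proposition.inf.max.sub}, and \cref{proposition.isolated.strict.contained}.

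For the direction ($\Leftarrow$), suppose $X$ is of finite type and satisfies the condition $(\star)$, that is, it has finitely many maximal subsystems and no outcast. This is precisely the hypothesis of \cref{proposition.finite.maximal}, which yields that $X$ is isolated. Nothing further is needed for this implication.

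For the direction ($\Rightarrow$), suppose $X$ is isolated. First I would invoke \cref{lemma.isolated.are.sft} to conclude that $X$ is of finite type. Next, to show that $X$ has finitely many maximal subsystems, I argue by contraposition: were $X$ to have infinitely many maximal subsystems, then---being of finite type---\cref{proposition.inf.max.sub} would force $X$ not to be isolated, contradicting the hypothesis. The order here matters, since the finite-type property must be secured first in order for \cref{proposition.inf.max.sub} to apply. Finally, to show that $X$ has no outcast, I again argue by contraposition: were $X$ to admit an outcast, \cref{proposition.isolated.strict.contained} would likewise force $X$ not to be isolated. Combining these three conclusions, $X$ is of finite type and satisfies $(\star)$, as desired.

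Because the argument is merely a combination of prior statements, there is no substantive obstacle at this stage; the difficulty was concentrated in the earlier results, in particular the construction in \cref{proposition.inf.max.sub} of the non-decreasing family $(K_n)$ whose union covers $X \setminus K$, and the iterated application of \cref{lemma.distance.outcast} underpinning \cref{proposition.isolated.strict.contained}. The present theorem simply packages these into the stated equivalence and, via the reformulation in terms of $(\star)$, into \cref{thm:A}.
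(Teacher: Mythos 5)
Your proposal is correct and follows exactly the paper's own proof, which cites the same four results (\cref{lemma.isolated.are.sft}, \cref{proposition.finite.maximal}, \cref{proposition.inf.max.sub}, \cref{proposition.isolated.strict.contained}) and combines them in the same way; you have merely made the routing between the two directions explicit. No gaps.
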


\begin{proof}
    This is a consequence of 
\cref{lemma.isolated.are.sft,proposition.finite.maximal,proposition.isolated.strict.contained,proposition.inf.max.sub}.
\end{proof}

From this we can recover the characterization of isolated shifts in $\mathcal{S}^1$. First, we introduce some notation and definitions.

\begin{notation}
    For a one-dimensional shift $X$ and $n\in \mathbb{N}$, we denote by  $G_{X,n}$ the directed graph with vertex set $\mathscr{L}_{\llbracket 0,n-1\rrbracket}(X)$, and where there is an edge from 
    $w_0 \ldots w_{n-1}$
to $w_1 \ldots w_{n}$ whenever $w_0 \ldots w_n \in \mathscr{L}_{\llbracket 0,n\rrbracket}(X)$.
\end{notation}

\begin{definition}
    A directed graph has the \textbf{no-middle-cycle property} if it
contains no \textit{middle cycle}, meaning a cycle $c$ which has both an incoming edge (that connects a vertex not in $c$ to a vertex in $c$) and an outgoing edge (that connects a vertex in $c$ to a vertex not in $c$).
\end{definition}

\begin{remark}
    Observe that if $G_{X,n}$ has the no-middle-cycle property, then for all $m\ge n$, $G_{X,m}$ has the no-middle-cycle property as well.
\end{remark}

\begin{definition}
    A one-dimensional shift X is said to have the no-middle-cycle property when there
exists an integer $n$ for which $G_{X,n}$ has the no-middle-cycle property.
\end{definition}

    For any finite directed graph $G$, we denote by $X_G$ the one-dimensional shift whose alphabet is the vertex set of $G$ and is defined by forbidden patterns $ab$, where there is no edge in $G$ from $a$ to $b$. Following R.Pavlov and S.Schmieding, we call \textit{barbell} a finite directed graph composed of exactly two cycles and a directed path which departs from a vertex of one cycle and ends at a vertex of the other one.

    \begin{lemma}\label{lemma.nmc}
        For any finite directed graph $G$, $X_G$ is isolated in $\mathcal{S}^1$ if and only if $G$ has the no-middle-cycle property. In this case, it has finitely many subsystems.
    \end{lemma}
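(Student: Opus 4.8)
The plan is to run everything through the characterization already obtained. Since $X_G$ is defined by forbidding the length-two patterns $ab$ corresponding to non-edges, it is always a shift of finite type, so by \cref{thm.main} being isolated is equivalent to satisfying $(\star)$. Thus the statement reduces to showing that $(\star)$ (in fact the stronger property of having finitely many subsystems) is equivalent to the no-middle-cycle property. As a harmless first reduction I would replace $G$ by the subgraph of vertices lying on a bi-infinite walk, since the others are not globally admissible and do not affect $X_G$ (this is automatic when $G=G_{X,n}$), and I read the no-middle-cycle condition on this essential graph. The combinatorial engine of the whole argument is the elementary fact that \emph{a strongly connected digraph is either a single simple cycle or contains a middle cycle}: if some vertex has out-degree at least two inside a strongly connected $S$, one extracts a cycle through one of its out-edges and checks that it has both an incoming and an outgoing edge, the second out-edge (or a shortest return path) supplying the missing one.

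For ``no middle cycle $\Rightarrow$ isolated'' I would use this fact to describe $X_G$ completely. No middle cycle forces every non-trivial strongly connected component to be a simple cycle, and forces each such cycle to be a source (no incoming edge) or a sink (no outgoing edge); the remaining vertices carry no cycle, so the subgraph they span is acyclic. Consequently every bi-infinite walk is eventually periodic in both directions -- settling backward into a source cycle and forward into a sink cycle -- separated by a transient of length at most $|V(G)|$ through the acyclic part. Hence there are finitely many periodic orbits (one per cycle) and finitely many transient orbits (one per path in the acyclic part), and each transient orbit has orbit closure equal to itself together with its two endpoint periodic orbits. A subsystem is then determined by the finite data of which periodic orbits and which transient orbits it contains (subject to containing the two endpoints of each transient orbit it uses), and such a selection is automatically closed because the only accumulation points of a transient orbit are its endpoints and there are finitely many orbit types. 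Thus $X_G$ has finitely many subsystems -- the final assertion of the lemma -- and it is isolated by \cref{proposition.finite.subsystems}.

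For the converse I would show directly that a genuine middle cycle $c$, entered by an edge $u\to w_1$ and exited by an edge $w_2\to v$ with $u,v$ on bi-infinite walks, prevents isolation, by producing proper subsystems arbitrarily close to $X_G$. For each $m$ let $Q_m$ be the pattern reading $u$, then $w_1$, then $m$ consecutive turns around $c$, and set $Y_m:=\{x\in X_G : Q_m\not\sqsubset x\}$, a subsystem of $X_G$. Each $Y_m$ is proper since $Q_m$ is globally admissible, yet $Y_m\to X_G$: by the convergence criterion for the Hausdorff metric it suffices to match patterns on $\llbracket -n,n\rrbracket$ with $2n+1<m|c|$, and given such a pattern of $X_G$ I would reroute its host configuration outside the window so that every forward excursion entering $c$ through $u\to w_1$ leaves $c$ through $w_2\to v$ after fewer than $m$ turns -- possible because from any vertex of $c$ one reaches $w_2$ within $|c|$ steps and the past of $u$ may be chosen without re-entering. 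This preserves the central pattern while destroying all occurrences of $Q_m$, giving $\mathscr{L}_{\llbracket -n,n\rrbracket}(Y_m)=\mathscr{L}_{\llbracket -n,n\rrbracket}(X_G)$ with $Y_m\neq X_G$, so $X_G$ is not isolated. (Alternatively one can repackage the same excursions as infinitely many maximal subsystems and invoke \cref{proposition.inf.max.sub}, but the direct argument avoids having to prove that a single excursion orbit is open.)

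The step I expect to be most delicate is this rerouting in the converse: making precise that every short window of $X_G$ extends to a bi-infinite walk with uniformly bounded excursions through $c$ while never creating a stray copy of $Q_m$ elsewhere. The essentiality reduction is the other point needing care, since a middle cycle whose entering and exiting edges lie on no bi-infinite walk does not obstruct isolation; restricting to the essential subgraph at the outset is exactly what makes the middle-cycle property the correct combinatorial invariant. The structural sub-lemma on strongly connected graphs, though elementary, carries the bulk of the case analysis and should be isolated and proved first.
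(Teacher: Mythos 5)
Your proposal is correct and follows essentially the same route as the paper: the forward direction derives the barbell/cycle structure of bi-infinite walks to conclude that $X_G$ has finitely many subsystems and is therefore isolated via \cref{proposition.finite.subsystems}, and the converse forbids ever-longer repetitions of a middle cycle to produce proper subsystems converging to $X_G$. Your explicit reduction to the essential subgraph is a worthwhile refinement that the paper's one-line argument glosses over (and which is genuinely needed for arbitrary $G$, though automatic for the graphs $G_{X,n}$ used in \cref{cor pavlov}).
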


    \begin{proof}
        $(\Leftarrow)$ If $G$ satisfies the no-middle-cycle property, then every configuration is in some $X_H$, where $H$ is a cycle or a barbell in $G$, and every subsystem 
        is a finite union of such shifts. Thus $X_G$ has finitely many subsystems. Since it is also of finite type, it is isolated in $\mathcal{S}_1$.
    $(\Rightarrow)$ Let us assume that $G$ does not satisfy the no-middle-cycle property.
    Denote by $X_n$ the subsystem of $X_G$ obtained by forbidding a configuration that contains
    the concatenation of $n$ times the same middle cycle. The shifts $X_n$, $n \ge 0$, are all distinct and converge towards $X_G$, which is thus not isolated.
    \end{proof}

\begin{corollary}[R. Pavlov, S. Schmieding]\label{cor pavlov}
A one-dimensional shift is isolated in $\mathcal{S}^1$ if and only if it has the no-middle-cycle property.
\end{corollary}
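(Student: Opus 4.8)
The plan is to deduce the corollary from the general characterization \cref{thm.main} together with its graph-theoretic incarnation \cref{lemma.nmc}, the bridge between the two being the higher-block recoding. For a one-dimensional shift $X$ and $n \ge 1$, let $X^{[n]}$ denote the image of $X$ under the conjugacy $x \mapsto (x_{i}x_{i+1}\cdots x_{i+n-1})_{i \in \mathbb{Z}}$, a shift on the alphabet $\mathscr{L}_{\llbracket 0,n-1\rrbracket}(X)$. By construction $X^{[n]}$ is conjugate to $X$ and is a subsystem of $X_{G_{X,n}}$, with equality $X^{[n]} = X_{G_{X,n}}$ as soon as $X$ is of finite type and $n$ is at least the diameter of the supports of a finite defining set of forbidden patterns. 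I will use throughout that being of finite type, the number of subsystems, and the condition $(\star)$ are invariants of conjugacy: a conjugacy $\varphi \colon X \to Y$ sends subsystems to subsystems and induces an order isomorphism of the posets of subsystems, under which maximal subsystems correspond to maximal subsystems and outcasts to outcasts.

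For the implication $(\Leftarrow)$, suppose $X$ has the no-middle-cycle property, so that $G_{X,n}$ has it for some $n$. Then \cref{lemma.nmc} gives that $X_{G_{X,n}}$ is of finite type with finitely many subsystems. Since $X^{[n]}$ is a subsystem of $X_{G_{X,n}}$, it is one of these finitely many subsystems; by the remark following \cref{lemma:subsystem.finite.subsystems} it is of finite type, and all of its subsystems are again among the finitely many subsystems of $X_{G_{X,n}}$. Transporting along the conjugacy $X \cong X^{[n]}$, the shift $X$ is of finite type with finitely many subsystems, so \cref{proposition.finite.subsystems} shows that $X$ is isolated.

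For the implication $(\Rightarrow)$, suppose $X$ is isolated. By \cref{thm.main} it is of finite type and satisfies $(\star)$. Taking $n$ large enough that $X^{[n]} = X_{G_{X,n}}$, the vertex shift $X_{G_{X,n}}$ is conjugate to $X$ and therefore is itself of finite type and satisfies $(\star)$; \cref{thm.main} then shows that $X_{G_{X,n}}$ is isolated, and \cref{lemma.nmc} yields that $G_{X,n}$ has the no-middle-cycle property. Hence $X$ has the no-middle-cycle property by definition.

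The one genuinely delicate point is the comparison between the abstract shift $X$ and the concrete vertex shift $X_{G_{X,n}}$ of its transition graph, to which alone \cref{lemma.nmc} applies: in general $X^{[n]}$ is only a proper subsystem of $X_{G_{X,n}}$, and the equality needed in the $(\Rightarrow)$ direction is available precisely because finite type is already in hand there. In the $(\Leftarrow)$ direction one cannot assume finite type a priori, so the argument must route through the inclusion $X^{[n]} \subseteq X_{G_{X,n}}$ and the finiteness of the subsystem lattice of the latter, rather than through an equality; the finiteness then retroactively forces finite type via the remark after \cref{lemma:subsystem.finite.subsystems}. The remaining verifications are the conjugacy invariance of finite type, of the number of subsystems, and of condition $(\star)$, all of which are routine consequences of the fact that a conjugacy carries the poset of subsystems isomorphically onto that of its image.
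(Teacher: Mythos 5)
Your proof is correct and follows essentially the same route as the paper's: both directions reduce to \cref{lemma.nmc} applied to the vertex shifts $X_{G_{X,n}}$, using that $X$ (via its higher-block recoding) sits inside $X_{G_{X,n}}$ and coincides with it for large $n$ when $X$ is of finite type. You are merely more explicit than the paper about the recoding $X \cong X^{[n]} \subseteq X_{G_{X,n}}$ and the conjugacy-invariance of the relevant properties (the paper simply writes $X \subset X_n$), and your $(\Rightarrow)$ direction argues directly through \cref{thm.main} where the paper argues the contrapositive via non-isolation of each $X_{G_{X,n}}$; both variants are sound.
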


\begin{proof}
    Consider a one-dimensional shift $X$ and for every integer $n$, denote by $X_n$ the shift $X_{G_{X,n}}$
    % the shift on the same alphabet $\mathcal{A}$ defined by forbidding words in $\mathcal{A}^{\llbracket0,n-1\rrbracket}\setminus\mathscr{L}_{\llbracket 0, n-1 \rrbracket}(X)$. We have $X_n \rightarrow X$ 
    and we apply Lemma~\ref{lemma.nmc}.

   $(\Rightarrow)$ If $X$ does not satisfy the no-middle-cycle property, then since each $X_n$ is not isolated, $X$ is also not isolated.
   
    $(\Leftarrow)$ If $X$ has the no-middle-cycle property, then one of the shifts $X_n$ has this property as well, and thus $X_n$ has finitely many subsystems (Lemma~\ref{lemma.nmc}). Since $X\subset X_n$, {we can apply Lemma \ref{lemma:subsystem.finite.subsystems} and conclude that $X$ is isolated.}
\end{proof}

\subsection{Finitely many subsystems are not necessary\label{section.example.inf.subsystems}}

If a shift has finitely many subsystems, then it satisfies the $(\star)$ property. One could wonder if the converse is true. Although it is the case for $d=1$ (Lemma~\ref{lemma.nmc}), we prove here that it is false in the case when $d \ge 2$. \bigskip 

We provide here an example of shift of finite type isolated in $\mathcal{S}^2$ which has infinitely many subsystems. This example can be generalized straightforwardly to all the spaces $\mathcal{S}^d$, $d \ge 2$. A formal proof of this statement would be quite involved due to the elements used in its construction. Since it is not essential to the purpose of the present paper, we only present here elements of the construction and the main arguments. We use the terminology introduced in Section~\ref{section.constructions}. \bigskip 

% \comm{A}{et si \`a la place de prop on l'appelle ``exemple'' ? comme \c{c}a on peut metre une prueve plus r\'elax\'ee sans avoir la n\'ecessit\'e de poser des milliards des excusses :')}

% \comm{S}{Oui je suis d'accord}

% \begin{proposition}\label{ex robi}
% There exists a shift of finite type isolated in $\mathcal{S}^2$ which has infinitely many subsystems.
% \end{proposition}

We construct a shift of finite type $X$ which has a unique maximal subsystem, infinitely many subsystems, and such that all of its strict subsystems are included in the maximal subsystem. This shift $X$ is then isolated. We construct this shift by adding several layers, one by one, to the shift $X_R^{+}$. These layers are as follows:

\begin{enumerate}
\item We refer to the first of these layers as the color layer. The elements of its alphabet are red and blue. For all cells the color is constant over the union of its border and its cytoplasm. If a cell has a border, then this color is blue.
\item The second layer has alphabet $\{0,1,2,\square\}$ and is described as follows: for each cell not included 
in a cell whose cytoplasm or border are colored in blue (for simplicity we call this a \textit{maximal} cell), the positions in its border which have a neighbor in a free column 
or a free row of this cell 
are superimposed with a 
symbol in $\{0,1,2\}$, and every other position is superimposed with $\square$.
We add an accessory layer 
which makes all the sequences of symbols written on an edge of the border equal (read from left to right for the vertical edges and from top to bottom for horizontal ones) and makes this common word on letters $0,1$ only or on $2$ only. 
%For all $n$, there is a cyclic permutation $\textbf{r}_n$
%of the set of words which can be read in the present layer on the edge of an order $n$ maximal cell such that it is possible to 
%add another accessory layer 
%which ensures that for all $n$, and two maximal cells of order $n$ which are adjacent, on the one on the right (resp. top) the word is $\textbf{r}_n(w)$, 
%where $w$ is the word 
%of the other one }
An accessory layer is added such that for any maximal cell of order $n$ whose border reads $w$ the border of any adjacent order $n$ maximal cell reads $r_n(w)$, where $r_n$ is some fixed cyclic permutation (for a more precise description of this, see~\cite{Gangloff}). The set of possible words $w$ has cardinal $p_n := 2^{2^n}+1$ and we can associate this set with $\mathbb{Z}/p_n\mathbb{Z}$ 
in such a way that $r_n(w) = r_n(w)+1$ for all $w$.

\item The third layer is on alphabet $\{0,1,\square\}$. 
A position is superimposed with a symbol in $\{0,1\}$ if and only if it is in the border of a cell strictly contained in a maximal cell.
This symbol is constant over the border. 
%\st{If we denote by $n$ the order of this maximal cell, the north west order $n-1$ cell inside it has symbol $1$ and the other ones $0$. For all $k < n-1$, there is only one order $k$ cell which is superimposed with $1$, and it is the one which is on the north west of an order $k+1$ cell superimposed with $1$.} 
Inside an order $n$ maximal cell, for any $k\le n$ it holds that the symbol on the border of any order $k-1$ cell is 1 if and only if it is the north west cell within cell which has order $k$ and border symbol 1 or has order $n$.
\item The fourth layer is also on alphabet $\{0,1,\square\}$. A position is superimposed with a symbol in $\{0,1\}$ if and only if it is in a free column of a maximal cell. This symbol is constant over a free column. On the extremities of a free column, the symbol in the current layer is equal to the one on the border position adjacent to it in Layer \textbf{2}, or $0$ if this symbol is a $2$.
%\textcolor{red}{1 if the symbol in Layer 2 is 1, and 0 otherwise.}
\item We add a last layer with alphabet $\{0,1,\square\}$ and defined as follows: a position has a symbol in $\{0,1\}$ if and only if 
it is in the border of a cell strictly contained in a maximal cell. 
%\st{For all $n$ symbol is the same for all the positions which are in the border of a cell of order $n$ included in the same maximal cell. For all 
%position which is marked with $1$ in layer \textbf{3}, if it has a neighbor which is marked with a symbol in $\{0,1\}$ 
%in layer \textbf{4}, then the symbol in the current layer is equal to this one.} 
Within a maximal cell, order $n$ cells all have the same symbol on their borders. In positions where Layer \textbf{3} has a 1 and one of the neighboring positions has either a 0 or a 1 on Layer \textbf{4}, the same symbol (as in Layer \textbf{4}) is placed in the current layer.
\end{enumerate}

With the aid of accessory layers we make $X$ a shift of finite type, using a similar strategy as the one used to make $X_R^+$ a shift of finite type -- accessory layers ``transmit'' information to force the desired patterns.

Consider the subsystem $Z$ of $X$ which consists in configurations in which the red color does not appear. %\textcolor{red}{Maybe explain in a bit more detail the whys of the claims that follow. I think the description of the construction is more or less clear, but the conclusions sounds kinda magical, they just appear. For instance the relation between being having a dense orbit and Fermat's numbers is super unclear.} 
It is maximal because any configuration with the red color has a dense orbit. In order to prove this, consider any pattern $p$ in the language of the shift $X$, and any configuration $x \in X$ in which the red color appears, and prove that $p$ appears in $x$. There are two possibilities: the color red appears in $p$ or not. Let us consider these two cases separately:

\begin{itemize}
    \item \textbf{If the color red appears in $p$.} In this case, $p$ can be completed into a pattern $q$ over a supertile centered on a cell in which the rules of $X$ are not broken. Within the set of patterns on this supertile, $q$ is determined by the sequence $w_0(q) , \ldots , w_k(q)$, where $k$ is the largest order of a cell in the supertile, and $w_i(q)$ is the word in Layer \textbf{2} on the top border of the rightmost topmost cell of order $i$ in the supertile. This supertile is of order $2k+1$ by definition, and it is known that it appears periodically in $x$ in both direction with period $4^{2k+1}$. Furthermore, because of the last accessory layer described in point \textbf{2} above, for two adjacent of these supertiles which support a maximal order $k$ cell, denoting by $q'$ and $q''$ the patterns appearing in $x$ on the left one and the right one respectively, the sequence $(w_0(q''), \ldots , w_k(q''))$ is the image of $(w_0(q'), \ldots , w_k(q'))$ by the following function from $\mathbb{Z}/p_0 \mathbb{Z} \times \ldots \times \mathbb{Z}/p_k \mathbb{Z}$ to itself:
    \[(w_0 , \ldots , w_k) \mapsto (w_0 + 4^{2k+1} , \ldots , w_k +1).\]
    Because the numbers are all co-prime (Goldbach’s theorem on Fermat numbers) and co-prime with $2$, this is a cyclic permutation. Because one can find in $x$ arbitrarily long sequences of adjacent supertiles as above, $q$ appears in $x$ and so does $p$.
    \item \textbf{If the color red does not appear in $p$.} Then $p$ can be completed into a pattern $q$ over a cell in which the rules of $X$ are not broken. Among all the valid patterns over the same cell, $q$ is determined by the word on the top part of the border in Layer \textbf{2}. Within the configuration $x$, this word takes all the possible values, because of the last accessory layer described in point \textbf{2} above. Therefore $q$ appears in $x$ and so does $p$.
\end{itemize}

We just have proved that all the configurations in which the red color appear have a dense orbit in $X$. For this reason every subsystem of $X$ is included in $Z$ which is the only maximal subsystem. As a consequence of Theorem~\ref{thm.main}, $X$ is isolated.

Furthermore, it is straightforward that $Z$ has infinitely many subsystems, because the set of configurations which hold a certain fixed sequence of Layer \textbf{3} symbols $\{0,1\}$ on cells of order $n$ is a shift and that for two different sequences, these two shifts are disjoint. Moreover, for every sequence this shift is not empty. Thus, $X$ has infinitely many subsystems.
% \comm{A}{see comment 44: dire que les sousystemes sont differents, preciser que les symboles sont ceux de la couche 3}

\subsection{Properties of shifts in relation with maximal subsystems\label{section.study.maximal}}

In this section we address some natural questions about maximal subsystems and the $(\star)$ property.

\begin{definition}
    For all shift $X$, we refer to the cardinality of $\mathcal{M}(X)$ as the \textbf{maximality type} of $X$.
\end{definition}

We will see in this section that every element of $\mathbb{N} \cup \{\infty\}$ is the maximality type of at least one shift [\cref{maxtype.realisation}].
%Observe that a minimal shift has maximality type 0 as it has no subsystem. The maximality type of a full shift is 0 as well, although it has infinitely many subsystems, but none of them is maximal (\cref{example.maxtype.fullshift}). \cref{maxtype.realisation} shows that for any $n\in\N\cup\{\infty\}$ there exists a shift with maximality type $n$.

% \comm{S}{Why this remark here since we see these things later ?}
% \comm{A}{In (45) it is asked to specify the range of possible values for the maximality type}

% \comm{S}{I think maybe it is a bit unclear for the reviewer. Maybe we should say that the maximality type is the cardinality of the set of maximal subsystems, and that in principle it can take values from 0 to infinity included, maybe just say that we prove later that all these values are possible (without necessarily citing the results).}
% \comm{A}{I don't see a major issue to cite the results ahead as some sort of introduction or guideline for what's coming.}

\paragraph{{The maximality type is invariant under conjugacy}}

Observe that having a conjugacy between two sets amounts, in particular, to having a bijection of subsystems that preserves the inclusion relation. Thus, if $X$ and $Y$ are conjugates, the maximal subsystems of $X$ (if any) are sent to the maximal subsystems of $Y$, and vice versa. This observation yields the following.

\begin{proposition}
    All conjugates of a shift have the same maximality type.
\end{proposition}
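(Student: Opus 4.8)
The plan is to promote the conjugacy $\varphi \colon X \to Y$ to an inclusion-preserving bijection between the subsystems of $X$ and those of $Y$, and then to observe that maximality is a property expressible purely in terms of the inclusion order on subsystems, hence transported by any such bijection. This reduces the statement to bookkeeping, once the correspondence is set up correctly.

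First I would check that $\varphi$ sends subsystems to subsystems. If $W$ is a subsystem of $X$, then $\varphi(W)$ is non-empty (as $W$ is), closed (being the image of a compact set under the homeomorphism $\varphi$), and shift-invariant: for every $\boldsymbol{u} \in \mathbb{Z}^d$, the intertwining relation $\varphi \circ \sigma_d^{\boldsymbol{u}} = \sigma_d^{\boldsymbol{u}} \circ \varphi$ gives $\sigma_d^{\boldsymbol{u}}(\varphi(W)) = \varphi(\sigma_d^{\boldsymbol{u}}(W)) \subset \varphi(W)$. Since $\varphi^{-1}$ is again a conjugacy, the same argument shows that $V \mapsto \varphi^{-1}(V)$ sends subsystems of $Y$ to subsystems of $X$, and these two assignments are mutually inverse. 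Thus $W \mapsto \varphi(W)$ is a bijection from the set of subsystems of $X$ onto the set of subsystems of $Y$.

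Next I would record the two order-theoretic facts that are needed. Because $\varphi$ is a bijection, $W_1 \subset W_2$ if and only if $\varphi(W_1) \subset \varphi(W_2)$, so both the assignment and its inverse preserve strict inclusions. Moreover $\varphi(X) = Y$, so a subsystem $W$ of $X$ is proper (that is, $W \neq X$) precisely when $\varphi(W) \neq Y$. Combining these, $Z$ is a maximal subsystem of $X$ — meaning $Z \neq X$ and every subsystem strictly containing $Z$ equals $X$ — if and only if $\varphi(Z)$ is a maximal subsystem of $Y$, since each clause in the definition translates verbatim across the bijection.

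It follows that $\varphi$ restricts to a bijection $\mathcal{M}(X) \to \mathcal{M}(Y)$, whence $\lvert \mathcal{M}(X)\rvert = \lvert \mathcal{M}(Y)\rvert$ and $X$, $Y$ share the same maximality type. There is essentially no hard step here: the only point requiring any care is the very first verification that the image of a subsystem is again a subsystem, and even this is immediate from compactness together with the defining intertwining relation of a conjugacy.
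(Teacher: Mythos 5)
Your proof is correct and follows exactly the paper's own argument: the paper likewise observes that a conjugacy induces an inclusion-preserving bijection between the sets of subsystems, so maximal subsystems correspond to maximal subsystems. You have simply filled in the routine verifications (images of subsystems are subsystems, strict inclusions and properness are preserved) that the paper leaves implicit.
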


%\begin{proof}
%Consider two shifts $X,X'$ and $f : X \rightarrow X'$ be a conjugacy. If $Z$ is a maximal subsystem of $X$ then $f(Z)$ is a maximal subsystem of $X'$. Indeed, it is not equal to $X'$ otherwise $Z=X$, and if $f(Z) \subset Z' \subset X'$, then $Z \subset f^{-1}(Z') \subset X$. Since $Z$ is maximal, $f^{-1}(Z') = Z$ or $f^{-1}(Z') = X$, thus $Z' = f(Z)$ or $Z' = X'$. Furthermore, let $m_f : \mathcal{M}(X) \rightarrow \mathcal{M}(X')$ (resp. $m_{f^{-1}} : \mathcal{M}(X') \rightarrow \mathcal{M}(X)$) be a map such that $m_f(Z) = f(Z)$ (resp. $m_{f^{-1}}(Z) = f^{-1}(Z)$), then  $m_f \circ m_{f^{-1}} = m_{f^{-1}} \circ m_f$ is the identity. This implies the statement.
%\end{proof}

\paragraph{{Realization of all numbers of maximal subsystems}\label{section.type.realization}}

%\begin{notation}
%For a shift $X$ of maximality type $1$, we denote by $M(X)$ the sole maximal subsystem of $X$ and by $R(X)$ the elements of $X$ which are not in $M(X)$.
%\end{notation}

\begin{lemma}\label{lemma.inf.max.subsystems}
Let $X$ be a shift which is the union of shifts $Z_n = K \sqcup M_n $, $n\ge 0$, where $K$ is a shift and for all $n\ge 0$, $M_n$ is a minimal shift such that:
\begin{itemize}
\item $(M_m = M_n) \Rightarrow (m=n)$;
\item $Z_n \rightarrow K$.
\end{itemize}
Then the maximal subsystems of $X$ are the shifts $\bigcup_{n \neq m} Z_n$, $m \ge 0$. 

\end{lemma}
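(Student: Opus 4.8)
The plan is to work with $X$ viewed as a set, where it is the disjoint union $K \sqcup \bigsqcup_{n} M_n$: the $M_n$ are pairwise disjoint because distinct minimal shifts are either equal or disjoint (and here they are distinct by the first hypothesis), and each is disjoint from $K$ since $Z_n = K \sqcup M_n$. Writing $W_m := \bigcup_{n \ne m} Z_n = K \cup \bigcup_{n\ne m} M_n = X \setminus M_m$, I would split the proof into showing (I) each $W_m$ is a maximal subsystem, and (II) there are no others.

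For (I) the decisive observation is that each $M_m$ is \emph{clopen} in $X$. Since $K$ and $M_m$ are disjoint compact sets, there is $\epsilon_0 > 0$ with $\delta(x,K) \ge \epsilon_0$ for every $x \in M_m$. Because $K \subseteq Z_n$ the Hausdorff distance reduces to $\delta_H(Z_n,K) = \sup_{z \in M_n} \delta(z,K)$, so $Z_n \to K$ forces this supremum to tend to $0$; hence all but finitely many $M_n$ lie within $\epsilon_0/2$ of $K$ and thus, by the triangle inequality, at distance $> \epsilon_0/2$ from $M_m$, while the remaining finitely many $M_n$ ($n \ne m$) are compact and disjoint from $M_m$. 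Taking the minimum of finitely many positive gaps shows $M_m$ is at positive distance from $W_m$, so $M_m$ is open in $X$ and $W_m = X \setminus M_m$ is closed; being shift-invariant and nonempty (it contains $K$) it is a subsystem, proper since $M_m \neq \emptyset$. For maximality, if $W_m \subsetneq Z' \subseteq X$ then a point of $Z' \setminus W_m$ lies in $M_m$, so $Z' \cap M_m$ is a nonempty closed invariant subset of the minimal shift $M_m$, whence $Z' \cap M_m = M_m$ and $Z' \supseteq W_m \cup M_m = X$.

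For (II) let $Y$ be any maximal subsystem. Minimality of the $M_n$ gives the dichotomy $M_n \subseteq Y$ or $M_n \cap Y = \emptyset$. The key step is to establish $K \subseteq Y$: once this is known, $Y \subsetneq X$ forces $Y$ to omit a point, which (as $X = K \cup \bigcup_n M_n$) must lie in some $M_m$, so $M_m \cap Y = \emptyset$ and therefore $Y \subseteq W_m \subsetneq X$; maximality then yields $Y = W_m$. To get $K \subseteq Y$ I would argue by contradiction: if $K \not\subseteq Y$, then $Y \cup K$ is a subsystem strictly containing $Y$, so by maximality $Y \cup K = X$, i.e. $X \setminus Y \subseteq K$. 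Any omitted $M_n$ would then lie in $X\setminus Y \subseteq K$, which is impossible; hence every $M_n \subseteq Y$, and by \cref{lemma.stability} $\overline{\bigcup_n M_n} \subseteq Y$.

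I expect forcing $K \subseteq Y$ to be the main obstacle, as this is where the hypotheses must genuinely be used. The delicate configuration is a candidate maximal $Y$ containing all the $M_n$ but missing a portion of $K$; one must rule out a ``rogue'' maximal subsystem of the form $X$ minus a clopen minimal piece of $K$. This is clean when $K$ is itself minimal --- which is the setting in the intended applications --- for then $Y \cap K$ is a proper closed invariant subset of $K$, hence empty, forcing $Y \subseteq \bigcup_n M_n$ with $Y$ closed, so $Y$ is a finite union of the clopen pieces $M_n$ and is visibly enlarged by $Y \cup K \subsetneq X$, contradicting maximality. For a general $K$ the same conclusion requires the extra input that $K$ carries no isolated minimal subsystem, so that removing the missing part $K \setminus Y$ and adjoining the orbit closure of one of its points cannot recover all of $K \setminus Y$; I would make this hypothesis explicit and treat the minimal-$K$ case as the principal one.
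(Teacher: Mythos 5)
Your part (I) is correct and is essentially the paper's own argument, only more carefully written out: the paper compresses it to ``$L_m:=\bigcup_{n\neq m}Z_n$ is closed because $Z_n\to K$, is disjoint from $M_m$, hence equals $X\setminus M_m$, hence is maximal since $M_m$ is minimal.'' The crux is part (II), and your suspicion there is exactly right. The paper disposes of (II) in a single unjustified sentence --- ``Every proper subsystem of $X$ is contained in some $L_m$'' --- which is precisely the assertion you could not derive, and it is in fact \emph{not} a consequence of the stated hypotheses. The hypothesis $Z_n\to K$ only gives $\sup_{z\in M_n}\delta(z,K)\to 0$; it says nothing about the $M_n$ accumulating on \emph{all} of $K$. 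If some nonempty closed invariant $B\subsetneq K$ is clopen in $K$ and disjoint from $\overline{\bigcup_n M_n}$, then $B$ is clopen in $X$, so $X\setminus B$ is a proper subsystem containing every $M_m$ (hence contained in no $L_m$), and when $B$ is minimal, $X\setminus B$ is itself a maximal subsystem by \cref{proposition.minimal}. Concretely: on the alphabet $\{0,1,2\}$ take $b$ the all-$2$ fixed point, $K'$ the union of the orbit closures of the two vertical half-plane configurations in symbols $0,1$, $K=K'\sqcup\{b\}$, and $M_n$ the orbit of the vertically constant configuration of horizontal period $2\cdot 4^n$ alternating $4^n$ columns of $0$ with $4^n$ columns of $1$. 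All hypotheses of the lemma hold, yet $X\setminus\{b\}$ is a maximal subsystem not of the form $L_m$. The paper's own \cref{example.inf.max.sub} appears to exhibit the same phenomenon, since the fixed point $x_+$ is isolated in that $X$ (no $x_n$ with $n\ge1$ has two adjacent crosses), making $X\setminus\{x_+\}$ an extra maximal subsystem.

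So there is no hidden idea in the paper for you to have missed; the honest repair is the one your argument already points at, namely adding the hypothesis $K\subseteq\overline{\bigcup_n M_n}$. With it, your case analysis closes: if $K\not\subseteq Y$ then, as you show, every $M_n\subseteq Y$, hence $Y\supseteq\overline{\bigcup_n M_n}\supseteq K$, a contradiction; so $K\subseteq Y$ always, and $Y=W_m$ for the $m$ with $M_m\cap Y=\emptyset$. Your alternative fixes ($K$ minimal, or ``$K$ has no isolated minimal subsystem'') are in the right spirit but not sharp --- the obstructing piece $B$ need not be a single point nor minimal, so the containment above is the correct condition --- and note that the $K$ arising in the paper's intended application is not minimal. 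Finally, the only downstream use of the lemma (\cref{maxtype.realisation}) needs only the direction you proved in full, that each $W_m$ \emph{is} maximal, so the defect is harmless for the rest of the paper.
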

% \comm{A}{le vide n'est pas un sous-systeme}

\begin{proof}
Fix some integer $m$.
The set $L_m := \left(\bigcup_{n \neq m} Z_n \right)$ is a shift since it is shift-invariant and closed because $Z_n \rightarrow K = \bigcap_m Z_m$. %Since $Z_n \rightarrow K$, there exists $n_0 > m$ such that for all $n \ge n_0$, $\delta_H (M_n,K) < \delta(M_m,K)$. This means that $M_m$ is disjoint from $K \cup \left(\bigcup_{n>n_0} M_n\right)$, and thus it is disjoint from $L_m$. 
Observe that $L_m$ is disjoint from $M_m$, and as a consequence, $L_m = X \backslash M_m$, which makes this shift a maximal subsystem of $X$ (since $M_n$ is minimal). %Since $Z_n \rightarrow K$, 
Every proper subsystem of $X$ is contained in some $L_m$, which means that the maximal subsystems of $X$ are exactly the sets $L_m$, $m\ge 0$.
% Moreover, it contains $X \backslash R(Z_m)$: indeed, if $x \notin R(Z_m)$, then it is in the unique maximal subsystem of $Z_m$ or in some $Z_n$, $n \neq m$, and thus in $\left(\bigcup_{n \neq m} Z_n \right)$. Furthermore, 
% this set is also contained in $X \backslash R(Z_m)$. Indeed, if an element of $R(Z_m)$ were in $\left(\bigcup_{n \neq m} Z_n \right)$, it would be in some $Z_n, n \neq m$. Then $Z_m$ would be contained in $Z_n$ and since $Z_n$ contains the unique maximal subsystem of $Z_n$, and that this subsystem is also a maximal subsystem of $Z_m$, this would imply that $Z_m = Z_n$, which is false by hypothesis. 
% We have proved that 
% $\left(\bigcup_{n \neq m} Z_n \right) = X \backslash R(Z_m)$. This implies that $\left(\bigcup_{n \neq m} Z_n \right)$ is maximal, because 
% any shift strictly containing this set would contain 
% $\left(\bigcup_{n} Z_n \right) = X$, and thus be equal to $X$.
\end{proof}

\begin{example}\label{example.inf.max.sub}
    
    Consider the following alphabet
    : 
    \begin{center}
        \begin{tikzpicture}[scale=0.5] \draw[line width=0.5mm] (0.5,0) -- (0.5,0.5) -- (0,0.5);\draw (0,0) rectangle (1,1);\end{tikzpicture},
        \begin{tikzpicture}[scale=0.5] \draw[line width=0.5mm] (0.5,0) -- (0.5,0.5) -- (1,0.5);\draw (0,0) rectangle (1,1);\end{tikzpicture},\begin{tikzpicture}[scale=0.5] \draw[line width=0.5mm] (1,0.5) -- (0.5,0.5) -- (0.5,1);\draw (0,0) rectangle (1,1);\end{tikzpicture},\begin{tikzpicture}[scale=0.5] \draw[line width=0.5mm] (0,0.5) -- (0.5,0.5) -- (0.5,1);\draw (0,0) rectangle (1,1);\end{tikzpicture},
        \begin{tikzpicture}[scale=0.5] \draw[line width=0.5mm] (0.5,0) -- (0.5,1);\draw (0,0) rectangle (1,1);\end{tikzpicture},
        \begin{tikzpicture}[scale=0.5] \draw[line width=0.5mm] (0,0.5) -- (1,0.5);\draw (0,0) rectangle (1,1);\end{tikzpicture},
        \begin{tikzpicture}[scale=0.5] \draw[line width=0.5mm] (0,0.5) -- (1,0.5);\draw[line width=0.5mm] (0.5,0) -- (0.5,1);\draw (0,0) rectangle (1,1);\end{tikzpicture},
        \begin{tikzpicture}[scale=0.5]\draw (0,0) rectangle (1,1);\end{tikzpicture}
    \end{center}
Define the two-dimensional configuration $x_+$ made only with the cross symbol, and for all $n\in\mathbb{N}$ define the two-dimensional configurations $x_n$ as follows:

\begin{enumerate}
\item The cross symbol$\ $ \begin{tikzpicture}[scale=0.35,baseline=0.8mm] \draw[line width=0.5mm] (0,0.5) -- (1,0.5);\draw[line width=0.5mm] (0.5,0) -- (0.5,1);\draw (0,0) rectangle (1,1);\end{tikzpicture} appears in
positions in $ \mathbb{N}_{n} :=(2n+1)\mathbb{Z}^2$;

\item The vertical symbol$\ $ \begin{tikzpicture}[scale=0.35,baseline=0.8mm] \draw[line width=0.5mm] (0.5,0) -- (0.5,1);\draw (0,0) rectangle (1,1);\end{tikzpicture}  appears in positions in $$\mathbb{G}^v_{n}:= \left\{((2n+1)k,l) : k,l \in \mathbb{Z} \right\}\backslash \mathbb{N}_{n};$$

\item The horizontal symbol$\ $ \begin{tikzpicture}[scale=0.35,baseline=0.8mm] \draw[line width=0.5mm] (0,0.5) -- (1,0.5);\draw (0,0) rectangle (1,1);\end{tikzpicture} appears in position in $$\mathbb{G}^h_{n}:= \left\{(k,(2n+1)l) : k,l \in \mathbb{Z} \right\}\backslash \mathbb{N}_{n};$$
\item On each of the translates of $\mathbb{U}_{2(n-1)}^2$ which do not intersect $\mathbb{G}^v_{n}$, 
$\mathbb{G}^h_{n,}$ nor
$\mathbb{N}_{n,}$, the pattern is of the following form: 

\begin{center}
    \begin{tikzpicture}[scale=0.5] 
    \draw (0,0) grid (8,8);
    \foreach \x in {0,1,2,3} {
    \draw[line width=0.5mm] (\x + 0.5,\x + 0.5) rectangle (7.5-\x,7.5-\x);
    }
        
    \end{tikzpicture}
\end{center}
\end{enumerate}
Figure~\ref{fig.config.ex} illustrates this definition. Define the two-dimensional shift $X$ as the closure of the following set: 

\[\mathcal{O}(x_+) \cup \left(\bigcup_n \mathcal{O}(x_n)\right).\]

\begin{figure}[H]
\centering
\begin{subfigure}[t]{.32\textwidth}\centering
\begin{tikzpicture}[scale=0.5] 
\draw (0,0) grid (7,7);
\foreach \x in {0,3,6}{
    \draw[line width=.5mm](\x+.5,0) -- (\x+.5,7);
    \draw[line width=.5mm](0,\x+.5) -- (7,\x+.5);}
\foreach \x in {1,4}{\foreach \y in{1,4}{
    \draw[line width=.5mm](\x+.5,\y+.5) rectangle (\x+1.5,\y+1.5);}}    
\end{tikzpicture}
\subcaption{Configuration $x_1$}
\end{subfigure}
\vspace{.05\textwidth}
\begin{subfigure}[t]{.32\textwidth}\centering
\begin{tikzpicture}[scale=.435]
\draw(0,0) grid (8,8);
\foreach \x in {0,7}{
    \draw[line width=.5mm](\x+.5,0) -- (\x+.5,8);
    \draw[line width=.5mm](0,\x+.5) -- (8,\x+.5);}
\foreach \x in {1,2,3}{
    \draw[line width=.5mm](4-\x+.5,4-\x+.5) rectangle (4+\x-.5,4+\x-.5);}
\end{tikzpicture}
\subcaption{Configuration $x_3$}
\end{subfigure}
\vspace{.05\textwidth}
\begin{subfigure}[t]{.32\textwidth}\centering
\begin{tikzpicture}[scale=.5]
\draw(0,0) grid (7,7);
\foreach \x in {0,...,6}{
    \draw[line width=.5mm](\x+.5,0) -- (\x+.5,7);
    \draw[line width=.5mm](0,\x+.5) -- (7,\x+.5);}
\end{tikzpicture}
\subcaption{Configuration $x_+$}
\end{subfigure}
\caption{Different configurations of $X$.\label{fig.config.ex}}
\end{figure}
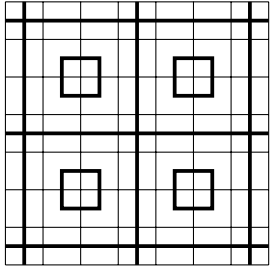
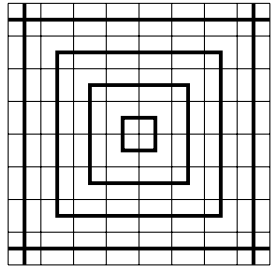
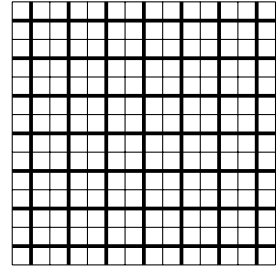

%%%%%%%%%%%%%%%%%%%%%%%%%%%%%%%%%%%%%%
%%%%%%%%%%%%%%%%%%%%%%%%%%%%%%%%%%%%%%
%%%%%%%%%%%%%%%%%%%%%%%%%%%%%%%%%%%%%%

Observe that in these configurations the occurrence of a corner symbol$\ $\begin{tikzpicture}[scale=0.35,baseline=0.8mm] \draw[line width=0.5mm] (0.5,0) -- (.5,0.5) -- (1,.5);\draw (0,0) rectangle (1,1);\end{tikzpicture} in position $(i,j)$ forces that at position $(i-1,j+1)$ either the same corner symbol or a cross symbol occurs. The same holds for the rotations
$\ $\begin{tikzpicture}[scale=0.35,baseline=0.8mm] \draw[line width=0.5mm] (0.5,0) -- (.5,0.5) -- (0,.5);\draw (0,0) rectangle (1,1);\end{tikzpicture},
$\ $\begin{tikzpicture}[scale=0.35,baseline=0.8mm] \draw[line width=0.5mm] (0.5,1) -- (.5,0.5) -- (0,.5);\draw (0,0) rectangle (1,1);\end{tikzpicture}, and
$\ $\begin{tikzpicture}[scale=0.35,baseline=0.8mm] \draw[line width=0.5mm] (0.5,1) -- (.5,0.5) -- (1,.5);\draw (0,0) rectangle (1,1);\end{tikzpicture}, 
of$\ $\begin{tikzpicture}[scale=0.35,baseline=0.8mm] \draw[line width=0.5mm] (0.5,0) -- (.5,0.5) -- (1,.5);\draw (0,0) rectangle (1,1);\end{tikzpicture}. Using this, it is possible to define $\mathcal{F}=\mathcal{A}^{\llbracket0,1\rrbracket\times\llbracket0,1\rrbracket}\setminus\mathcal{P}$, where the set $\mathcal{P}$ consists of the following $2\times2$ patterns
\begin{center}
\begin{tikzpicture}[scale=0.5] \draw[line width=0.5mm] (0.5,0) -- (0.5,2);
\draw[line width=0.5mm] (1.5,0) -- (1.5,2);
\draw (0,0) grid (2,2);
\end{tikzpicture},
\begin{tikzpicture}[scale=0.5] \draw[line width=0.5mm] (0.5,0) -- (0.5,2);
\draw[line width=0.5mm] (2,.5) -- (1.5,0.5) -- (1.5,2);
\draw (0,0) grid (2,2);
\end{tikzpicture},
\begin{tikzpicture}[scale=0.5] \draw[line width=0.5mm] (0.5,0) -- (0.5,2);
\draw[line width=0.5mm] (2,.5) -- (1.5,0.5) -- (1.5,1.5) -- (2,1.5);
\draw (0,0) grid (2,2);
\end{tikzpicture},
\begin{tikzpicture}[scale=0.5] \draw[line width=0.5mm] (0.5,0) -- (0.5,1.5) -- (2,1.5);
\draw[line width=0.5mm] (1.5,0) -- (1.5,.5) -- (2,.5);
\draw (0,0) grid (2,2);
\end{tikzpicture}, 
\begin{tikzpicture}[scale=0.5] \draw[line width=0.5mm] (0.5,0) -- (0.5,2);
\draw[line width=.5mm] (0,1.5) -- (2,1.5);
\draw[line width=0.5mm] (1.5,0) -- (1.5,.5) -- (2,.5);
\draw (0,0) grid (2,2);
\end{tikzpicture}, 
\begin{tikzpicture}[scale=0.5] \draw[line width=0.5mm] (0.5,0.5) -- (0.5,1.5) -- (1.5,1.5) -- (1.5,.5) -- (.5,.5);
\draw (0,0) grid (2,2);
\end{tikzpicture}, and 
\begin{tikzpicture}[scale=0.5] \draw[line width=0.5mm] (0.5,0) -- (0.5,2);
\draw[line width=0.5mm] (1.5,0) -- (1.5,2);
\draw[line width=0.5mm] (0,0.5) -- (2,0.5);
\draw[line width=0.5mm] (0,1.5) -- (2,1.5);
\draw (0,0) grid (2,2);
\end{tikzpicture}, 
\end{center}
and their rotations. We leave it to the reader to check that $X=X_{\mathcal{F}}$, which means that the shift $X$ is of finite type. 

%%%%%%%%%%%%%%%%%%%%%%%%%%%%%%%%%%%%%%
%%%%%%%%%%%%%%%%%%%%%%%%%%%%%%%%%%%%%%
%%%%%%%%%%%%%%%%%%%%%%%%%%%%%%%%%%%%%%

Denote by $K :=X \backslash \bigcup_n \mathcal{O}(x_n)$. Then $X$ satisfies the conditions of Lemma~\ref{lemma.inf.max.subsystems},
where $Z_n := \mathcal{O}(x_n) \cup K$ for all $n$. 

% is the orbit of a configuration in which the cross symbol appears periodically with period $2n$.
% \comm{A}{Selon le reviewer (obs 48) $\cap_mZ_m=\emptyset$ (je suis d'accord), mais $Z_n\not\to\emptyset$, ce qui ne safisfait pas les conditions du \cref{lemma.inf.max.subsystems}, je suis d'accord avec \c{c}a aussi. Je pense qu'il faut prendre $Z_n=\mathcal{O}(x_n)\cup\mathcal{O}(x_\infty)$, o\`u $x_\infty$ es la configuration qu'a que des carr\'es (comme celui l\`a au dessin au point 4), avec \c{c}a je pense qu'on a $\cap_mZ_m=\mathcal{O}(x_\infty)$ et $Z_n\to\mathcal{O}(x_\infty)$}
% \comm{Silvere}{Pour ta suggestion, je suis d'accord mais il faut remplacer l'orbite de $x_{\infty}$ par toutes les configurations qui ne sont pas un shift de $x_n$ pour un $n$ ou de $x_{+}$.}
\end{example}

\begin{proposition}\label{maxtype.realisation}
For all $n \in \mathbb{N} \cup \{+\infty\}$ and all $d > 1$, there is a $d$-dimensional shift of finite type which has exactly $n$ maximal subsystems.
\end{proposition}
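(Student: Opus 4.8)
The plan is to treat the possible values of $n$ case by case, assembling the constructions already available in the paper for the two genuinely non-trivial cases and supplying an elementary construction for the rest. Concretely, I would split into: the minimal case, finite $n\ge 2$, $n=1$, and $n=\infty$. The guiding observation is that the ``leave-one-out'' mechanism produced by a disjoint union of minimal pieces yields exactly as many maximal subsystems as there are pieces, \emph{provided there are at least two pieces}; the value $n=1$ escapes this mechanism and requires a shift in which a single proper subsystem absorbs every other one, which is exactly the behaviour of the Robinson-based example of Section~\ref{section.example.inf.subsystems}.

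For finite $n\ge 2$ I would take $X$ to be the shift on alphabet $\{1,\dots,n\}$ defined by forbidding every pattern in which two adjacent cells carry distinct symbols; this is a shift of finite type whose only configurations are the $n$ constant configurations $1^{\mathbb{Z}^d},\dots,n^{\mathbb{Z}^d}$, each a fixed point. Since $X$ is finite and all its points are fixed, its subsystems are exactly the non-empty subsets of $\{1^{\mathbb{Z}^d},\dots,n^{\mathbb{Z}^d}\}$, so its maximal subsystems are the $n$ subsets obtained by deleting a single fixed point; hence $X$ has maximality type $n$. The degenerate case of this construction with a single symbol instead gives one fixed point, which is minimal and therefore has maximality type $0$, settling that value as well.

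For $n=1$ I would invoke the shift of finite type built in Section~\ref{section.example.inf.subsystems}: there it is shown that every configuration in which the colour red occurs has a dense orbit, so every proper subsystem is contained in the red-free subsystem $Z$, which is therefore the unique maximal subsystem; thus this shift has maximality type $1$. For $n=\infty$ I would use \cref{example.inf.max.sub}, which satisfies the hypotheses of \cref{lemma.inf.max.subsystems}; by that lemma its maximal subsystems are the infinitely many distinct shifts $\bigcup_{k\neq m}Z_k$, $m\ge 0$, so its maximality type is $\infty$. Both of these examples are two-dimensional, so to obtain every $d>1$ I would lift them to $\mathbb{Z}^d$: given a two-dimensional shift of finite type $Y$, let $\tilde Y\subset \mathcal{A}^{\mathbb{Z}^d}$ be the shift of finite type whose configurations are constant in the directions $\mathbf{e}_3,\dots,\mathbf{e}_d$ and whose restriction to the $(\mathbf{e}_1,\mathbf{e}_2)$-plane lies in $Y$. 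The $\mathbb{Z}^d$-action on $\tilde Y$ factors through the $\mathbb{Z}^2$-action on $Y$, so the inclusion-preserving correspondence between subsystems of $\tilde Y$ and of $Y$ preserves the maximality type.

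The only real obstacle is the value $n=1$: unlike all other finite values, it cannot be produced by juxtaposing minimal pieces, and it forces one to exhibit a shift of finite type possessing a dense orbit whose non-generic points form a single proper subsystem, precisely the role of the Robinson-based construction. Everything else is routine: the subsystem count for the constant-configuration shift is immediate, the $n=\infty$ case is a direct application of \cref{lemma.inf.max.subsystems}, and the passage from $\mathbb{Z}^2$ to $\mathbb{Z}^d$ is a formal lifting that preserves the lattice of subsystems.
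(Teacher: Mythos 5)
Your proof is correct, and for the two substantive cases it follows the same route as the paper: the value $\infty$ is realized by \cref{example.inf.max.sub} via \cref{lemma.inf.max.subsystems}, and finite $n\ge 2$ by a disjoint union of $n$ fixed points (the paper uses $n$ single periodic orbits, which is the same mechanism). Where you genuinely diverge is at $n=1$, and this is to your credit: the paper's proof asserts that the disjoint union of $n$ periodic orbits has exactly $n$ maximal subsystems ``for any integer $n$'', which fails at $n=1$, since a single periodic orbit is minimal and hence has maximality type $0$, not $1$. You correctly observe that the leave-one-out mechanism needs at least two pieces and supply a separate example for $n=1$. Your choice of example --- the Robinson-based shift of Section~\ref{section.example.inf.subsystems} --- does the job, but it is heavy machinery whose verification the paper itself only sketches; the arrow shift of Section~\ref{section.max.type.1} (the orbit closure of the configuration with a single $\square$ and arrows along the axes) is a far lighter shift of finite type that the paper already states has a unique maximal subsystem, and would be the more economical citation. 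Finally, your explicit lift from $\mathbb{Z}^2$ to $\mathbb{Z}^d$ (configurations constant in the extra directions, which preserves the subsystem lattice) addresses a point the paper's proof leaves implicit.
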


\begin{proof}
The shift presented in example~\ref{example.inf.max.sub} has infinitely many maximal subsystems, because it satisfies the conditions of Lemma~\ref{lemma.inf.max.subsystems}. For any integer $n$, it is sufficient to see that the disjoint union of $n$ shifts, each one consisting in a single periodic orbit, is of finite type and has exactly $n$ maximal subsystems.
\end{proof}

\paragraph{{Maximality and minimality}}

It is natural to wonder whether there is a relation between maximal and minimal subsystems.  We present two results in this direction.

%\begin{proposition}
%    Let us consider $X$ a shift of finite type which has at least two maximal subsystems. Then for all such subsystem $Z$, 
%    $\overline{X \backslash Z}$ is a minimal shift. Therefore, $X$ is equal to the union of some minimal shifts with the intersection of its maximal subsystems.
%\end{proposition}

\begin{proposition}\label{proposition.minimal}
    If a shift $X$ has a subsystem $Z$ such that $\overline{X \backslash Z}$ is a minimal shift, then $Z$ is a maximal subsystem of $X$.
\end{proposition}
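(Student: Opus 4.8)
The plan is to verify directly the two clauses in the definition of a maximal subsystem (Definition~\ref{definition.maxsub.outcast}): that $Z \neq X$, and that every subsystem of $X$ strictly containing $Z$ must equal $X$.

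For the first clause, I would use that $\overline{X \backslash Z}$, being a minimal shift, is in particular nonempty. Hence $X \backslash Z \neq \emptyset$, which gives $Z \neq X$ immediately.

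For the second clause, I would fix an arbitrary subsystem $Z'$ of $X$ with $Z \subsetneq Z'$ and show $Z' = X$. First pick a point $x \in Z' \backslash Z$. Since $Z$ is shift-invariant and $x \notin Z$, the whole orbit $\mathcal{O}(x)$ avoids $Z$: if some $\sigma^{\textbf{u}}(x)$ lay in $Z$, invariance of $Z$ under $\sigma^{-\textbf{u}}$ would force $x \in Z$. Thus $\mathcal{O}(x) \subset X \backslash Z$, and taking closures, $\overline{\mathcal{O}(x)} \subset \overline{X \backslash Z}$. By Lemma~\ref{lemma.stability} applied to $U = \mathcal{O}(x)$, the set $\overline{\mathcal{O}(x)}$ is a shift, i.e. a nonempty closed invariant subset contained in the minimal shift $\overline{X \backslash Z}$; by minimality it therefore equals $\overline{X \backslash Z}$. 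On the other hand, $Z'$ is closed and invariant and contains $x$, so $\overline{\mathcal{O}(x)} \subset Z'$. Combining, $\overline{X \backslash Z} \subset Z'$, and in particular $X \backslash Z \subset Z'$. Together with $Z \subset Z'$ this yields $X = Z \cup (X \backslash Z) \subset Z'$, so $Z' = X$, as desired.

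The argument is short and I do not anticipate a genuine obstacle; the only point requiring a little care is the verification that $\mathcal{O}(x)$ lies entirely in $X \backslash Z$, which relies on invariance of $Z$ under shifts in all directions (both $\sigma^{\textbf{u}}$ and $\sigma^{-\textbf{u}}$). Everything else is a direct application of minimality and of the fact, recorded in Lemma~\ref{lemma.stability}, that orbit closures are shifts.
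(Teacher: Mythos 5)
Your proof is correct and follows essentially the same route as the paper: both arguments hinge on the observation that the orbit closure of any point of $X \backslash Z$ is contained in, hence (by minimality) equal to, $\overline{X \backslash Z}$, so that any subsystem strictly containing $Z$ must contain $X \backslash Z$ and therefore all of $X$. Your write-up merely spells out the invariance and non-emptiness details that the paper leaves implicit.
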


\begin{proof}
Indeed, consider any point $x \in X \backslash Z$. 
Since $\overline{\mathcal{O}(x)} = \overline{X \backslash Z}$, then $Z \bigcup \overline{\mathcal{O}(x)} = X$. Therefore, all the shifts which contain $Z$ are equal to $Z$ or $X$.
\end{proof}

\begin{notation}
For a shift $X$ of maximality type $1$, we denote by $M(X)$ the sole maximal subsystem of $X$ and by $R(X)$ the elements 
of $X$ which are not in $M(X)$.
\end{notation}

\begin{proposition}
Provided a shift $X$ and $Z$ 
a subsystem of $X$, if every strict subsystem of $Z$ is included in $\overline{X \backslash Z}$, then $\overline{X \backslash Z}$ 
is either a maximal subsystem of $X$ or equal to $X$.
\end{proposition}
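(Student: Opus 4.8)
The plan is to set $W := \overline{X \setminus Z}$ and to show that the hypothesis forces the dichotomy directly: either $W = X$, or $W$ is a maximal subsystem. First I would record that $W$ is genuinely a shift. Since $Z$ is shift-invariant and each $\sigma^{\textbf{u}}$ is invertible, the complement satisfies $\sigma^{\textbf{u}}(X \setminus Z) \subseteq X \setminus Z$ for every $\textbf{u}$ (if $\sigma^{\textbf{u}}(x)$ landed in $Z$ then $x = \sigma^{-\textbf{u}}\sigma^{\textbf{u}}(x) \in Z$), so Lemma~\ref{lemma.stability} applies with $U = X \setminus Z$ and $\overline{X \setminus Z}$ is a shift. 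It is non-empty as soon as $Z \neq X$, which is the only case of interest (if $Z = X$ then $W = \emptyset$ and there is nothing to establish).

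The central observation is the decomposition $X = Z \cup W$. Indeed, $X \setminus Z \subseteq W \subseteq X$ gives $X = Z \cup (X \setminus Z) \subseteq Z \cup W \subseteq X$, hence equality. Assuming now $W \neq X$, I would verify maximality straight from Definition~\ref{definition.maxsub.outcast}: take any subsystem $W'$ with $W \subsetneq W'$ and prove $W' = X$. Intersecting $W'$ with the decomposition yields $W' = W' \cap X = (W' \cap Z) \cup (W' \cap W) = (W' \cap Z) \cup W$, where the last equality uses $W \subseteq W'$.

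Then I would analyze $W' \cap Z$, which is closed and shift-invariant as an intersection of two such sets. It cannot be empty, since otherwise $W' = W$, contradicting $W \subsetneq W'$; hence $W' \cap Z$ is a subsystem of $Z$. Two cases remain. If $W' \cap Z = Z$, then $W' = Z \cup W = X$, as wanted. If instead $W' \cap Z$ is a strict subsystem of $Z$, the hypothesis gives $W' \cap Z \subseteq \overline{X \setminus Z} = W$, so that $W' = (W' \cap Z) \cup W = W$, again contradicting $W \subsetneq W'$. Therefore the only possibility is $W' = X$, which shows that $W$ is maximal.

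I expect no serious obstacle: once the decomposition $X = Z \cup W$ is in place, the argument is purely set-theoretic. The only points requiring care are checking that $W' \cap Z$ is a \emph{non-empty} subsystem of $Z$ before invoking the hypothesis, and cleanly separating the subcase $W' \cap Z = Z$ (which yields $W' = X$) from the strict subcase (which yields a contradiction); both are routine. A minor boundary remark is that the statement is only meaningful for $Z \neq X$, since for $Z = X$ the set $\overline{X \setminus Z}$ is empty.
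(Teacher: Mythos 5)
Your proof is correct, and it takes a genuinely more direct route than the paper's. The paper argues dynamically: it first splits on whether $Z$ equals the closure of the union of its strict subsystems (in which case that closure, and hence $\overline{X\setminus Z}$, contains $Z$ and therefore equals $X$), and otherwise invokes the maximality-type-$1$ structure of $Z$ (via $M(Z)$ and $R(Z)$) to conclude that every point of $X$ outside $\overline{X\setminus Z}$ has orbit dense in $Z$, so that any strictly larger subsystem must contain $Z\cup\overline{X\setminus Z}=X$. You instead take an arbitrary subsystem $W'\supsetneq W:=\overline{X\setminus Z}$, use the decomposition $X=Z\cup W$ to write $W'=(W'\cap Z)\cup W$, and apply the hypothesis directly to the non-empty invariant closed set $W'\cap Z$. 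This is purely set-theoretic, avoids the case split on the internal structure of $Z$ entirely, and handles uniformly the situation where $Z$ is minimal (there $\mathcal{S}(Z)=\emptyset$ and $M(Z)$ is undefined, so the paper's appeal to maximality type $1$ is, strictly speaking, out of place, whereas in your argument minimality of $Z$ simply forces the subcase $W'\cap Z=Z$). What the paper's route buys is the extra dynamical information that every point outside $\overline{X\setminus Z}$ has orbit closure equal to $Z$; what yours buys is brevity and robustness. Your boundary remark is also apt: the statement is only meaningful when $X\setminus Z\neq\emptyset$, since subsystems are non-empty by definition.
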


This is the case if $Z$ is a minimal subsystem of $X$ for instance. 

\begin{proof}
If $Z$ is %different from 
equal to the closure of the union of the elements of $\mathcal{S}(Z)$, then  $\overline{X \backslash Z}$ must contain $Z$ and is equal to $X$. Otherwise, $Z$ is necessarily of maximality type $1$. The closure of $X\backslash Z$ contains the maximal subsystem of $Z$ by hypothesis. So in any case $X \backslash (\overline{X \backslash Z})\subset R(Z)$. If the closure of $X \backslash Z$ is not $X$, then the orbit of any element outside of it is dense in $Z$. This implies that $\overline{X \backslash Z}$ is a maximal subsystem of $X$.
%f $\overline{X \backslash Z}$ is not equal to $X$, then $X \backslash \left(\overline{X \backslash Z} \right)\subset R(Z)$. This implies that $\overline{X \backslash Z}$ is maximal.
\end{proof}

\begin{remark}
In particular if $X$ has maximality type 0, 
then for every strict subsystem $Z$, if $\overline{X \backslash Z} \neq X$,
there exists another subsystem $Z'$ such that $Z' \not \subset \overline{X \backslash Z}$.
\end{remark}

%\begin{remark}
%    Let us notice that there exist subshifts of finite type with infinitely many maximal subsystems: for instance 
%    if $X$ is the minimal Robinson superimposed with bits on uniform on every level; or even the non-minimal Robinson. The common property of these two examples is that they can be written as $Z \bigcup \left(\bigcup_n Z_n \right)$, where the sushifts $Z_n$ are disjoint and converge to $Z$, which is as well disjoint from all the $Z_n$. For all $m$, the subshift $Z \bigcup \left(\bigcup_{n \neq m} Z_n \right)$ is maximal. 
%\end{remark}

\paragraph{{Maximal subsystems of a shift of finite type are of finite type}}

\begin{proposition}
    Consider any set of patterns $\mathcal{F}$ such that $X:= X_{\mathcal{F}} \neq \emptyset$. If $Z$ is a maximal subsystem of $X$, 
    then there exists a pattern $p \notin \mathcal{F}$ such that 
    $Z = X_{\mathcal{F} \cup \{p\}}$.
\end{proposition}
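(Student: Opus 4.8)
The plan is to produce $p$ as a single pattern witnessing that $Z$ is a \emph{proper} subshift of $X$, and then to check that adjoining it to $\mathcal{F}$ collapses $X$ onto $Z$ precisely because $Z$ is maximal. First I would recall that a shift is completely determined by its language: for any shift $Y$ one has $Y = \{x : p \sqsubset x \Rightarrow p \in \mathscr{L}(Y)\}$. Since $Z \subsetneq X$, equality of the languages is impossible (it would force $Z = X$), so $\mathscr{L}(Z) \subsetneq \mathscr{L}(X)$. I would therefore fix a finite pattern $p \in \mathscr{L}(X) \setminus \mathscr{L}(Z)$, that is, a pattern globally admissible for $X$ but appearing in no configuration of $Z$. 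Because $p \in \mathscr{L}(X)$, it appears in some configuration of $X = X_{\mathcal{F}}$, and this already shows $p \notin \mathcal{F}$.

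Next I would examine $X_{\mathcal{F}\cup\{p\}}$, the set of configurations of $X$ in which $p$ does not appear. It is closed and shift-invariant, hence a shift. On the one hand, every configuration of $Z$ lies in $X$ and avoids $p$ (as $p \notin \mathscr{L}(Z)$), so $Z \subseteq X_{\mathcal{F}\cup\{p\}}$; in particular $X_{\mathcal{F}\cup\{p\}}$ is nonempty and thus a genuine subsystem. On the other hand, choosing a configuration of $X$ in which $p$ \emph{does} appear (possible since $p \in \mathscr{L}(X)$) exhibits a point of $X$ lying outside $X_{\mathcal{F}\cup\{p\}}$, so the inclusion $X_{\mathcal{F}\cup\{p\}} \subsetneq X$ is strict.

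Finally I would invoke maximality. The shift $X_{\mathcal{F}\cup\{p\}}$ is a subsystem containing $Z$ and different from $X$, so by \cref{definition.maxsub.outcast} it must equal $Z$. This yields $Z = X_{\mathcal{F}\cup\{p\}}$ with $p \notin \mathcal{F}$, as required; and when $\mathcal{F}$ is finite, $\mathcal{F}\cup\{p\}$ is finite as well, which is exactly the asserted consequence that maximal subsystems of a shift of finite type are themselves of finite type. I expect no serious obstacle here: the only step needing care is the standard but essential fact that a subshift is recovered from its language, which is what guarantees the existence of the separating pattern $p$; every remaining step is a direct verification of the inclusions.
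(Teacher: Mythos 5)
Your proof is correct, and it reaches the conclusion by a more direct route than the paper. Both arguments share the same skeleton: produce a pattern $p \notin \mathscr{L}(Z)$ such that $X_{\mathcal{F}\cup\{p\}}$ is a \emph{proper} subsystem of $X$ containing $Z$, and then let maximality force $Z = X_{\mathcal{F}\cup\{p\}}$. Where you differ is in how the existence of such a $p$ is established. The paper works with the full set $\mathcal{P}$ of patterns absent from $\mathscr{L}(Z)$ and argues by contradiction: if every single-pattern extension $X_{\mathcal{F}\cup\{p\}}$ equalled $X$, then so would $X_{\mathcal{Q}}$ for every finite $\mathcal{F}\subseteq\mathcal{Q}\subseteq\mathcal{P}$, and passing to the limit along an enumeration of $\mathcal{P}$ would give $Z = X$ — so the argument relies on the convergence $X_{\mathcal{P}_n}\to X_{\mathcal{P}}=Z$. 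You instead observe that one may choose $p$ not merely outside $\mathscr{L}(Z)$ but inside $\mathscr{L}(X)\setminus\mathscr{L}(Z)$ (nonempty because a subshift is determined by its language), and this extra constraint makes the strictness of $X_{\mathcal{F}\cup\{p\}}\subsetneq X$ immediate: some configuration of $X$ contains $p$. This buys you a constructive, contradiction-free argument that avoids the limiting procedure entirely, at the modest cost of invoking the standard fact that $Y=\{x : p\sqsubset x \Rightarrow p\in\mathscr{L}(Y)\}$ for any shift $Y$ (a compactness argument of the same flavour as the one the paper uses in its limit step, so nothing is gained for free — but your version localizes it to a single clean citation of a folklore fact). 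The remaining verifications ($p\notin\mathcal{F}$, $Z\subseteq X_{\mathcal{F}\cup\{p\}}$, and the final appeal to maximality) coincide with the paper's.
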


\begin{proof}
    Consider the set $\mathcal{P}$ of patterns on the alphabet of 
    $X$ which are not in the language of $Z$. This set must contain $\mathcal{F}$. 
    Since $Z$ is different from $X$, it cannot be equal to $\mathcal{F}$. Let us proceed ad absurdum, and suppose that for all $p \in \mathcal{P} \backslash \mathcal{F}$, $X_{\mathcal{F} \cup \{p\}} = X$. Then for all finite subset $\mathcal{Q} \subset \mathcal{P}$ which contains $\mathcal{F}$, $X_{\mathcal{Q}} = X_{\mathcal{F}}$. 
    Set $(p_n)_{n \in \mathbb{N}}$ an enumeration of the elements of $\mathcal{P}$, $\mathcal{P}_n :=\{p_1,\ldots,p_n\}\cup\mathcal{F}$ and $X_n :=X_{\mathcal{P}_n}$.
    Therefore, since $X_{n}$ converges to $Z$ when $n \rightarrow \infty$, we have $Z = X$, which is impossible. Thus there exists $p \in \mathcal{P} \backslash \mathcal{F}$ such that $X_{\mathcal{F} \cup \{p\}} \neq X$. Since $Z \subset X_{\mathcal{F} \cup \{p\}}$ and $Z$ is maximal, we have 
    $Z = X_{\mathcal{F} \cup \{p\}}$.
\end{proof}

% \begin{proof}[Proof by the reviewer]
% \textcolor{cyan}{Let $\mathcal{P}=\{p_1,p_2,\ldots\}$ be an enumeration of all patterns on the alphabet of $X$ that do not appear in the language of $Z$. Set $\mathcal{P}_n=\{p_1,\ldots,p_n\}\cup\mathcal{F}$ and $X_n=X_{\mathcal{P}_n}$. For each $n\in\N$ have that $Z\subset X_n\subset X$, and by the maximality of $Z$, we must have either $X_n=Z$ or $X_n=X$. Since $X_n\to Z$ (by \cref{lemma.intersection}), there exists $n\in\N$ such that $X_n=Z$. Choose the minimal such $n$ and note that $X_{\mathcal{F}\cup\{p_n\}}=X_n=Z$.}
% \end{proof}

% \comm{S}{This proof by the reviewer does not work. We just need to specify what $\mathcal{Q} \rightarrow \mathcal{P}$ means (which is so clear from context but well).}

\begin{corollary}
    If $X$ is a shift of finite type, each of its maximal subsystems is also of finite type.
\end{corollary}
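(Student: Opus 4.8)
The plan is to read this corollary off directly from the immediately preceding Proposition, whose conclusion supplies exactly the representation we need. First I would invoke the hypothesis that $X$ is of finite type to fix a \emph{finite} set of patterns $\mathcal{F}$ with $X = X_{\mathcal{F}}$; this is available by the very definition of a shift of finite type. Then, given any maximal subsystem $Z$ of $X$, I would apply the Proposition to this $\mathcal{F}$ and this $Z$, which produces a single pattern $p \notin \mathcal{F}$ such that $Z = X_{\mathcal{F} \cup \{p\}}$.

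The key observation is simply that $\mathcal{F} \cup \{p\}$ is finite whenever $\mathcal{F}$ is, so the equality $Z = X_{\mathcal{F} \cup \{p\}}$ exhibits $Z$ as a shift defined by finitely many forbidden patterns, i.e.\ as a shift of finite type, which is the claim. There is no genuine obstacle here: all the substantive work has already been discharged in the Proposition, whose content is precisely that maximality forces a \emph{single} additional forbidden pattern rather than an arbitrary (a priori infinite) family. The only point worth emphasizing is that the Proposition is stated for an arbitrary set $\mathcal{F}$, so the finite-type hypothesis on $X$ is used at exactly one place, namely at the final step, where the finiteness of $\mathcal{F}$ is what guarantees the finiteness of $\mathcal{F} \cup \{p\}$ and hence that $Z$ is itself of finite type.
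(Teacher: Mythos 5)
Your proposal is correct and matches the paper's intent exactly: the corollary is stated without proof there precisely because it follows immediately from the preceding proposition by taking $\mathcal{F}$ finite, so that $\mathcal{F}\cup\{p\}$ is again finite and $Z=X_{\mathcal{F}\cup\{p\}}$ is of finite type. Your added remark about where the finiteness hypothesis enters is accurate and is the only non-trivial observation needed.
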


\begin{corollary}\label{corollary.countable}
For every shift $X$, $\mathcal{M}(X)$ is countable.
\end{corollary}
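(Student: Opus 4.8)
The plan is to leverage \cref{lemma.two.maximal}, which asserts that any two distinct maximal subsystems $Z_1, Z_2$ of $X$ satisfy $Z_1 \cup Z_2 = X$. The key observation I would extract from this is that the complements of distinct maximal subsystems are \emph{pairwise disjoint}: for $Z_1 \neq Z_2$ in $\mathcal{M}(X)$, one has
\[
(X \backslash Z_1) \cap (X \backslash Z_2) = X \backslash (Z_1 \cup Z_2) = \emptyset.
\]
This reduces the counting of maximal subsystems to counting a family of pairwise disjoint open sets, where a topological argument takes over.

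First I would record that each maximal subsystem $Z$ is, by \cref{definition.maxsub.outcast}, a proper subsystem, so $X \backslash Z$ is nonempty; and since $Z$ is closed, $X \backslash Z$ is relatively open in $X$. Combined with the disjointness above, the family $\{X \backslash Z : Z \in \mathcal{M}(X)\}$ is a collection of nonempty, pairwise disjoint open subsets of $X$, indexed injectively by $\mathcal{M}(X)$ (distinct $Z$ give distinct, in fact disjoint, complements). The final step invokes a standard fact: the ambient full shift $\mathcal{A}^{\mathbb{Z}^d}$ is a Cantor space, hence second countable, and therefore so is $X$ with the subspace topology. A second countable space can contain only countably many pairwise disjoint nonempty open sets, since each such set must contain at least one member of a fixed countable base, and disjoint sets select distinct basic sets. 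Applying this to the family above yields that $\mathcal{M}(X)$ is countable.

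I expect no serious obstacle in this argument; the only points requiring care are the verification that the complements are genuinely nonempty (immediate from maximality being a \emph{proper} inclusion) and pairwise disjoint (immediate from \cref{lemma.two.maximal}), together with the standard separability of the Cantor space. It is worth emphasizing that this proof makes no use of $X$ being of finite type, so it applies to every shift $X \in \mathcal{S}^d$, as the statement of the corollary requires.
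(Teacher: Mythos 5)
Your proof is correct, but it follows a genuinely different route from the paper's. The paper obtains the corollary as an immediate consequence of the proposition just before it: writing an arbitrary shift as $X = X_{\mathcal{F}}$ for some (possibly infinite) set of forbidden patterns $\mathcal{F}$, every maximal subsystem equals $X_{\mathcal{F} \cup \{p\}}$ for a single additional finite pattern $p$, and since there are only countably many finite patterns over a finite alphabet, $\mathcal{M}(X)$ is countable. You instead argue topologically: by \cref{lemma.two.maximal} the complements $X \backslash Z$, for $Z \in \mathcal{M}(X)$, are nonempty, relatively open, and pairwise disjoint, and a second countable space satisfies the countable chain condition. Both arguments are complete; each step of yours checks out (properness gives nonemptiness, closedness of subsystems gives openness of complements, and the ccc argument via a countable base is standard). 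The trade-off is that your argument is more general and more robust --- it applies verbatim to maximal closed invariant subsets of any second countable dynamical system, with no reference to the symbolic presentation --- while the paper's argument yields strictly more information, namely an explicit parametrization of the maximal subsystems by single forbidden patterns, which is what delivers the companion corollary that maximal subsystems of a shift of finite type are again of finite type.
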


\paragraph{{Additivity of types relatively to disjoint union}}

The following is straightforward, but we include a proof in order to illustrate how maximal subsystems of a disjoint union arise.

\begin{proposition}
Let $X,Z$ be non-minimal shifts with empty intersection, and let $m,n$ be the maximality type of $X,Z$, respectively. Then the maximality type of $X \cup Z$ is $n+m$. If $X$ is minimal and $Z$ is not minimal, $X \cup Z$ is of type $n+1$. If both $X$ and $Z$ are minimal, $X \cup Z$ has type $2$.
\end{proposition}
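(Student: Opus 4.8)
The plan is to reduce the computation to the lattices of subsystems of the two factors. Since $X$ and $Z$ are disjoint compact sets, $X \cup Z$ is again a shift, and it is non-minimal because $X$ and $Z$ are proper nonempty subsystems of it. The structural observation I would start from is that every subsystem $W$ of $X \cup Z$ satisfies $W = (W \cap X) \cup (W \cap Z)$, where each of $W \cap X, W \cap Z$ is closed and shift-invariant, hence is either $\emptyset$ or a subsystem of the corresponding factor. Using the hypothesis $X \cap Z = \emptyset$, the map $W \mapsto (W \cap X, W \cap Z)$ is an inclusion-preserving bijection between the subsystems of $X \cup Z$ and the pairs $(W_X, W_Z)$, where $W_X$ is $\emptyset$ or a subsystem of $X$, $W_Z$ is $\emptyset$ or a subsystem of $Z$, and they are not both empty; its inverse is $(W_X, W_Z) \mapsto W_X \cup W_Z$, and the whole shift $X \cup Z$ corresponds to the pair $(X,Z)$.

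Next I would characterise the maximal subsystems, i.e. the proper subsystems sitting immediately below $X \cup Z$ in this order. If $W = W_X \cup W_Z$ has both $W_X \subsetneq X$ and $W_Z \subsetneq Z$ strict, then $W \subsetneq X \cup W_Z \subsetneq X \cup Z$, so $W$ is not maximal; hence a maximal subsystem must have one of its two parts equal to the full factor. I would then show that, with $W_X = X$, the subsystem $X \cup W_Z$ is maximal exactly when $W_Z$ is immediately below $Z$ in the lattice of subsystems of $Z$ (together with $\emptyset$), and symmetrically for $W_Z = Z$. This is the step that needs the most care, since the meaning of "immediately below $Z$" depends on whether $Z$ is minimal: if $Z$ is non-minimal these are precisely the maximal subsystems $\mathcal{M}(Z)$, whereas if $Z$ is minimal the only element strictly below $Z$ is $\emptyset$, so the corresponding maximal subsystem of $X \cup Z$ is $X$ itself. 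In particular, when $Z$ is non-minimal the full factor $X$ alone is \emph{not} maximal, because $Z$ has a proper nonempty subsystem $W_Z'$ and then $X \subsetneq X \cup W_Z' \subsetneq X \cup Z$.

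Finally I would count, treating the two factors independently and checking that the two resulting families of maximal subsystems are disjoint: an equality $M \cup Z = X \cup M'$ would force $M = X$ upon intersecting with $X$, contradicting $M \subsetneq X$. When both $X$ and $Z$ are non-minimal, the maximal subsystems are the $m$ shifts $M \cup Z$ with $M \in \mathcal{M}(X)$ together with the $n$ shifts $X \cup M'$ with $M' \in \mathcal{M}(Z)$, giving total $m+n$. When $X$ is minimal and $Z$ is non-minimal, the $X$-side contributes the single maximal subsystem $Z$ (arising from $W_X = \emptyset$) while the $Z$-side contributes the $n$ shifts $X \cup M'$, for a total of $n+1$; these $n+1$ are distinct since $Z \cap X = \emptyset$ whereas $(X \cup M') \cap X = X$. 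When both are minimal, the only proper subsystems are $X$ and $Z$, each maximal, so the type is $2$. The main obstacle throughout is exactly this bookkeeping at the minimal/non-minimal boundary: one must verify that a minimal factor contributes precisely one maximal subsystem (the opposite full factor) and that the full factor alone is never maximal when the other factor is non-minimal, so that no maximal subsystem is missed or double-counted.
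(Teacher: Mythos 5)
Your proof is correct and follows essentially the same route as the paper: decompose each subsystem $W$ of $X\cup Z$ as $(W\cap X)\cup(W\cap Z)$, observe that a maximal subsystem must contain one full factor, and count the contributions from each side. Your treatment is in fact slightly more careful than the paper's at the minimal/non-minimal boundary (the paper's description of the maximal subsystems in the minimal case appears to contain a typo, where the empty part is written as the full factor), but the underlying argument is the same.
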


\begin{proof}
     Let us consider the first case. It is sufficient to see that given two disjoint shifts $X$ and $Z$, the subsystems of $X \cup Z$ of the following form are maximal: i) $X\cup M$, where $M$ is a maximal subsystem of $Z$ if $Z$ is not minimal and equal to $Z$ otherwise; ii) $N\cup Z$, where $N$ is a maximal subsystem of $X$ if $X$ is not minimal and equal to $X$ otherwise. Any other strict subsystem of $X \cup Z$ is of the form $N\cup M$, where $N$ (resp. $M$) is a subsystem of $X$ (resp. $Z$) or empty. Thus, such a subsystem is included in a subsystem as described above. Hence, they are the only maximal subsystems.
\end{proof}

\section{\label{section.dec}Maximality decomposition}

We prove in this section that every non-minimal shift can be written as the union of two parts: a union of shifts which are transitive shifts of maximality type $1$ or minimal; a set which is empty or a non-minimal shift having maximality type $0$ (Section~\ref{section.max.dec}). We then prove that this decomposition is unique in a certain sense (Section~\ref{section.other.dec}). This will be useful in Section~\ref{section.cantor.bendixson}.

\subsection{\label{section.max.dec}Definition and properties}

\begin{lemma}\label{lemma:maxtype}
    Consider a shift $X$ and $Z$ a maximal subsystem of $X$. If $Z \cap \overline{X \backslash Z} = \emptyset$, then $\overline{X \backslash Z}$ is minimal. Otherwise, 
    $\overline{X \backslash Z}$ is 
    transitive of maximality type $1$.
\end{lemma}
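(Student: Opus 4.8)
The plan is to work throughout with the auxiliary shift $W := \overline{X \backslash Z}$. First I would record the basic structural facts that make everything run. Since $Z$ is shift-invariant, so is $X \backslash Z$, and hence $W$ is a shift by \cref{lemma.stability}. Because $Z \neq X$ (maximal subsystems are proper), the set $X \backslash Z$ is nonempty, and clearly $W \cup Z = X$ since $X \backslash Z \subset W \subset X$. These observations are routine and I would dispatch them in a sentence or two.

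The heart of the argument is a single claim: \emph{every} $x \in X \backslash Z$ satisfies $\overline{\mathcal{O}(x)} = W$. To prove it I would use maximality of $Z$ directly. The set $Z \cup \overline{\mathcal{O}(x)}$ is a subsystem of $X$ that strictly contains $Z$ (it contains $x \notin Z$), so by maximality of $Z$ it equals $X$. This forces $X \backslash Z \subset \overline{\mathcal{O}(x)}$, and taking closures gives $W = \overline{X \backslash Z} \subset \overline{\mathcal{O}(x)}$; the reverse inclusion $\overline{\mathcal{O}(x)} \subset W$ is immediate because $x \in X \backslash Z \subset W$ and $W$ is closed and invariant. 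Hence $\overline{\mathcal{O}(x)} = W$, so every point of $X \backslash Z$ is a transitive point of $W$.

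With this claim in hand the two cases fall out. If $Z \cap W = \emptyset$, then $W \subset X \backslash Z$, and combined with $X \backslash Z \subset W$ this yields $W = X \backslash Z$; thus every point of $W$ has dense orbit, so $W$ is minimal. If instead $Z \cap W \neq \emptyset$, then $W$ is transitive (any $x \in X \backslash Z$ works), and I would show $W \cap Z$ is its unique maximal subsystem. It is a nonempty shift and is proper in $W$ because $W$ contains points of $X \backslash Z$, which lie outside $Z$. For uniqueness, let $Y \subsetneq W$ be any proper subsystem; if $Y \not\subset Z$ it contains some $y \in X \backslash Z$, whence $W = \overline{\mathcal{O}(y)} \subset Y$, a contradiction. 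So every proper subsystem of $W$ is contained in $W \cap Z$, which makes $W \cap Z$ maximal and the only maximal subsystem, i.e.\ $W$ has maximality type $1$.

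The one step needing care — and the main place a careless write-up could stumble — is the double inclusion establishing $\overline{\mathcal{O}(x)} = W$, since it is tempting to conclude $X \backslash Z \subset \overline{\mathcal{O}(x)}$ without then reconciling it with $\overline{\mathcal{O}(x)} \subset W$; both directions must be invoked, and the passage to closures is what bridges the non-closed set $X \backslash Z$ and the shift $W$. Beyond that, the only subtlety is to confirm in the second case that $W \cap Z$ is genuinely \emph{proper} in $W$ (so that the type is exactly $1$ and not $0$), which is exactly where the hypothesis $X \backslash Z \neq \emptyset$ is used. I do not expect any genuinely hard obstacle here; the proof is essentially a careful bookkeeping of inclusions driven by the maximality of $Z$.
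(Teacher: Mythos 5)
Your proof is correct and follows essentially the same route as the paper: both hinge on the observation that maximality of $Z$ forces $\overline{\mathcal{O}(x)} \supset \overline{X\backslash Z}$ for every $x \in X\backslash Z$, then split on whether $Z \cap \overline{X\backslash Z}$ is empty. Your write-up merely spells out the double inclusion and the uniqueness of the maximal subsystem in more detail than the paper does.
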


\begin{proof}
    Since $Z$ is maximal, then for all $x \in X \backslash Z$, 
    $\overline{\mathcal{O}(x)}$ contains $X \backslash Z$ and since it is closed, it also contains $\overline{X \backslash Z}$. In the first case, $X \backslash Z$ must be closed, and the argument above implies that it is minimal. In the second case, $Z \cap \overline{X \backslash Z}$ is the unique maximal subsystem of $\overline{X \backslash Z}$ and $\overline{X \backslash Z}$ is transitive.
\end{proof}

\begin{notation}
For every shift $X$, we  set 
$\mathcal{T}(X) := \left\{ \overline{X \backslash Z} : Z \in \mathcal{M}(X)\right\}$,  $\mathcal{E}(X):= \overline{\bigcup_{T \in \mathcal{T}(X)} T} $ and $\mathcal{K}(X):= \overline{X \backslash \mathcal{E}(X)}$.
\end{notation}

\begin{remark}
In the particular case that $X$ is minimal, $\mathcal{T}(X) = \mathcal{E}(X) = \emptyset$ and $\mathcal{K}(X) = X$.
\end{remark}

\begin{lemma}\label{lemma.max.dec}
    For every shift $X$, $X = \mathcal{K}(X) \cup \mathcal{E}(X)$. We will refer to this equality as the \textit{maximality decomposition} of $X$. Furthermore, $\mathcal{K}(X)$ is a subset of 
 $\bigcap_{Z \in \mathcal{M}(X)} Z$.
\end{lemma}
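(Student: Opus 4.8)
The plan is to observe that both assertions reduce to elementary manipulations of closures, relying on two facts about shifts: every shift is compact, hence closed, so that the closure of any subset of $X$ is again contained in $X$; and every maximal subsystem $Z \in \mathcal{M}(X)$ is, being a subsystem, a closed subset of $X$. With these in hand neither claim requires anything beyond set-theoretic bookkeeping (and, if one wants $\mathcal{K}(X)$ and $\mathcal{E}(X)$ to be \emph{shifts} rather than mere closed sets, an appeal to \cref{lemma.stability}).

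For the first equality $X = \mathcal{K}(X) \cup \mathcal{E}(X)$, I would first note that $\mathcal{E}(X) \subseteq X$: each member $T = \overline{X \backslash Z}$ of $\mathcal{T}(X)$ is the closure of a subset of the closed set $X$, hence $T \subseteq X$, so $\bigcup_{T \in \mathcal{T}(X)} T \subseteq X$ and therefore $\mathcal{E}(X) = \overline{\bigcup_{T} T} \subseteq \overline{X} = X$. By the very definition of $\mathcal{K}(X)$ as a closure, $X \backslash \mathcal{E}(X) \subseteq \overline{X \backslash \mathcal{E}(X)} = \mathcal{K}(X)$. Combining these, and using $\mathcal{E}(X) \subseteq X$,
\[
X \;=\; \mathcal{E}(X) \cup \bigl(X \backslash \mathcal{E}(X)\bigr) \;\subseteq\; \mathcal{E}(X) \cup \mathcal{K}(X) \;\subseteq\; X,
\]
the last inclusion because $\mathcal{K}(X) = \overline{X \backslash \mathcal{E}(X)} \subseteq \overline{X} = X$ as well. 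This forces equality throughout.

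For the second claim $\mathcal{K}(X) \subseteq \bigcap_{Z \in \mathcal{M}(X)} Z$, I would fix an arbitrary $Z \in \mathcal{M}(X)$ and argue $\mathcal{K}(X) \subseteq Z$. Since $T_Z := \overline{X \backslash Z} \in \mathcal{T}(X)$, we have $T_Z \subseteq \mathcal{E}(X)$, and hence, complementing inside $X$,
\[
X \backslash \mathcal{E}(X) \;\subseteq\; X \backslash T_Z \;=\; X \backslash \overline{X \backslash Z} \;\subseteq\; X \backslash (X \backslash Z) \;=\; Z,
\]
where the last inclusion uses $X \backslash Z \subseteq \overline{X \backslash Z}$ together with $Z \subseteq X$. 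As $Z$ is closed, taking closures preserves this inclusion, giving $\mathcal{K}(X) = \overline{X \backslash \mathcal{E}(X)} \subseteq \overline{Z} = Z$. Since $Z \in \mathcal{M}(X)$ was arbitrary, $\mathcal{K}(X) \subseteq \bigcap_{Z \in \mathcal{M}(X)} Z$.

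There is no genuine obstacle here; the only points warranting care are that complementation reverses inclusions (so the step $X \backslash \overline{X \backslash Z} \subseteq Z$ must be read carefully) and the degenerate case $\mathcal{M}(X) = \emptyset$. In that case $\mathcal{E}(X) = \overline{\emptyset} = \emptyset$ and $\mathcal{K}(X) = \overline{X} = X$, while the empty intersection $\bigcap_{Z \in \mathcal{M}(X)} Z$ is understood to be $X$ itself; this is exactly the minimal situation recorded in the remark preceding the lemma, and the inclusion $\mathcal{K}(X) \subseteq X$ holds trivially.
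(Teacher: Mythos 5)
Your proof is correct and follows exactly the paper's (much terser) argument: the first equality is immediate from $X\backslash\mathcal{E}(X)\subseteq\overline{X\backslash\mathcal{E}(X)}=\mathcal{K}(X)$, and the second from the inclusion $X\backslash\overline{X\backslash Z}\subseteq Z$ for each maximal $Z$, followed by taking closures. You have merely written out in full the set-theoretic bookkeeping that the paper leaves implicit, including the harmless degenerate case $\mathcal{M}(X)=\emptyset$.
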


\begin{proof}
    The first assertion is a direct consequence of the definition of $\mathcal{K}(X)$. The second one comes from the fact that for all maximal subsystem $Z$ of $X$, $X \backslash (\overline{X \backslash Z})$ is a subset of $Z$.
\end{proof}

\begin{lemma}\label{lemma.non.inclusion}
    For every shift $X$ and $Z \in \mathcal{M}(X)$, we have: \[ \overline{X \backslash Z} \subset \bigcap_{\underset{Z' \neq Z}{Z' \in \mathcal{M}(X)}} Z'\]
\end{lemma}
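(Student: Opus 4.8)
The plan is to deduce this almost immediately from \cref{lemma.two.maximal}, which asserts that any two distinct maximal subsystems of $X$ cover $X$. So I would fix $Z \in \mathcal{M}(X)$ and argue the containment separately for each other maximal subsystem, then intersect.

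Concretely, first I would pick an arbitrary $Z' \in \mathcal{M}(X)$ with $Z' \neq Z$. By \cref{lemma.two.maximal} we have $Z \cup Z' = X$. From this I would read off that $X \backslash Z \subset Z'$: indeed, any $x \in X \backslash Z$ lies in $X = Z \cup Z'$ but not in $Z$, hence in $Z'$. The next step is to pass to the closure. Since $Z'$ is a subsystem, it is by definition a compact (in particular closed) subset of the ambient full shift, so $\overline{X \backslash Z} \subset \overline{Z'} = Z'$. The only point that warrants a moment's care here is precisely this observation that subsystems are closed, which lets the closure be taken for free; no separate use of \cref{lemma.stability} is needed to know $Z'$ itself is closed.

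Finally, since $Z'$ was an arbitrary maximal subsystem distinct from $Z$, the inclusion $\overline{X \backslash Z} \subset Z'$ holds for every such $Z'$, and intersecting over all of them yields
\[
\overline{X \backslash Z} \subset \bigcap_{\underset{Z' \neq Z}{Z' \in \mathcal{M}(X)}} Z',
\]
which is the claim. I do not expect any genuine obstacle: the statement is essentially a restatement of \cref{lemma.two.maximal} combined with the fact that each maximal subsystem is closed, and the passage to the closure is the only step that is not purely set-theoretic.
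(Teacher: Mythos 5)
Your proof is correct and is precisely the argument the paper intends: the paper's proof consists of the single line ``This is a direct consequence of Lemma~\ref{lemma.two.maximal},'' and your write-up simply fills in the routine details (covering $X$ by $Z\cup Z'$, passing to the closure using that subsystems are closed, and intersecting over all $Z'\neq Z$). No discrepancy to report.
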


\begin{proof}
    This is a direct consequence of Lemma~\ref{lemma.two.maximal}.
\end{proof}

The following is straightforward: 

\begin{lemma}\label{lemma.nucleus}
    Consider a shift $X$ and $K$ another shift such that $X = K \cup \mathcal{E}(X)$. Then $\mathcal{K}(X) \subset K$.
\end{lemma}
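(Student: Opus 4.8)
The plan is to unwind the definition of $\mathcal{K}(X)$ and exploit the fact that a shift is automatically a closed set. Recall that by definition $\mathcal{K}(X) = \overline{X \backslash \mathcal{E}(X)}$, so it suffices to first establish the set-theoretic inclusion $X \backslash \mathcal{E}(X) \subset K$ and then pass to closures. I would therefore start at the level of points: take any $x \in X \backslash \mathcal{E}(X)$. By hypothesis $x \in X = K \cup \mathcal{E}(X)$, and since $x \notin \mathcal{E}(X)$, the only remaining possibility is $x \in K$. As $x$ was arbitrary, this yields $X \backslash \mathcal{E}(X) \subset K$.

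To conclude, I would take closures on both sides, using that $K$, being a shift, is a compact (hence closed) subset of the ambient full shift, so that $\overline{K} = K$. Monotonicity of the closure operation then gives
\[
\mathcal{K}(X) = \overline{X \backslash \mathcal{E}(X)} \subset \overline{K} = K,
\]
which is the desired conclusion. There is essentially no obstacle in this argument; the only point worth flagging explicitly is that the inclusion survives the passage to closures precisely because $K$ is closed, a property that is built into the definition of a shift and need not be assumed separately.
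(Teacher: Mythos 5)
Your proof is correct and is precisely the ``straightforward'' argument the paper has in mind (the paper states this lemma without proof): from $X = K \cup \mathcal{E}(X)$ one gets $X \setminus \mathcal{E}(X) \subset K$, and taking closures using that the shift $K$ is closed yields $\mathcal{K}(X) = \overline{X \setminus \mathcal{E}(X)} \subset K$. Nothing is missing.
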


\begin{lemma}
    Whenever $X$ is not minimal, if $\mathcal{K}(X)$ is not empty, it is a shift which has maximality type $0$ and is not minimal.
\end{lemma}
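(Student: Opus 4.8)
The plan is to establish the three assertions in turn: that $\mathcal{K}(X)$ is a shift, that it has maximality type $0$, and that it is not minimal. That it is a shift is immediate: $\mathcal{E}(X)$ is closed and shift-invariant, so $X\setminus\mathcal{E}(X)$ is shift-invariant, and by \cref{lemma.stability} its closure $\mathcal{K}(X)=\overline{X\setminus\mathcal{E}(X)}$ is a shift. The reusable observation throughout is that $X\setminus\mathcal{E}(X)$ is a \emph{dense} subset of $\mathcal{K}(X)$ that is \emph{disjoint} from $\mathcal{E}(X)$; whenever a witness point is needed, I pick it in $X\setminus\mathcal{E}(X)$.

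For maximality type $0$, I would argue by contradiction: suppose $W$ is a maximal subsystem of $\mathcal{K}(X)$. Since $\mathcal{K}(X)\setminus W$ is nonempty and open in $\mathcal{K}(X)$ while $X\setminus\mathcal{E}(X)$ is dense there, I fix $x\in(X\setminus\mathcal{E}(X))\setminus W$. The key step is to show that $W\cup\mathcal{E}(X)$ is a maximal subsystem of $X$. It is proper since $x\notin W\cup\mathcal{E}(X)$. For maximality, note $\mathcal{K}(X)=W\cup\overline{\mathcal{K}(X)\setminus W}$, and that for every $y\in\mathcal{K}(X)\setminus W$ the maximality of $W$ in $\mathcal{K}(X)$ forces $\overline{\mathcal{O}(y)}=\overline{\mathcal{K}(X)\setminus W}$ (indeed $W\cup\overline{\mathcal{O}(y)}$ strictly contains $W$, hence equals $\mathcal{K}(X)$, while the orbit of $y$ avoids the invariant set $W$); this is \cref{lemma:maxtype} applied inside $\mathcal{K}(X)$. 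Thus any subsystem strictly containing $W\cup\mathcal{E}(X)$ contains a point of $(X\setminus\mathcal{E}(X))\setminus W\subset\mathcal{K}(X)\setminus W$, hence contains $\overline{\mathcal{K}(X)\setminus W}$, hence equals $\overline{\mathcal{K}(X)\setminus W}\cup W\cup\mathcal{E}(X)=\mathcal{K}(X)\cup\mathcal{E}(X)=X$ by \cref{lemma.max.dec}. Therefore $W\cup\mathcal{E}(X)\in\mathcal{M}(X)$, so $\overline{X\setminus(W\cup\mathcal{E}(X))}\in\mathcal{T}(X)\subset\mathcal{E}(X)$; but this closure contains $x$, giving $x\in\mathcal{E}(X)$, a contradiction. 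Hence $\mathcal{K}(X)$ has no maximal subsystem.

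For non-minimality, I would again argue by contradiction, assuming $\mathcal{K}(X)$ is minimal. If $\mathcal{K}(X)=X$ then $X$ would be minimal, contrary to hypothesis; so $\mathcal{K}(X)\subsetneq X$, which forces $\mathcal{M}(X)\neq\emptyset$ and $\mathcal{E}(X)\neq\emptyset$ (otherwise $\mathcal{E}(X)=\emptyset$ and $\mathcal{K}(X)=\overline{X}=X$). Pick $x\in X\setminus\mathcal{E}(X)\subset\mathcal{K}(X)$. Since $\mathcal{K}(X)\cap\mathcal{E}(X)$ is a closed invariant subset of the minimal shift $\mathcal{K}(X)$ not containing $x$, it must be empty; hence $X=\mathcal{K}(X)\sqcup\mathcal{E}(X)$ with $\mathcal{K}(X)$ minimal. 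Now $\mathcal{E}(X)$ is a proper subsystem, and any subsystem strictly containing it meets $X\setminus\mathcal{E}(X)=\mathcal{K}(X)$, hence contains all of the minimal set $\mathcal{K}(X)$ and so equals $X$; thus $\mathcal{E}(X)\in\mathcal{M}(X)$. Then $\overline{X\setminus\mathcal{E}(X)}=\mathcal{K}(X)$ lies in $\mathcal{T}(X)\subset\mathcal{E}(X)$, contradicting $\mathcal{K}(X)\cap\mathcal{E}(X)=\emptyset$ together with $\mathcal{K}(X)\neq\emptyset$.

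The main obstacle is the maximality-type-$0$ argument, specifically verifying that $W\cup\mathcal{E}(X)$ is genuinely maximal in $X$. This is where one must exploit that $\mathcal{K}(X)$ and $\mathcal{E}(X)$ may overlap, carefully separating the role of the dense interior $X\setminus\mathcal{E}(X)$ from the boundary, and where the transitivity-type conclusion $\overline{\mathcal{O}(y)}=\overline{\mathcal{K}(X)\setminus W}$ does the real work. The non-minimality argument is structurally simpler, reducing to the disjoint-union situation $X=\mathcal{K}(X)\sqcup\mathcal{E}(X)$ once the overlap is shown to be empty.
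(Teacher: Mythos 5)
Your proof is correct, and while the overall strategy agrees with the paper's on the key point --- deriving a contradiction by showing that a hypothetical maximal subsystem $W$ of $\mathcal{K}(X)$ would make $W\cup\mathcal{E}(X)$ a maximal subsystem of $X$, which is impossible since $\overline{X\setminus(W\cup\mathcal{E}(X))}$ would then have to lie in $\mathcal{E}(X)$ while containing a point outside it --- you take a genuinely simpler route to get there. The paper only reaches this contradiction in the case $\mathcal{K}(X)\setminus W\not\subset\mathcal{E}(X)$, and must dispose of the complementary case by a separate argument involving $\mathcal{E}(\mathcal{K}(X))$, the iterate $\mathcal{K}^2(X)$ and \cref{lemma.nucleus}. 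You observe that the complementary case is vacuous: since $X\setminus\mathcal{E}(X)$ is dense in $\mathcal{K}(X)=\overline{X\setminus\mathcal{E}(X)}$ and $\mathcal{K}(X)\setminus W$ is nonempty and open in $\mathcal{K}(X)$, a witness $x\in(\mathcal{K}(X)\setminus W)\setminus\mathcal{E}(X)$ always exists. This eliminates the paper's second branch entirely, and you also spell out the verification that $W\cup\mathcal{E}(X)$ is genuinely maximal (via the transitivity of $\overline{\mathcal{K}(X)\setminus W}$ from \cref{lemma:maxtype}), which the paper only asserts. The non-minimality part of your argument is essentially identical to the paper's: $\mathcal{E}(X)$ would be a maximal subsystem of $X$ (you re-derive \cref{proposition.minimal} by hand), forcing $\mathcal{K}(X)\subset\mathcal{E}(X)$ and hence $\mathcal{K}(X)=\emptyset$. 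In short: same skeleton, but your density observation buys a shorter and more self-contained proof of the maximality-type-$0$ claim.
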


\begin{proof}
    Assume that $\mathcal{K}(X)$ is not empty. Let us assume ad absurdum that $\mathcal{K}(X)$ has at least one maximal subsystem.
    For all $Z \in \mathcal{M}(\mathcal{K}(X))$, 
    $\mathcal{K}(X) \backslash Z$ is included in $\mathcal{E}(X)$. Otherwise $Z \cup \mathcal{E}(X)$ would be a maximal subsystem of $X$ and this is impossible because $\overline{X \backslash \left(Z \cup \mathcal{E}(X)\right)}$ is not contained in $\mathcal{E}(X)$. 
    Since $\mathcal{E}(X)$ is closed, we have $\overline{\mathcal{K}(X)} \backslash Z \subset \mathcal{E}(X)$. 
    This implies that $\mathcal{E}(\mathcal{K}(X)) \subset \mathcal{E}(X)$.
    Therefore, since $X = \mathcal{K}(X) \cup \mathcal{E}(X) = \mathcal{K}^2(X) \cup \mathcal{E}(\mathcal{K}(X)) \cup \mathcal{E}(X)$, $X = \mathcal{K}^2(X)\cup \mathcal{E}(X)$. By Lemma~\ref{lemma.nucleus}, $\mathcal{K}^2(X)$ contains $\mathcal{K}(X)$. However this is impossible because $\mathcal{K}^2(X)$ is contained in every maximal subsystem of $\mathcal{K}(X)$. Assume that  $\mathcal{K}(X)$ is minimal. By Proposition~\ref{proposition.minimal}, $\mathcal{E}(X)$ is a maximal subsystem of $X$. By definition, $\mathcal{K}(X)$ is thus included in $\mathcal{E}(X)$. This implies that $\mathcal{E}(X) = X$ and that $\mathcal{K}(X)$ is empty.
\end{proof}

% \comm{A}{c'est quoi $K$ dans la prueve du lemme 3.6 ? }

% \comm{(S)}{J'ai changé la preuve.}

\begin{lemma}
For every shift $X$ and $T \in \mathcal{T}(X)$, $T$ is minimal or transitive with maximality type $1$.
\end{lemma}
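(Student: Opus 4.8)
The plan is to reduce the statement directly to Lemma~\ref{lemma:maxtype}, which already contains all of the substance. First I would unpack the definition of $\mathcal{T}(X)$: by construction any $T \in \mathcal{T}(X)$ is of the form $T = \overline{X \backslash Z}$ for some maximal subsystem $Z \in \mathcal{M}(X)$. (If $\mathcal{M}(X) = \emptyset$, for instance when $X$ is minimal, then $\mathcal{T}(X)$ is empty and there is nothing to prove.)

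Then I would invoke Lemma~\ref{lemma:maxtype} applied to this particular $Z$. It provides exactly the required dichotomy, according to whether $Z \cap \overline{X \backslash Z}$ is empty: in the first case $T = \overline{X \backslash Z}$ is minimal, and in the second case $T$ is transitive of maximality type $1$. In either case $T$ is minimal or transitive of maximality type $1$, which is precisely the claim.

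Since the genuine work is carried by Lemma~\ref{lemma:maxtype}, there is no real obstacle here; the only point requiring mild care is matching the two formulations, as Lemma~\ref{lemma:maxtype} is phrased for a single fixed maximal subsystem $Z$ while the present statement quantifies over all members of the set $\mathcal{T}(X)$. The conceptual engine behind both conclusions is that maximality of $Z$ forces $\overline{\mathcal{O}(x)} \supset \overline{X \backslash Z}$ for every $x \in X \backslash Z$, which is what yields minimality when $X \backslash Z$ is already closed and transitivity (with a unique maximal subsystem $Z \cap \overline{X \backslash Z}$) otherwise.
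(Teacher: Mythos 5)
Your proposal is correct and follows exactly the paper's route: the paper's proof is literally ``a direct consequence of definitions and Lemma~\ref{lemma:maxtype},'' which is precisely the reduction you carry out. Nothing further is needed.
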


\begin{proof}
    This is a direct consequence of definitions and Lemma~\ref{lemma:maxtype}.
\end{proof}

\begin{lemma}
Consider a shift $X$. For all $T,T' \in \mathcal{T}(X)$, if $T \subset T'$ then $T=T'$. Furthermore, for all $T \in \mathcal{T}(X)$, $T \not \subset \mathcal{K}(X)$. Furthermore, if $\mathcal{T}(X)$ is infinite, and $(T_k)_{k \ge 1}$ is an enumeration of its elements (recall that $\mathcal{M}(X)$, and thus $\mathcal{T}(X)$, is countable [Corollary \ref{corollary.countable}]), then for all $m \ge 1$ we have \[T_m \not \subset \bigcap_k \overline{\bigcup_{l \ge k} T_{l}}.\]
\end{lemma}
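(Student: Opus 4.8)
The plan is to reduce all three assertions to a single elementary observation, and then to apply \cref{lemma.non.inclusion} in each case. The key observation is this: for any $Z \in \mathcal{M}(X)$, the shift $\overline{X \backslash Z}$ is never contained in $Z$. Indeed, a maximal subsystem is proper, so $X \backslash Z$ is non-empty and disjoint from $Z$; since $\overline{X \backslash Z} \supset X \backslash Z$, an inclusion $\overline{X \backslash Z} \subset Z$ would give $X \backslash Z \subset Z$, which is absurd. I would pair this repeatedly with \cref{lemma.non.inclusion}, which asserts that $\overline{X \backslash Z'} \subset Z$ whenever $Z, Z' \in \mathcal{M}(X)$ are distinct.

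For the first assertion, I would write $T = \overline{X \backslash Z}$ and $T' = \overline{X \backslash Z'}$ with $Z,Z' \in \mathcal{M}(X)$, and suppose $T \subset T'$. If $Z = Z'$ then $T = T'$ and there is nothing to prove. If $Z \neq Z'$, then \cref{lemma.non.inclusion} gives $T' = \overline{X \backslash Z'} \subset Z$, so $T \subset T' \subset Z$, contradicting the observation above. Hence $Z = Z'$ and $T = T'$. For the second assertion, I would fix $T = \overline{X \backslash Z} \in \mathcal{T}(X)$; by \cref{lemma.max.dec} one has $\mathcal{K}(X) \subset \bigcap_{Z'' \in \mathcal{M}(X)} Z'' \subset Z$, so $T \subset \mathcal{K}(X)$ would again force $T \subset Z$, which is impossible.

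For the third assertion, I would set $W := \bigcap_k \overline{\bigcup_{l \ge k} T_l}$ and fix $m \ge 1$, writing $T_k = \overline{X \backslash Z_k}$ with $Z_k \in \mathcal{M}(X)$. Since the $T_k$ enumerate distinct shifts, for each $l \ge m+1$ we have $T_l \neq T_m$, hence $Z_l \neq Z_m$, and \cref{lemma.non.inclusion} gives $T_l \subset Z_m$. Taking the union over $l \ge m+1$ and using that $Z_m$ is closed, this yields $\overline{\bigcup_{l \ge m+1} T_l} \subset Z_m$. As the family $\overline{\bigcup_{l \ge k} T_l}$ is non-increasing in $k$, the intersection $W$ is contained in its $(m+1)$-th term, so $W \subset Z_m$; then $T_m \subset W$ would give $T_m \subset Z_m$, the same contradiction. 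Therefore $T_m \not\subset W$.

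None of the steps presents a genuine obstacle: the entire argument rests on \cref{lemma.non.inclusion} (itself a consequence of \cref{lemma.two.maximal}) together with the triviality $\overline{X \backslash Z} \not\subset Z$. The only points deserving attention are the bookkeeping in the third part --- verifying that $\overline{\bigcup_{l \ge k} T_l}$ decreases with $k$, so that $W$ sits inside its $(m+1)$-th term --- and the remark that distinct members of the enumeration $(T_k)$ correspond to distinct maximal subsystems, which is immediate because $\overline{X \backslash Z}$ is determined by $Z$.
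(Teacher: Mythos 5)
Your proof is correct and follows essentially the same route as the paper's: all three claims are reduced, via Lemma~\ref{lemma.non.inclusion} and Lemma~\ref{lemma.max.dec}, to the impossibility of $\overline{X\backslash Z}\subset Z$ for $Z$ a maximal (hence proper) subsystem. The only difference is that you spell out this last impossibility and the details of the third assertion, which the paper leaves as "similar" and "impossible" without elaboration.
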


\begin{proof}
\begin{enumerate}
\item Consider $T$ and $T'$ two elements of $\mathcal{T}(X)$ such that $T \subset T'$. Denote by $Z,Z'$ maximal subsystems of $X$ such that $T= \overline{X\backslash Z}$ and $T' = \overline{X \backslash Z'}$. Assume ad absurdum that $T \neq T'$. This implies that $Z \neq Z'$. We know by  Lemma~\ref{lemma.non.inclusion} that $T'$ is contained in $Z$, and thus $T$ is contained in $Z$, which is impossible. 
\item Assume that for some $T \in \mathcal{T}(X)$, $T \subset \mathcal{K}(X)$. Consider a $Z$ maximal subsystem of $X$ such that $T = \overline{X\backslash Z}$.
% \footnote{\textcolor{red}{j'ai chang\'e $Z\backslash X$ par $X\backslash Z$}}. 
We know (Lemma~\ref{lemma.max.dec}) that $\mathcal{K}(X)$ is contained in all the maximal subsystems of $X$. Therefore $T$ is contained in $Z$. Again, this is impossible.
\item The last point is proved in a similar way, as the set $\bigcap_k \overline{\bigcup_{l \ge k} T_{l}}$ is contained in every maximal subsystem of $X$.
\end{enumerate}
\end{proof}

% \comm{A}{$\mathcal{T}(X)$ countable? 3.8 and 3.9}

\subsection{\label{section.other.dec}Uniqueness}

\begin{theorem}\label{thm:decomposition}
    Consider a shift $X$, $K$ a set, 
    and $\mathcal{T}$ a set of shifts such that: \textbf{(i)} every element
    of $\mathcal{T}$ is minimal or transitive with maximality type $1$; 
    \textbf{(ii)} $K$ is empty or a shift of maximality type 0 and not minimal; 
    \textbf{(iii)} 
    $X = K \cup \overline{\bigcup_{T\in \mathcal{T}} T}$; \textbf{(iv)} if $T \subset T'$ for $T,T' \in \mathcal{T}$, then $T=T'$; \textbf{(v)}
        for all $T \in \mathcal{T}$, $T \not \subset K$. \textbf{(vi)} if $|\mathcal{T}|= \infty$, denoting by $T_1,T_2, \ldots$
        the elements of $\mathcal{T}$, for all $m$ we have 
        \[T_m \not \subset \bigcap_k \overline{\bigcup_{l \ge k} T_{l}}.\]
        Then the maximal subsystems of $X$ are subsystems of the form \[K \bigcup \overline{\left(\bigcup_{\underset{T' \in \mathcal{T}}{T' \neq T}} T'\right)} \bigcup M(T),\] 
        where $T \in \mathcal{T}$. Furthermore, $\mathcal{T}(X) = \mathcal{T}$ and $\mathcal{K}(X) \subset K$.
\end{theorem}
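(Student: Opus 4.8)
The plan is to prove that the map $\Phi\colon T\mapsto L_T$, where $L_T := K\cup\overline{\bigcup_{T'\neq T}T'}\cup M(T)$ (with the convention $M(T)=\emptyset$ and $R(T)=T$ when $T$ is minimal), is a bijection from $\mathcal{T}$ onto the set $\mathcal{M}(X)$ of maximal subsystems of $X$. Before anything else I would record the auxiliary fact that in a transitive shift $T$ of maximality type $1$ every point of $R(T)=T\backslash M(T)$ has a dense orbit: if $\overline{\mathcal{O}(x)}\neq T$ for some $x\notin M(T)$, then $\overline{\mathcal{O}(x)}\cup M(T)$ is a subsystem strictly containing the maximal subsystem $M(T)$, hence equals $T$, so $T$ would be a union of two proper subsystems and could not contain a transitive point, a contradiction. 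For minimal $T$ this is automatic. This fact is what lets me identify ``removing the transitive orbits of $T$'' with ``passing to $L_T$''.

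First I would show each $L_T$ is a maximal subsystem. It is a shift, being a finite union of the shifts $K$, $\overline{\bigcup_{T'\neq T}T'}$ (a shift by Lemma~\ref{lemma.stability}) and $M(T)$. For properness, fix a transitive point $x\in R(T)$; then $x\notin M(T)$, and $x\notin K$ since otherwise $T=\overline{\mathcal{O}(x)}\subset K$ would violate \textbf{(v)}; finally $x\notin\overline{\bigcup_{T'\neq T}T'}$, for otherwise $T=\overline{\mathcal{O}(x)}\subset\overline{\bigcup_{T'\neq T}T'}$, which contradicts \textbf{(iv)} in the finite case and \textbf{(vi)} in the infinite case. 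The infinite case is the delicate point: writing $\mathcal{T}=\{T_1,T_2,\dots\}$ and $T=T_m$, for each $k>m$ one splits $\bigcup_{l\neq m}T_l$ into the finite (hence closed) union over $l<k$ and the tail $\bigcup_{l\ge k}T_l$; the transitive point cannot land in a finite $T_l$ by \textbf{(iv)}, so it lands in $\overline{\bigcup_{l\ge k}T_l}$, giving $T_m\subset\overline{\bigcup_{l\ge k}T_l}$ for all $k$ and hence $T_m\subset\bigcap_k\overline{\bigcup_{l\ge k}T_l}$, contradicting \textbf{(vi)}. For maximality, any $y\in X\backslash L_T$ lies in $\overline{\bigcup_{T'}T'}=T\cup\overline{\bigcup_{T'\neq T}T'}$ (as $y\notin K$), hence in $T\backslash M(T)=R(T)$, so $\overline{\mathcal{O}(y)}=T$ and $\overline{L_T\cup\mathcal{O}(y)}=L_T\cup T=X$.

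The main obstacle is surjectivity: showing that an arbitrary maximal subsystem $Z$ equals some $L_T$. Set $T_Z:=\overline{X\backslash Z}$; by the argument in Lemma~\ref{lemma:maxtype} every point of $X\backslash Z$ has dense orbit in $T_Z$, and $T_Z$ is minimal or transitive of type $1$. The key step is to first prove $K\subset Z$. If not, maximality of $Z$ forces $K\cup Z=X$, whence $X\backslash Z\subset K$ and $T_Z\subset K$; I would then show $Z\cap K$ is a maximal subsystem of $K$ --- indeed $K=(Z\cap K)\cup T_Z$, the set $K\backslash(Z\cap K)$ equals $R(T_Z)$ (resp.\ $T_Z$ in the minimal case), whose points have orbit closure $T_Z$, so any subsystem properly containing $Z\cap K$ contains $T_Z$ and hence all of $K$. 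This contradicts \textbf{(ii)} ($K$ has maximality type $0$), so $K\subset Z$. Next, some $T\in\mathcal{T}$ must satisfy $R(T)\not\subset Z$, for otherwise $\bigcup_T R(T)\subset Z$ gives $\overline{\bigcup_T T}\subset Z$ and then $X=K\cup\overline{\bigcup_T T}\subset Z$, impossible. For such a $T$, a transitive point $w\in R(T)\backslash Z$ lies in $X\backslash Z$, so $T=\overline{\mathcal{O}(w)}=T_Z$; thus $T_Z\in\mathcal{T}$. Finally $Z=L_{T_Z}$: both are maximal subsystems satisfying $\overline{X\backslash(\,\cdot\,)}=T_Z$, and Lemma~\ref{lemma.two.maximal} shows two distinct maximal subsystems would cover $X$, forcing $T_Z\subset Z$, which is absurd since $X\backslash Z\subset T_Z$.

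With the bijection in hand, $\mathcal{T}(X)=\{\overline{X\backslash Z}:Z\in\mathcal{M}(X)\}=\{\overline{X\backslash L_T}:T\in\mathcal{T}\}=\mathcal{T}$, using that $\overline{X\backslash L_T}=T$ (the points of $X\backslash L_T$ are exactly transitive points of $T$). Consequently $\mathcal{E}(X)=\overline{\bigcup_{T\in\mathcal{T}}T}$, so \textbf{(iii)} reads $X=K\cup\mathcal{E}(X)$, and Lemma~\ref{lemma.nucleus} yields $\mathcal{K}(X)\subset K$, completing the proof.
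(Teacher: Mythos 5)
Your proof is correct, and its first half coincides with the paper's: both define the candidate maximal subsystems $L_T$ (the paper's $S(T)$), identify $X \backslash L_T$ with $R(T)$, and use the fact that every point of $R(T)$ has dense orbit in $T$ to conclude maximality. You add value here by spelling out what the paper only asserts in one line, namely how hypotheses \textbf{(iv)}--\textbf{(vi)} force $R(T) \cap \overline{\bigcup_{T' \neq T} T'} = \emptyset$; your decomposition of that closure into a finite (closed) part plus the tail $\overline{\bigcup_{l \ge k} T_l}$ is exactly the right way to bring \textbf{(vi)} into play. Where you genuinely diverge is the converse inclusion $\mathcal{M}(X) \subset \{L_T\}$. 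The paper works with an arbitrary strict subsystem $Z$: if some $T \not\subset Z$ then $Z \cap R(T) = \emptyset$ (a point of $R(T)$ in $Z$ would force $T \subset Z$), hence $Z \subset L_T$ since $X = R(T) \sqcup L_T$; and if $Z$ contains every $T$, then $Z \cap K$ is a proper-or-empty subsystem of $K$, which by maximality type $0$ of $K$ can be strictly enlarged inside $K$, so $Z$ is not maximal. You instead start from a maximal $Z$, prove $K \subset Z$ by showing that otherwise $Z \cap K$ would be a maximal subsystem of $K$ (or $K$ would be minimal), identify $T_Z = \overline{X \backslash Z}$ as an element of $\mathcal{T}$ through a transitive point of some $R(T) \backslash Z$, and close with Lemma~\ref{lemma.two.maximal}. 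Both routes rest on the same use of hypothesis \textbf{(ii)}; the paper's is shorter because ``contained in some $L_T$'' suffices for non-maximality of everything else, while yours produces the explicit bijection $T \mapsto L_T$ and makes the degenerate cases ($Z \cap K = \emptyset$, $T_Z$ minimal) visible. The only caveat, shared with the paper, is the degenerate situation where $L_T$ is empty (e.g.\ $K = \emptyset$, $\mathcal{T} = \{T\}$ with $T$ minimal, so $X$ is minimal and has no maximal subsystem at all); neither argument addresses it, and it is an edge case of the statement rather than a defect of your proof.
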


For a minimal shift $T$, we use the convention $M(T) = \emptyset$.

\begin{proof}
    For every $T \in \mathcal{T}$, denote by $S(T)$ the set \[K \bigcup \overline{\left(\bigcup_{\underset{T' \in \mathcal{T}}{T' \neq T}} T'\right)}\bigcup M(T).\]
The set $X \backslash S(T)$ contains $R(T)$ because of properties \textbf{(iv)}, \textbf{(v)} and \textbf{(vi)}. Because of the property \textbf{(iii)}, $X \backslash S(T)$ is contained in $T$. Since it does not contain any element of $M(T)$, $X \backslash S(T) = R(T)$. This implies that $S(T)$ is maximal.
Consider some strict subsystem $Z$ of $X$. If there is some $T \in \mathcal{T}$ which is not contained in $Z$, then $Z \subset S(T)$. If $Z$ contains every $T \in \mathcal{T}$, then $K \neq \emptyset$. Thus $Z \cap K$ is a strict subsystem of $K$ or is empty, and because of property \textbf{(ii)}, $Z$ is not maximal. We have proved that the maximal subsystems of $X$ are the shifts $S(T)$, $T \in \mathcal{T}$. Along the way we have proved that for all $T \in \mathcal{T}$, $X \backslash S(T) = R(T)$, and thus $\overline{X \backslash S(T)} = T$. This implies that $\mathcal{T}(X) = \mathcal{T}$. From Lemma~\ref{lemma.nucleus} we have that $\mathcal{K}(X) \subset K$.
\end{proof}

% \comm{A}{see comment 55}

\section{Isolated points in the space of transitive shifts\label{section.transitive}}

M. Hochman, and R. Pavlov and S. Schmieding have studied the effects of transitivity on genericity. It is natural to wonder how our characterization in Section~\ref{section.isolated.shifts}
is affected by transitivity, and more generally by transitivity-like properties.
For this we will consider how 
these kind of properties affect the 
structure of the set of subsystems.

We denote by $\mathcal{T}^d$ the space of transitive $d$-dimensional shifts.

%\begin{remark}
%    If a SFT is transitive, then it has at most one maximal subsystem. Are there constraints on the maximal subsystem when there is one ?
%\end{remark}

\subsection{{The number of maximal subsystems of a transitive shift}}

\begin{lemma}
    A shift $X$ is transitive if and only if 
    \[X \neq \bigcup_{Z \in \mathcal{S}(X)} Z.\]
\end{lemma}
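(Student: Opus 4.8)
The plan is to reduce both implications to a single elementary observation: for any configuration $x \in X$, the orbit closure $\overline{\mathcal{O}(x)}$ is the smallest subsystem of $X$ containing $x$, and it coincides with $X$ precisely when $x$ belongs to no strict subsystem. The point $\overline{\mathcal{O}(x)}$ is indeed a shift: $\mathcal{O}(x)$ is invariant under $\sigma^{\textbf{u}}$ for every $\textbf{u}$, so by \cref{lemma.stability} its closure is closed, shift-invariant and nonempty, hence a subsystem. With this in hand the statement is just an unwinding of the definition of transitivity (the existence of $x$ with $\overline{\mathcal{O}(x)} = X$) against the definition of $\mathcal{S}(X)$ as the set of nonempty proper subsystems.

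For the direction ($\Rightarrow$), I would assume $X$ is transitive and fix a transitive point $x$, so $\overline{\mathcal{O}(x)} = X$. If $x$ belonged to some $Z \in \mathcal{S}(X)$, then invariance of $Z$ gives $\mathcal{O}(x) \subset Z$, and since $Z$ is closed we would obtain $X = \overline{\mathcal{O}(x)} \subset Z$, contradicting that $Z$ is strict. Hence $x$ lies in no strict subsystem, so $x \notin \bigcup_{Z \in \mathcal{S}(X)} Z$, which yields $X \neq \bigcup_{Z \in \mathcal{S}(X)} Z$.

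For the direction ($\Leftarrow$), I would assume $X \neq \bigcup_{Z \in \mathcal{S}(X)} Z$ and pick some $x \in X$ that lies in no strict subsystem. Then $\overline{\mathcal{O}(x)}$ is a subsystem of $X$ containing $x$; were it a strict subsystem it would exhibit $x$ inside an element of $\mathcal{S}(X)$, a contradiction. Therefore $\overline{\mathcal{O}(x)} = X$, and $x$ is a transitive point, so $X$ is transitive.

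There is no substantial obstacle here; the only points requiring care are bookkeeping ones. First, one must invoke \cref{lemma.stability} to be sure $\overline{\mathcal{O}(x)}$ is genuinely a shift rather than merely a closed set, and note it is nonempty so that it qualifies as a subsystem under the paper's convention. Second, one should keep track of the fact that ``strict subsystem'' means proper and nonempty, so that the complement $X \setminus \bigcup_{Z \in \mathcal{S}(X)} Z$ being nonempty is exactly what delivers a candidate transitive point. Everything else is a direct translation between the two formulations.
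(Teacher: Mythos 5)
Your proof is correct and follows essentially the same route as the paper's: a transitive point cannot lie in any strict subsystem, and conversely any point outside the union of strict subsystems must have dense orbit. The only difference is that you spell out the justification that $\overline{\mathcal{O}(x)}$ is a subsystem (via Lemma~\ref{lemma.stability}) and the containment argument $X = \overline{\mathcal{O}(x)} \subset Z$, which the paper leaves implicit.
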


\begin{proof}
$(\Rightarrow)$ If $X$ is transitive, then there exists $x \in X$ such that $\overline{\mathcal{O}(x)} = X$. As a consequence there is no $Z \in \mathcal{S}(X)$ such that $x \in Z$, 
thus $x$ does not belong to the union of the elements of $\mathcal{S}(X)$.
$(\Leftarrow)$ Reciprocally, if the union of the elements of $\mathcal{S}(X)$ is different from $X$, then consider $x$ in $X$ outside of this union. The set $\overline{\mathcal{O}(x)}$ cannot be an element of $\mathcal{S}(X)$, thus it must be equal to $X$. This means that $X$ is transitive.
\end{proof}

\begin{theorem}\label{theorem.0.1}
    The maximality type of a transitive shift is either $0$ or $1$.
\end{theorem}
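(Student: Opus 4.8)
The plan is to argue by contradiction, showing that a transitive shift cannot carry two distinct maximal subsystems, so that the maximality type (a cardinal) is forced to lie in $\{0,1\}$. The two ingredients I would rely on are both already available: first, the fact that transitivity of $X$ means there is a point $x$ with $\overline{\mathcal{O}(x)} = X$, equivalently (by the preceding lemma characterizing transitivity) a point lying outside every strict subsystem; second, \cref{lemma.two.maximal}, which says that any two \emph{different} maximal subsystems $Z_1, Z_2$ of $X$ already cover $X$, i.e.\ $Z_1 \cup Z_2 = X$.

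The key step I would carry out is the following. Suppose for contradiction that the maximality type is at least $2$, so that $X$ admits two distinct maximal subsystems $Z_1 \neq Z_2$. By \cref{lemma.two.maximal} we have $Z_1 \cup Z_2 = X$. Now pick a transitive point $x \in X$, that is, a point with $\overline{\mathcal{O}(x)} = X$; such a point exists by transitivity. Since $Z_1 \cup Z_2 = X$, the point $x$ lies in $Z_1$ or in $Z_2$. But each $Z_i$ is by definition a maximal subsystem, hence a \emph{proper} subsystem, $Z_i \neq X$, and each is closed and shift-invariant; so if $x \in Z_i$ then $\overline{\mathcal{O}(x)} \subseteq Z_i \subsetneq X$, contradicting $\overline{\mathcal{O}(x)} = X$. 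Therefore $X$ has at most one maximal subsystem, and its maximality type is $0$ or $1$.

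I would then note explicitly that this exhausts the possibilities, including the infinite case: any maximality type $\geq 2$ (in particular $\infty$) yields at least two distinct maximal subsystems, and the argument above applies verbatim. The only genuinely delicate point to keep in mind is that the contradiction uses the defining property that a maximal subsystem is a \emph{strict} subsystem (so that a transitive point cannot land inside it); this is precisely what makes the two hypotheses clash, and it is the heart of the short argument. Beyond this, no estimates or topological subtleties are required, so I do not anticipate a substantial obstacle: the statement is essentially an immediate corollary of \cref{lemma.two.maximal} combined with the existence of a transitive point.
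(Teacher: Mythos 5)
Your argument is correct: by definition a maximal subsystem is a \emph{proper} subsystem, \cref{lemma.two.maximal} gives $Z_1\cup Z_2=X$ for two distinct maximal subsystems, and a transitive point $x$ with $\overline{\mathcal{O}(x)}=X$ cannot lie in a proper closed invariant set, so it cannot lie in $Z_1\cup Z_2$ --- a clean contradiction that handles every maximality type $\ge 2$, including $\infty$, at once.

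Your route is genuinely different from the paper's, though both hinge on the same fact that a transitive point lies outside every strict subsystem. The paper does not invoke \cref{lemma.two.maximal} here; instead it considers $Z:=\overline{\bigcup_{Z'\in\mathcal{S}(X)}Z'}$, the closure of the union of all strict subsystems, and splits into cases: if $Z\neq X$ then $Z$ is \emph{the} unique maximal subsystem (every strict subsystem sits inside it), and if $Z=X$ then no strict subsystem can be maximal, since a maximal $Z'$ would have to absorb every other strict subsystem (because $Z'\cup Z''$ avoids the transitive point, hence is strict, hence equals $Z'$ by maximality), forcing $Z\subseteq Z'\neq X$. The paper's version is slightly longer but buys more: it identifies the maximal subsystem concretely as $\overline{\bigcup_{Z'\in\mathcal{S}(X)}Z'}$ in the type-$1$ case and characterizes type $0$ as the case where the strict subsystems are jointly dense, structural information that is reused elsewhere (e.g.\ in the discussion of isolated points in $\mathcal{T}^d$). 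Your version is the minimal argument for the cardinality bound alone, and is arguably the cleaner way to present just this statement.
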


\begin{proof}
Let $X$ be a transitive shift. If $X$ is minimal, then its maximality type is 0. From now on, we assume that $X$ is not minimal. Consider the shift $Z \subset X$ which is equal to the closure of the union of all the strict subsystems of $X$. If $Z$ is different from $X$, then $Z$ is the only maximal subsystem of $X$, for every other subsystem is included in it. When $Z$ is equal to $X$, no strict subsystem $Z'$ can be maximal. Indeed, assume there is such a $Z'$. Since there exists some $x \in X$ whose orbit is dense in $X$, for each $Z''$ strict subsystem of $X$, $Z' \cup Z''$ is a strict subsystem and thus must be contained in $Z'$, which implies that $Z'' \subset Z'$. This means that $Z$ is not equal to $X$, which is a contradiction.
\end{proof}

\subsection{Characterization of isolated points in $\mathcal{T}^d$}

In this section, we provide a characterization of isolated points in the space of transitive shifts. It is straightforward that, since $\mathcal{T}^d \subset \mathcal{S}^d$, if a transitive shift is isolated in $\mathcal{S}^d$, it is also isolated in $\mathcal{T}^d$. We will see in Section~\ref{section.example.isolated.trans} that there are isolated shifts in $\mathcal{T}^d$ which are not isolated in $\mathcal{S}^d$.

We analyze separately shifts that have maximality type 0 (Section~\ref{section.max.type.0}) and the ones that have maximality type 1 (Section~\ref{section.max.type.1}). Both cases combined yield the claims in \cref{thm:B}.

\subsubsection{Maximality type 1\label{section.max.type.1}}

\begin{lemma}\label{lemma.type.1.star}
    If $X\in\mathcal{T}^d$ has maximality type 1, then every strict subsystem of $X$ is contained in its maximal subsystem.
\end{lemma}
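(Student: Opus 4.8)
The plan is to reuse the construction at the heart of the proof of \cref{theorem.0.1}, namely the closure of the union of all strict subsystems. Write $M(X)$ for the unique maximal subsystem of $X$ (which exists since $X$ has maximality type $1$), and note that type $1$ forces $X$ to be non-minimal, so by transitivity there is some $x\in X$ with $\overline{\mathcal{O}(x)}=X$, and this $x$ lies in no strict subsystem. Set
\[
Z:=\overline{\bigcup_{W\in\mathcal{S}(X)} W}.
\]
Since each strict subsystem $W$ is shift-invariant, so is their union, and hence $Z$ is a shift by \cref{lemma.stability}. By construction every strict subsystem is contained in $Z$, so the whole statement reduces to proving $Z=M(X)$.

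Next I would run the dichotomy exactly as in \cref{theorem.0.1}. Either $Z=X$ or $Z\neq X$. In the first case no strict subsystem can be maximal: if $Z'$ were maximal, then for every strict subsystem $W$ the point $x$ shows $Z'\cup W$ is again a strict subsystem, hence contained in $Z'$, forcing $W\subset Z'$ for all $W$ and therefore $Z=X\subset Z'$, contradicting $Z'\neq X$. This would make the maximality type of $X$ equal to $0$, contradicting the hypothesis. Therefore $Z\neq X$, and since $Z$ is a strict subsystem containing every strict subsystem, it is the largest strict subsystem and hence maximal; as $X$ has a unique maximal subsystem, $Z=M(X)$.

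Finally, any strict subsystem $W\in\mathcal{S}(X)$ satisfies $W\subseteq\bigcup_{W'\in\mathcal{S}(X)}W'\subseteq Z=M(X)$, which is exactly the claim. The only delicate point — and the step I would write most carefully — is verifying in the case $Z\neq X$ that $Z$ is genuinely \emph{maximal} rather than merely a proper subsystem; this is where transitivity is essential, since it guarantees that a strict subsystem cannot be enlarged past $Z$ without capturing the dense-orbit point $x$ and thus becoming all of $X$. Everything else (stability of $Z$ as a shift, the containment of each $W$ in $Z$) is routine and already available from the cited lemmas.
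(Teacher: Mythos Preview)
Your proof is correct, but it takes a different route from the paper's. You rebuild the maximal subsystem from below as $Z=\overline{\bigcup_{W\in\mathcal S(X)}W}$ and then rerun the dichotomy of \cref{theorem.0.1} to conclude $Z\neq X$, hence $Z=M(X)$. The paper instead works from above: it takes the given maximal subsystem $Z$ and shows directly that every $x\in X\setminus Z$ has dense orbit, by observing that $Z\cup\overline{\mathcal O(x)}=X$ (maximality of $Z$) and that the dense-orbit point $z$ supplied by transitivity lies in $X\setminus Z\subset\overline{\mathcal O(x)}$, so $\overline{\mathcal O(x)}\supset\overline{\mathcal O(z)}=X$. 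The paper's argument is slightly more economical since it uses the hypothesis ``maximality type $1$'' directly rather than re-deriving the conclusion of \cref{theorem.0.1}; your approach has the pleasant side effect of identifying $M(X)$ explicitly as the closure of the union of all strict subsystems.

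One small inaccuracy in your commentary (not in the proof itself): you say transitivity is essential for verifying that $Z$ is maximal in the case $Z\neq X$. In fact that step is automatic from the definition of $Z$---any strict subsystem is already contained in $Z$, so nothing strict can sit properly between $Z$ and $X$. Transitivity was used earlier, to produce the dense-orbit point $x$ that rules out the case $Z=X$.
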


\begin{proof}
    Denote by $Z$ the maximal subsystem. It is sufficient to prove that for all $x \in X \backslash Z$, $\overline{\mathcal{O}(x)} = X$. Let $x \in X \backslash Z$. We have then $Z \cup \overline{\mathcal{O}(x)} = X$, otherwise $Z$ would not be maximal. Since 
    $X$ is transitive, there exists $z \in X$ such that $\overline{\mathcal{O}(z)} = X$, and this implies that $z \in X \backslash Z$. As a consequence, $z \in \overline{\mathcal{O}(x)}$ and $\overline{\mathcal{O}(x)} = X$.
\end{proof}

\begin{comment}
Un type 1 transitif est isolé si et seulement si il est contenu dans un SFT dans lequel son complémentaire n'est pas dense.
-> c'est équivalent à ce que le complémentaire d'une orbite dense est un maximal.
\end{comment}

\cref{lemma.type.1.star} and \cref{thm.main} have as a consequence the following.

\begin{corollary}
A transitive shift of finite type $X$ which has maximality type 1 is isolated in $\mathcal{S}^d$ and in $\mathcal{T}^d$. %Furthermore, any shift of maximality type $1$ is transitive.
\end{corollary}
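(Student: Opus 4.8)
The plan is to deduce the statement directly from the characterization in \cref{thm.main}, so that essentially all of the real work has already been carried out in \cref{lemma.type.1.star}. Since $X$ is assumed to be of finite type, by \cref{thm.main} it suffices to verify that $X$ satisfies the condition $(\star)$ of \cref{definition.star}, namely that it has finitely many maximal subsystems and no outcast.

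First I would dispatch the finiteness requirement, which is immediate: having maximality type $1$ means by definition that $\mathcal{M}(X)$ is a singleton, so in particular $X$ has finitely many maximal subsystems. For the absence of outcasts, I would invoke \cref{lemma.type.1.star}, which guarantees that every strict subsystem of $X$ is contained in its unique maximal subsystem $M(X)$. An outcast is, by definition, a proper subsystem that is not contained in any maximal subsystem; hence \cref{lemma.type.1.star} says precisely that $X$ has no outcast. Combining the two points, $X$ satisfies $(\star)$, and since it is of finite type, \cref{thm.main} yields that $X$ is isolated in $\mathcal{S}^d$.

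It then remains to transfer isolation from $\mathcal{S}^d$ to the subspace $\mathcal{T}^d$. This is the elementary observation recorded at the beginning of the section: because $\mathcal{T}^d \subset \mathcal{S}^d$ carries the subspace topology and $X \in \mathcal{T}^d$, any open ball in $\mathcal{S}^d$ that isolates $X$ meets $\mathcal{T}^d$ in an open set whose only element is $X$. Thus $X$ is isolated in $\mathcal{T}^d$ as well, completing the argument.

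I do not expect any genuine obstacle at this level, since the substantive content has been front-loaded into \cref{lemma.type.1.star} (where transitivity is used to force every point outside $M(X)$ to have dense orbit) and into the forward direction of \cref{thm.main}. The only point deserving a moment of care is the bookkeeping identification of \emph{``every strict subsystem lies in the maximal subsystem''} with \emph{``no outcast''}; once the two definitions are lined up, the implication is automatic.
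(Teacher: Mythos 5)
Your argument is correct and is exactly the paper's: the corollary is stated there as a direct consequence of \cref{lemma.type.1.star} (which rules out outcasts) together with \cref{thm.main}, with isolation in $\mathcal{T}^d$ following from the subspace-topology observation made at the start of the section. Nothing to add.
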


\begin{remark}
Observe that maximality type 0 does not imply transitivity, because a union of full shifts on disjoint alphabets $\mathcal{A}_1,\mathcal{A}_2$
such that $|\mathcal{A}_1| > 1$ or $|\mathcal{A}_2| > 1$ is of maximality type 0 but it is not transitive. However, is there a shift of maximality type 0 which is not a union of transitive shifts? Similarly, a maximality type $1$ shift is not necessarily transitive, because the disjoint union of a maximality type $0$ shift and a maximality type $1$ shift has maximality type $1$, and can not be transitive.
\end{remark}

\begin{example}
We provide an example of a transitive shift with maximality type $1$.
Let $\textbf{u}=(\textbf{u}_1,\textbf{u}_2)\in\mathbb{Z}^2$ and let $\overline{\mathcal{O}(x)}$ be a bi-dimensional shift, 
where $x$ is the configuration such that $x_{\textbf{u}}$ is equal to: %\textcolor{blue}{all of the following conditions are in terms of $u_2$, is that right? Also, the goal of the example does not seem too clear, like the conclusion of  it}
$\uparrow$ if $\textbf{u}_2 > 0$; $\downarrow$ if $\textbf{u}_2 < 0$; $\rightarrow$ if $\textbf{u}_2 = 0$ and $\textbf{u}_1 > 0$; $\leftarrow$ if $\textbf{u}_2 = 0$ and $\textbf{u}_1 < 0$; 
otherwise it is equal to $\square$. This shift is of finite type, transitive, and has a unique maximal subsystem, which is the set of configurations in which the symbol $\square$ does not appear.
\end{example}

%\begin{lemma}\label{lemma.approx.subsystem}
%    Consider a shift of finite type $Z$ and a subsystem $X$ of $Z$. If a sequence $X_n$ of shifts converges towards $X$, then there exists some $n_0$ such that for all $n \ge n_0$, $X_n \subset Z$.
%\end{lemma}

%\begin{proof}
%Denote by $\mathcal{A}$ the alphabet of $Z$. Since $Z$ is a shift of finite type, there exists a finite subset $\mathbb{U}$ of $\mathbb{Z}^d$ such that $Z = X_{\mathcal{A}^{\mathbb{U}} \backslash \mathscr{L}_{\mathbb{U}}(Z)}$. Since $X \subset Z$, we have $\mathscr{L}_{\mathbb{U}}(X) \subset \mathscr{L}_{\mathbb{U}}(Z)$ and since $X_n \rightarrow X$, there exists $n_0$ such that for all $n \ge n_0$, $\mathscr{L}_{\mathbb{U}}(X_n) = \mathscr{L}_{\mathbb{U}}(X)$. This means that $X_n \subset Z$.
%\end{proof}

\begin{proposition}\label{proposition.transitive.1}
    A transitive shift $X$ of maximality type 1 which is not of finite type is isolated in $\mathcal{T}^d$
    if and only if there exists a shift of finite type $Z$ 
    such that $X \subset Z$ and $Z \backslash X$ is not dense in $Z$.
\end{proposition}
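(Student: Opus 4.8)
The plan is to prove both implications by comparing a neighbourhood of $X$ in $\mathcal{T}^d$ with finitely much data in the language of $X$, using crucially that $X$ has maximality type $1$: by \cref{lemma.type.1.star} every strict subsystem of $X$ is contained in its unique maximal subsystem $M(X)$, and every point of $R(X)=X\setminus M(X)$ has dense orbit in $X$. This structural fact is what lets me pin down $X$ on a finite window even though $X$ is not of finite type.

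For the direction ($\Leftarrow$), fix a shift of finite type $Z\supseteq X$ with $Z\setminus X$ not dense in $Z$, and let $\mathcal{F}$ be a finite set of forbidden patterns defining $Z$, supported in some $\mathbb{B}_M^d$. Non-density of $Z\setminus X$ produces a configuration $z_0\in Z$ and an integer $N$ such that every $z\in Z$ agreeing with $z_0$ on $\mathbb{B}_N^d$ lies in $X$; writing $q:=z_0|_{\mathbb{B}_N^d}$ and using shift-invariance, any configuration of $Z$ in which $q$ appears belongs to $X$, and $q\in\mathscr{L}_{\mathbb{B}_N^d}(X)$. Since $M(X)\subsetneq X$ have distinct languages, I also fix a pattern $p\in\mathscr{L}_{\mathbb{B}_{N'}^d}(X)\setminus\mathscr{L}_{\mathbb{B}_{N'}^d}(M(X))$. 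Setting $L=\max(M,N,N')$ and $\epsilon=2^{-L}$, any transitive $Y$ with $\delta_H(Y,X)<\epsilon$ satisfies $\mathscr{L}_{\mathbb{B}_L^d}(Y)=\mathscr{L}_{\mathbb{B}_L^d}(X)$, since agreement of the Hausdorff distance below $2^{-L}$ is equivalent to agreement of languages on $\mathbb{B}_L^d$.

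The key step is to upgrade this finite coincidence to $Y=X$. First, $L\ge M$ forces $Y\subseteq Z$, as no pattern of $\mathcal{F}$ appears in $\mathscr{L}_{\mathbb{B}_M^d}(Y)=\mathscr{L}_{\mathbb{B}_M^d}(X)$. Let $y^{\ast}$ have dense orbit in $Y$. Because $q\in\mathscr{L}_{\mathbb{B}_N^d}(Y)$, the pattern $q$ appears in $y^{\ast}$, and as $y^{\ast}\in Z$ the certifying property gives $y^{\ast}\in X$, whence $Y=\overline{\mathcal{O}(y^{\ast})}\subseteq X$. If $Y\neq X$ then $Y\in\mathcal{S}(X)$, so $Y\subseteq M(X)$ by \cref{lemma.type.1.star}, forcing $\mathscr{L}_{\mathbb{B}_L^d}(X)=\mathscr{L}_{\mathbb{B}_L^d}(Y)\subseteq\mathscr{L}_{\mathbb{B}_L^d}(M(X))$ and contradicting the choice of $p$. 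Hence $Y=X$ and $X$ is isolated in $\mathcal{T}^d$; note that \cref{lemma.type.1.star} here plays the role that finite type played in \cref{thm.main}.

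For the direction ($\Rightarrow$), I argue by contraposition: assuming that $Z\setminus X$ is dense in $Z$ for every shift of finite type $Z\supseteq X$, I build transitive shifts $Y_m\neq X$ with $Y_m\to X$. Take the canonical approximations $Z_n:=X_{\mathcal{F}_n}$, where $\mathcal{F}_n$ collects the forbidden patterns of $X$ supported in $\mathbb{B}_n^d$; these are of finite type, satisfy $X\subsetneq Z_n$ (as $X$ is not of finite type), $\bigcap_n Z_n=X$, and $Z_n\to X$ by \cref{lemma.intersection}. Fix a transitive point $x^{\ast}$ of $X$; by density of $Z_n\setminus X$ choose $w_n\in Z_n\setminus X$ agreeing with $x^{\ast}$ on $\mathbb{B}_n^d$, and set $Y_n:=\overline{\mathcal{O}(w_n)}$, which is automatically transitive and distinct from $X$ since $w_n\notin X$. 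The main obstacle is the two-sided control $\delta_H(Y_n,X)\to0$: the inclusion $Y_n\subseteq Z_n$ bounds one side because $\mathscr{L}(Z_n)\to\mathscr{L}(X)$ on every window, while for the other side I use that $x^{\ast}$ has dense orbit, so for each $m$ there is a radius $N(m)$ with every pattern of $\mathscr{L}_{\mathbb{B}_m^d}(X)$ already occurring within $\mathbb{B}_{N(m)}^d$ of $x^{\ast}$; then for $n\ge N(m)$ one gets $\mathscr{L}_{\mathbb{B}_m^d}(Y_n)\supseteq\mathscr{L}_{\mathbb{B}_m^d}(X)$. Choosing $n(m)$ large enough yields $\mathscr{L}_{\mathbb{B}_m^d}(Y_{n(m)})=\mathscr{L}_{\mathbb{B}_m^d}(X)$, hence $\delta_H(Y_{n(m)},X)\le2^{-m}$, contradicting isolation in $\mathcal{T}^d$. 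I expect the only delicate part to be this bookkeeping, matching the window $m$, the dense-orbit radius $N(m)$, and the index $n$; the remaining verifications are routine.
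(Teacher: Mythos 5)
Your proof is correct, and both directions are organized genuinely differently from the paper's, while resting on the same two pillars (\cref{lemma.type.1.star} and the finite-type containment lemmas \cref{lemma.subsystem,lemma.subsystem.2}). For ($\Leftarrow$) the paper argues by contradiction with limits: it takes transitive shifts $\overline{\mathcal{O}(x_n)}\to X$ distinct from $X$, confines them in $Z$, shows each must lie in $\overline{Z\setminus X}\cup M(X)$ (the alternative would force $\overline{Z\setminus X}$ to meet $R(X)$), and gets the contradiction $X\subset\overline{Z\setminus X}\cup M(X)$ in the limit. You instead exhibit an explicit $\epsilon$ together with two witness patterns --- $q$ certifying membership in $X$ for configurations of $Z$, and $p$ separating $\mathscr{L}(X)$ from $\mathscr{L}(M(X))$ --- and verify directly that every transitive $Y$ in the $\epsilon$-ball equals $X$; this is more quantitative, avoids passing to limits, and makes visible exactly which finite data isolates $X$, at the cost of a little more bookkeeping. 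For ($\Rightarrow$) the paper uses a doubly indexed family $x_{m,n}\in Z_n\setminus X$ converging to points of $R(X)$, compactness of $\mathcal{S}(Z_1)$ to extract limits $\Lambda_n\supseteq X$, and a diagonal extraction; your single sequence $w_n$ anchored to one transitive point $x^*$, with the two-sided Hausdorff estimate split into $Y_n\subseteq Z_n$ on one side and the dense-orbit radius $N(m)$ on the other, is more elementary and arguably cleaner. Both arguments are complete; the difference is in execution rather than in the underlying ideas.
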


\begin{proof}
    $(\Leftarrow)$ In this case we have that $\overline{Z\setminus X}$ does not intersect $R(X)$, 
    % \textcolor{blue}{, otherwise $\overline{Z\backslash X}\supset X$, as the orbit closure of any point in $R(X)$ must be $X$. This means that $Z=\overline{Z\backslash X}$, since $X\subset Z$, which is impossible since by hypothesis $Z\backslash X$ is not dense in $Z$.} 
    otherwise, we would have that $ X \subset \overline{Z\setminus X}$, since $Z\setminus X$ is invariant under $\sigma$. Using the trivial inclusion $Z \setminus X \subset \overline{Z\setminus X}$ and taking the union with $X\subset\overline{Z\backslash X}$, we get $Z = X \cup (X\backslash Z) \subset \overline{Z\backslash X}$. Since $\overline{Z\backslash X} \subset Z$, we get that $\overline{Z\setminus X}=Z$, which contradicts the hypothesis that $Z\setminus X$ is not dense in $Z$.
    Let us assume ad absurdum that there is a sequence $(x_n)_{n\ge0}$ such that $\overline{\mathcal{O}(x_n)} \rightarrow X$ and for all $n$, $\overline{\mathcal{O}(x_n)} \neq X$. Since $Z$ is of finite type, by application of Lemma~\ref{lemma.subsystem.2}, we can assume that for all $n$, $\overline{\mathcal{O}(x_n)} \subset Z$. 
    Denote by $X'$ the unique maximal subsystem of $X$. Since $\overline{\mathcal{O}(x_n)} \neq X$, we have that either $X_n := \overline{\mathcal{O}(x_n)} \subset \overline{Z \backslash X} \cup X'$ or $X_n$ strictly contains $X$. If there were infinitely many integers $n$ such that $X_n$ strictly contains $X$, $\overline{Z\backslash X}$ would then have non empty intersection with $R(X)$, which is not true. Therefore we can assume that for all $n$, $\overline{\mathcal{O}(x_n)} \subset \overline{Z \backslash X} \cup X'$. Hence, by taking the limit, $X \subset \overline{Z \backslash X} \cup X'$, which is impossible. $(\Rightarrow)$ Suppose now that for all $Z$ shift of finite type which contains $X$, $Z \backslash X$ is dense in $Z$. Denote by $(Z_n)_{n\ge0}$ a non-increasing sequence of shifts of finite type which converges towards $X$. For all $n$, there exists a sequence of points $(x_{m,n})_{m\ge0}$ in $Z_n \backslash X$ which converges to some point in $R(X)$. Observe that $\mathcal{S}(Z_1)$ is compact, thus we can assume, without loss of generality, that $\overline{\mathcal{O}(x_{m,n})}$ converges when $m \rightarrow +\infty$. The limit, that we denote by $\Lambda_n$, must contain $X$ and is contained in $Z_n$. We can assume without loss of generality, again because $\mathcal{S}(Z_1)$ is compact, that $(\Lambda_n)_n$ converges. Since $X \subset \Lambda_n \subset Z_n$ for all $n$ and $Z_n \rightarrow X$, $\Lambda_n$ converges towards $X$. This implies that there is a sequence of points $(z_n)_{n\ge0}$ such that $\overline{\mathcal{O}(z_n)} \rightarrow X$. This means that $X$ is not isolated in $\mathcal{T}^d$.
\end{proof}

\begin{example}
    An example of (non finite type) transitive shift of maximality type $1$, and not isolated in $\mathcal{T}^d$ is the \textit{sunny-side-up}, which has alphabet $\{0,1\}$ and whose configurations are the ones which contain at most one symbol $1$. Indeed, it has a unique subsystem, which is thus maximal, whose only element is the constant configuration with value $0$. This shift is the limit of the orbit closures of configurations in which $1$ symbols appear periodically in both directions with period $n$.
\end{example}

%\begin{example}
%All the transitive subsystems of the Robinson shift, except for its unique minimal subsystem, are examples of non finite type transitive shifts which isolated in $\mathcal{T}_d$, and because they are not of finite type, they are not isolated in $\mathcal{S}_d$. Indeed, let us denote by $P_n$ any square pattern of the Robinson subshift which contains a desynchronisation at level $n$ and which is of minimal size denoted by $k_n$. Let us denote by $X_n$ the subsystem of the Robinson subshift corresponding to forbidden patterns $P_m$, $m \neq n$. Then the subshift $X_n$ is transitive for all $n$. Furthermore, the distance between $X_n$ and any other $X_m$ is at least $2^{-k_n}$. Since there is a neighborhood of $X_n$ which contains only subsystems of the Robinson subshift, there is a neighborhood of $X_n$ which contains only such subsystems which are different from the $X_m$, $m \neq n$.
%\end{example}

\subsubsection{{Maximality type 0}\label{section.max.type.0}}

\begin{lemma}\label{lemma.converging.minimal}
    Consider a minimal shift $X$ and a sequence of shifts $(X_n)_{n\ge0}$ such that $X_n \rightarrow X$. 
    For every sequence of shifts $(Z_n)_{n\ge0}$ such that for all $n$, $Z_n \subset X_n$, we have that $Z_n \rightarrow X$.
\end{lemma}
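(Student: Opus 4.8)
The plan is to work entirely with the finite-window characterization of Hausdorff convergence recalled just after the definition of $\delta_H$: it suffices to show that for every finite $\mathbb{U} \subset \mathbb{Z}^d$ one has $\mathscr{L}_{\mathbb{U}}(Z_n) = \mathscr{L}_{\mathbb{U}}(X)$ for all large enough $n$. One inclusion comes for free: since $Z_n \subset X_n$ we always have $\mathscr{L}_{\mathbb{U}}(Z_n) \subset \mathscr{L}_{\mathbb{U}}(X_n)$, and because $X_n \to X$ the right-hand side equals $\mathscr{L}_{\mathbb{U}}(X)$ once $n$ is large. The whole content of the lemma is therefore the reverse inclusion $\mathscr{L}_{\mathbb{U}}(X) \subset \mathscr{L}_{\mathbb{U}}(Z_n)$, i.e. the fact that the (a priori very small) shifts $Z_n$ must nonetheless realize every pattern of $X$ in the limit. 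This is exactly where minimality of $X$ enters.

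First I would convert minimality into a uniform recurrence statement. Fix a finite window $\mathbb{U}$ and a pattern $q \in \mathscr{L}_{\mathbb{U}}(X)$. The cylinder $C_q := \{x \in X : x|_{\mathbb{U}} = q\}$ is open and nonempty, so $\bigcup_{\textbf{v} \in \mathbb{Z}^d} \sigma^{\textbf{v}}(C_q)$ is an open, shift-invariant, nonempty subset of $X$; by minimality it equals $X$, and by compactness finitely many of its translates already cover $X$. Hence there is a radius $R_q$ such that $q$ occurs in the restriction of any $x \in X$ to $\mathbb{B}_{R_q}^d$. Since $\mathscr{L}_{\mathbb{U}}(X)$ is finite, setting $R := \max_q R_q$ and $\mathbb{W} := \mathbb{B}_R^d$ gives a single window with the property that every $q \in \mathscr{L}_{\mathbb{U}}(X)$ occurs inside $x|_{\mathbb{W}}$ for every $x \in X$. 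Equivalently, every pattern of $\mathscr{L}_{\mathbb{W}}(X)$ contains an occurrence of each pattern of $\mathscr{L}_{\mathbb{U}}(X)$.

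Next I would combine this with the convergence $X_n \to X$ applied to the larger window $\mathbb{W}$. For $n$ large enough, $\mathscr{L}_{\mathbb{W}}(X_n) = \mathscr{L}_{\mathbb{W}}(X)$. Each $Z_n$ is a nonempty shift contained in $X_n$, so $\mathscr{L}_{\mathbb{W}}(Z_n)$ is nonempty and contained in $\mathscr{L}_{\mathbb{W}}(X_n) = \mathscr{L}_{\mathbb{W}}(X)$. Picking any $P \in \mathscr{L}_{\mathbb{W}}(Z_n)$, the previous paragraph shows that $P$ contains an occurrence of every $q \in \mathscr{L}_{\mathbb{U}}(X)$; since $P$ is globally admissible for $Z_n$, so is each such $q$, giving $\mathscr{L}_{\mathbb{U}}(X) \subset \mathscr{L}_{\mathbb{U}}(Z_n)$. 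Taking $n$ large enough to also guarantee the free inclusion $\mathscr{L}_{\mathbb{U}}(Z_n) \subset \mathscr{L}_{\mathbb{U}}(X)$ yields equality on $\mathbb{U}$, and as $\mathbb{U}$ was an arbitrary finite set this is precisely the statement $Z_n \to X$.

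The only genuine obstacle is controlling the reverse inclusion: the naive worry is that $Z_n$ could be a tiny subsystem of $X_n$ (say a single periodic orbit) and so miss most patterns of $X$. Minimality defeats this, because uniform recurrence forces every admissible $\mathbb{W}$-pattern — and a nonempty $Z_n$ has at least one — to already carry all the short patterns of $X$. I would also flag the one hypothesis that must be used explicitly, namely that each $Z_n$ is a nonempty shift (subsystem), since the argument relies on $\mathscr{L}_{\mathbb{W}}(Z_n) \neq \emptyset$; without nonemptiness the conclusion fails.
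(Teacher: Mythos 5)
Your proof is correct, but it takes a genuinely different route from the paper's. The paper's argument is a three-line compactness argument: any convergent subsequence of $(Z_n)$ has a limit which, by the finite-window characterization and $Z_n \subset X_n \rightarrow X$, is a (non-empty) subsystem of $X$, hence equals $X$ by minimality; since every subsequential limit is $X$ and the ambient space is compact, the whole sequence converges to $X$. You instead make the mechanism explicit and quantitative: you extract from minimality the uniform recurrence property (every pattern of $\mathscr{L}_{\mathbb{U}}(X)$ occurs in every $\mathbb{B}_R^d$-window of every point of $X$, for some $R$ depending on $\mathbb{U}$), transfer this to $X_n$ via $\mathscr{L}_{\mathbb{W}}(X_n) = \mathscr{L}_{\mathbb{W}}(X)$ for large $n$, and then use the non-emptiness of $Z_n$ to force $\mathscr{L}_{\mathbb{U}}(X) \subset \mathscr{L}_{\mathbb{U}}(Z_n)$. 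Both arguments are sound; the paper's is shorter and leans on soft compactness, while yours isolates exactly where minimality is used (uniform recurrence) and gives an effective window size, at the cost of more bookkeeping. Your closing remark that non-emptiness of the $Z_n$ is essential is well taken --- the paper's version uses it just as crucially, though silently, via its convention that subsystems are non-empty.
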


\begin{proof}
    It is sufficient to see that the limit of any convergent subsequence of $(Z_n)$ has to be a subsystem of $X$, therefore it is equal to $X$.
\end{proof}

\begin{proposition}
Let $X$ be a transitive shift which has maximality type 0. %If $X$ is not minimal, then it is not isolated in $\mathcal{T}^d$. If $X$ is minimal, then it is isolated in $\mathcal{T}^d$ if and only if it is of finite type.
Then $X$ is isolated in $\mathcal{T}^d$ if and only if it is a minimal shift of finite type.
\end{proposition}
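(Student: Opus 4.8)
The plan is to prove both implications, handling the forward one by contraposition.

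For the easy direction $(\Leftarrow)$, suppose $X$ is a minimal shift of finite type. Since $X$ is of finite type, \cref{lemma.subsystem} yields $\epsilon>0$ such that every shift $Z$ with $\delta_H(Z,X)<\epsilon$ satisfies $Z\subset X$; but a (nonempty) subshift of a minimal shift is a subsystem and hence equals $X$, so the $\epsilon$-ball around $X$ contains only $X$. Thus $X$ is isolated already in $\mathcal{S}^d$, and a fortiori in the subspace $\mathcal{T}^d$. (A minimal shift is automatically transitive and of maximality type $0$, so the hypotheses are consistent.)

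For $(\Rightarrow)$ I argue the contrapositive: if $X$ is transitive of maximality type $0$ but is \emph{not} a minimal shift of finite type, then $X$ is not isolated in $\mathcal{T}^d$. I split into two cases. \textbf{Case 1: $X$ is not minimal.} The key observation is that for a transitive shift the set $\bigcup_{Z\in\mathcal{S}(X)}Z$ is exactly the set of non-transitive points (points whose orbit is not dense), and that maximality type $0$ forces, as in the proof of \cref{theorem.0.1}, the closure $\overline{\bigcup_{Z\in\mathcal{S}(X)}Z}$ to equal $X$. Hence the non-transitive points are dense in $X$. Now fix $n$ and a transitive point $x$ of $X$. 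Since $\overline{\mathcal{O}(x)}=X$, each of the finitely many patterns of $\mathscr{L}_{\mathbb{B}_n^d}(X)$ occurs in $x$ within some bounded window, so there is $N_n$ with all of them appearing inside $x|_{\mathbb{B}_{N_n}^d}$. By density of non-transitive points, pick a non-transitive $y_n$ agreeing with $x$ on $\mathbb{B}_{N_n}^d$. Then $T_n:=\overline{\mathcal{O}(y_n)}$ is a transitive proper subsystem of $X$ whose language on $\mathbb{B}_n^d$ already equals $\mathscr{L}_{\mathbb{B}_n^d}(X)$ (it contains these patterns and is contained in $X$); therefore $T_n\to X$ with $T_n\neq X$, so $X$ is not isolated in $\mathcal{T}^d$.

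\textbf{Case 2: $X$ is minimal but not of finite type.} Following \cref{lemma.isolated.are.sft}, write $X=X_{\mathcal F}$ and let $X_n$ forbid only the patterns of $\mathcal F$ fitting in $\mathbb{B}_n^d$; then $X_n\supsetneq X$ for every $n$ (otherwise $X$ would be of finite type) and $X_n\to X$ by \cref{lemma.intersection}. Choosing $y_n\in X_n\setminus X$ and setting $T_n:=\overline{\mathcal{O}(y_n)}\subset X_n$, the minimality of $X$ together with \cref{lemma.converging.minimal} gives $T_n\to X$, while $T_n\neq X$ since $y_n\notin X$. Again $X$ is not isolated in $\mathcal{T}^d$, and the two cases exhaust the negation of ``minimal and of finite type.'' The main obstacle is Case 1: one must exhibit a \emph{single} transitive shift realizing all patterns of $X$ up to a given size, and the finite unions of proper subsystems that density cheaply provides are in general not transitive. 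The device that overcomes this is to replace ``density of proper subsystems'' by ``density of non-transitive points'' and to exploit that a transitive point already displays every fixed-size pattern in a bounded window, so a single nearby non-transitive point inherits all of them in its orbit closure.
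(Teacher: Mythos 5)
Your proof is correct and follows essentially the same route as the paper's: the backward direction via the finite-type neighborhood of subsystems plus minimality, the minimal-non-SFT case via the decreasing SFT approximations and \cref{lemma.converging.minimal}, and the non-minimal case via density of the union of strict subsystems (forced by maximality type $0$) together with the fact that a transitive point realizes every fixed-size pattern in a bounded window. The only cosmetic difference is that in the non-minimal case you verify convergence of the orbit closures through the language characterization of Hausdorff convergence, whereas the paper uses a finite $\epsilon/2$-dense subset of the dense orbit; these are the same argument in different clothing.
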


\begin{proof}
    Consider a transitive shift $X$ which has maximality type 0.
    \begin{enumerate}
        \item \textbf{If $X$ is minimal.} \begin{enumerate}
            \item \textbf{If X is not of finite type.} Consider a set $\mathcal{F}$ such that $X = X_\mathcal{F}$, and a sequence of patterns $(p_n)_{n \ge 0}$ such that $\mathcal{F} = \{p_0, p_1 , \ldots\}$. Then set $X_n = X_{\mathcal{F}_n}$, where $\mathcal{F}_n = \{p_0 , \ldots , p_n\}$. Then we have $X_n \rightarrow X$ and for all $n$, $X_{n+1} \subset X_n$ and $X_n \neq X$.  We have then that for all $n$, there exists a configuration $x_n \in X_n$ with $x_n \notin X$. Thus, for all $n$, $\overline{\mathcal{O}(x_n)} \neq X$ and $\overline{\mathcal{O}(x_n)} \rightarrow X$, by an application of Lemma~\ref{lemma.converging.minimal}. %Since every convergent subsequence converges towards $X$, then $\overline{\mathcal{O}(x_n)} \rightarrow X$. 
            We conclude that $X$ is not isolated in $\mathcal{T}^d$.
        \item \textbf{If X is of finite type.} In this case, since it has no strict subsystem, by \cref{thm.main}, it is isolated in $\mathcal{S}^d$ and thus in $\mathcal{T}^d$.
        \end{enumerate}
        \item \textbf{If $X$ is not minimal.} Let $x \in X$ such that $\overline{\mathcal{O}(x)} = X$. By hypothesis, the closure of the union of all elements of $\mathcal{S}(X)$ 
        is equal to $X$. Therefore, there exists a sequence of points $(x_n)_{n\ge0}$ such that 
        for all $n$,  $\overline{\mathcal{O}(x_n)}$ is a strict subsystem of $X$ and such that $x_n \rightarrow x$. We claim that $\overline{\mathcal{O}(x_n)} \rightarrow X$. Let us prove that for all $\epsilon > 0$, 
        there exists some $n$ such that the Hausdorff distance 
        between $\overline{\mathcal{O}(x_n)}$ and $X$ is smaller than $\epsilon$. Since $\mathcal{O}(x)$ is dense in $X$, there exists a finite subset $S$ of $\mathcal{O}(x)$
        such that $\delta_H(S,X) < \epsilon/2$. Since $x_n \rightarrow x$, by continuity, there exists  $n$ and a subset $S_n$ of $\overline{\mathcal{O}(x_n)}$ such that $\delta_H(S_n,S) < \epsilon/2$. Consequently $\delta_H(\overline{\mathcal{O}(x_n)},X) < \epsilon$.
    \end{enumerate}
\end{proof}

\begin{proposition}\label{proposition.annex.gluing}
    Let $X$ be a transitive, infinite shift. If the set of its periodic points is dense, then $X$ has maximality type 0.
\end{proposition}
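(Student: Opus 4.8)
The plan is to leverage \cref{theorem.0.1}, which already forces the maximality type of a transitive shift into $\{0,1\}$, so that the entire task reduces to ruling out type $1$. I would therefore argue by contradiction: assume $X$ is transitive, infinite, has dense periodic points, and has maximality type $1$. Let $Z := M(X)$ denote its unique maximal subsystem, which is by definition a strict (hence proper and closed) subsystem of $X$.

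The engine of the argument is \cref{lemma.type.1.star}: since $X$ is transitive of maximality type $1$, \emph{every} strict subsystem of $X$ is contained in $Z$. I would then feed the periodic points into this: for any periodic point $p \in X$, the orbit $\mathcal{O}(p)$ is finite, so $\overline{\mathcal{O}(p)} = \mathcal{O}(p)$ is a finite subsystem of $X$. Because $X$ is infinite, this finite orbit is a \emph{strict} subsystem, and hence $p \in \mathcal{O}(p) \subset Z$. Thus the set of all periodic points of $X$ is contained in $Z$.

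To close the contradiction I would invoke the density hypothesis together with the fact that $Z$ is closed: since the periodic points are dense in $X$ and all lie in the closed set $Z$, we obtain $Z \supseteq \overline{\{\text{periodic points}\}} = X$, i.e. $Z = X$. This contradicts $Z$ being a proper subsystem. Therefore $X$ cannot have maximality type $1$, and by \cref{theorem.0.1} it has maximality type $0$.

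I do not anticipate a genuine obstacle here; the argument is short and rests entirely on two inputs already available, namely the dichotomy of \cref{theorem.0.1} and the containment statement of \cref{lemma.type.1.star}. The only point requiring a moment of care is the verification that each periodic orbit is a \emph{strict} subsystem, which is exactly where the hypothesis that $X$ is infinite is used; without it a single periodic orbit could exhaust $X$ and the contradiction would collapse. Everything else is the routine combination of density with closedness of $Z$.
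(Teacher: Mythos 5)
Your proof is correct and follows essentially the same route as the paper's: both reduce to ruling out maximality type $1$ via \cref{theorem.0.1} and then contradict \cref{lemma.type.1.star} using the strict subsystems supplied by the dense periodic points. The paper packages these subsystems into a strictly increasing sequence converging to $X$, whereas you argue more directly that every (finite, hence strict) periodic orbit lies in the closed maximal subsystem, forcing it to equal $X$ by density; this is the same idea, stated slightly more cleanly.
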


\begin{proof}
    In this case, there exists a strictly increasing sequence of shifts different from $X$ which converges towards $X$. Therefore, these shifts are not included in any maximal subsystem. By Lemma~\ref{lemma.type.1.star}, $X$ cannot have maximality type 1. Since $X$ is transitive, Theorem~\ref{theorem.0.1} implies that it has maximality type 0.
\end{proof}

\begin{example}\label{example.maxtype.fullshift}
    By the means of Proposition~\ref{proposition.annex.gluing}, we have that 
    every full shift has maximality type $0$ and, thus, it is not isolated in $\mathcal{T}^d$.
\end{example}

\subsubsection{Example of a non finite type shift isolated in $\mathcal{T}^d$\label{section.example.isolated.trans}}\label{non SFT isolated in T}

We provide an example of a transitive shift of maximality type 1 which is isolated in $\mathcal{T}^2$ and is not of finite type, which implies that it is not isolated in $\mathcal{S}^2$. This example can be easily adapted in order to construct shifts isolated in $\mathcal{T}^d$, $d>2$, which are not of finite type. \bigskip

% Let us denote by $\mathcal{A}_{\textbf{t}}$ the 
 % following alphabet: 
 % \[\rightarrow,\leftarrow,\uparrow,\downarrow, \square,\]
 Denote by $\mathbb{N}^*$ the set of positive integers.
 We denote by $x$ the configuration such that 
$x_{(0,0)} = \square$; for all $k > 0$ (resp $k< 0$), $x_{(0,k)} = \uparrow$ (resp. $\downarrow$); for all $k >0$ (resp. $k < 0$), $x_{(k,0)} = \rightarrow$ (resp. $\leftarrow$); the restriction of $x$ to any set $\left( \pm \mathbb{N}^*\right) \times \left( \pm \mathbb{N}^*\right)$ is a pattern over an infinite supertile of the Robinson shift $X_R$. \bigskip

The shift $Z:= \overline{\mathcal{O}(x)} \bigcup X_R^{\infty}$ is a subsystem of $X:= \overline{\mathcal{O}(x)} \bigcup X_R$, which is of finite type. Furthermore, 
$Z$ is clearly a transitive shift whose maximality type is $1$. Moreover, $X \backslash Z$ is not dense in $X$, because no configuration in it contains the symbol $\square$. The shift $Z$ is not of finite type because 
it is the limit of a decreasing sequence $X_n$ of shifts of finite type, 
where $X_n = \overline{\mathcal{O}(x)} \bigcup X_R^n$.
By Proposition~\ref{proposition.transitive.1}, $Z$ is a transitive shift which has maximality type $1$, and it is isolated and not of finite type.

%\begin{proposition}
    % There exists a homeomorphism $\Phi$ from the space of transitive subsystems of $X_R$ to the space
    % \[\bigcup_{n \in \mathbb{N}^*}\left\{\frac{1}{n}\right\} \times  \left( \frac{1}{n} \textbf{c}\right)\]
    % such that $\Phi(X_R^{\infty}) = 0$.
% \end{proposition}

% \begin{proof}
    % Let us consider a configuration of $X_R \backslash X_R^{\infty}$ which satisfies condition (ii). Since it is not in $X_R^{\infty}$, there is a minimal integer such that the order $n$ supertiles in one infinite supertile are not aligned with the order $n$ supertiles of the other. We denote then $X_R^m$ the set of configurations such that that this integer is $m$.
% \end{proof}

% \begin{corollary}
    % All the transitive non minimal subsystems of the Robinson are isolated in $\mathcal{T}_d$.
% \end{corollary}

\subsection{{Other dynamical properties}}

Denote by $\mathcal{M}^d$ the space of minimal $d$-dimensional shifts.

\begin{proposition}
    A minimal shift $X$ is isolated in $\mathcal{M}^d$ if and only if it has a neighborhood in $\mathcal{S}^d$ in which every shift contains $X$.
\end{proposition}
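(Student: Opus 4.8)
The plan is to prove the two implications separately; the reverse implication is immediate, while the forward one ($X$ isolated $\Rightarrow$ the neighborhood condition) carries the content. Throughout I will rely on the standard fact that every non-empty shift contains a minimal subsystem, which follows from Zorn's lemma applied to the non-empty closed invariant subsets ordered by reverse inclusion, using compactness to ensure that descending chains have non-empty intersection.

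For the ($\Leftarrow$) direction, suppose $X$ admits a neighborhood $U$ in $\mathcal{S}^d$ in which every shift contains $X$. I would take an arbitrary minimal shift $Y \in U \cap \mathcal{M}^d$; then $X \subseteq Y$, and since $X$ is a non-empty subsystem of the minimal shift $Y$, minimality forces $X = Y$. Hence $U \cap \mathcal{M}^d = \{X\}$ and $X$ is isolated in $\mathcal{M}^d$.

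For the ($\Rightarrow$) direction I would argue by contraposition, assuming that $X$ has no such neighborhood and producing a sequence of minimal shifts distinct from $X$ that converges to $X$. The failure of the neighborhood condition provides, for each $n$, a shift $Y_n$ with $\delta_H(Y_n,X) < 2^{-n}$ and $X \not\subseteq Y_n$; in particular $Y_n \to X$. For each $n$ I pick a minimal subsystem $M_n \subseteq Y_n$. Two facts then conclude the argument: first, since $X$ is minimal, $Y_n \to X$ and $M_n \subseteq Y_n$, \cref{lemma.converging.minimal} gives $M_n \to X$; second, $M_n \neq X$ for every $n$, because $M_n = X$ would yield $X = M_n \subseteq Y_n$, contradicting $X \not\subseteq Y_n$. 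Thus $(M_n)$ lies in $\mathcal{M}^d \setminus \{X\}$ and converges to $X$, so $X$ is not isolated in $\mathcal{M}^d$.

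The only delicate step is the convergence $M_n \to X$: a priori a minimal subsystem of $Y_n$ could be located far from $X$ inside $Y_n$, and it is precisely the minimality of the limit $X$ that forbids this, since any convergent subsequence of $(M_n)$ has a limit which is a subsystem of $X$ and therefore equals $X$. This is exactly the content of \cref{lemma.converging.minimal}, so I expect no further obstacle; the remaining ingredients (existence of minimal subsystems and the rigidity $X \subseteq Y \Rightarrow X = Y$ for minimal $Y$) are routine.
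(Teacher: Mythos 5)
Your proof is correct and follows essentially the same route as the paper: both directions reduce to the observation that a minimal shift containing (or contained in) $X$ must equal $X$, and the forward direction extracts a minimal subsystem $M_n$ of each witness $Y_n$ and applies \cref{lemma.converging.minimal} to get $M_n \to X$, exactly as the paper does. The only cosmetic difference is that you prove the easy direction directly while the paper phrases it contrapositively.
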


\begin{proof}
    $(\Leftarrow)$ Suppose that there is a sequence of shifts $(X_n)_{n\ge0}$ in $\mathcal{S}^d$ which converges towards $X$ and such that $X_n$ does not contain $X$, for all $n$. This means that $X_n$ contains another a minimal shift $Z_n \neq X$. By Lemma~\ref{lemma.converging.minimal}, $Z_n \rightarrow X$, and therefore $X$ is not isolated in $\mathcal{M}^d$. $(\Rightarrow)$ Reciprocally, if $X$ is not isolated in $\mathcal{M}^d$, there is a sequence of different minimal shifts 
    which converges to $X$, and since they are minimal, none of these shifts can contain $X$.
\end{proof}

Recall that we denote by $||.||_{\infty}$ the norm on 
$\mathbb{Z}^d$ such that $||\textbf{u}||_{\infty} = \max_{k \le d} |\textbf{u}_k|$, for all $\textbf{u} \in \mathbb{Z}^d$. 
% We say that a shift $X$ is \textit{mixing} when for all pattern $p,q$ in the language of $X$, denoting by $\mathbb{U},\mathbb{V}$ supports of $p,q$ respectively, there exists some $m$ such that for all $\textbf{u} \in \mathbb{Z}$ such that $||\textbf{u}||_{\infty} \ge m$, there is a configuration of $X$ whose restriction to $\mathbb{U}$ is $p$ and whose restriction to $\textbf{u}+\mathbb{V}$ is equal to $q$.
The Hausdorff distance between two finite subsets $\mathbb{U}$ and $\mathbb{V}$ of $\mathbb{Z}^d$ is 
\[\max \left( \max_{\textbf{u} \in \mathbb{U}} \min_{\textbf{v} \in \mathbb{V}} ||\textbf{u}-\textbf{v}||_{\infty} , \max_{\textbf{v} \in \mathbb{V}} \min_{\textbf{u} \in \mathbb{U}} ||\textbf{u}-\textbf{v}||_{\infty}\right)\]

% \begin{proposition}
    % All mixing shifts have maximality type $0$.
% \end{proposition}

% \begin{proof}
    % Let us consider $X$ a mixing shift and $\mathcal{F}$ a set of patterns such that $X=X_{\mathcal{F}}$. If $X$ is minimal, it has maximality type $0$. 
    % For the remainder of the proof let us assume that it is not minimal. In particular its language has at least two elements $p,q$ which have the same support. We know that
    % if $X$ has a maximal subsystem, it is equal to $X_{\mathcal{F} \cup \{w\}}$ for some pattern $w$ not in $\mathcal{F}$. In order to prove that $X$ has no maximal subsystem, 
    % we can prove that for all $w$ not in $\mathcal{F}$, there exists $w'$ such that $X_{\mathcal{F} \cup \{w'\}}$ strictly contains $X_{\mathcal{F} \cup \{w\}}$. Let us fix such pattern $w$ and denote by $\mathbb{U}$ a support of $w$. Denote by $\mathbb{V}$ a (common) support of $p,q$. Because $X$ is mixing, there is some $\textbf{u}$ and a configuration $x \in X$ 
    % (resp $x' \in X$) such that the restriction of $x$ on $\mathbb{U}$  is $w$ and the restriction
    % of $x$ to $\textbf{u}+\mathbb{V}$ is $p$ (resp. $q$). Let us denote then by $w'$ (resp. $w''$) the pattern on support $(\textbf{u}+\mathbb{V})\cup \mathbb{U}$ whose restriction on $\textbf{u}+\mathbb{V}$ is $p$ (resp. $q$) and whose restriction on $\mathbb{U}$ is $w$.
    % $X_{\mathcal{F} \cup \{w'\}}$ strictly contains $X_{\mathcal{F} \cup \{w\}}$ because the language of the former contains the pattern $w$ (because it contains $w''$) and not the former.
% \end{proof}

\begin{definition}
Let $f: \mathbb{N} \rightarrow \mathbb{N}$ be a map.
   We say that a shift $X$ is $f(n)$-block gluing when for all $n$, for all patterns $p,q$ with support $\mathbb{B}_n^d$, and for all $\textbf{u} \in \mathbb{Z}^d$ such that the Hausdorff distance between 
   $\mathbb{B}_n^d$ and $\textbf{u} + \mathbb{B}_n^d$ is at least $n+f(n)$, there exists a configuration $x \in X$ such that $x_{\mathbb{B}_n^d} = p$ and $x_{\textbf{u} + \mathbb{B}_n^d} = q$.
\end{definition}

\begin{proposition}
    An $f(n)$-block gluing shift, where $f(n) = o(\log(n))$, has maximality type 0. 
\end{proposition}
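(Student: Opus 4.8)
The plan is to verify the three hypotheses of Proposition~\ref{proposition.annex.gluing} — transitivity, infiniteness, and density of periodic points — and then conclude that $X$ has maximality type $0$ by a direct application of that proposition. The only substantial point is the density of periodic points, which is exactly where the hypothesis $f(n)=o(\log n)$ must be spent; everything else is soft.

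First I would dispatch the easy hypotheses. If $X$ is minimal it has no proper subsystem, so $\mathcal{M}(X)=\emptyset$ and its type is $0$; hence I may assume $X$ is not minimal. Block gluing forces transitivity: enumerating the globally admissible patterns $q_1,q_2,\dots$ of $X$, I would build a nested sequence of admissible patterns on exhausting cubes, at each step padding the pattern constructed so far and the next $q_k$ to a common cube $\mathbb{B}_m^d$ and gluing them (placing $q_k$ far enough away that the gluing hypothesis holds); passing to the limit by compactness yields a configuration whose orbit is dense, so $X\in\mathcal{T}^d$. A transitive shift that is finite is a single periodic orbit, hence minimal; since $X$ is not minimal it is infinite. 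Thus transitivity and infiniteness are in place, and by Theorem~\ref{theorem.0.1} the type of $X$ is $0$ or $1$; to rule out type $1$ it remains to establish density of periodic points, after which Proposition~\ref{proposition.annex.gluing} finishes the argument. (Concretely, for each $N$ one picks a periodic point realizing each of the finitely many patterns of $\mathscr{L}_{\mathbb{B}_N^d}(X)$ and lets $Y_N$ be the closure of the union of their finite orbits: this is a finite union of periodic orbits, hence a proper subsystem because $X$ is infinite, and $\mathscr{L}_{\mathbb{B}_N^d}(Y_N)=\mathscr{L}_{\mathbb{B}_N^d}(X)$, so $Y_N\to X$ by the convergence criterion recalled after the definition of $\delta_H$. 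Had $X$ been of type $1$ with maximal subsystem $M$, Lemma~\ref{lemma.type.1.star} would place every $Y_N$ inside $M$, whence $\delta_H(Y_N,X)\ge\delta_H(M,X)>0$ by Lemma~\ref{lemma.inclusion}, a contradiction.)

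The main obstacle is therefore the density of periodic points: for each globally admissible $p$ on $\mathbb{B}_N^d$, I must produce a genuinely periodic configuration of $X$ containing $p$. Pairwise block gluing lets me place copies of $p$ on a grid of cells of side $2N+1$ separated by gaps of width $f(2N+1)$, but turning such a glued array into an exactly periodic configuration requires the fillings around all cells — and, for $d\ge 2$, around the corners — to be made simultaneously consistent, which pairwise gluing does not grant for free. This is precisely where the sub-logarithmic gap is essential: the obstruction to aligning glued blocks across successive scales is the very phenomenon that, at a logarithmic gap, permits aperiodic order of Robinson type (the shift $X_R$ of Section~\ref{section.constructions}, whose alignment data must be transmitted over distances of order the logarithm of the scale). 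When $f(n)=o(\log n)$ the accumulated misalignment across scales stays bounded, so the array can be corrected into a periodic configuration, whereas a Robinson-type example shows the threshold $\log n$ is sharp. I would carry this out by a scale-by-scale correction of the glued array (or by invoking the corresponding density statement for block gluing with sub-logarithmic gap); this is the technical heart of the proof, the remaining steps being the routine reduction above.
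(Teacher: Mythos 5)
Your proposal is correct and follows essentially the same route as the paper: block gluing gives transitivity, the finite/minimal case is dispatched trivially, and the infinite case reduces to Proposition~\ref{proposition.annex.gluing} once periodic points are known to be dense. The paper obtains that density simply by citing the known result for $o(\log n)$-block gluing shifts (\cite{GangloffSablik}) --- the fallback you offer in your last sentence --- rather than attempting the scale-by-scale correction you sketch, which as written is not yet an argument and is best replaced by that citation.
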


\begin{proof}
    It is known that a shift which is $o(\log(n))$-block gluing has a dense set of periodic points (see for instance~\cite{GangloffSablik}). Furthermore, for every $f$, $f(n)$-block gluing implies transitivity. If the shift is finite, then it is of maximality type 0. If it is infinite, then Proposition~\ref{proposition.annex.gluing} applies, and implies that the shift is of maximality type 0.
\end{proof}

\begin{question}
    What are the possible maximality types for shifts which are strictly $f(n)$-block gluing, where $f$ is linear? For mixing shifts?
\end{question}

\section{The topological structure of $\mathcal{S}^d$ when $d \ge 2$\label{section.topological.structure}}

A natural approach to genericity for higher-dimensional shifts is to 
compute the closure of the set of isolated shifts, and in particular its complement 
in $\mathcal{S}^d$, especially because isolated points form a generic set contained in all other generic set when $d=1$ (see Section~\ref{section.closure.isolated}). As this question is quite difficult, we only provide some observations in this direction. Another way is to compute the Cantor-Bendixson rank of $\mathcal{S}^d$.
We prove in Section~\ref{section.cantor.bendixson} that this rank is infinite. We then prove in Section~\ref{section.cb.residue} that the closure of the set of maximality type $\infty$ shifts is included in the Cantor-Bendixson residue. 

\subsection{On the closure of the set of isolated shifts\label{section.closure.isolated}}

Denote by $\mathcal{I}^d$ the set of isolated shifts in $\mathcal{S}^d$. The question of interest is the following:

\begin{question}
    How to characterize the set $\overline{\mathcal{I}^d}$?
\end{question}

The answer is not trivial: $\overline{\mathcal{I}^d}$ 
is not equal to $\mathcal{I}^d$ and not equal to $\mathcal{S}^d$ (this was already proved in~\cite{Doucha}, \textbf{Theorem 3.1}), because of the following examples.

\begin{example}
    Any infinite full shift is in $\overline{\mathcal{I}^d}$ because it is the limit of a sequence of finite shifts and a finite shift is isolated. On the other hand, a full shift has no maximal subsystem and has strict subsystems, and therefore is not isolated in $\mathcal{S}^d$.
\end{example}

\begin{example}
    %The shift $X$ presented in Example~\ref{example.inf.max.sub} has only finitely many subsystems which are isolated in $\mathcal{S}^d$. Indeed, a subsystem is of finite type if it is obtained from $X$ by forbidding 
    %the cross symbols to be spaced by a finite number of distances, or by forbidding the cross symbol and additional patterns. \textcolor{red}{reasoning problem here} There are finitely many subsystems of the second type. The other ones are of maximality type $+\infty$, for the same reason as $X$ has infinitely many maximal subsystems. Thus they are not isolated. As a consequence, $X$ is not in $\overline{\mathcal{I}_d}$. 
    The shift $X_R$ has no isolated subsystem, and is thus not in $\overline{\mathcal{I}_d}$. We leave the proof to the reader.
\end{example}

\begin{example}
    Every transitive subsystem $Z$ of $X_R$ is not isolated in 
    $\mathcal{T}^d$. The reason is that for any shift of finite type $X$ which contains $Z$, $X\backslash Z$ is dense in $X$ (Proposition~\ref{proposition.transitive.1} applies). Hence the set of isolated points of $\mathcal{T}^d$ is not dense in $\mathcal{T}^d$. 
\end{example}

\subsection{On the Cantor-Bendixson rank of $\mathcal{S}^d$\label{section.cantor.bendixson}}

\subsubsection{{Definition}}

We recall the definition of the Cantor-Bendixson rank. For every topological space $X$, denote by $X'$ the set of non-isolated points in $X$, called the \textbf{derived set} of $X$.

\begin{definition}
    For every ordinal number $\alpha$, the $\alpha$-th Cantor-Bendixson derivative of $X$ is the set $X^{(\alpha)}$ defined as follows: $X^{(0)} = X$; for all $\alpha$, $X^{(\alpha+1)} = {X^{(\alpha)}}'$; $X^{(\lambda)} = \bigcap_{\alpha < \lambda} X^{(\alpha)}$ where $\lambda$ is a limit ordinal. 
    The \textbf{Cantor-Bendixson rank} of $X$ is the least ordinal $\alpha$ such that $X^{(\alpha +1)} = X^{(\alpha)}$. We call \textbf{Cantor-Bendixson residue} the set $X^{(\alpha)}$, where $\alpha$ is the Cantor-Bendixon rank of $X$.
\end{definition}

For all $d$ and $n$, we denote by $\mathcal{S}_n^d$ the set $\left(\mathcal{S}^d\right)^{(n)}$.

\subsubsection{{The shift associated with the${}\times2\times3$ system on the circle}}

Using a symbolic encoding of the well-known${}\times 2 \times 3$  system on the circle, we first prove that there is at least one isolated point in ${\mathcal{S}^d}' = \mathcal{S}^d \backslash \mathcal{I}^d$.

\begin{notation}
We denote by $X_0$ the
shift on alphabet $\mathbb{Z}/6\mathbb{Z}$ defined as the set of configurations $x$ in $\left(\mathbb{Z}/6\mathbb{Z}\right)^{\mathbb{Z}^2}$ such that for all $\textbf{u}\in\mathbb{Z}^2$, there exist $\epsilon_1 \in \{0,1\}$ and $\epsilon_2 \in \{0,1,2\}$ such that $x_{\textbf{u} + \textbf{e}_1} = 2 x_{\textbf{u}} + \epsilon_1$ and $x_{\textbf{u} + \textbf{e}_2} = 3 x_{\textbf{u}} + \epsilon_2$.
\end{notation}

Note that this shift is identical to the one defined in \cite{rudolph}. The following can be checked case by case:

\begin{lemma}\label{lemma.det.diag}
For all $k,l \in \mathbb{Z}/6\mathbb{Z}$, there exists a unique pattern $q$ in $\mathscr{L}_{\llbracket 0 , 1 \rrbracket ^2}(X_0)$ such that $q_{(0,0)} = k$ and $q_{(1,1)}=l$.
\end{lemma}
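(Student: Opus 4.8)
The plan is to prove existence and uniqueness separately, both resting on two elementary bijections that encode the mixed-radix decompositions of $\mathbb{Z}/6\mathbb{Z}$. Write the four entries of a pattern $q$ on $\llbracket 0,1\rrbracket^2$ as $a = q_{(0,0)}$, $b = q_{(1,0)}$, $c = q_{(0,1)}$, $d = q_{(1,1)}$, where $a = k$ and $d = l$ are prescribed. Any $q$ in $\mathscr{L}_{\llbracket 0,1\rrbracket^2}(X_0)$ appears in a configuration of $X_0$, hence satisfies the four nearest-neighbour rules along its edges: $b = 2a + \epsilon_1$ and $d = 2c + \epsilon_1'$ (horizontal), and $c = 3a + \epsilon_2$ and $d = 3b + \epsilon_2'$ (vertical), with $\epsilon_1,\epsilon_1' \in \{0,1\}$ and $\epsilon_2,\epsilon_2' \in \{0,1,2\}$, all equalities in $\mathbb{Z}/6\mathbb{Z}$.

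For uniqueness I would first record that $\Phi\colon (\epsilon,\delta) \mapsto 3\epsilon + \delta$ from $\{0,1\}\times\{0,1,2\}$ and $\Psi\colon (\delta,\epsilon)\mapsto 2\delta + \epsilon$ from $\{0,1,2\}\times\{0,1\}$ are bijections onto $\mathbb{Z}/6\mathbb{Z}$ (they are exactly the two mixed-radix expansions of $0,\dots,5$). Combining the bottom-horizontal and right-vertical rules gives $l = 3b + \epsilon_2' = 3(2k+\epsilon_1) + \epsilon_2' = 3\epsilon_1 + \epsilon_2' = \Phi(\epsilon_1,\epsilon_2')$, since $6k = 0$; hence $(\epsilon_1,\epsilon_2') = \Phi^{-1}(l)$ is forced, and so is $b = 2k + \epsilon_1$. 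Symmetrically, the left-vertical and top-horizontal rules give $l = 2\epsilon_2 + \epsilon_1' = \Psi(\epsilon_2,\epsilon_1')$, forcing $(\epsilon_2,\epsilon_1') = \Psi^{-1}(l)$ and thus $c = 3k + \epsilon_2$. Therefore $b$ and $c$, and with them the whole pattern, are uniquely determined by $(k,l)$; a direct back-substitution confirms that this candidate satisfies all four rules, so it is at least locally admissible.

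For existence, that is, to upgrade local admissibility to membership in the language, I would use the arithmetic coding of the $\times 2\times 3$ system. For $t \in \mathbb{R}$ set $x_{\textbf{u}} := \lfloor 6\cdot 2^{u_1}3^{u_2} t\rfloor \bmod 6$ on the quadrant $u_1,u_2 \ge 0$; writing $6\cdot 2^{u_1}3^{u_2} t = n + f$ with $n = \lfloor 6\cdot 2^{u_1}3^{u_2} t\rfloor$ and $f \in [0,1)$ gives $x_{\textbf{u}+\textbf{e}_1} = 2x_{\textbf{u}} + \lfloor 2f\rfloor$ and $x_{\textbf{u}+\textbf{e}_2} = 3x_{\textbf{u}} + \lfloor 3f\rfloor$, so the defining rules hold. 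Partitioning $[0,1)$ into the $36$ intervals $[m/36,(m+1)/36)$ and writing $m = 6k + l$ shows that the corner pair $(x_{(0,0)}, x_{(1,1)}) = (k,l)$ runs through all of $(\mathbb{Z}/6\mathbb{Z})^2$ as $m$ runs over $0,\dots,35$; by the uniqueness above, the induced pattern on $\llbracket 0,1\rrbracket^2$ is exactly the candidate.

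To conclude that the candidate is globally admissible (it occurs in a genuinely bi-infinite configuration, not only a quadrant) I would run the same coding on the quadrant based at $(-n,-n)$, namely $x^{(n)}_{\textbf{u}} := \lfloor 6\cdot 2^{u_1+n}3^{u_2+n} t_n\rfloor \bmod 6$, choosing $t_n$ so that the corner pair at $\llbracket 0,1\rrbracket^2$ is again $(k,l)$; restricting $x^{(n)}$ to $\llbracket -n,n\rrbracket^2$ yields locally admissible patterns containing $q$ on $\llbracket 0,1\rrbracket^2$ with arbitrarily large margins, and a compactness extraction (using that $X_0$ is closed) produces a configuration of $X_0$ containing $q$. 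I expect this last passage — from local/quadrant realizability to a two-sided configuration — to be the only genuine obstacle: the uniqueness half is a finite check, whereas here one must ensure the nearest-neighbour rules create no obstruction to extension, which the explicit coding circumvents.
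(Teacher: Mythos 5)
Your proof is correct, and it goes well beyond what the paper records: the authors dispose of this lemma with the single sentence ``the following can be checked case by case,'' i.e.\ an implicit enumeration of the $36$ pairs $(k,l)$. Your uniqueness argument is a clean structural version of that check: reducing $3(2k+\epsilon_1)+\epsilon_2'$ and $2(3k+\epsilon_2)+\epsilon_1'$ modulo $6$ kills the $6k$ term, and the two mixed-radix bijections $(\epsilon_1,\epsilon_2')\mapsto 3\epsilon_1+\epsilon_2'$ and $(\epsilon_2,\epsilon_1')\mapsto 2\epsilon_2+\epsilon_1'$ pin down all four carries, hence $q_{(1,0)}$ and $q_{(0,1)}$, from $l$ alone. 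More importantly, you treat a point the paper's one-liner silently elides: $\mathscr{L}_{\llbracket 0,1\rrbracket^2}(X_0)$ consists of \emph{globally} admissible patterns, so a case-by-case verification of local consistency does not by itself establish existence. Your $\times 2\times 3$ coding $x_{\mathbf{u}}=\lfloor 6\cdot 2^{u_1}3^{u_2}t\rfloor \bmod 6$ on quadrants based at $(-n,-n)$, followed by a compactness extraction (legitimate here since $X_0$ is a nearest-neighbour SFT, so any configuration all of whose finite windows are eventually inside a locally admissible region lies in $X_0$), closes that gap correctly. The only cost of your route is length; what it buys is an actual proof of global admissibility and an arithmetic explanation of \emph{why} the count comes out to exactly $36$, rather than a table.
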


The following is well-known:

\begin{theorem}[H. Furstenberg, \cite{Furstenberg}]\label{theorem.furstenberg}
    The only infinite subset of the circle $\mathbb{T}^1$, which is closed and invariant for the maps $x \mapsto 2x$ and $x \mapsto 3x$ is $\mathbb{T}^1$ itself.
\end{theorem}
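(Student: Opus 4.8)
The plan is to follow Furstenberg's classical argument, organized around the \emph{non-lacunarity} of the multiplicative semigroup $S := \{2^a 3^b : a,b \ge 0\}$, whose elements act on $\mathbb{T}^1$ by $x \mapsto s x \bmod 1$. Since $A$ is invariant under $x \mapsto 2x$ and $x \mapsto 3x$, it is invariant under all of $S$, so throughout I would work with the $S$-action and aim to show that an infinite, closed, $S$-invariant set must be all of $\mathbb{T}^1$. First I would record the arithmetic input: because $\log 3/\log 2$ is irrational, Weyl equidistribution shows that $\{b\log 3 \bmod \log 2 : b \ge 0\}$ is dense in $[0,\log 2)$, and from this one deduces that the gaps between consecutive elements of $\{a\log 2 + b\log 3 : a,b\ge 0\}$ tend to $0$. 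Translating back multiplicatively yields the key \emph{blow-up lemma}: for every $\delta>0$ there is $\eta_0>0$ such that whenever $0<\eta<\eta_0$, the set $\{s\eta \bmod 1 : s\in S\}$ is $\delta$-dense in $\mathbb{T}^1$. The point is that an arbitrarily small nonzero quantity can be magnified by elements of $S$ to hit every target up to error $\delta$, precisely because $S$ contains ratios arbitrarily close to $1$ at every scale.

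As a warm-up that already exhibits the mechanism, I would consider $E := \overline{A-A}$. It is closed, $S$-invariant (since $s(a-b)=sa-sb$ with $sa,sb\in A$), contains $0$, and—because $A$ is infinite and compact—has $0$ as an accumulation point. Feeding arbitrarily small elements of $E$ into the blow-up lemma shows that $E$ is $\delta$-dense for every $\delta>0$, hence $E=\mathbb{T}^1$. This is only motivational: a dense difference set does not by itself force $A=\mathbb{T}^1$ (fat Cantor sets have full difference set), so the $S$-invariance of $A$ must be used a second time.

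For the full statement I would pass to a minimal closed $S$-invariant subset $M\subseteq A$ (Zorn) and use recurrence. When $M$ is infinite it is perfect, so each $x\in M$ is an accumulation point, giving $x,\,x+\eta_n\in M$ with $\eta_n\to 0$; applying $s\in S$ keeps both $sx$ and $sx+s\eta_n$ in $M$. The plan is to choose $s$ simultaneously so that $sx$ returns close to $x$ (possible because in a minimal system the return times to a neighborhood of $x$ form a syndetic subset of $S$) and so that $s\eta_n$ lands near a prescribed target $t-x$ (the blow-up lemma); then $sx+s\eta_n\in M$ sits near $t$, showing $M$, and hence $A$, is dense, and therefore equal to $\mathbb{T}^1$.

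The main obstacle, and the technical heart of the proof, is exactly this simultaneous control: one must show that the syndetic set of return times still contains elements realizing the full $\delta$-density coming from non-lacunarity, i.e. that recurrence of the base point and magnification of the small gap can be arranged along one and the same $s$. A secondary point to dispatch is the possibility that the minimal subset chosen is finite (a periodic rational orbit); I would handle this by first exhibiting a point of $A$ with infinite $S$-orbit and running the recurrence argument inside its orbit closure, noting that an invariant set consisting only of rationals would, by the same blow-up mechanism applied to its accumulating small differences, already be forced to be dense.
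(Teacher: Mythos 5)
First, a remark on the comparison itself: the paper does not prove this statement. It is imported verbatim from Furstenberg's 1967 paper and used as a black box (the text introduces it with ``The following is well-known''), so there is no internal proof to measure your proposal against; what follows is an assessment of your sketch as a reconstruction of the classical argument.

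Your outline correctly reproduces the skeleton of Furstenberg's proof: the non-lacunarity of $S=\{2^a3^b : a,b\ge 0\}$ deduced from the irrationality of $\log 3/\log 2$, the resulting blow-up lemma, and the (correct) observation that $\overline{A-A}=\mathbb{T}^1$ is not by itself conclusive. The finite-minimal-set/rational case you defer at the end is indeed the easy one and can be closed essentially as you indicate: if the accumulation point $z$ of $A$ is rational with denominator prime to $6$, the sub-semigroup $\{s\in S : sz=z\}$ has finite index in $S$, is still non-lacunary, and blowing up the differences $x_n-z$ along it places a dense set inside $z+ (A - z) \subseteq A$. However, the step you yourself flag as ``the technical heart'' is a genuine gap, and the specific route you propose for it does not work as stated. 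Minimality gives that the return set $R=\{s\in S : sx\in B_\rho(x)\}$ is syndetic in $S$, but syndeticity only bounds the multiplicative gaps of $R$ by a constant depending on $\rho$: if $F$ is the finite set witnessing syndeticity, consecutive elements $r<r'$ of $R$ satisfy only $r'/r\le (\max F)(1+o(1))$, and this ratio does not tend to $1$. Hence $\{s\eta_n : s\in R\}$ is only guaranteed to be $c$-dense for some fixed $c>0$, not $\delta$-dense for every $\delta$, so the simultaneous choice of $s$ realizing both recurrence of $x$ and fine magnification of $\eta_n$ cannot be extracted this way. Closing this is exactly where Furstenberg's argument needs a further device (for instance the analysis of the closed sub-semigroup, hence subgroup, $\{t\in\mathbb{T}^1 : t+M\subseteq M\}$, or his derived-set induction). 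As written, your proposal is an accurate map of the proof, with the hardest pass left unclimbed and the proposed route up it blocked.
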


% \textcolor{red}{Here we can address comments 61 and 63, about $\mathscr{L}_{\mathbb{N}^2}$ and its topology.}

% \comm{S}{We can just define this for infinite sets where it is defined first and just say it is the product of discrete topology that we put on this set}

% \comm{A}{unless i'm missing something, to define it for infinite sets it just suffices to delete the word ``finite.'' For now i used \st{finite} where they appeared in pag 5. I'm not so sure where to put de topology part, for the moment I added a line (in blue) just after the definition of $\mathscr{L}(X)$.}

\begin{notation}
    For all $n \in \mathbb{N}$, set $\textbf{u}^n :=(n,n)$. We denote by $\lambda$ the function 
    $\mathscr{L}_{\mathbb{N}^2}(X_0) \rightarrow \mathbb{T}^1$
    such that for all $p \in \mathscr{L}_{\mathbb{N}^2}(X_0)$,
    $\lambda(p)$ is the point $x \in \mathbb{T}^1$ whose representation in base $6$ is $(p_{\boldsymbol{u}^n})_n$.
    
    % Denote by $\lambda: \mathbb{T}^1 \rightarrow \mathcal{L}_{\mathbb{N}^2}(X_0)$ the map which to an element of $\mathbb{T}^1$ associates the pattern $p$ on $\mathbb{N}^2$ such that for all $\textbf{u}=(\textbf{u}_1,\textbf{u}_2) \in \mathbb{N}^2$, $p_{\textbf{u}} = \overline{k} \in \mathbb{Z}/6\mathbb{Z}$, $k \in \llbracket 0, 5 \llbracket$ if and only if $2 ^{\textbf{u}_1}  3^{\textbf{u}_2} \cdot x \in [k/6,(k+1)/6[$.
\end{notation}

\begin{lemma}
The map $\lambda$ is surjective and continuous. Furthermore, for all $x \in \mathbb{T}^1$, $\left|\lambda^{-1}(\{x\})\}\right| = 2$ if $x$ is of the form $m/6^l$, for some $m,l\in\N$, and 
$\left|\lambda^{-1}(\{x\})\right| = 1$ otherwise.
    % The map $\lambda$ is a bijection.
\end{lemma}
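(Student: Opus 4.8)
The plan is to factor $\lambda$ through the base-$6$ value map and to transfer everything to the diagonal. Write $v : \{0,\dots,5\}^{\mathbb{N}} \to \mathbb{T}^1$ for the map sending a digit sequence $(d_n)_n$ to $\sum_{n \ge 0} d_n 6^{-(n+1)}$, and $D : \mathscr{L}_{\mathbb{N}^2}(X_0) \to \{0,\dots,5\}^{\mathbb{N}}$ for the map reading the diagonal, $D(p) = (p_{\textbf{u}^n})_n$. By definition $\lambda = v \circ D$, so it suffices to control $D$ and to use the classical properties of $v$. Continuity is then immediate: if two patterns agree on $\textbf{u}^0,\dots,\textbf{u}^N$, their images under $\lambda$ share their first $N+1$ base-$6$ digits and hence lie within $6^{-(N+1)}$ of each other in $\mathbb{T}^1$; since convergence in the prodiscrete topology forces agreement on arbitrarily long initial segments of the diagonal, $\lambda$ is (uniformly) continuous.

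The heart of the argument is that $D$ is a bijection onto $\{0,\dots,5\}^{\mathbb{N}}$. For injectivity I would show that the diagonal determines the whole pattern, by induction on $|a-b|$ for a position $(a,b)$. The base case $|a-b|=0$ is exactly the hypothesis that the diagonal is fixed. For the inductive step, to recover $p_{(a,a+k)}$ with $k \ge 1$, I apply the appropriate translate of \cref{lemma.det.diag} to the unit square with lower-left corner $(a,a+k-1)$: its two main-diagonal corners $(a,a+k-1)$ and $(a+1,a+k)$ both satisfy $|a-b| = k-1$ and are therefore already determined, so the lemma pins down the entire square and in particular $p_{(a,a+k)}$; the positions below the diagonal are handled symmetrically. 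Here I use that every $2\times2$ subpattern of a globally admissible pattern itself lies in $\mathscr{L}_{\llbracket 0,1\rrbracket^2}(X_0)$, so that \cref{lemma.det.diag} indeed applies to each such square.

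For surjectivity of $D$ — which also yields surjectivity of $\lambda$, since $v$ is onto — I would realize each digit sequence by an explicit configuration of $X_0$. Given $x \in \mathbb{T}^1$ with representative $r \in [0,1)$, the configuration $(a,b) \mapsto \lfloor 6\cdot 2^a 3^b r\rfloor \bmod 6$ satisfies the defining relations of $X_0$ by the routine identity $\lfloor 2s\rfloor = 2\lfloor s\rfloor + \lfloor 2\{s\}\rfloor$ (and its base-$3$ analogue), which keep the two increments in $\{0,1\}$ and $\{0,1,2\}$; its diagonal is the standard, eventually-$0$ base-$6$ expansion of $x$. The second configuration $(a,b)\mapsto (\lceil 6\cdot 2^a 3^b r\rceil - 1)\bmod 6$ also lies in $X_0$ — most cleanly seen as the pointwise limit, over $t \uparrow r$, of the previous family, using that $X_0$ is closed — and its diagonal is the eventually-$5$ expansion of $x$. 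Since every element of $\{0,\dots,5\}^{\mathbb{N}}$ is one of the (at most two) base-$6$ expansions of its own value, $D$ is onto.

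Finally I would assemble the count. Since $D$ is a bijection, $|\lambda^{-1}(\{x\})| = |v^{-1}(\{x\})|$, i.e.\ the number of base-$6$ expansions of $x$ in $\mathbb{T}^1$. This number is $2$ exactly when $x$ is a $6$-adic rational $m/6^l$ (the two expansions being the eventually-$0$ and eventually-$5$ ones, both realized above; the case $x=0 \equiv 0.\overline{5}$ corresponds to the constant configurations $0$ and $5$), and it is $1$ otherwise, which is precisely the claimed dichotomy. The main obstacle is the injectivity step: multiplication by $2$ on $\mathbb{Z}/6\mathbb{Z}$ is not invertible, so the pattern cannot be reconstructed by naively inverting the horizontal and vertical relations, and the anti-diagonal propagation through \cref{lemma.det.diag} is exactly what makes the reconstruction possible; carefully setting up that induction is the crux of the proof.
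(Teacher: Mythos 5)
Your proof is correct, and it is a genuine expansion of the paper's, which disposes of the lemma in three sentences (continuity is straightforward; surjectivity by ``repeated application of \cref{lemma.det.diag} and the shift map''; the fiber count ``comes from the definition of $\lambda$''). Your injectivity induction on $|a-b|$, propagating along anti-diagonals of $2\times 2$ squares via \cref{lemma.det.diag}, is exactly the content hidden behind the paper's last sentence, and you are right that it is the crux: since multiplication by $2$ is not invertible modulo $6$, the horizontal and vertical relations alone do not let one reconstruct a pattern from its diagonal. Where you genuinely diverge is surjectivity. The paper's sketch builds the quadrant outward from a prescribed diagonal by repeated use of \cref{lemma.det.diag}, which leaves implicit the check that the locally determined squares assemble into a \emph{globally admissible} pattern on $\mathbb{N}^2$; your explicit configurations $(a,b)\mapsto\lfloor 6\cdot 2^a3^b r\rfloor\bmod 6$, together with their limits as $t\uparrow r$, sidestep that issue entirely, since they are visibly full configurations of $X_0$ and they realize both base-$6$ expansions of each $6$-adic rational (including $x=0$ via the constant configurations $0$ and $5$). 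This buys a self-contained argument at the price of a small computation with the carry identities $\lfloor 2s\rfloor=2\lfloor s\rfloor+\lfloor 2\{s\}\rfloor$ and $\lfloor 3s\rfloor=3\lfloor s\rfloor+\lfloor 3\{s\}\rfloor$. One cosmetic remark: the diagonal of your first configuration is the greedy base-$6$ expansion, which is ``eventually $0$'' only when $x$ is of the form $m/6^l$; for all other $x$ it is simply the unique expansion, which is what your final counting step actually uses.
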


\begin{proof}
Continuity is straightforward. Surjectivity is obtained from repeated application of Lemma~\ref{lemma.det.diag} and the shift map. The rest of the statement comes from the definition of $\lambda$.
% We prove that $\lambda$ is injective, and then that it is surjective. We set $\textbf{u}^n :=(n,n)$.

% \begin{enumerate}
%     \item \textbf{Injectivity.}
%     In order to see that $\lambda$ is injective, it suffices to see that for all $x \in [0,1[$, the sequence $\left(\lambda(x)_{\textbf{u}^n}\right)_n$ is the decomposition in base $6$ of $x$, and that it uniquely determines $x$. 
%     \item \textbf{Surjectivity.} Let $p$ a pattern in $\mathcal{L}_{\mathbb{N}^2}(X_0)$. Observe that for all $k,l\in\mathbb{Z}/6\mathbb{Z}$ there exists a unique $m$ such that $2m=l$ and $3k=m$ (resp. $3m=l$ and $2k=m$). Therefore, the sequence $(p_{\textbf{u}^n})_{n\ge0}$ uniquely determines $p$. Let $x\in[0,1[$ be such that its base 6 decomposition is $p_{\textbf{u}^0}p_{\textbf{u}^1}\ldots p_{\textbf{u}^n}\ldots$. Since $p$ and $\lambda(x)$ coincide on the diagonal $\{\textbf{u}^n\mid n\in\mathbb{N}\}$, we have $p=\lambda(x)$.
%     \end{enumerate}
\end{proof}

\begin{lemma}\label{lemma.quarter.to.full}
For a bi-dimensional shift $X$, if $\mathscr{L}_{\mathbb{N}^2}(X)$ is finite, then $X$ is finite. Furthermore, considering two shifts $X \subset Z$, if  $\mathscr{L}_{\mathbb{N}^2}(X) = \mathscr{L}_{\mathbb{N}^2}(Z)$, then $Z=X$.
\end{lemma}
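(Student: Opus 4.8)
The plan is to derive both assertions from a single principle: in a $\mathbb{Z}^2$-shift the globally admissible patterns on the quarter plane $\mathbb{N}^2$ already control the whole configuration space, but that control has to be extracted in two different ways. For the first assertion I would convert finiteness of $\mathscr{L}_{\mathbb{N}^2}(X)$ into a uniform bound on the complexity of $X$; for the second I would use that diagonal translations of a quarter plane sweep out all of $\mathbb{Z}^2$. In both steps the closedness (compactness) of shifts does the final work.

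First I would record the uniform bound. Set $C:=|\mathscr{L}_{\mathbb{N}^2}(X)|<\infty$. Any finite $\mathbb{U}\subset\mathbb{Z}^2$ is contained in a translate $\mathbf{u}+\mathbb{N}^2$ (choose $\mathbf{u}$ with sufficiently negative coordinates), and the restriction map $\mathscr{L}_{\mathbf{u}+\mathbb{N}^2}(X)\to\mathscr{L}_{\mathbb{U}}(X)$ is onto: a globally admissible pattern on $\mathbb{U}$ occurs in some configuration, whose restriction to $\mathbf{u}+\mathbb{N}^2$ is a globally admissible quarter-plane pattern extending it. By translation invariance $|\mathscr{L}_{\mathbf{u}+\mathbb{N}^2}(X)|=C$, so $|\mathscr{L}_{\mathbb{U}}(X)|\le C$ for every finite $\mathbb{U}$. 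Applying this to the central boxes $\mathbb{B}_n^2$, the sequence $\bigl(|\mathscr{L}_{\mathbb{B}_n^2}(X)|\bigr)_n$ is non-decreasing (the restriction $\mathbb{B}_{n+1}^2\to\mathbb{B}_n^2$ is onto) and bounded by $C$, hence eventually constant. From some rank $N$ on, the bonding restrictions $\mathscr{L}_{\mathbb{B}_{n+1}^2}(X)\to\mathscr{L}_{\mathbb{B}_n^2}(X)$ are bijections, so two configurations agreeing on $\mathbb{B}_N^2$ agree on every $\mathbb{B}_n^2$ and therefore coincide. Thus $x\mapsto x_{\mathbb{B}_N^2}$ is injective and $|X|\le|\mathscr{L}_{\mathbb{B}_N^2}(X)|\le C$, proving $X$ finite.

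For the second assertion, given $X\subset Z$ with $\mathscr{L}_{\mathbb{N}^2}(X)=\mathscr{L}_{\mathbb{N}^2}(Z)$, I would fix $z\in Z$ and approximate it from inside $X$. For each $n$ we have $\sigma^{(-n,-n)}(z)\in Z$, so its restriction to $\mathbb{N}^2$ lies in $\mathscr{L}_{\mathbb{N}^2}(Z)=\mathscr{L}_{\mathbb{N}^2}(X)$; pick $y^{(n)}\in X$ realising this quarter-plane pattern and set $x^{(n)}:=\sigma^{(n,n)}(y^{(n)})\in X$. A direct check shows that $x^{(n)}$ agrees with $z$ on $\llbracket -n,+\infty\rrbracket^2$, which contains $\mathbb{B}_n^2$, whence $\delta(x^{(n)},z)\le 2^{-n}$. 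Since $X$ is closed, $z\in X$; hence $Z\subset X$ and $Z=X$.

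The main obstacle in both parts is that the naive restriction $x\mapsto x_{\mathbb{N}^2}$ is useless on its own: it is not injective, and $\bigcup_n\mathbb{U}_n^2=\mathbb{N}^2\neq\mathbb{Z}^2$, so quarter-plane data does not literally recover a configuration. The point is to reintroduce the missing half-planes. For finiteness this is achieved by transferring the bound to the \emph{central} boxes $\mathbb{B}_n^2$, which do exhaust $\mathbb{Z}^2$, and exploiting that a bounded non-decreasing complexity must stabilise into bijective bonding maps. For the equality it is achieved by translating along the diagonal so that the shifted quarter planes $\llbracket -n,+\infty\rrbracket^2$ exhaust $\mathbb{Z}^2$; I expect the index bookkeeping in $x^{(n)}:=\sigma^{(n,n)}(y^{(n)})$ to be the only genuinely delicate computation.
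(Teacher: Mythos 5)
Your proof is correct. For the second assertion you take essentially the same route as the paper: given $z\in Z$, realise the quarter-plane pattern of $\sigma^{(-n,-n)}(z)$ inside $X$, shift back by $(n,n)$, and conclude by closedness of $X$; your index bookkeeping ($x^{(n)}$ agreeing with $z$ on $(-n,-n)+\mathbb{N}^2$) is right. For the first assertion you take a genuinely different route. The paper argues that for each $p\in\mathscr{L}_{\mathbb{N}^2}(X)$ the sequence of restrictions $p_{|(n,n)+\mathbb{N}^2}$ is ultimately periodic (finitely many possible values, each term determining the next), and deduces that the constant sequence $\sigma^{(n,n)}(X)=X$ converges to a finite shift. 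You instead transfer the hypothesis into a uniform bound $C$ on $|\mathscr{L}_{\mathbb{U}}(X)|$ over all finite $\mathbb{U}$ (every finite set sits in a translate of the quarter plane and the restriction maps are onto), and then exploit the stabilisation of the non-decreasing, bounded sequence $|\mathscr{L}_{\mathbb{B}_n^2}(X)|$: once the bonding restrictions become bijections, restriction to a fixed central box is injective on $X$. Both arguments are sound. Yours is more explicit and self-contained -- the paper's convergence claim is stated rather tersely -- it generalises verbatim to any dimension $d$, and it yields the quantitative bound $|X|\le C$; the paper's diagonal-periodicity argument is shorter but leans on the reader to justify the convergence of $\sigma^{(n,n)}(X)$ to a finite limit.
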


\begin{proof}
Suppose that $\mathscr{L}_{\mathbb{N}^2}(X)$ is finite.
For every $p \in \mathscr{L}_{\mathbb{N}^2}(X)$, $\left(p_{|(n,n) + \mathbb{N}^2}\right)_n$ is an ultimately periodic sequence. As a consequence the sequence $\left(\sigma^{(n,n)}(X)\right)_n$ converges towards a finite shift. On the other hand, it is constant with value $X$, therefore $X$ is finite.
Consider now $Z \subset X$ such that $\mathscr{L}_{\mathbb{N}^2}(X) = \mathscr{L}_{\mathbb{N}^2}(Z)$. In order to prove that $Z=X$, it suffices to prove that for all $x \in X$, and all $n$, there exists a configuration $z$ in $Z$ such that $z$ coincides with $x$ on $(-n,-n) + \mathbb{N}^2$. Indeed, in this case, $x$ is in the closure of $Z$, thus in $Z$. Let $x \in X$ and $n\in\mathbb{N}$. By hypothesis there exists $z \in Z$ such that $\sigma^{(n,n)}(x)$ and $z$ coincide on $\mathbb{N}^2$. Therefore, $x$ and  $\sigma^{(-n,-n)}(z) \in Z$ coincide on $(-n,-n) + \mathbb{N}^2$. This ends the proof.
\end{proof}

\begin{proposition}
All the strict subsystems of $X_0$ are finite.
\end{proposition}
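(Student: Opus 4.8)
The plan is to transport the whole problem to the circle through the factor map $\lambda$ and then invoke Furstenberg's theorem (\cref{theorem.furstenberg}). Let $Y$ be a strict subsystem of $X_0$; I will argue by contradiction, assuming $Y$ is infinite. Since $Y$ is compact and shift-invariant, $\mathscr{L}_{\mathbb{N}^2}(Y)=\{y_{|\mathbb{N}^2}:y\in Y\}$ is a \emph{closed} (compact) subset of $\mathscr{L}_{\mathbb{N}^2}(X_0)$, and by the contrapositive of the first assertion of \cref{lemma.quarter.to.full} it is infinite. I then set $\Lambda:=\lambda\big(\mathscr{L}_{\mathbb{N}^2}(Y)\big)\subseteq\mathbb{T}^1$; as $\lambda$ is continuous it is compact, hence closed, and since $\lambda$ has finite fibres it is infinite.

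The first key step is to verify that $\lambda$ intertwines the shift with the multiplications, namely $\lambda\big((\sigma^{\textbf{e}_1}y)_{|\mathbb{N}^2}\big)=2\,\lambda(y_{|\mathbb{N}^2})$ and $\lambda\big((\sigma^{\textbf{e}_2}y)_{|\mathbb{N}^2}\big)=3\,\lambda(y_{|\mathbb{N}^2})$ for every $y\in X_0$. Writing $a_n:=y_{(n,n)}$, the point $\lambda(y_{|\mathbb{N}^2})$ has base-$6$ digits $(a_n)_n$, and $\lambda\big((\sigma^{\textbf{e}_2}y)_{|\mathbb{N}^2}\big)$ is read off the digits $y_{(n,n+1)}$. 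The defining relations of $X_0$ give $y_{(n,n+1)}=3a_n+\epsilon$ and $a_{n+1}=2\epsilon+\epsilon'$ with $\epsilon\in\{0,1,2\}$ and $\epsilon'\in\{0,1\}$; the second identity forces $\epsilon=\lfloor a_{n+1}/2\rfloor$, which is exactly the carry generated when multiplying $(a_n)_n$ by $3$ in base $6$. Hence $(y_{(n,n+1)})_n$ are precisely the base-$6$ digits of $3\lambda(y_{|\mathbb{N}^2})$, and symmetrically for $\sigma^{\textbf{e}_1}$ and multiplication by $2$. Because $Y$ is invariant under $\sigma^{\textbf{e}_1}$ and $\sigma^{\textbf{e}_2}$, this yields $2\Lambda\subseteq\Lambda$ and $3\Lambda\subseteq\Lambda$, so $\Lambda$ is an infinite closed subset of $\mathbb{T}^1$ invariant under $x\mapsto 2x$ and $x\mapsto 3x$; \cref{theorem.furstenberg} then gives $\Lambda=\mathbb{T}^1$. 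I expect this carry bookkeeping to be the main obstacle: one must check that the purely local, nearest-neighbour relations defining $X_0$ automatically produce the globally correct base-$6$ carries, so that $\lambda$ really is a conjugacy onto the $\times 2\times 3$ system and not merely a digit-wise comparison.

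It then remains to deduce $\mathscr{L}_{\mathbb{N}^2}(Y)=\mathscr{L}_{\mathbb{N}^2}(X_0)$, which contradicts the strictness of $Y$ by the second assertion of \cref{lemma.quarter.to.full}. Using \cref{lemma.det.diag} propagated to all of $\mathbb{N}^2$, a pattern of $\mathscr{L}_{\mathbb{N}^2}(X_0)$ is determined by its diagonal $(p_{(n,n)})_n$ and every diagonal occurs, so $p\mapsto(p_{(n,n)})_n$ is a homeomorphism $\mathscr{L}_{\mathbb{N}^2}(X_0)\cong(\mathbb{Z}/6\mathbb{Z})^{\mathbb{N}}$ under which $\lambda$ becomes base-$6$ evaluation. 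Let $D:=\{m/6^l:m,l\in\mathbb{N}\}$, a countable set over which, by the fibre description of $\lambda$, the map is injective on the complement $\mathbb{T}^1\setminus D$. From $\Lambda=\mathbb{T}^1$ I obtain $\mathscr{L}_{\mathbb{N}^2}(Y)\supseteq\lambda^{-1}(\mathbb{T}^1\setminus D)$, and $\lambda^{-1}(D)$ (the diagonals eventually equal to $0$ or to $5$) is countable; since $\mathscr{L}_{\mathbb{N}^2}(X_0)$ is a Cantor set, the complement of a countable set is dense, so the closed set $\mathscr{L}_{\mathbb{N}^2}(Y)$ must equal $\mathscr{L}_{\mathbb{N}^2}(X_0)$. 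This is the sought contradiction, whence $Y$ is finite.
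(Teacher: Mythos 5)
Your proposal is correct and follows essentially the same route as the paper's proof: transport $\mathscr{L}_{\mathbb{N}^2}(Y)$ to the circle via $\lambda$, use the intertwining with $\times 2$ and $\times 3$ to invoke Furstenberg's theorem, and then use the fibre cardinality of $\lambda$ together with Lemma~\ref{lemma.quarter.to.full} to conclude in both branches of the dichotomy. The only differences are presentational — you argue by contradiction rather than by cases and you spell out the base-$6$ carry verification and the countable-exceptional-set density argument, which the paper leaves implicit.
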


\begin{proof}
Consider a subsystem $X$ of $X_0$.
    Denote by $Z$ the 
    set of restrictions of elements of $X$ to $\mathbb{N}^2$. Since $\lambda$ is continuous, and that by definition of $X_0$, for all $p \in \mathscr{L}_{\mathbb{N}^2}(X_0)$, $\lambda(\sigma^{\textbf{e}_1}(x)) = 2\lambda(p) $, and $\lambda(\sigma^{\textbf{e}_2}(p)) = 3\lambda(p) $, the set $\lambda(Z)$ is invariant for the maps $x \mapsto 2x$ and $x \mapsto 3x$. By Theorem~\ref{theorem.furstenberg},
    there are two possibilities.
    If the set $\lambda(Z)$ is finite, since $\left|\lambda^{-1}(\{x\})\right| \le 2$ for all $x \in \mathbb{T}^1$, then $Z$ is finite and by Lemma~\ref{lemma.quarter.to.full}, $X$ is also finite. Otherwise, as a consequence of Theorem~\ref{theorem.furstenberg}, $\lambda(Z)$ must be equal to $\mathbb{T}^1$ and since for all $x \in \mathbb{T}^1$ which is not of the form $m/6^l$, %$m/2^l$
    $\left|\lambda^{-1}(\{x\})\right| = 1$, $Z$ contains a subset which is dense in $\mathscr{L}_{\mathbb{N}^2}(X_0)$. Since it is closed, it must be equal to $\mathscr{L}_{\mathbb{N}^2}(X_0)$. Again, because of Lemma~\ref{lemma.quarter.to.full}, $X = X_0$. Therefore if $X$ is different from $X_0$, then $X$ is finite.
    % Consider a subsystem $X$ of $X_0$.
    % Denote by $Z$ the 
    % set of restrictions of elements of $X$ to $\mathbb{N}^2$. Since $\lambda^{-1}$ is continuous, and for all $p \in \mathcal{L}_{\mathbb{N}^2}(X_0)$, $\lambda^{-1}(\sigma^{\textbf{e}_1}(x)) = 2\lambda^{-1}(x) $, and $\lambda^{-1}(\sigma^{\textbf{e}_2}(x)) = 3\lambda^{-1}(x) $, the set $\lambda^{-1}(\mathcal{L}_{\mathbb{N}^2}(X_0))$ satisfies the conditions of Theorem~\ref{theorem.furstenberg}.
    % There are thus two possibilities.
    % If the set $\lambda^{-1}(Z)$ is finite, then $Z$ is finite and by Lemma~\ref{lemma.quarter.to.full}, $X$ is also finite. Otherwise, as a consequence of Theorem~\ref{theorem.furstenberg}, $\lambda^{-1}(Z)$ must be equal to $\mathbb{T}^1$ and because $\lambda$ is bijective, $Z = \mathcal{L}_{\mathbb{N}^2}(X_0)$. Again, because of Lemma~\ref{lemma.quarter.to.full}, $X = X_0$. Therefore if $X$ is different from $X_0$, then $X$ is finite.
\end{proof}

\begin{remark}
    As a consequence, the shift $X_0$ has maximality type 0.
\end{remark}

\begin{notation}
    For all $d \ge 0$, and $X$ a bi-dimensional shift,
    we denote by $\mathcal{E}^{d}(X)$ the $(d+2)$-dimensional shift such that for all $k \ge 3$, and for all $x \in \mathcal{E}^{d}(X)$, $\sigma^{\textbf{e}_k}(x) = x$ and, denoting by $P$ the set of elements of $\mathbb{Z}^d$ whose $k$-th coordinate is 0 for all $k \ge 3$, 
    $\mathscr{L}_P (\mathcal{E}^{d}(X)) = X_0$.
\end{notation}

\begin{theorem}\label{theorem.times.shift}
    For all $d \ge 0$, $\mathcal{E}^{d}(X_0)$ is isolated in $\mathcal{S}^{d+2} \backslash \mathcal{I}^{d+2}$.
\end{theorem}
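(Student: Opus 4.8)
The plan is to prove the two halves of the statement separately: that $\mathcal{E}^{d}(X_0)$ belongs to the derived set $\mathcal{S}^{d+2}\setminus\mathcal{I}^{d+2}$ (i.e.\ is not isolated in $\mathcal{S}^{d+2}$), and that it is an isolated point of that derived set. The whole argument rests on one structural observation: the lattice of subsystems of $\mathcal{E}^{d}(X_0)$ is isomorphic, as a partially ordered set, to the lattice of subsystems of $X_0$. Indeed, every configuration of $\mathcal{E}^{d}(X_0)$ is constant in the directions $\textbf{e}_3,\dots,\textbf{e}_{d+2}$ and is therefore determined by its restriction to the plane $P$, on which the allowed patterns are exactly those of $X_0$; this gives a homeomorphism $\iota\colon X_0 \to \mathcal{E}^{d}(X_0)$ intertwining the $\mathbb{Z}^2$-subaction, while the remaining generators act trivially on the image. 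Consequently $\mathbb{Z}^{d+2}$-invariance of a closed subset of $\mathcal{E}^{d}(X_0)$ is the same as $\mathbb{Z}^2$-invariance of its $\iota$-preimage, so $W \mapsto \iota^{-1}(W)$ is an inclusion-preserving bijection between the subsystems of $\mathcal{E}^{d}(X_0)$ and those of $X_0$. In particular $\mathcal{E}^{d}(X_0)$ has maximality type $0$, and its strict subsystems are exactly the sets $\iota(Y)$ for $Y$ a strict subsystem of $X_0$.

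First I would record that $\mathcal{E}^{d}(X_0)$ is of finite type: the constraints defining $X_0$ are nearest-neighbour in the plane $P$, and the conditions $\sigma^{\textbf{e}_k}(x)=x$ for $k\ge 3$ are nearest-neighbour in the extra directions, so a finite family of forbidden patterns defines $\mathcal{E}^{d}(X_0)$. For non-isolation, note that $X_0$ is infinite and has maximality type $0$ (shown above), while possessing at least one strict subsystem (for instance the fixed point $x\equiv 0$). Through $\iota$ the same holds for $\mathcal{E}^{d}(X_0)$: it has a strict subsystem but no maximal subsystem, so every strict subsystem is an outcast. By \cref{proposition.isolated.strict.contained}, $\mathcal{E}^{d}(X_0)$ is not isolated, hence $\mathcal{E}^{d}(X_0)\in\mathcal{S}^{d+2}\setminus\mathcal{I}^{d+2}$.

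For the second half, I would use that $\mathcal{E}^{d}(X_0)$ is of finite type together with \cref{lemma.subsystem}: there is $\epsilon>0$ such that every shift $Z$ with $\delta_H(Z,\mathcal{E}^{d}(X_0))<\epsilon$ is a subsystem of $\mathcal{E}^{d}(X_0)$. Any such $Z$ different from $\mathcal{E}^{d}(X_0)$ is then a strict subsystem, so $Z=\iota(Y)$ with $Y$ a strict subsystem of $X_0$; since all strict subsystems of $X_0$ are finite (shown above) and $\iota$ is a bijection on configurations, $Z$ is finite. A finite shift is a finite union of periodic orbits, hence of finite type with finitely many subsystems, and is therefore isolated in $\mathcal{S}^{d+2}$ by \cref{proposition.finite.subsystems}. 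Thus the $\epsilon$-ball around $\mathcal{E}^{d}(X_0)$ contains no non-isolated shift other than $\mathcal{E}^{d}(X_0)$ itself, which is precisely the assertion that $\mathcal{E}^{d}(X_0)$ is isolated in $\mathcal{S}^{d+2}\setminus\mathcal{I}^{d+2}$.

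The step I expect to need the most care is the structural correspondence between the subsystems of $\mathcal{E}^{d}(X_0)$ and those of $X_0$: one must check that the extra generators really act trivially on \emph{every} element of $\mathcal{E}^{d}(X_0)$ (so that no additional dynamics or invariance is hidden in the extruded directions), and that the bijection $\iota$ preserves finiteness, so that finite strict subsystems of $X_0$ correspond to finite (hence isolated) strict subsystems. Once this is secured, the remainder of the proof is a direct combination of previously established results: the classification of the strict subsystems of $X_0$ as finite, the characterisation of isolated shifts in \cref{thm.main}, and the outcast criterion for non-isolation.
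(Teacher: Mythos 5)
Your proof is correct and follows essentially the same route as the paper: strict subsystems of $\mathcal{E}^d(X_0)$ are finite (hence isolated), and since $\mathcal{E}^d(X_0)$ is of finite type it has a neighborhood consisting only of its subsystems, so that neighborhood meets $\mathcal{S}^{d+2}\setminus\mathcal{I}^{d+2}$ only in $\mathcal{E}^d(X_0)$ itself. The only difference is that you spell out the subsystem correspondence with $X_0$ and the membership of $\mathcal{E}^d(X_0)$ in the derived set (via maximality type $0$ and the outcast criterion), both of which the paper treats as straightforward or leaves to the preceding remark.
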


\begin{proof}
    Since strict subsystems of $X_0$ are finite, it is straightforward that strict subsystems of $\mathcal{E}^{d}(X_0)$ are also finite for all $d \ge 0$.
    Since a finite shift is isolated, all the subsystems of $\mathcal{E}^{d}(X_0)$ are 
    isolated. Since $\mathcal{E}^{d}(X_0)$ is of finite type, it has a neighborhood which contains only isolated points in $\mathcal{S}^{d+2}$. 
    Therefore $\mathcal{E}^{d}(X_0)$ is isolated in $\mathcal{S}^{d+2} \backslash \mathcal{I}^{d+2}$.
\end{proof}

%X2X3:

%Question: comment caractériser la structure topologique de x2x3 et la reproduire ? En particulier produire des isolés dans S-I (éventuellement en construire avec deux mesures invariantes) -> adhérence de l'union d'une suite de périodiques aux languages croissant ? Clairement pas suffisant car le full shift est aussi limite d'une telle suite. -> ajouter que tout motif suffisamment grand d'un language contient tous les mots du précédent ? -> les configurations limites sont celles dans lesquelles tous les mots des sous shifts périodiques apparaissent.
%Pour qu'il n'y ai pas de sous système infini, il faut que toutes ces configurations soient denses. Autrement dit si une configuration contient tous les mots du language périodique, alors elle contient tous les mots du language. -> il faut une condition sur la densité dans un motif : pour tout motif dans le language fini, il existe un rang a partir duquel tous les motifs des périodiques contiennent une densité de ce motif supérieure à une constante.
%\end{comment}

\subsubsection{{The Cantor-Bendixson rank of the shifts space is infinite}}

In this subsection we prove \cref{thm:C} from the introduction as \cref{thm.Cantor}. For this purpose, we first prove some technical lemmas on which the proof of \cref{thm.Cantor} relies, making this proof simpler.

% Let us denote by $\mathcal{P}_{n}^d$ the sets defined by $\mathcal{P}_{0}^d = \mathcal{I}^d$ for all $n$, $\mathcal{P}_{n+1}^{d}$ is the set of shifts of finite type in $\mathcal{S}^d$ whose strict subsystems are all in $ \bigcup_{k \le n} \mathcal{P}_{k}^{d}$. 
% We also denote $\mathcal{D}^d_{n}:=\mathcal{P}_{n}^{d} \backslash \mathcal{P}_{n-1}^{d}$ when $n \ge 1$ and $\mathcal{D}^d_{0} := \mathcal{P}_{0}^{d}$. The following is straightforward:

% \begin{lemma}\label{lemma.cantor.inclusion}
%     For all $d,n$, $\mathcal{D}_{n}^{d} \subset \mathcal{S}_{n}^d$. 
%     %\textcolor{blue}{$S^d_n$?? the n-th derivative of $S^d$?? in that case ${S^d}^{(n)}$ but we should add the 'recursive' notation after the definition of the derived set $X'$}
% \end{lemma}

\begin{lemma}\label{lemma.union.2}
    The set $\mathcal{I}^d$ is invariant under disjoint unions.
\end{lemma}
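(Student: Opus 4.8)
The plan is to read the statement as saying that the binary---and hence, by an immediate induction, any finite---disjoint union of isolated shifts is again isolated, and to verify the two halves of the characterization in \cref{thm.main}: that $X \sqcup Y$ is of finite type and that it satisfies $(\star)$. So I would fix $X, Y \in \mathcal{I}^d$, re-encoded so that their alphabets $\mathcal{A}_X, \mathcal{A}_Y \subset \mathbb{Z}$ are disjoint, and write $X \sqcup Y$ for their disjoint union inside the full shift on $\mathcal{A}_X \cup \mathcal{A}_Y$.

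First I would check that $X \sqcup Y$ is of finite type. Writing $X = X_{\mathcal{F}_X}$ and $Y = X_{\mathcal{F}_Y}$ for finite forbidden sets, I claim $X \sqcup Y = X_{\mathcal{F}}$, where $\mathcal{F}$ consists of $\mathcal{F}_X \cup \mathcal{F}_Y$ together with every two-cell pattern on a support $\{\mathbf{0}, \mathbf{e}_i\}$, $1 \le i \le d$, having one symbol in $\mathcal{A}_X$ and the other in $\mathcal{A}_Y$. Since $\mathbb{Z}^d$ is connected for adjacency, forbidding these ``mixing dominoes'' forces every configuration to use a single alphabet, after which $\mathcal{F}_X$ (resp.\ $\mathcal{F}_Y$) cuts it down to $X$ (resp.\ $Y$); as $\mathcal{F}$ is finite this gives the finite type property.

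Next I would pin down the maximal subsystems. Because the alphabets are disjoint, every subsystem $W$ decomposes uniquely as $A \cup B$ with $A = W \cap X$ and $B = W \cap Y$ (each a subsystem or empty), and $W = X \sqcup Y$ exactly when $A = X$ and $B = Y$. From this I would recover, as in the additivity proposition proved earlier, that the maximal subsystems are precisely the sets $N \cup Y$ with $N \in \mathcal{M}(X)$---collapsing to the single set $Y$ when $X$ is minimal---together with the symmetric family $X \cup M$, $M \in \mathcal{M}(Y)$, or $X$ when $Y$ is minimal. Since $\mathcal{M}(X)$ and $\mathcal{M}(Y)$ are finite, $\mathcal{M}(X \sqcup Y)$ is finite.

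The main obstacle is the absence of outcasts, which I would prove directly. Let $W = W_X \cup W_Y$ be a proper subsystem; properness forces, say, $W_X \neq X$. If $X$ is minimal then $W_X = \emptyset$, so $W \subseteq Y$, and $Y$ is one of the maximal subsystems listed above. If $X$ is non-minimal then, being isolated with no outcast, it must actually possess a maximal subsystem (a shift of maximality type $0$ with no outcast has no proper subsystem at all, hence is minimal), and either $W_X = \emptyset$ or $W_X$ is a nonempty proper subsystem contained, by the no-outcast property of $X$, in some $N \in \mathcal{M}(X)$; in both cases $W \subseteq N \cup Y$, which is maximal. The symmetric argument handles $W_Y \neq Y$, so $X \sqcup Y$ has no outcast, and \cref{thm.main} yields isolation. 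The delicate points to get right are the edge cases where a factor is minimal or where $W_X$ (resp.\ $W_Y$) is empty, together with the observation that a non-minimal isolated shift necessarily has at least one maximal subsystem; once these are handled, the passage to arbitrary finite disjoint unions is immediate by induction.
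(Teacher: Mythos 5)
Your proposal is correct and follows essentially the same route as the paper: show the disjoint union of finite-type shifts is of finite type, identify the maximal subsystems of $X \sqcup Y$ as unions of one factor with a maximal subsystem of the other, check every proper subsystem lands in one of these, and invoke \cref{thm.main}. You are in fact more careful than the paper's terse proof about the edge cases where a factor is minimal (and about why a non-minimal isolated shift must have a maximal subsystem), details the paper leaves to its earlier additivity proposition.
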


\begin{proof}
    Indeed, the disjoint union of two shifts of finite type is also of finite type.
    Furthermore, if $X$ and $Z$ are two disjoint isolated shifts, the maximal subsystems 
    of $X \cup Z$ are of the form $X \cup M$
    or $N \cup Z$, where $N$ and $M$ are maximal subsystems of $X$ and $Z$, respectively. Therefore $X \cup Z$ has a finite number of maximal subsystems.
    Since every subsystem of $X \cup Z$ is the union of a subsystem of $X$ and a subsystem of $Z$, 
    every subsystem of $X \cup Z$ is included in one of its maximal subsystems. The result follows from \cref{thm.main}.
\end{proof}

For all $n > 0$, we denote by $\mathcal{D}_{n}^d$ the set of isolated points in $\mathcal{S}_n^d$ whose subsystems are all shifts of finite type. Also set $\mathcal{C}_n^d := \bigcup_{k=0}^n \mathcal{D}_k^d$. In particular, $\mathcal{D}_0^d = \mathcal{C}_0^d$ is the set of shifts in $\mathcal{I}^d$ whose subsystems are all of finite type. Thus every finite shift is in this set. Denote as well the lexicographic order on $\mathbb{N}^2$ by $<_{\texttt{lex}}$, which is defined as follows: for $(k,l),(m,n) \in \mathbb{N}^2$, $(k,l) <_{\texttt{lex}} (m,n)$ if and only if ($k < m$ or ($k=m$ and $l < n$)).

\begin{lemma}\label{lemma.recur.s}
    For all $n \ge 1$ and $0 \le m \le n$, we have the following property denoted by $P_{n,m}$. For every $X \in \mathcal{S}_m^d$ and $Z \in \mathcal{S}_n^d$ such that $X \cap Z = \emptyset$, if $m=0$ we have $X \cup Z \in \mathcal{S}_n^d$, otherwise we have $X \cup Z \in \mathcal{S}_{n+1}^d$.
\end{lemma}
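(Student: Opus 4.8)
The plan is to prove all the statements $P_{n,m}$ simultaneously by induction on the pair $(n,m)$ for the lexicographic order $<_{\texttt{lex}}$ just introduced, the base case being $P_{1,0}$. Two elementary observations are used throughout. First, disjoint union is $1$-Lipschitz: $\delta_H(A \cup C, B \cup C) \le \delta_H(A,B)$ for all shifts $A,B,C$, so that $A^{(k)} \to A$ implies $A^{(k)} \cup C \to A \cup C$. Second, and this is the key technical point, if $A^{(k)} \to A$ and $A \cap C = \emptyset$, then $A^{(k)} \cap C = \emptyset$ for all large $k$: otherwise one could pick $z_k \in A^{(k)} \cap C$, and since $\delta(z_k, A) \le \delta_H(A^{(k)}, A) \to 0$, a convergent subsequence (by compactness of $C$) would produce a point of $C$ at distance $0$ from $A$, hence a point of $A \cap C$, a contradiction. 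Consequently, given pairwise distinct shifts $A^{(k)}$ converging to $A$ with $A \cap C = \emptyset$, after discarding finitely many terms all $A^{(k)}$ are disjoint from $C$, the shifts $A^{(k)} \cup C$ are pairwise distinct, and they converge to $A \cup C$; thus $A \cup C$ is a limit point of $\{A^{(k)}\cup C\}$.

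For the base case $P_{1,0}$, take $X$ arbitrary and $Z \in \mathcal{S}_1^d$. Since $Z$ is non-isolated it is a limit of distinct shifts $Z^{(k)} \in \mathcal{S}^d$; the observations above yield distinct shifts $X \cup Z^{(k)} \to X \cup Z$, so $X \cup Z \in \mathcal{S}_1^d$, which is exactly the $m=0$ conclusion for $n=1$.

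For the inductive step I treat the two cases of the statement. When $m = 0$ and $n \ge 2$, I write $Z \in \mathcal{S}_n^d = (\mathcal{S}_{n-1}^d)'$ as a limit of distinct $Z^{(k)} \in \mathcal{S}_{n-1}^d$; applying $P_{n-1,0}$ (which is $<_{\texttt{lex}}$-smaller) to $X$ and each $Z^{(k)}$ gives $X \cup Z^{(k)} \in \mathcal{S}_{n-1}^d$, and by the observations these distinct shifts converge to $X\cup Z$, so $X \cup Z \in (\mathcal{S}_{n-1}^d)' = \mathcal{S}_n^d$. When $m \ge 1$, I write instead $X \in \mathcal{S}_m^d = (\mathcal{S}_{m-1}^d)'$ as a limit of distinct $X^{(k)} \in \mathcal{S}_{m-1}^d$. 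Since $m-1 \le n$, I may apply $P_{n,m-1}$ (again $<_{\texttt{lex}}$-smaller, with the same first coordinate) to each $X^{(k)}$ and $Z$; its conclusion gives $X^{(k)} \cup Z \in \mathcal{S}_n^d$ whether $m-1 = 0$ (directly) or $m - 1 \ge 1$ (landing in $\mathcal{S}_{n+1}^d \subseteq \mathcal{S}_n^d$, using that the Cantor-Bendixson derivatives are nested). Once more the observations produce pairwise distinct shifts $X^{(k)} \cup Z \in \mathcal{S}_n^d$ converging to $X \cup Z$, whence $X \cup Z \in (\mathcal{S}_n^d)' = \mathcal{S}_{n+1}^d$, completing the induction.

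I expect the only real obstacle to be the bookkeeping that keeps the induction self-contained: one must verify that both recursive appeals, to $P_{n-1,0}$ and to $P_{n,m-1}$, are genuinely to lexicographically smaller pairs lying in the allowed range $0 \le m \le n$, and that the eventual-disjointness observation is applied to the correct member of $\{X,Z\}$ in each case so that every disjoint union formed remains a legitimate disjoint union. The inclusion $\mathcal{S}_{n+1}^d \subseteq \mathcal{S}_n^d$ invoked in the case $m \ge 1$ is the standard downward nesting of Cantor-Bendixson derivatives of a closed set and needs no separate argument.
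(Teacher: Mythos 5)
Your proof is correct and follows essentially the same route as the paper's: lexicographic induction on $(n,m)$, approximating $Z$ by distinct elements of $\mathcal{S}_{n-1}^d$ when $m=0$ and approximating $X$ when $m>0$, then passing the disjoint union through the induction hypothesis. The only difference is cosmetic (you invoke $P_{n,m-1}$ where the paper invokes $P_{n,0}$), and you make explicit the eventual-disjointness and distinctness facts that the paper uses implicitly.
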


\begin{proof}
    We prove this by induction on $(n,m) \in (\mathbb{N} \times \mathbb{N}) \backslash (\mathbb{N} \times \{0\})$ following the lexicographic order. Fix $(n,m) \in  (\mathbb{N} \times \mathbb{N}) \backslash (\mathbb{N} \times \{0\})$ and assume that the property $P_{k,l}$ is true for all $(k,l) <_{\texttt{lex}} (m,n)$. Fix $X \in \mathcal{S}_m^d$ and $Z \in \mathcal{S}_n^d$ such that $X \cap Z = \emptyset$. There exists a sequence $(Z_r)_r$ of shifts in $\mathcal{S}_{n-1}^d$ such that $Z_r \rightarrow Z$ and for all $r$, $Z_r \neq Z$. \textbf{Case $\boldsymbol{m = 0}$.} For all $r$, $X \cup Z_r$ is in $\mathcal{S}_{n-1}^d$: if $n=1$ this is trivial, otherwise this comes from the induction hypothesis. Since $X \cup Z_r \rightarrow X \cup Z$ and for all $r$ we have $X \cup Z_r \neq X \cup Z$, this implies that $X \cup Z \in \mathcal{S}_n^d$. \textbf{Case $\boldsymbol{m > 0}$.} In this case, consider any sequence $(X_r)$ in $\mathcal{S}_0^d$ which converges towards $X$. Then by induction, for all $r$ we have $X_r \cup Z \in \mathcal{S}_n^d$. Since $X_r \cup Z \rightarrow X \cup Z$ and for all $r$, $X_r \cup Z \neq X \cup Z$, this implies that $X \cup Z \in \mathcal{S}_{n+1}^d$.
\end{proof}

\begin{lemma}\label{lemma:neighborhood}
    Let $X,Z$ be two shifts of finite type whose subsystems are all of finite type, and let $U$ (resp. $V$) be a neighborhood of $X$ (resp. $Z$) which consists  of subsystems of $X$ (resp. $Z$). The set $W$ of shifts of the form $M \cup N$, where $M \in U$ and $N \in V$, is a neighborhood of $X \cup Z$.
\end{lemma}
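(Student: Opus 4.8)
The plan is to exhibit a radius $\epsilon>0$ such that the open ball $B(X\cup Z,\epsilon)$ in $\mathcal{S}^d$ is contained in $W$; this is exactly what it means for $W$ to be a neighborhood of $X\cup Z$. I work in the setting relevant to this subsection, where $X\cap Z=\emptyset$ (the separation between $X$ and $Z$ is what makes the splitting below work). Since $U$ is a neighborhood of $X$ consisting of subsystems of $X$, fix $\epsilon_1>0$ with $B(X,\epsilon_1)\subseteq U$, and likewise $\epsilon_2>0$ with $B(Z,\epsilon_2)\subseteq V$. Given a shift $Y$ with $\delta_H(Y,X\cup Z)$ small, I set $M:=Y\cap X$ and $N:=Y\cap Z$, and show that $Y=M\cup N$ with $M\in U$ and $N\in V$, so that $Y\in W$.

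Two facts drive the argument. The first, and the step I expect to be the main obstacle, is that $X\cup Z$ is itself of finite type: a union of two shifts of finite type need not be of finite type in general, so disjointness is essential here. As $X$ and $Z$ are disjoint compact sets, $\gamma:=\min_{x\in X,\,z\in Z}\delta(x,z)=2^{-M}$ is positive, and (enlarging $M$ so that $X$ and $Z$ are both defined by patterns of radius at most $M$) the languages $\mathscr{L}_{\mathbb{B}_M^d}(X)$ and $\mathscr{L}_{\mathbb{B}_M^d}(Z)$ are disjoint: a pattern common to both would yield $x\in X$, $z\in Z$ agreeing on $\mathbb{B}_M^d$, contradicting the value of $\gamma$. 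Consequently $X\cup Z=X_{\mathcal H}$, where $\mathcal H$ consists of the patterns on $\mathbb{B}_M^d$ lying in neither $\mathscr{L}_{\mathbb{B}_M^d}(X)$ nor $\mathscr{L}_{\mathbb{B}_M^d}(Z)$, together with the patterns on $\mathbb{B}_M^d\cup(\mathbf{e}_i+\mathbb{B}_M^d)$, for $1\le i\le d$, whose two $\mathbb{B}_M^d$-windows carry different types (one in the language of $X$, the other in that of $Z$). The first family makes every window of an admissible configuration fall in exactly one of the two disjoint languages, and the second forces this type to be constant across neighbours, hence globally constant by connectivity of $\mathbb{Z}^d$, pinning any such configuration into $X$ or into $Z$. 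By \cref{lemma.subsystem} there is then $\epsilon_0>0$ so that $\delta_H(Y,X\cup Z)<\epsilon_0$ forces $Y\subseteq X\cup Z$.

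The second fact is that, once $Y\subseteq X\cup Z$ and $\epsilon\le\gamma$, the separation lets me split $Y$ cleanly. The sets $M=Y\cap X$ and $N=Y\cap Z$ are closed and shift-invariant, and $Y=M\cup N$ because $Y\subseteq X\cup Z$. For each $x\in X$ there is $y\in Y$ with $\delta(x,y)\le\delta_H(Y,X\cup Z)<\epsilon\le\gamma$; such a $y$ cannot lie in $Z$, so $y\in Y\cap X=M$. This shows $M$ is nonempty and, together with $M\subseteq X$, gives $\delta_H(M,X)\le\delta_H(Y,X\cup Z)$; the argument for $N$ and $Z$ is symmetric.

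To finish, I take $\epsilon\le\min(\epsilon_0,\gamma,\epsilon_1,\epsilon_2)$, choosing $\epsilon$ a power of $2$ so the strict inequalities on the discrete metric $\delta_H$ go through. Then any $Y$ with $\delta_H(Y,X\cup Z)<\epsilon$ satisfies $\delta_H(M,X)<\epsilon_1$ and $\delta_H(N,Z)<\epsilon_2$, whence $M\in B(X,\epsilon_1)\subseteq U$ and $N\in B(Z,\epsilon_2)\subseteq V$; since $Y=M\cup N$, we conclude $Y\in W$. Thus $B(X\cup Z,\epsilon)\subseteq W$, proving $W$ is a neighborhood of $X\cup Z$. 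Apart from the finite-type claim for $X\cup Z$, the remaining steps are routine bookkeeping with the ultrametric $\delta$.
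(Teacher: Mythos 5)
Your proof is correct and follows essentially the same route as the paper's: use the finite-type property of the disjoint union $X\cup Z$ (via \cref{lemma.subsystem}) to force any sufficiently close shift $Y$ to be a subsystem, then split $Y=(Y\cap X)\cup(Y\cap Z)$ and use the positive separation $\gamma$ between the disjoint compact sets to show each piece is Hausdorff-close to $X$ and $Z$ respectively. The only differences are cosmetic and in your favor: you verify the ball only around $X\cup Z$ itself (which is all the neighborhood claim requires, whereas the paper argues at a general point $M\cup N$ of $W$), and you spell out the forbidden-pattern argument showing that a disjoint union of shifts of finite type is of finite type, a fact the paper asserts without proof.
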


\begin{proof}
Consider a shift of the form $M \cup N$, where $M \in U$ and $N \in V$. Since $M \in U$ and $N \in V$, they are subsystems of $X$ and $Z$, respectively, and thus by hypothesis they are of finite type. Since $X$ and $Z$ are disjoint, $M$ and $N$ are also disjoint. Therefore, $M \cup N$ is a shift of finite type. 
Set $\epsilon_0 < \min_{x \in M} \min_{z \in N} \delta (x,z)$ such that every shift which is at distance less than $\epsilon_0$ from $M \cup N$ is a subsystem of $M \cup N$ and every shift at distance less than $\epsilon_0$ from $M$ (resp. $N$) is in $U$ (resp. $V$). Consider a shift $Y$ such that  $\delta_H(Y,M \cup N) < \epsilon_0$. This implies that $Y$ is a subsystem of $M \cup N$ and can be written as $Y = (Y \cap M) \cup (Y \cap N)$. Both $Y \cap M$ and $Y \cap N$ are shifts and since $\delta_H(Y,M \cup N) < \epsilon_0$ and $\epsilon_0 < \min_{x \in M} \min_{z \in N} \delta (x,z)$, they are at distance less than $\epsilon_0$ from $M$ and $N$ respectively. Indeed, consider $x \in M$. Since $\delta_H(Y,M \cup N) < \epsilon_0$, there exists $y \in Y$ such that $\delta(x,y) \le \epsilon_0$. Since $\epsilon_0 < \min_{x \in M} \min_{z \in N} \delta (x,z)$, we have $y \in Y \cap M$. Since $Y \cap M \subset M$, this is enough to conclude that $\delta_H(Y\cap M , M) \le \epsilon_0$. Similarly we have $\delta_H(Y\cap N , N) \le \epsilon_0$.
Thus, $Y\cap M$ is in $U$ and $Y\cap N$ is in $V$. This implies that $Y$ is in $W$.
  
\end{proof}

\begin{lemma}\label{lemma.hierarchy}
    For all $n \ge 1$, $0 \le m \le n$, we have the following property denoted by $Q_{n,m}$. For every $X \in \mathcal{D}_{m}^{d}$ and $Z \in \mathcal{D}_{n}^{d}$ such that $X \cap Z = \emptyset$, if $m =0$ the shift $X \cup Z$ is in $\mathcal{D}^d_{n}$, otherwise it is in $\mathcal{D}^d_{n+1}$.
\end{lemma}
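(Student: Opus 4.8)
The plan is to establish the whole family $Q_{n,m}$ by induction on $(n,m)$ in the lexicographic order, mirroring the proof of \cref{lemma.recur.s}, and to split the statement into three ingredients: that every subsystem of $X\cup Z$ is of finite type, that $X\cup Z$ lands in the correct Cantor--Bendixson derivative, and that it is \emph{isolated} there. The first two are soft and follow from earlier results; the isolation is the heart of the matter and is the only place where the inductive hypothesis and the full strength of the hypotheses on $X$ and $Z$ are genuinely needed.

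For the subsystem condition, I would first note that $X\in\mathcal{D}_m^d$ and $Z\in\mathcal{D}_n^d$ are themselves of finite type with all subsystems of finite type. Since $X$ and $Z$ are disjoint, any subsystem of $X\cup Z$ splits uniquely as $M\sqcup N$, with $M$ a subsystem of $X$ (or empty) and $N$ a subsystem of $Z$ (or empty); both are of finite type by hypothesis, and a disjoint union of two finite type shifts is of finite type, exactly as in the proof of \cref{lemma.union.2}. Hence every subsystem of $X\cup Z$ is of finite type. For membership in the derivative I would simply invoke \cref{lemma.recur.s}: since $\mathcal{D}_m^d\subset\mathcal{S}_m^d$ and $\mathcal{D}_n^d\subset\mathcal{S}_n^d$, property $P_{n,m}$ gives $X\cup Z\in\mathcal{S}_n^d$ when $m=0$ and $X\cup Z\in\mathcal{S}_{n+1}^d$ when $m>0$.

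The real work is to show that $X\cup Z$ is isolated in the target derivative $\mathcal{S}_k^d$, where $k=n$ if $m=0$ and $k=n+1$ if $m>0$. Here I would use \cref{lemma:neighborhood}: choose neighborhoods $U$ of $X$ and $V$ of $Z$ consisting of subsystems (possible by \cref{lemma.subsystem}, as $X,Z$ are of finite type), shrunk so that $U\cap\mathcal{S}_m^d=\{X\}$ and $V\cap\mathcal{S}_n^d=\{Z\}$, which is possible because $X$ and $Z$ are isolated in $\mathcal{S}_m^d$ and $\mathcal{S}_n^d$ respectively. Then $W=\{M\cup N:M\in U,\ N\in V\}$ is a neighborhood of $X\cup Z$, and disjointness of $X$ and $Z$ makes the decomposition $M\cup N\mapsto(M,N)$ well defined and continuous on $W$. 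Arguing by contradiction, suppose a sequence of distinct shifts $Y_r\in\mathcal{S}_k^d$ converges to $X\cup Z$; for large $r$ write $Y_r=M_r\cup N_r\in W$ with $M_r\to X$, $N_r\to Z$ and $(M_r,N_r)\neq(X,Z)$. By the choice of $U$ and $V$, whenever $M_r\neq X$ we have $M_r\notin\mathcal{S}_m^d$, so $M_r\in\mathcal{D}_{m-1}^d$ (it is of finite type with finite-type subsystems, lies in $\mathcal{S}_{m-1}^d$, and is isolated there); symmetrically for $N_r\neq Z$. Applying the inductive hypothesis $Q$ to the pairs $(M_r,Z)$, $(X,N_r)$ and $(M_r,N_r)$—all of strictly smaller lexicographic index—then pins down the Cantor--Bendixson level of $Y_r$; the goal is to force this level strictly below $k$, contradicting $Y_r\in\mathcal{S}_k^d$ and yielding isolation.

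The step I expect to be the main obstacle is precisely this last piece of rank bookkeeping, i.e. the \emph{upper} bound on the Cantor--Bendixson level of the approximants $Y_r=M_r\cup N_r$; \cref{lemma.recur.s} supplies only the lower bound, and it is here that ``isolated in its derivative'' (rather than merely ``lying in it'') is indispensable. The argument closes transparently when the lower-rank factor has rank $0$ or $1$: for $m=0$ the isolation of $X$ forces $U=\{X\}$, so only $N_r$ varies and the level of $Y_r$ drops below $n$; and for $m=1$ a replaced factor $M_r\neq X$ has rank $0$, so by the induction the level of $M_r\cup Z$ and of $X\cup N_r$ stays at most $n$. The genuinely delicate situation is $M_r\neq X$ with $N_r=Z$ when $m\ge 2$: here one must control exactly how much the level increases upon passing from $X$ to a proper subsystem of positive rank, which is governed by the product structure of $W\cong U\times V$ and by how Cantor--Bendixson levels combine under this product. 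I would therefore set up the induction so that this case is isolated and treated first, as it is the single point on which the strict inequality, and hence the whole statement, rests.
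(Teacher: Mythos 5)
Your strategy is the same as the paper's: lexicographic induction on $(n,m)$, the product neighbourhood $W$ supplied by \cref{lemma:neighborhood}, and a classification of the elements of $W\setminus\{X\cup Z\}$ according to the Cantor--Bendixson levels of their two factors. You handle $m=0$ and $m=1$ correctly, and you have put your finger on exactly the right spot: the case $m\ge 2$ with one factor replaced by a proper subsystem of positive rank is where the bookkeeping does not close. Since you leave that case open, your proposal does not establish $Q_{n,m}$ for $m\ge 2$. (A small imprecision elsewhere: a factor $M_r\ne X$ need only lie in $\mathcal{C}_{m-1}^d=\bigcup_{j\le m-1}\mathcal{D}_j^d$, not necessarily in $\mathcal{D}_{m-1}^d$; this does not affect the structure of the argument.)

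You should know, however, that the gap you flagged is also present in the paper's own proof, hidden in the sentence ``in any case, by induction, we can deduce that any element of $W$ different from $X \cup Z$ is in $\mathcal{C}_n^d$''. For $m\ge 2$ this is false, and in fact the statement itself fails: since $X\in\mathcal{S}_m^d=\bigl(\mathcal{S}_{m-1}^d\bigr)'$, there are shifts of $\mathcal{S}_{m-1}^d$ distinct from $X$ accumulating on $X$; any such shift close enough to the finite-type shift $X$ is a proper subsystem of $X$ lying outside $\mathcal{S}_m^d$, hence belongs to $\mathcal{D}_{m-1}^d$ with $m-1\ge 1$. For such an $M_r$, the induction hypothesis $Q_{n,m-1}$ places $M_r\cup Z$ in $\mathcal{D}_{n+1}^d\subset\mathcal{S}_{n+1}^d$, and these shifts converge to $X\cup Z$ while being distinct from it, so $X\cup Z$ is \emph{not} isolated in $\mathcal{S}_{n+1}^d$. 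The correct conclusion for all $m$ is the ``additive level count'' you suspected, namely $X\cup Z\in\mathcal{D}_{m+n}^d$; with that restatement your induction closes with no exceptional case, because every element of $W\setminus\{X\cup Z\}$ is of the form $M\cup N$ with $M\in\mathcal{D}_k^d$, $N\in\mathcal{D}_l^d$ and $k+l\le m+n-1$ (replacing either factor strictly decreases its level), hence lies in $\mathcal{C}_{m+n-1}^d$ and therefore outside $\mathcal{S}_{m+n}^d$. Note that \cref{thm.Cantor} only invokes the cases $m\in\{0,1\}$, where the stated exponent $n+1$ agrees with $m+n$, so the downstream application is unaffected by this correction.
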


\begin{proof}
Again we prove this by induction on $(n,m) \in  (\mathbb{N} \times \mathbb{N}) \backslash (\mathbb{N} \times \{0\})$ following the lexicographic order. Fix $(n,m) \in  (\mathbb{N} \times \mathbb{N})\backslash (\mathbb{N} \times \{0\})$ and assume that the property $Q_{k,l}$ is true for all $(k,l) <_{\texttt{lex}} (n,m)$. Take $X \in \mathcal{D}_m^d$ and $Z \in \mathcal{D}_n^d$ such that $X \cap Z$ is empty. Observe that the subsystems of $X\cup Z$ are its non-empty subsets of the form $M \cup N$, where $M$ (resp. $N$) is empty or a subsystem of $X$ (resp. $Z$).
Since the disjoint union of two shifts of finite type is of finite type, all the subsystems of $X \cup Z$ are of finite type.
\textbf{Case $\boldsymbol{m = 0}$.} The shift $X \cup Z$ is in $\mathcal{S}_{n}^d$ by Lemma \ref{lemma.recur.s}.
We have left to prove that $X \cup Z$ is isolated in $\mathcal{S}_{{n}}^d$, meaning that it has a neighborhood $W$ such that $W\backslash \{X\cup Z\} \subset \mathcal{C}_{{n-1}}^d$. 
% = \mathcal{I}^d$.
Since $Z$ is of finite type and isolated in $\mathcal{S}_{{n}}^d$, there exists a neighborhood $U$ of $Z$ which consists of subsystems of $Z$ which are in $\mathcal{C}_{{n-1}}^d$. Denote by $W$ the set of 
subsystems of $X \cup Z$ of the form $X \cup N$, where $N \in {U}$. Since $X$ is isolated in $\mathcal{S}_0^d$, {by Lemma \ref{lemma:neighborhood}}, the set $W$ is a neighborhood of $X \cup Z$. Any element of $W$ different from $X \cup Z$ is the disjoint union of {a shift is $\mathcal{C}_{n-1}^d$ and a shift in $\mathcal{C}_{0}^d$. For $n> 1$, this implies that every element of $W$ different from $X \cup Z$ is in $\mathcal{C}_{n-1}^d$. In the case $n = 1$, every element of $W$ different from $X \cup Z$ is the union of two shifts in $\mathcal{I}^d$, which is in $\mathcal{I}^d$.}
 {Since the subsystems of both of these shifts are all of finite type and that the disjoint union of two shifts of finite type is of finite type, every element of $W$ is in $\mathcal{C}_0^d$.}
\textbf{Case $\boldsymbol{m > 0}$.} 
The shift $X \cup Z$ is in $\mathcal{S}_{n+1}^d$ by Lemma \ref{lemma.recur.s}. We have left to prove that $X \cup Z$ is isolated in $\mathcal{S}_{n+1}^d$, meaning that it has a neighborhood $W$ such that $W\backslash \{X\cup Z\} \subset \mathcal{C}_n^d$.
Since $X$ (resp. $Z$) is of finite type and isolated in $\mathcal{S}_{m}^d$ (resp. $\mathcal{S}_{n}^d$), there exists a neighborhood $U$ (resp. $V$) of $X$ (resp. $Z$) such that $U \backslash \{X\}$ (resp. $V \backslash \{Z\}$) consists of subsystems of $X$ (resp. $Z$) which are in $\mathcal{C}_{m-1}^d$ (resp. $\mathcal{C}_{n-1}^d$). Denote by $W$ the set of 
subsystems of $X \cup Z$ of the form $M \cup N$, where $M \in U$ and $N \in V$. {By Lemma \ref{lemma:neighborhood},} the set $W$ is a neighborhood of $X \cup Z$. Any element of $W$ different from $X \cup Z$ is the disjoint union of a shift in $\mathcal{D}_k^d$ with a shift in $\mathcal{D}_l^d$, where $k < m$ and $l  {\le}  n$, {or $k \le  m$ and $l < n$}. {In any case,} by induction, we can deduce that any element of $W$ different from $X \cup Z$ is in $\mathcal{C}_n^d$.
% In order to prove that $X \cup Z$ in $\mathcal{D}_2^d$, it is sufficient observe that all its strict subsystems are of the form $M \cup N$, where  
% following forms: i) $N\cup M$; ii) $N\cup Z$; iii) $X\cup M$; iv) $X$; v)$Z$; where $N\subsetneq X$ and $M\subsetneq Z$. Since $N$ and $M$ are isolated, by \cref{lemma.union.2}, subsystems of the form i) are isolated. Subsystems of the form iv) and v) are in $\mathcal{D}_1^d$ by hypothesis. Subsystems of the form ii) are in $\mathcal{D}_1^d$, since any of their subsystems of the form i) o just $N$. A symmetric argument shows that subsystems of the form iii) are in $\mathcal{D}_1^d$ as well. This means that $X\cup Z$ is in $\mathcal{D}_2^d$.
% An analogous argument shows that if the claim holds for $n=k$, then it holds for $n=k+1$.
\end{proof}

% \begin{proof}
%     Let us prove this by recursion on $n$. Set $n=0$. By Lemma~\ref{lemma.union.2}, the subsystems of $X \cup Z$ which are union of two isolated shifts are also isolated. The other subsystems of $X \cup Z$ are of the form $X \cup M$
%     or $N \cup Z$, where $N$ and $M$ are subsystems of $X$ and $Z$, respectively.
%     The subsystems of these subsystems are, thus, unions of isolated points and are isolated. 
%     Therefore all subsystems of $X \cup Z$ are isolated or in $\mathcal{D}^d_{0}$. This implies that $X \cup Z$ is in $\mathcal{D}^d_{1}$. An analogous argument shows that if the claim holds for $k$, then it holds for $k+1$.
% \end{proof}
% \comm{Silvere}{In fact there is a problem in this result. When taking subsystems of a type iii) subsystem, it can still be a subsystem of type iii) and not isolated necessarily. I believe the precise formulation of all these intermediate results can be changed in order to get the main results there (infinite rank) but it needs just a bit of work.}
% \comm{Silvere}{Replace condition that all subsystems are in the previous set by the existence of a neighborhood whose elements are all in the previous set ? In fact this is the def cantor-bendixson sets so no need for new notations.}

For any $d$-dimensional shift $X$ and any integer $n \ge 0$, we denote by $\mathcal{G}_n(X)$ the shift $X \times Z_n$, where $Z_n$ is the $d$-dimensional shift on alphabet $\llbracket 1, n \rrbracket$ whose elements are the constant configurations with value in $\llbracket 1, n \rrbracket$.

\begin{theorem}[\cref{thm:C}]\label{thm.Cantor}
The Cantor-Bendixson rank of $\mathcal{S}^d$, $d>1$, is infinite.
\end{theorem}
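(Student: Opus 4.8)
The plan is to realise, for every integer $n \ge 1$, an explicit shift inside the set $\mathcal{D}_n^d$. Since by definition $\mathcal{D}_n^d$ consists of isolated points of $\mathcal{S}_n^d$, any such shift lies in $\mathcal{S}_n^d$ but not in $\mathcal{S}_{n+1}^d = (\mathcal{S}_n^d)'$; hence $\mathcal{S}_n^d \supsetneq \mathcal{S}_{n+1}^d$ for every $n$, the derivation process never stabilises at a finite stage, and the Cantor--Bendixson rank is therefore infinite.

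The seed of the construction is $W := \mathcal{E}^{d-2}(X_0)$, which is well defined because $d > 1$. By \cref{theorem.times.shift}, $W$ is isolated in $\mathcal{S}^d \setminus \mathcal{I}^d = \mathcal{S}_1^d$, and since all its strict subsystems are finite (in particular of finite type), $W$ belongs to $\mathcal{D}_1^d$.

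First I would note that $\mathcal{G}_n(W) = W \times Z_n$ is nothing but the disjoint union of $n$ copies of $W$: every configuration of $Z_n$ is constant, so the shift action is trivial on the second coordinate and $\mathcal{G}_n(W)$ decomposes into $n$ pairwise disjoint shift-invariant pieces, each a relabelling of $W$ and thus again a member of $\mathcal{D}_1^d$. I would then prove $\mathcal{G}_n(W) \in \mathcal{D}_n^d$ by induction on $n$. The case $n=1$ is the previous observation (as $\mathcal{G}_1(W)$ is a copy of $W$). For the inductive step, write $\mathcal{G}_{n+1}(W) = \mathcal{G}_n(W) \sqcup W'$, where $W' \cong W$ is the piece carrying the label $n+1$, disjoint from $\mathcal{G}_n(W)$; with $W' \in \mathcal{D}_1^d$ and $\mathcal{G}_n(W) \in \mathcal{D}_n^d$ I apply the property $Q_{n,1}$ from \cref{lemma.hierarchy} (legitimate since $1 \le n$), whose conclusion in the case of positive smaller index $m=1$ gives $\mathcal{G}_{n+1}(W) \in \mathcal{D}_{n+1}^d$.

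Combining these steps, $\mathcal{D}_n^d \neq \emptyset$ for all $n \ge 1$, which yields $\mathcal{S}_n^d \supsetneq \mathcal{S}_{n+1}^d$ and hence infinite rank. The substantive work here is already carried by \cref{lemma.hierarchy} (and the lemmas feeding it); the obstacle that remains in assembling the theorem is essentially bookkeeping: checking that the $n$ pieces produced by $\mathcal{G}_n$ are genuinely disjoint and that each sits in $\mathcal{D}_1^d$, that the index hypothesis $1 \le n$ of $Q_{n,1}$ holds at each stage, and that the strict descent of the derived sets at every finite level is precisely what is needed to push the rank past all finite ordinals (that is, to at least $\omega$), rather than merely producing a long finite chain.
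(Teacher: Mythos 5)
Your proposal is correct and follows essentially the same route as the paper: seed with $\mathcal{E}^{d-2}(X_0)\in\mathcal{D}_1^d$ via \cref{theorem.times.shift}, then use $\mathcal{G}_n$ (disjoint unions of copies) together with \cref{lemma.hierarchy} to place a shift in each $\mathcal{D}_n^d$, forcing strict descent of the derived sets. The only difference is that you spell out the induction via $Q_{n,1}$ explicitly, which the paper leaves implicit in its one-line appeal to \cref{lemma.hierarchy}.
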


% \comm{Silvere}{In light of the changes above; I think in the following proof we should replace $\mathcal{D}_0^{d+2}$ with $\mathcal{D}_1^{d+2}$ and probably $\mathcal{D}_{n-1}^{d+2}$ with $\mathcal{D}_n^{d+2}$}

% \comm{A}{or maybe just replace $n\geq1$ by $n\geq2$, after the reference to Lemma 5.18. So that the ``first case'' is $\mathcal{G}_2(\mathcal{E}^d(X_0))\in\mathcal{D}_1^{d+2}$ and we're safe.}

% \comm{S}{That works}

\begin{proof}
We already know from Theorem~\ref{theorem.times.shift} that $\mathcal{E}^d(X_0) \in \mathcal{D}^{d+2}_{{1}}$ for all $d \ge 0$. 
By Lemma~\ref{lemma.hierarchy}, for all $n \ge 1$, 
$\mathcal{G}_n(\mathcal{E}^d(X_0)) \in \mathcal{D}^{d+2}_{{n}}$. Thus for all $n$, the set {of isolated points in} $\mathcal{S}^d_{n}$ is not empty. This implies that the Cantor-Bendixson rank of $\mathcal{S}^d$ is infinite. 
\end{proof}
%\textcolor{cyan}{The claim in the previous theorem is exactly the claim in \cref{thm:C} from the introduction which is now proven.}
\begin{comment}
Produits de x2x3 : on a un rang de cantor bendixon supérieur à omega2. Selon le me principe on aurait omega^n en dim n.
En prenant des unions finies de copies de x2x3 on peut voir que le rang de cantor bendixon est au moins Omega.
En faisant un produit de x2x3 avec Robinson on réalise Omega, et avec plusieurs produits on a que ce rang est au moins Omega^2.
\end{comment}

\subsubsection{Type $\infty$ shifts are in the Cantor-Bendixson residue}\label{section.cb.residue}

\begin{proposition}
    The set of maximality type $\infty$ shifts is included in the Cantor-Bendixson residue. Furthermore, it has empty interior.
\end{proposition}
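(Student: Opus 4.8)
The plan is to prove the two assertions separately, writing $A := \{X \in \mathcal{S}^d : |\mathcal{M}(X)| = \infty\}$ for the set of maximality type $\infty$ shifts. For the containment in the Cantor--Bendixson residue I would reduce everything to one \emph{self-accumulation} property: that every element of $A$ is a limit of a sequence of pairwise distinct elements of $A$, all different from it. Granting this, a transfinite induction gives $A \subseteq (\mathcal{S}^d)^{(\alpha)}$ for every ordinal $\alpha$. The case $\alpha = 0$ is trivial; the limit case follows from $(\mathcal{S}^d)^{(\lambda)} = \bigcap_{\alpha < \lambda}(\mathcal{S}^d)^{(\alpha)}$; and for a successor, if $A \subseteq (\mathcal{S}^d)^{(\alpha)}$ then each $X \in A$ is a limit of points of $(\mathcal{S}^d)^{(\alpha)} \setminus \{X\}$, hence not isolated in $(\mathcal{S}^d)^{(\alpha)}$, i.e. $X \in (\mathcal{S}^d)^{(\alpha+1)}$. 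Intersecting over all $\alpha$ places $A$ in the residue.

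The core is thus the self-accumulation property. Fix $X \in A$ and enumerate $\mathcal{M}(X) = \{Z_0, Z_1, \dots\}$, which is legitimate and infinite by Corollary~\ref{corollary.countable} and the definition of $A$. First, the computation in the proof of Proposition~\ref{proposition.inf.max.sub} applies verbatim (it uses only that $X$ has infinitely many maximal subsystems, not finiteness of type): with $K = \bigcap_n Z_n$ and $K_n = \bigcup_{k\le n}\overline{X\setminus Z_k}$ one gets $K \cup K_n \subseteq Z_{n+1} \subseteq X$ and $K \cup K_n \to X$, so Lemma~\ref{lemma.inclusion} forces $Z_n \to X$, with the $Z_n$ pairwise distinct and all different from $X$. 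It remains to show each $Z_n \in A$. For this I would prove that, for $Z := Z_n$ and any other maximal subsystem $Z' \neq Z$ of $X$, the intersection $Z \cap Z'$ is a maximal subsystem of $Z$: it is a proper subsystem (else $Z \subseteq Z'$ gives $Z = Z'$), and if $Z \cap Z' \subseteq W \subsetneq Z$ then $W \cup Z'$ is a subsystem containing the maximal $Z'$, hence equals $Z'$ or $X$; the case $W \cup Z' = X$ gives $W \supseteq \overline{X \setminus Z'} \supseteq Z \setminus Z'$ together with $W \supseteq Z \cap Z'$, whence $W = Z$, a contradiction, so $W = Z \cap Z'$. Moreover $Z' \mapsto Z \cap Z'$ is injective on $\mathcal{M}(X) \setminus \{Z\}$: if $Z \cap Z' = Z \cap Z''$ with $Z', Z''$ distinct and different from $Z$, then Lemma~\ref{lemma.non.inclusion} gives $\overline{X \setminus Z'} \subseteq Z \cap Z'' = Z \cap Z' \subseteq Z'$, forcing $Z' = X$. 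Hence $Z_n$ has infinitely many maximal subsystems, i.e. $Z_n \in A$, and the self-accumulation argument is complete.

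For the empty interior I would show that every $X \in A$ is also a limit of shifts of \emph{finite} maximality type. Using the maximality decomposition of Section~\ref{section.dec}, write $X = \mathcal{K}(X) \cup \mathcal{E}(X)$ with $\mathcal{E}(X) = \overline{\bigcup_{T \in \mathcal{T}(X)} T}$; here $\mathcal{T}(X)$ is infinite because $Z \mapsto \overline{X\setminus Z}$ is injective on $\mathcal{M}(X)$ (again by Lemma~\ref{lemma.non.inclusion}). Enumerate $\mathcal{T}(X) = \{T_1, T_2, \dots\}$ and, for each $N$, set
\[ Y_N := \mathcal{K}(X) \cup \overline{\textstyle\bigcup_{k \le N} T_k} = \mathcal{K}(X) \cup \bigcup_{k \le N} T_k . \]
Applying Theorem~\ref{thm:decomposition} with $K = \mathcal{K}(X)$ and $\mathcal{T} = \{T_1, \dots, T_N\}$ — whose hypotheses (i),(iv),(v) hold by the properties of $\mathcal{T}(X)$ and $\mathcal{K}(X)$ from Section~\ref{section.dec}, (ii) since $\mathcal{K}(X)$ is empty or of maximality type $0$, (iii) by construction and (vi) vacuously — shows that $Y_N$ has at most $N$ maximal subsystems, so $Y_N \notin A$. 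Since $\overline{\bigcup_N Y_N} = \mathcal{K}(X) \cup \overline{\bigcup_k T_k} = X$ and $(Y_N)_N$ is non-decreasing, the compactness argument in the proof of Lemma~\ref{lemma.union} (which needs only that the union be dense in the compact limit) yields $Y_N \to X$. As $Y_N \notin A$ while $X \in A$, every neighborhood of $X$ meets $\mathcal{S}^d \setminus A$; since $X \in A$ was arbitrary, $A$ has empty interior.

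The main obstacle is the self-accumulation step, and precisely the claim that a maximal subsystem of a maximality type $\infty$ shift is itself of maximality type $\infty$: this is what makes the transfinite induction go through, and it rests on the two set-theoretic facts above (that $Z \cap Z'$ is maximal in $Z$, and that $Z' \mapsto Z \cap Z'$ is injective), both leaning on Lemmas~\ref{lemma.two.maximal} and~\ref{lemma.non.inclusion}. The convergences $Z_n \to X$ and $Y_N \to X$ are comparatively routine given the decomposition theory of Section~\ref{section.dec}; the only delicate point there is to verify that the relevant unions are merely dense rather than closed, so that the Lemma~\ref{lemma.union} argument still applies.
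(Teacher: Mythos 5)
Your proof is correct, and for the first assertion it takes a genuinely different route from the paper. The paper handles both assertions with the maximality decomposition of Section~\ref{section.dec}: for membership in the residue it approximates $X$ by the type-$\infty$ shifts $\mathcal{K}(X)\cup\overline{\bigcup_{n\neq k}T_n}$ (deleting one transitive component at a time, via Theorem~\ref{thm:decomposition}), and for empty interior it truncates to $\mathcal{K}(X)\cup\bigcup_{k\le N}T_k$ exactly as you do. You instead approximate $X$ by its own maximal subsystems $Z_n$ (reusing the convergence computation from Proposition~\ref{proposition.inf.max.sub}) and prove the structural fact that each $Z_n$ is itself of maximality type $\infty$, via the two observations that $Z'\mapsto Z\cap Z'$ sends $\mathcal{M}(X)\setminus\{Z\}$ injectively into $\mathcal{M}(Z)$. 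This buys something the paper does not state --- a maximal subsystem of a type-$\infty$ shift is again of type $\infty$ --- and it avoids Theorem~\ref{thm:decomposition} entirely for the first assertion, relying only on Lemmas~\ref{lemma.two.maximal} and~\ref{lemma.non.inclusion}; your explicit transfinite induction also makes precise the step the paper leaves implicit (that a set with no isolated points lies in every derivative). The paper's version is shorter once Section~\ref{section.dec} is available and treats both halves uniformly. One small point to add: since subsystems are by definition non-empty, you should note that $Z\cap Z'\neq\emptyset$ for distinct maximal subsystems; this is immediate here because $\mathcal{M}(X)$ is infinite, so picking a third maximal $Z''$ gives $\emptyset\neq\overline{X\setminus Z''}\subseteq Z\cap Z'$ by Lemma~\ref{lemma.non.inclusion}.
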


\begin{proof}
    Consider any shift $X$ which has maximality type $\infty$. Denote by $T_k, k \in \mathbb{N}$, the elements of $\mathcal{T}(X)$. \begin{enumerate}
        \item Using Theorem~\ref{thm:decomposition}, for all $k \in \mathbb{N}$, 
    the set $\mathcal{K}(X) \cup \left(\bigcup_{n \neq k} T_n \right)$ is also of maximality type $\infty$, and this sequence converges towards $X$ by Lemma~\ref{lemma.max.dec}. As a consequence, the set of maximality type $\infty$ shifts has no isolated point. This implies that the closure of this set is contained in the Cantor-Bendixson residue.
    \item Again by Theorem~\ref{thm:decomposition}, for all $n$, the set $\mathcal{K}(X) \cup \left(\bigcup_{k=0}^n T_k\right)$ has maximality type $n+1$ and this sequence converges to $X$. Thus $X$ is the limit of a sequence of finite maximality type shifts. Since this is true for all $X$, the set of maximality type $\infty$ shifts has empty interior.
    \end{enumerate}
\end{proof}

\begin{remark}
A direct consequence is that the closure of this set is also in the Cantor-Bendixson residue.
\end{remark}

% \begin{remark}
    % It would be natural to think 
    % that the sets $K \cup \left( \bigcup_{n \le k} T_n\right)$ are of type $n$ and since they converge to $X$ when $k \rightarrow \infty$, we can decude that the set of maximality type $\infty$ shifts has empty interior. But they are not necessarily of type $n$, because $K$ is not necessarily empty or of maximality type $0$. Hence the following question: does this set have empty interior?
% \end{remark}

\section{Open questions\label{open.questions}}

We conclude our discussion with some open questions. 

\paragraph*{Main  questions} The most relevant questions are the following: 

\begin{question}
    What is the closure of the set of isolated shifts in $\mathcal{S}^d$?
\end{question}

We expect that the systems in this closure satisfy some conditions in terms of the structure of the subsystems set, however these conditions are not 
apparent. Another important question is: 

\begin{question}
    What is the Cantor-Bendixson rank of $\mathcal{S}^d$?
\end{question}

% We expect that other constructions based on the shift $X_0$ to lead to other lower bounds on this rank. It is not clear though if this will be sufficient. \bigskip

We conjecture the following: 

\begin{conjecture}
    The Cantor-Bendixson rank of the space of $d$-dimensional shifts is $\omega$ for all $d > 1$. Furtheremore, the Cantor-Bendixson residue is the closure of the set of shifts having maximality type infinity. The shifts which are not in this residue are the ones which can be written as a finite union of shifts of finite type which are the limit of their minimal subsystems.
\end{conjecture}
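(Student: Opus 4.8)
Write $S_\infty$ for the set of shifts of maximality type $\infty$. The plan is to reduce both assertions to the decomposition machinery of \cref{thm:decomposition} together with the convergence criterion of \cref{lemma.union}. Recall that the Cantor-Bendixson residue is the largest perfect subset of $\mathcal{S}^d$, so that any set contained in a perfect set lies in the residue; I would invoke the standard transfinite induction showing that a perfect set $P$ satisfies $P \subseteq (\mathcal{S}^d)^{(\alpha)}$ for every ordinal $\alpha$ (at successor stages no point of $P$ is isolated in $(\mathcal{S}^d)^{(\alpha)}$, being a limit of other points of $P$; limit stages are intersections). Thus it suffices to prove that $S_\infty$ has no isolated point, so that $\overline{S_\infty}$ is perfect, and separately that $S_\infty$ has empty interior.

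For the first assertion, fix $X \in S_\infty$ and, using \cref{corollary.countable}, enumerate $\mathcal{T}(X) = \{T_k\}_{k \in \N}$ (an infinite set, since $X$ has type $\infty$). For each $k$ I would set
\[ X_k := \mathcal{K}(X) \cup \overline{\bigcup_{n \neq k} T_n}. \]
Applying \cref{thm:decomposition} to the data $(\mathcal{K}(X), \{T_n\}_{n \neq k})$ — all of the conditions \textbf{(i)}--\textbf{(vi)} being inherited from the decomposition of $X$ — shows that $\mathcal{T}(X_k) = \{T_n\}_{n \neq k}$ is still infinite, so $X_k \in S_\infty$; moreover $X_k \neq X$, since $X_k$ is contained in the maximal subsystem $S(T_k)$. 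To see $X_k \to X$ I would pass through the auxiliary \emph{non-decreasing} sequence $Y_N := \mathcal{K}(X) \cup \overline{\bigcup_{n \le N} T_n}$, which converges to $X$ by \cref{lemma.union}: given a finite window $\mathbb{U}$ there is $N_0$ with $\mathscr{L}_{\mathbb{U}}(Y_{N_0}) = \mathscr{L}_{\mathbb{U}}(X)$, and for every $k > N_0$ one has $Y_{N_0} \subseteq X_k \subseteq X$, forcing $\mathscr{L}_{\mathbb{U}}(X_k) = \mathscr{L}_{\mathbb{U}}(X)$. Hence every $X \in S_\infty$ is a limit of distinct elements of $S_\infty$, so $S_\infty$ has no isolated point and $\overline{S_\infty}$ is perfect; by the remark above, $S_\infty \subseteq \overline{S_\infty}$ lies in the residue.

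For the empty-interior claim I would instead use the monotone truncations directly: by \cref{thm:decomposition} the shift $\mathcal{K}(X) \cup \overline{\bigcup_{k \le n} T_k}$ has maximality type exactly $n+1$, which is finite, and the sequence converges to $X$ by \cref{lemma.union}. Thus every neighbourhood of $X$ contains a shift of finite maximality type, i.e.\ a point outside $S_\infty$; since $X \in S_\infty$ was arbitrary, the complement of $S_\infty$ is dense and $S_\infty$ has empty interior.

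The main obstacle I anticipate is the bookkeeping in applying \cref{thm:decomposition} to the punctured family $\{T_n\}_{n \neq k}$: one must check that deleting a single $T_k$ preserves every hypothesis, in particular condition \textbf{(ii)} (that $\mathcal{K}(X)$ is still the type-$0$, non-minimal part) and the more delicate condition \textbf{(vi)} on $\bigcap_k \overline{\bigcup_{l \ge k} T_l}$, and that $X_k$ is genuinely a strict subsystem. The convergence step is the other sensitive point, since the $X_k$ are not themselves monotone; routing the argument through the increasing sequence $Y_N$ and \cref{lemma.union} is precisely what makes the comparison of languages on each finite window go through.
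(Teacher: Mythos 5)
What you have written is not a proof of the stated conjecture but of a strictly weaker statement, namely the proposition of \cref{section.cb.residue}, which the paper does prove and which your argument reproduces almost verbatim: the paper likewise deletes one $T_k$ at a time from the decomposition of \cref{thm:decomposition} to show that the set of maximality-type-$\infty$ shifts has no isolated point, and uses the truncations $\mathcal{K}(X)\cup\overline{\bigcup_{k\le n}T_k}$ for the empty-interior claim. (Your detour through the monotone sequence $Y_N$ and \cref{lemma.union} is a sensible way to make the convergence of the non-monotone sequence $X_k$ precise, and is slightly more careful than the paper's one-line appeal at that point.) The statement you were asked to prove, however, is explicitly left open in the paper, and your argument addresses none of its actual content beyond the one inclusion that is already known. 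Three things are missing. First, the claim that the Cantor--Bendixson rank equals $\omega$: the paper only proves that the rank is at least $\omega$ (\cref{thm.Cantor}), and equality would require showing that $(\mathcal{S}^d)^{(\omega)}$ has no isolated points, i.e. $(\mathcal{S}^d)^{(\omega+1)}=(\mathcal{S}^d)^{(\omega)}$; nothing in your argument touches this. Second, the conjecture asserts that the residue \emph{equals} the closure of the set of type-$\infty$ shifts, so you would also need the reverse inclusion, that every shift in the residue is a limit of type-$\infty$ shifts; your perfect-set argument says nothing about points of the residue outside $\overline{S_\infty}$. Third, the characterization of the complement of the residue as finite unions of shifts of finite type that are limits of their minimal subsystems does not appear anywhere in your proposal.

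In short, the argument you give is correct as far as it goes, but it establishes only the containment of $\overline{S_\infty}$ in the residue together with the empty-interior statement, which is the paper's own (proved) proposition, not the conjecture. If you want to attack the conjecture, the entire difficulty lies in the three missing directions above; the half you prove is the easy one and is already in the paper.
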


Another strategy in order to understand the topological structure of $\mathcal{S}^d$ is to begin with understanding the structure of the subsystems of shifts of finite type outside of full shifts. In particular: 

\begin{question}
    What are the shifts of finite type in $\mathcal{S}^d$ whose set of (transitive)
    %\textcolor{blue}{which set? (transitive)?} 
    subsystems is a perfect set?
\end{question}

\paragraph*{Relations between the notions introduced}

The main outcome of our study is a battery of concepts which may be studied independently. A better understanding of these concepts 
could be useful in order to understand genericity for higher dimensional shifts: the notion of maximal subsystem, and the maximality type; certain properties of shifts in relation with isolated shifts (having only isolated subsystems or no isolated subsystems at all), or the shifts which have a neighborhood in which each element contains it, the shifts which satisfy the $(\star)$ property. 
One could, for instance, wonder about the possible maximality types for systems in each of the classes described, or the closure and interior of these classes. Characterize systems whose subsystems are all in a certain class, or whose neighborhood has only systems in a given class. Which of these classes have non empty intersection?

\paragraph*{Properties of isolated shifts} Questions which could lead to a better understanding of isolated points of $\mathcal{S}^d$ are of interest. In particular, we suspect that isolated points have low complexity: 

\begin{question}
    Do isolated points in $\mathcal{T}^d$ have zero entropy?
\end{question}

Another way is to explore the limit structures of subsystems in the set of isolated shifts. For instance:

\begin{question}
Is there a decreasing infinite sequence of shifts $(X_n)_{n\ge0}$ such that
$X_0$ is of finite type and for all $n$, $X_{n+1}$ is the unique maximal subsystem of $X_n$? If so, what are the properties of $\bigcap_n X_n$?
\end{question}

\paragraph*{Transitivity and maximality type}

We have proved that a transitive shift has maximality type $0$ or $1$. On the other hand, we know that not all shifts of maximality type $0$ or $1$ are transitive. Is it possible to distinguish how non-transitivity of shifts with maximality type $\ge 2$ differs from the non-transitivity of those which have maximality type $0$ or $1$?

\end{document}